\def\ov#1{{\overline{#1}}}
\def\wh#1{{\widehat{#1}}}
\def\wt#1{{\widetilde{#1}}}
\def\?{\ ???\ \immediate\write16{}%
\immediate\write16{Warning: There was still a question mark . . . }%
\immediate\write16{}}
\newcommand{\conv}{\operatorname{conv}}
\newcommand{\rec}{\operatorname{rec}}
\newcommand{\vol}{\operatorname{vol}}
\newcommand{\dist}{\operatorname{dist}}
\newcommand{\tor}{\operatorname{tor}}
\newcommand{\htor}{\h^{\tor}}
\newcommand{\Car}{\operatorname{Car}}
\newcommand{\Rat}{\operatorname{Rat}}
\newcommand{\conc}{\operatorname{conc}}
\newcommand{\avol}{\operatorname{\wh{vol}}}
\newcommand{\MI}{\operatorname{MI}}
\newcommand{\VSF}{\operatorname{VSF}}
\newcommand{\SF}{\operatorname{SF}}
\newcommand{\pur}{\text{\rm pur}}
\newcommand{\dd}{\hspace{1pt}\operatorname{d}\hspace{-1pt}}
\renewcommand{\div}{\operatorname{div}}
\newcommand{\LL}{\operatorname{L}}
\newcommand{\lL}{\operatorname{\it l}}
\newcommand{\Div}{\operatorname{Div}}
\newcommand{\whL}{\operatorname{\wh {\LL}}}
\newcommand{\whl}{\operatorname{\wh {\it l}}}
\newcommand{\Spec}{\operatorname{Spec}}
\newcommand{\mult}{\operatorname{mult}}
\newcommand{\stab}{\operatorname{stab}}
\newcommand{\chern}{\operatorname{c}}
\renewcommand{\i}{\operatorname{i}}
\newcommand{\ri}{\operatorname{ri}}
\newcommand{\DV}{\operatorname{DV}}
\newcommand{\e}{\operatorname{e}}
\newcommand{\h}{\operatorname{h}}
\newcommand{\Hom}{\operatorname{Hom}}
\newcommand{\can}{{\operatorname{can}}}
\newcommand{\ord}{{\operatorname{ord}}}
\newcommand{\val}{{\operatorname{val}}}
\newcommand{\an}{{\operatorname{an}}}
\def \A{\mathbb{A}}
\def \C{\mathbb{C}}
\def \F{\mathbb{F}}
\def \G{\mathbb{G}}
\def \K{\mathbb{K}}
\def \N{\mathbb{N}}
\def \P{\mathbb{P}}
\def \Q{\mathbb{Q}}
\def \R{\mathbb{R}}
\def \SS{\mathbb{S}}
\def \T{\mathbb{T}}
\def \Z{\mathbb{Z}}
\def\cC {{\mathcal C}}
\def\cD {{\mathcal D}}
\def\cE {{\mathcal E}}
\def\cK {{\mathcal K}}
\def\cO {{\mathcal O}}
\def\cP {{\mathcal P}}
\def\cX {{\mathcal X}}
\def\cY {{\mathcal Y}}
\newcommand{\bfalpha}{{\boldsymbol{\alpha}}}
\numberwithin{equation}{section}
\theoremstyle{definition}
\newtheorem{defn}[equation]{Definition}
\newtheorem{rem}[equation]{Remark}
\newtheorem{exmpl}[equation]{Example}
\theoremstyle{plain}
\newtheorem{lem}[equation]{Lemma}
\newtheorem{prop}[equation]{Proposition}
\newtheorem{thm}[equation]{Theorem}
\newtheorem{cor}[equation]{Corollary}
\newtheorem{prop-def}[equation]{Proposition-Definition}
\newtheorem{ques}[equation]{Question}
\newtheorem{introthm}{Theorem}
\begin{document}

\title[Arithmetic positivity on toric varieties]{Arithmetic positivity on toric varieties}

\author[Burgos Gil]{Jos\'e Ignacio Burgos Gil}
\address{Instituto de Ciencias Matem\'aticas (CSIC-UAM-UCM-UCM3).
  Calle Nicol\'as Ca\-bre\-ra~15, Campus UAB, Cantoblanco, 28049 Madrid,
  Spain} 
\email{burgos@icmat.es}
\urladdr{\url{http://www.icmat.es/miembros/burgos}}
\author[Moriwaki]{Atsushi Moriwaki}
\address{Department of Mathematics,
Faculty of Science, Kyoto University.
606-8502 Kyoto, Japan
}
\email{moriwaki@math.kyoto-u.ac.jp}
\author[Philippon]{Patrice Philippon}
\address{Institut de Math{\'e}matiques de
Jussieu -- U.M.R. 7586 du CNRS, \'Equipe de Th\'eorie des Nombres.
Case 247, 4 place Jussieu, 75252 Paris cedex 05, France}
\email{patrice.philippon@imj-prg.fr}
\urladdr{\url{http://www.math.jussieu.fr/~pph}}
\author[Sombra]{Mart{\'\i}n~Sombra}
\address{ICREA \& Universitat de Barcelona, Departament d'{\`A}lgebra i Geometria.
Gran Via 585, 08007 Bar\-ce\-lo\-na, Spain}
\email{sombra@ub.edu}
\urladdr{\url{http://atlas.mat.ub.es/personals/sombra}}

\thanks{Burgos Gil was  partially supported by the MICINN
  research projects MTM2009-14163-C02-01 and MTM2010-17389.
  Moriwaki was partially 
  supported by the Ministry of Education, Science, Sports and Culture,
  Grant-in-Aid for Scientific Research (A) 22244003, 2011.  Philippon
  was partially supported by the CNRS international projects for
  scientific cooperation (PICS) ``Properties of the heights of
  arithmetic varieties'' and ``G\'
  eom\' etrie diophantienne et calcul formel'' and the ANR research project ``Hauteur,
  modularit\'e, transcendance''. 
  Sombra was partially
  supported by the MICINN
  research project MTM2009-14163-C02-01 and the MINECO research project
  MTM2012-38122-C03-02.}

\date{\today} \subjclass[2000]{Primary 14M25; Secondary
  14G40, 52A41.}  \keywords{Toric variety, global field, adelic vector
  space, metrized line bundle, arithmetic volume, Zariski
  decomposition, Fujita approximation}

\begin{abstract}
  We continue the study of the arithmetic geometry of toric varieties
  started in \cite{BurgosPhilipponSombra:agtvmmh}.  In this text, we
  study the positivity properties of metrized $\R$-divisors in the
  toric setting.  For a toric metrized $\R$-divisor, we give formulae
  for its arithmetic volume and its $\chi$-arithmetic volume, and we
  characterize when it is arithmetically ample, nef, big or
  pseudo-effective, in terms of combinatorial data.  As an
  application, we prove a higher-dimensional analogue of  Dirichlet's
  unit theorem for toric varieties, 
  we give a characterization for the existence of a Zariski
  decomposition of a toric metrized $\R$-divisor, and we prove a toric
  arithmetic Fujita approximation theorem.

\end{abstract}
\maketitle

\overfullrule=0.3mm
\vspace{-8mm}

\tableofcontents

\section*{Introduction}
\label{sec:introduction}

The study of the positivity properties of a divisor on an algebraic
variety is a central subject in algebraic geometry which has many
important results and applications. A modern account about this
subject can be found in the book~\cite{Lazarsfeld:posit_I}.

There are different notions of positivity for a divisor: it can be
ample, nef, big, or pseudo-effective. There are also numerical
invariants of a divisor related with positivity, like its degree and
its volume. The degree of a divisor is the top intersection product of
the divisor with itself, while the volume measures the asymptotic
growth of the space of global sections of the multiples of the
divisor. When the divisor is ample both invariants agree but, in
general, the volume is always nonnegative while the degree can be
  either positive, zero or negative. The different notions of
positivity, the degree and the volume of a divisor are invariant under
numerical equivalence and can be extended to $\R$-divisors.

Analogues of these invariants and notions of positivity have been
introduced in Arakelov geometry and their study has interesting
applications to Diophantine geometry.  In \cite{Zhang:plbas}, Zhang
started the study of a theory of arithmetic ampleness and proved an
arithmetic Nakai-Moishezon criterion. Using this theory, he obtained
the so-called ``theorem on successive algebraic minima'' relating the minimal
height of points in the variety which are Zariski dense with the
height of the variety itself. This result plays an important role in
Diophantine geometry, for example in the context of the Manin-Mumford conjecture, the Bomogolov and Lehmer questions, and the Zilber-Pink conjecture.

In \cite{Moriwaki:cav}, Moriwaki introduced the notion of arithmetic
volume measuring the growth of the number of small sections of the
multiples of an arithmetic divisor, and proved the continuity of
this invariant.  In \cite{Yuan:blbav}, Yuan studied the basic
properties of big arithmetic divisors, that is, arithmetic divisors
with strictly positive arithmetic volumes. As an application, he
obtained a very general criterion for the equidistribution of points
of small height in an arithmetic variety, generalizing the previous
equidistribution theorems of Szpiro--Ullmo--Zhang
\cite{SzpiroUllmoZhang:epp}, Bilu \cite{Bilu:ldspat},
Favre--Rivera-Letelier \cite{FavreRivera:eqpph}, Baker--Rumely
\cite{BakerRumely:esp} and Chambert-Loir \cite{ChambertLoir:meeB}.

Given these results, it is interesting to dispose of effective
criteria to test the positivity properties of an arithmetic divisor
and to be able to calculate the associated invariants in concrete
situations.  In this direction, Moriwaki has studied a family of
twisted Fubini-Study metrics on the hyperplane divisor of
$\P^{n}_{\Z}$. He has obtained criteria for when these metrics define an
ample, nef, big or pseudo-effective arithmetic divisor, he also computed the
arithmetic volumes of such divisors, proved a Fujita approximation
theorem and gave a criteria for when a special type of Zariski
decomposition exists \cite{Moriwaki:bad}.  The present text
generalizes these results to arbitrary toric (adelically) metrized
$\R$-divisors on toric varieties.

Toric varieties can be described in combinatorial terms and many of
their algebro-geometric properties can be translated in terms of this
description.  A proper toric variety~$X$ of dimension $n$ over an arbitrary
field is given by a complete fan $\Sigma$ on a vector space
$N_{\R}\simeq \R^{n}$.
A toric $\R$-divisor~$D$ on~$X$ defines a function $\Psi_{D}\colon
N_{\R}\to \R$ which is linear in each cone of the fan $\Sigma$.
Following the usual terminology in toric geometry, we call such
function a ``virtual support function''. One can also associate to $D$
a polytope $\Delta_{D}$ in the dual space $M_{\R}:=N_{\R}^{\vee}$.
There is a ``toric dictionary'' that translates algebro-geometric
properties of the pair $(X,D)$ into combinatorial properties of the
fan, the virtual support function and the polytope.  For instance, the
set of points of $\Delta _{D}$ in the dual lattice $M=N^{\vee}$ gives
a basis for the space of global sections of $\mathcal{O}(D)$, and the
volume of $D$ can be computed as $\vol(X,D)= n!\vol_{M} (\Delta _{D})$, where
$\vol_{M}$ is the Haar measure on $M_{\R}$ which gives covolume 1 to
the lattice $M$. The divisor $D$ is nef if and only if the function
$\Psi_{D}$ is concave and, if this is the case, its degree coincides
with its volume.

In \cite{BurgosPhilipponSombra:agtvmmh}, this toric dictionary has
been extended to cover some of the arithmetic properties of toric
varieties. Let $\K$ be a global field, that is, a field which is
either a number field or the field of rational functions of a
projective curve, and suppose that $X$ is a toric variety over
$\K$. Then, a toric metrized divisor $\ov D$ on $X$ defines a
family of functions $\psi_{\ov D,v}\colon N_{\R}\to \R$ indexed by
the set of places $\mathfrak M_{\K}$ of $\K$ such that $\psi_{\ov
  D,v}=\Psi_{D}$ for all $v$ except for a finite number of them.  By
duality, this family of functions gives rise to a family of concave
functions on the polytope $\vartheta_{\ov D, v}\colon \Delta_{D}\to
\R$, called the local roof functions of $\ov D$.  The global roof function
$\vartheta_{\ov D}$ of a metrized divisor $\ov D$ is defined as a
weighted sum over all places of these local roof functions.  The
convex subset $\Theta_{\ov D}\subset \Delta_{D}$ is the set
of points where $\vartheta_{\ov D}$ takes nonnegative values.

These objects encode many of the Arakelov-theoretical properties of
$\ov D$.  For instance, a metrized divisor $\ov D$ is called
semipositive in \cite{BurgosPhilipponSombra:agtvmmh} if its metrics
are uniform limits of semipositive  smooth (respectively algebraic) 
metrics for the Archimedean (respectively non-Archimedean)
places. Then, $\ov D$ is semipositive if and only if all the functions
$\psi_{\ov D,v}$ are concave. If this is the case, the height of $X$
with respect to $\ov D$ can be computed as
\begin{displaymath}
  \h_{\ov D}(X)=(n+1)!\int_{\Delta _{D}}\vartheta _{\ov D}\dd\vol_{M}.
\end{displaymath}
Moreover, we show in the present text how these notions and results extend to toric metrized $\R$-divisors. We refer the reader to \S \ref{sec:toric-metrized-r} for the precise definitions and more details.

The arithmetic volume and the $\chi$-arithmetic volume of a metrized
$\R$-divisor~$\ov D$ measure respectively the growth of the number of small
sections of the multiples of $\ov D$ and the growth of the Euler
characteristic of the space of sections of the multiples of $\ov D$
(Definition \ref{def:3}).  Our first main result in this text are
formulae for the arithmetic volume and the $\chi$-arithmetic volume of
a toric metrized $\R$-divisor (Theorem~\ref{thm:1}).

\begin{introthm}\label{thm:6} Let  $X$ be a proper toric variety  over
  $\K$ and  $\ov D$ a toric  
  metrized $\R$-divisor on $X$.  Then  
  the arithmetic volume of $\ov D$ is given by
  \begin{displaymath}
    \avol(X,\ov D)= 
    (n+1)!\int _{\Delta_{D} }\max(0,\vartheta_{\ov D}) \dd\vol_{M}=
    (n+1)!\int _{\Theta_{\ov D} }\vartheta_{\ov D} \dd\vol_{M},
  \end{displaymath}
while its $\chi$-arithmetic volume is given by
  \begin{displaymath}
    \wh \vol_{\chi}(X,\ov D)=(n+1)!\int _{\Delta_{D} }\vartheta_{\ov
      D} \dd\vol_{M}.
  \end{displaymath}
\end{introthm}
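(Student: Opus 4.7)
My plan is to exploit the defining feature of toric line bundles: for $k\geq 1$, the global sections $H^{0}(X, \mathcal{O}(kD))$ decompose canonically as $\bigoplus_{m \in kM \cap k\Delta_{D}} \K\cdot \chi^{m}$, indexed by lattice points of the dilated polytope $k\Delta_{D}$, and the characters $\chi^{m}$ are ``adelically orthogonal'' with respect to the sup-norms induced by a toric metric. This reduces the computation of both arithmetic volumes to a lattice-point counting problem on $\Delta_D$, weighted by the roof function.

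More concretely, the first step is to compute the local sup-norm of each character: for $m \in k\Delta_{D} \cap M$, the section $\chi^{m}$ of $\mathcal{O}(kD)$ has sup-norm $\|\chi^{m}\|_{v,\sup} = e^{-k\vartheta_{\ov D, v}(m/k)}$ at each place $v$, as a direct consequence of the description of the local roof function as the Legendre--Fenchel dual of $\Psi_{D}-\psi_{\ov D, v}$. Combining over all places with the appropriate weights $n_{v}$ yields that the adelic size of $\chi^{m}$ is governed by $\vartheta_{\ov D}(m/k)$. Using the asymptotic orthogonality of characters together with an adelic geometry-of-numbers argument along the lines of Minkowski--Bombieri--Vaaler, the count of small sections of $k \ov D$ should satisfy
\[
h^{0}_{\le 1}(X, k\ov D) \sim \sum_{m \in kM \cap k\Delta_{D}} \max\bigl(0,\, k\vartheta_{\ov D}(m/k)\bigr),
\]
while the logarithmic covolume entering the $\chi$-arithmetic volume should satisfy
\[
\wh{\chi}\bigl(H^{0}(X, kD), \|\cdot\|\bigr) \sim \sum_{m \in kM \cap k\Delta_{D}} k\vartheta_{\ov D}(m/k).
\]
Dividing by $k^{n+1}/(n+1)!$ and sending $k \to \infty$, the two Riemann sums converge to the claimed integrals over $\Delta_{D}$, giving both formulas at once. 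The equality of the two expressions for the arithmetic volume then follows from the definition of $\Theta_{\ov D}$ as the locus where $\vartheta_{\ov D}\geq 0$.

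The main obstacle is making rigorous the asymptotic decomposition of the number of small sections as a product of per-character contributions: at an Archimedean place the sup-norm of a linear combination $\sum a_{m}\chi^{m}$ is \emph{not} exactly $\max_{m} |a_{m}|_{v}\|\chi^{m}\|_{v,\sup}$, and controlling the distortion requires a quantitative version of the orthogonality of monomials under toric metrics, combined with Minkowski's second theorem or a Siegel-type lemma, to guarantee that the discrepancies contribute only lower-order terms in $k$. A secondary obstacle is the passage from toric metrized $\Q$-divisors to arbitrary $\R$-divisors, for which I would approximate $\ov D$ by sequences of rational toric divisors and invoke the continuity of the arithmetic volumes (Moriwaki, Yuan) together with the continuity of the roof function established in \cite{BurgosPhilipponSombra:agtvmmh}; one must verify that these continuity statements hold in the non-semipositive setting covered by the theorem, since the proof of the Fujita approximation is a consequence of the theorem rather than an input. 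Finally, the argument must be carried out uniformly over global fields, substituting the appropriate product formula and adelic geometry-of-numbers input in both the number field and function field cases.
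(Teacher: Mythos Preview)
Your overall strategy matches the paper's: decompose $\LL(\ell D)$ along the monomial basis $\{s_{m}\}_{m\in\ell\Delta_{D}\cap M}$, compute $\|s_{m}\|_{v,\sup}$ in terms of the local roof functions, reduce to a weighted lattice-point count, and take the Riemann-sum limit. Two points, however, deserve correction.

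First, what you identify as the main obstacle is in fact not an obstacle. The paper proves (Proposition~\ref{prop:4}) that the monomial basis is \emph{exactly} orthogonal with respect to $\|\cdot\|_{v,\sup}$ at \emph{every} place, Archimedean included. The trick is an averaging argument: pick a primitive $(2r+1)$-th root of unity $\omega$ with $\Delta_{D}\subset[-r,r]^{n}$, and for each $a\in[-r,r]^{n}\cap N$ consider the translates $t_{a}^{*}s$ by the torsion point $t_{a}=(\omega^{a_{1}},\dots,\omega^{a_{n}})\in\SS_{v}^{\an}$. Since the metric is $\SS_{v}^{\an}$-invariant these translates all have the same sup-norm, and inverting the character matrix recovers each $\gamma_{m}s_{m}$ as a combination of the $t_{a}^{*}s$ with coefficients of absolute value $(2r+1)^{-n}$, giving $\|\gamma_{m}s_{m}\|_{v,\sup}\le\|s\|_{v,\sup}$. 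With exact orthogonality in hand there is no need for Minkowski's second theorem or Bombieri--Vaaler: in the non-Archimedean case the sup-norm is literally $\max_{m}\|\gamma_{m}s_{m}\|_{v,\sup}$, and in the Archimedean case the distortion is bounded by $\#(\ell\Delta_{D}\cap M)$, contributing only $O(\ell^{n}\log\ell)$ after taking logarithms. The counting then reduces place-by-place to one-dimensional $\mathfrak{M}_{\K}$-divisors, handled by Lemma~\ref{lemm:2}; the Euler characteristic is handled by the orthogonal-basis estimate of Proposition~\ref{prop:7}.

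Second, your proposed passage from $\Q$-divisors to $\R$-divisors via continuity of the arithmetic volume is both unnecessary and risky. The paper's argument works directly for $\R$-divisors throughout: the orthogonality lemma, the sup-norm computation, and the lattice-point estimates make no rationality assumption on $D$. By contrast, the continuity results you cite are proved under hypotheses (smooth or model metrics, number-field base) not covered by the present adelic, non-semipositive framework, and establishing continuity here independently of Theorem~\ref{thm:1} would be a separate project. Drop this step.
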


The height is defined for DSP metrized $\R$-divisors, that is,
differences of semipositive ones, whereas  the arithmetic
volume and the $\chi$-arithmetic volume are defined for any
metrized $\R$-divisor.  Observe that when $\ov D$ is semipositive, the
$\chi$-arithmetic volume agrees with the height and
the formula for $\wh \vol_{\chi}(X,\ov D)$ coincides with that for
$\h_{\ov D}(X)$. Nevertheless, we show that the notion of height no
longer coincides with that of $\chi$-arithmetic volume for arbitrary DSP
$\R$-divisors (Examples \ref{exm:4} and~\ref{exm:5}).
 
Formulae similar to those in Theorem \ref{thm:6} were previously
obtained by Yuan \cite{Yuan:valb}, \cite{Yuan:valbII} and by Boucksom
and Chen \cite{BoucksomChen:Obfs} for a metrized divisor $\ov D$ on a
variety over a number field, under the hypothesis that the underlying
divisor~$D$ is big and that the metrics at the non-Archimedean places
are given by a global projective model over the ring of integers of
the number field.  These formulae are expressed in terms of the
integral of a function over the Okounkov body of the divisor. The
Okounkov body is a generalization to arbitrary divisors
of the polytope that appears in toric geometry. Indeed, the functions
introduced by Yuan and by Boucksom and Chen can be seen as a
generalization to arbitrary  metrized divisors (under the
aforementioned hypothesis) of the global roof function.

Our second main result is the following characterization of 
positive toric metrized $\R$-divisors (Theorem \ref{thm:2}). 

\begin{introthm}\label{thm:7}
Let  $X$ be a proper toric variety  over $\K$ and  $\ov D$ a toric 
  metrized $\R$-divisor on $X$.  Then
  \begin{enumerate}
  \item $\ov D$  is ample if
    and only if  $\Psi_{D} $ is strictly
    concave on $\Sigma $, the
    function $\psi _{\ov D,v}$ is concave for all $v\in \mathfrak
    M_{\K}$,  and $ \vartheta _{\ov D}(x)>0$ for all $x\in \Delta_{D}$;

  \item $\ov D$ is nef if and only if $\psi _{\ov D,v}$ is concave for
    all $v\in \mathfrak M_{\K}$ and $ \vartheta _{\ov D}(x)\ge0$ for
    all $x\in \Delta_{D}$;
  \item $\ov D$ is big if and only if
    $\dim(\Delta_{D})=n $ and there exists $x\in\Delta_{D}$ such that
    $\vartheta _{\ov D}(x)>0$; 
  \item $\ov D$ is pseudo-effective if and only if 
    there exists $x\in\Delta_{D}$ such that $\vartheta
    _{\ov D}(x)\ge0$; 
\item $\ov D$ is effective if and only if $0\in
  \Delta_{D}$ and $\vartheta_{\ov D,v}(0)\ge0$ for all $v\in
  \mathfrak{M}_{\K}$.  
  \end{enumerate}
\end{introthm}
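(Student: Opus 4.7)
The plan is to treat the five characterizations in an order that builds on earlier ones: item (5) by a direct computation, items (3) and (4) from the arithmetic volume formula of Theorem~\ref{thm:6}, and items (1) and (2) via an arithmetic Nakai--Moishezon criterion together with a limiting argument. Starting with (5), a metrized $\R$-divisor is effective iff it admits a global section whose associated Weil divisor dominates $-D$ and whose norm is $\le 1$ everywhere. On a toric variety, the torus-invariant global sections of $\cO(D)$ are in bijection with $\Delta_{D}\cap M$, and every effective section is dominated by one such toric section. The canonical section $s_{0}$ corresponding to $0\in M$ is global iff $0\in\Delta_{D}$, and its supremum norm at the place $v$ equals $\exp(-\vartheta_{\ov D,v}(0))$ by the definition of the local roof function as the Legendre--Fenchel dual of $-\psi_{\ov D,v}$. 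Effectivity is therefore equivalent to $0\in\Delta_{D}$ together with $\vartheta_{\ov D,v}(0)\ge0$ for every $v$, which is (5).

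For (3), Theorem~\ref{thm:6} gives $\avol(X,\ov D)=(n+1)!\int_{\Delta_{D}}\max(0,\vartheta_{\ov D})\dd\vol_{M}$. Each local roof function is concave as a Legendre--Fenchel dual, hence so is the weighted sum $\vartheta_{\ov D}$. The integral is strictly positive iff $\Delta_{D}$ has full dimension $n$ and $\vartheta_{\ov D}$ is strictly positive at some point (by concavity and continuity on the interior, such positivity propagates to a set of positive measure), matching the definition of bigness $\avol(X,\ov D)>0$. Item (4) then follows by approximation: fixing an ample toric $\ov A$ (which exists whenever $X$ is projective), pseudo-effectivity of $\ov D$ is equivalent to $\ov D+\epsilon\ov A$ being big for every $\epsilon>0$. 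Substituting into the characterization from (3) and letting $\epsilon\to 0^{+}$, using that $\vartheta_{\ov D+\epsilon\ov A}=\vartheta_{\ov D}+\epsilon\vartheta_{\ov A}$ on $\Delta_{D}$, yields the condition $\sup_{\Delta_{D}}\vartheta_{\ov D}\ge 0$.

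For (1), I invoke an arithmetic Nakai--Moishezon criterion in the spirit of Zhang~\cite{Zhang:plbas}: a semipositive toric metrized $\R$-divisor $\ov D$ with $D$ ample is arithmetically ample iff $\h_{\ov D|_{Y}}(Y)>0$ for every horizontal closed subvariety $Y\subset X$. Geometric ampleness of $D$ translates to strict concavity of $\Psi_{D}$ on $\Sigma$, and semipositivity translates to concavity of each $\psi_{\ov D,v}$, both part of the toric dictionary already in place. Applying the integral formula for the height to a torus-invariant subvariety $X_{\sigma}$ shows that $\h_{\ov D}(X_{\sigma})>0$ is equivalent to $\int_{F_{\sigma}}\vartheta_{\ov D}\dd\vol_{M}>0$, where $F_{\sigma}$ is the face of $\Delta_{D}$ dual to $\sigma$. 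Since the minimum of the concave function $\vartheta_{\ov D}$ on the compact polytope $\Delta_{D}$ is attained at a vertex, taking all $\sigma$ together yields the equivalence with $\vartheta_{\ov D}(x)>0$ for every $x\in\Delta_{D}$. The reduction of heights of arbitrary horizontal subvarieties to the toric invariant ones can be carried out using the equivariance of $\ov D$ under the compact torus action together with a lower semi-continuity argument. Finally, (2) follows from (1) by passing to the limit: $\ov D$ is nef iff $\ov D+\epsilon\ov A$ is ample for every $\epsilon>0$ and some fixed ample toric $\ov A$, and the limit converts strict concavity and strict positivity into concavity and nonnegativity.

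The main obstacle is the reduction in (1) from arbitrary horizontal subvarieties to torus-invariant ones in the Nakai--Moishezon step; this can be approached either via a toric version of the theorem on successive minima or by bounding the height of an arbitrary $Y$ in terms of $\vartheta_{\ov D}$ restricted to the face of $\Delta_{D}$ corresponding to the generic torus orbit meeting $Y$. Everything else in the proof is convex-geometric manipulation of $\vartheta_{\ov D}$ via Legendre--Fenchel duality, combined with standard limiting arguments for positivity cones.
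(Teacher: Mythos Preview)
Your treatment of (5) and (3) is fine and matches the paper. The serious problems are in (4), (1), and (2).

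For (4), the formula $\vartheta_{\ov D+\epsilon\ov A}=\vartheta_{\ov D}+\epsilon\vartheta_{\ov A}$ is false. What is additive is the family of metric functions, $\psi_{\ov D+\epsilon\ov A,v}=\psi_{\ov D,v}+\epsilon\psi_{\ov A,v}$, but the roof function is the Legendre--Fenchel dual of this, and the dual of a sum is the sup-convolution, not the sum: one only has the inequality $(\psi_{\ov D,v}+\epsilon\psi_{\ov A,v})^{\vee}\ge \psi_{\ov D,v}^{\vee}\boxplus(\epsilon\psi_{\ov A,v})^{\vee}$ (Lemma~\ref{lemm:4}). Even the domain $\Delta_{D+\epsilon A}$ need not be $\Delta_{D}$; it is $\stab(\Psi_{D}+\epsilon\Psi_{A})$, which only contains $\Delta_{D}+\epsilon\Delta_{A}$. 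The paper handles the pseudo-effective case by a genuine limiting argument in convex analysis (Lemma~\ref{lemm:6}): it tracks a sequence of points $x_{\ell}$ where the perturbed roof function is positive, extracts a limit $x_{\infty}\in\bigcap_{\ell}\Delta_{\ell}=\Delta_{D}$, and uses the monotonicity and continuity statements of Lemma~\ref{lemm:6} to pass the inequality to the limit. Your shortcut does not work without this.

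For (1), you invoke an arithmetic Nakai--Moishezon criterion as input, but no such criterion is available in the generality of adelic metrized $\R$-divisors on proper (not necessarily projective) varieties; the paper explicitly raises this as an open question (Remark~\ref{rem:16}). In fact the logic runs the other way: the toric Nakai--Moishezon statement (Corollary~\ref{cor:9}) is deduced \emph{from} Theorem~\ref{thm:2}. The paper's proof of the ``$\vartheta_{\ov D}>0\Rightarrow$ ample'' direction is direct and constructive: given any $\ov M$, it uses the strict positivity of $\vartheta_{\ov D}$ at the vertices of $\Delta_{D}$ together with Proposition~\ref{prop:2} to produce, for $\ell\gg0$, explicit small $\R$-sections generating $\ov M+\ell\ov D$. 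The forward direction uses Proposition~\ref{prop:6}\eqref{item:75} only at $\T$-invariant points, which suffices by concavity of $\vartheta_{\ov D}$.

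For (2), the same additivity error undermines the ``nef as limit of ample'' argument, and moreover the paper does not establish that nef is the closure of ample in this adelic setting. The paper instead verifies Definition~\ref{def:22}\eqref{item:42} directly: for any closed point $p$, it moves to the orbit containing $p$, restricts $\ov D$ via a suitable $s_{m_{\sigma}}$, and bounds $\h_{\ov D}(p)$ below by $\vartheta_{\ov D}(m)\ge 0$ using Proposition~\ref{prop:3}\eqref{item:13}.
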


There are several questions one can ask about the relations between the different
notions of positivity. An effective metrized
$\R$-divisor is  also pseudo-effective and, conversely, one can ask if any
pseudo-effective metrized $\R$-divisor is linearly equivalent to an
effective one. As Moriwaki pointed out, this question can be seen as
an extension of Dirichlet's unit theorem to metrized $\R$-divisors on
 varieties \cite{Moriwaki:tdutav}.

 Another relevant question is whether one can approximate
 pseudo-effective or big metrized $\R$-divisors by nef or ample
 ones. A Zariski decomposition of a big metrized $\R$-divisor
 $\ov D$ amounts to its decomposition, up to a birational
 transformation, into an effective part and a nef part which has the
 same arithmetic volume as $\ov D$.  Such a decomposition always
 exists when the underlying variety is a curve over a number field
 \cite{Moriwaki:zdas} but it does not always exist for 
 varieties of higher dimension \cite{Moriwaki:tdutav}.

 In the absence of a Zariski decomposition, one can ask for the
 existence of an arithmetic Fujita approximation.  The existence of an
 arithmetic Fujita approximation was proved by Yuan \cite{Yuan:valb}
 and by Chen \cite{Chen:afa} for the case when $\K$ is a number field,
 $D$ is a divisor, the metrics at the infinite places are smooth, and
 those at the finite places come from a common projective model over
 $\cO_{\K}$.

 As a consequence of our characterization of the different notions of
 arithmetic positivity, we give a positive answer to the Dirichlet's
 unit theorem for toric varieties when the base field $\K$ is an
 $A$-field, that is, a number field or the function field of a curve
 over a finite field. We also give a criterion for when a toric
 Zariski decomposition exists and we prove a toric Fujita
 approximation theorem (Theorem~\ref{thm:4}).
 
\begin{introthm} \label{thm:8}
Let $X$ be a proper toric variety over $\K$ and $\ov D$ a toric metrized
$\R$-divisor on $X$. 
\begin{enumerate}
\item Assume that $\K$ is an $A$-field. Then $\ov D$
  is pseudo-effective if and only if there exists $a\in \Delta_{D}$
  and $\alpha\in \K^{\times}\otimes \R$ such that
\begin{displaymath}
  \ov D+ \wh\div(\alpha \chi^{a})\ge 0.
\end{displaymath}

\item \label{item:28} Assume that $\ov D$ is big. Then there
  exists a birational toric map $\varphi\colon X'\to X$ and toric metrized
  $\R$-divisors $\ov P$, $\ov E$ on $X'$ such that $\ov P$ is nef, $\ov E$ is effective,
\begin{displaymath}
 \varphi^{*}\ov D=\ov P+\ov E\quad\text{and}\quad \wh\vol(X',\ov P)= 
 \wh\vol(X,\ov D)
\end{displaymath}
if and only if $\Theta_{\ov D}$ is a quasi-rational polytope
(Definition \ref{def:13}).

\item Assume that $\ov D$ is big. Then, for every
  $\varepsilon>0$, there exists a birational toric map $\varphi\colon
  X'\to X$ and toric 
metrized $\R$-divisors $\ov A$, $\ov E$ on $X'$ such that $\ov A$ is
ample, $\ov E$ is effective,
\begin{displaymath}
 \varphi^{*}\ov D=\ov A+\ov E\quad\text{and}\quad \wh\vol(X',\ov A)\ge
 \wh\vol(X,\ov D)-\varepsilon. 
\end{displaymath}
\end{enumerate}
\end{introthm}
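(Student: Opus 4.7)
The plan is to derive all three statements from Theorems~\ref{thm:6} and~\ref{thm:7}, exploiting the combinatorial translation of positivity and volumes into roof-function data on the polytope $\Delta_D$. \textbf{For part~(1)}, the direction from right to left is immediate: an effective metrized $\R$-divisor is pseudo-effective by parts~(5) and~(4) of Theorem~\ref{thm:7}, and replacing $\ov D$ by $\ov D+\wh\div(\alpha\chi^a)$ only translates the combinatorial data. For the converse, Theorem~\ref{thm:7}(4) gives $a\in\Delta_D$ with $\vartheta_{\ov D}(a)=\sum_v\lambda_v\vartheta_{\ov D,v}(a)\ge 0$, the sum having only finitely many nonzero terms. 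Setting $\ov D':=\ov D+\wh\div(\alpha\chi^a)$, its polytope is $\Delta_D-a\ni 0$ and its local roof function at~$0$ equals $\vartheta_{\ov D,v}(a)-\log|\alpha|_v$. By Theorem~\ref{thm:7}(5), effectivity of $\ov D'$ amounts to $\log|\alpha|_v\le\vartheta_{\ov D,v}(a)$ for every~$v$, together with the product formula $\sum_v\lambda_v\log|\alpha|_v=0$. Distributing the slack $\vartheta_{\ov D}(a)\ge 0$ among the finitely many active places produces admissible targets $(c_v)_v$, and the Dirichlet unit theorem for $A$-fields---which identifies the image of $\K^\times\otimes\R$ in $\bigoplus_v\R$ with the hyperplane $\sum_v\lambda_v x_v=0$---realizes them as $\log|\alpha|_v$ for some $\alpha\in\K^\times\otimes\R$.

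\textbf{For part~(2)}, if $\Theta_{\ov D}$ is quasi-rational, refine the fan $\Sigma$ to a toric blowup $\varphi\colon X'\to X$ whose fan is compatible with the normal fans of both $\Delta_D$ and $\Theta_{\ov D}$, so that $\Theta_{\ov D}$ becomes the polytope $\Delta_P$ of a toric $\R$-Cartier divisor~$P$ on~$X'$. Endow~$P$ with toric metrics whose local roof functions match $\vartheta_{\ov D,v}|_{\Theta_{\ov D}}$, after a concave-envelope adjustment at the finitely many places where this is needed. Then each $\psi_{\ov P,v}$ is concave, so $\ov P$ is nef by Theorem~\ref{thm:7}(2), and Theorem~\ref{thm:6} together with the identity $\Theta_{\ov D}=\{\vartheta_{\ov D}\ge 0\}$ yields $\wh\vol(X',\ov P)=(n+1)!\int_{\Theta_{\ov D}}\vartheta_{\ov D}\,d\vol_M=\wh\vol(X,\ov D)$. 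The residual $\ov E:=\varphi^*\ov D-\ov P$ is then effective by Theorem~\ref{thm:7}(5), using Legendre duality to evaluate its local roof functions at~$0$. Conversely, a decomposition with $\wh\vol(\ov P)=\wh\vol(\ov D)$ forces $\Delta_P$ to coincide with $\Theta_{\ov D}$ up to a rational translate, so $\Theta_{\ov D}$ must be quasi-rational.

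\textbf{For part~(3)}, assume $\ov D$ is big. By Theorem~\ref{thm:7}(3) the polytope $\Theta_{\ov D}$ has nonempty interior on which $\vartheta_{\ov D}$ is strictly positive. Approximate $\Theta_{\ov D}$ from inside by a rational polytope $Q_\varepsilon$ with $\vartheta_{\ov D}|_{Q_\varepsilon}>0$ and $(n+1)!\int_{Q_\varepsilon}\vartheta_{\ov D}\,d\vol_M\ge\wh\vol(X,\ov D)-\varepsilon$. Mimicking the construction of part~(2) with $Q_\varepsilon$ in place of $\Theta_{\ov D}$, and tightening the local data so that each $\psi_{\ov A,v}$ is strictly concave and each $\vartheta_{\ov A,v}$ is strictly positive on $Q_\varepsilon$, yields a birational toric morphism $\varphi\colon X'\to X$ and a toric metrized $\R$-divisor $\ov A$ on~$X'$ that is ample by Theorem~\ref{thm:7}(1), with the required volume estimate and with $\ov E:=\varphi^*\ov D-\ov A$ effective.

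The main obstacle lies in part~(2): constructing metrics on~$P$ so that $\ov P$ is genuinely nef, matches the volume of~$\ov D$ on the nose, and leaves an effective residual $\ov E$. This requires a careful interplay between the Legendre duality underlying $\psi_{\ov D,v}\leftrightarrow\vartheta_{\ov D,v}$, the concave-envelope adjustments at places where $\vartheta_{\ov D,v}$ fails to be concave on $\Theta_{\ov D}$, and the quasi-rationality hypothesis which ensures that the construction descends to an actual toric Cartier $\R$-divisor on some refinement of~$\Sigma$.
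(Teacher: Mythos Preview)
Your overall strategy matches the paper's (Theorem~\ref{thm:4} there), but several details are off and one of them undermines part~(3) as you have written it.

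\textbf{Part~(2), forward direction.} The local roof functions $\vartheta_{\ov D,v}=\lambda_v\psi_{\ov D,v}^{\vee}$ are Legendre--Fenchel duals and hence \emph{always} concave on $\Delta_D$, regardless of whether $\psi_{\ov D,v}$ is concave. No ``concave-envelope adjustment'' is needed; the paper simply restricts each $\vartheta_{\ov D,v}$ to $\Theta_{\ov D}$ and invokes the bijection of Proposition~\ref{prop:23}\eqref{item:73}. To see that $\ov E=\varphi^*\ov D-\ov P$ is effective, the clean route is Proposition~\ref{prop:14} (equivalence of $\ov D\ge\ov P$ with $\Delta_P\subset\Delta_D$ and $\vartheta_{\ov P,v}\le\vartheta_{\ov D,v}$ on $\Delta_P$, valid because $\ov P$ is semipositive); your appeal to Theorem~\ref{thm:7}(5) would require controlling $\vartheta_{\ov E,v}(0)$ for a non-semipositive $\ov E$, which is awkward.

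\textbf{Part~(2), converse.} Your argument is too brisk and the phrase ``up to a rational translate'' is wrong. What is needed: from $\ov P$ nef and $\varphi^*\ov D-\ov P\ge 0$ one gets, via Proposition~\ref{prop:14}, the chain $\vartheta_{\ov D,v}\ge\vartheta_{\ov P,v}$ on $\Delta_P$ for every $v$, hence $\vartheta_{\ov D}\ge\vartheta_{\ov P}\ge 0$ on $\Delta_P$, so $\Delta_P\subset\Theta_{\ov D}$. The volume equality then forces $\Delta_P=\Theta_{\ov D}$ exactly (using bigness to ensure $\vartheta_{\ov D}>0$ on the interior of $\Theta_{\ov D}$), with no translate.

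\textbf{Part~(3).} You have misremembered the ampleness criterion. Theorem~\ref{thm:7}(1) requires that the \emph{recession} $\Psi_A$ be strictly concave on the fan, that each $\psi_{\ov A,v}$ be (merely) concave, and that the \emph{global} roof function $\vartheta_{\ov A}$ be strictly positive. You instead propose to arrange each local $\psi_{\ov A,v}$ strictly concave and each local $\vartheta_{\ov A,v}$ strictly positive. The latter is generally impossible without destroying the volume estimate: at individual places $\vartheta_{\ov D,v}$ can be very negative on $\Theta_{\ov D}$ (only the weighted sum is nonnegative), so perturbing each local piece to be positive may change $\vartheta_{\ov A}$ by a large amount. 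The paper avoids this: it takes a quasi-rational polytope $\Delta'$ in the interior of $\Theta_{\ov D}$ with small volume defect, sets $\vartheta_{\ov A,v}=\vartheta_{\ov D,v}|_{\Delta'}$ unchanged, and gets $\vartheta_{\ov A}=\vartheta_{\ov D}|_{\Delta'}>0$ by concavity; strict concavity of $\Psi_A$ on the refined fan comes from choosing $\Delta'$ so that its normal fan refines $\Sigma$.

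\textbf{Part~(1).} Your argument is correct, though the weights in the product formula and in $\vartheta_{\ov D}$ are $n_v$, not $\lambda_v$.
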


A stronger version of the Zariski decomposition asks that the nef part
is maximal in a precise sense (Definition \ref{def:2}).  In the toric
setting, one can ask for the existence of a decomposition which is
maximal among all toric ones (Definition \ref{def:27}). Indeed, we
show that the criterion in Theorem \ref{thm:8}\eqref{item:28} extends
to pseudo-effective metrized $\R$-divisors if one uses this stronger
version of the Zariski decomposition (Theorem~\ref{thm:4}\eqref{item:55}).

A related question in whether the existence of a non-necessarily toric
Zariski decomposition of a big toric metrized $\R$-divisor is
equivalent to the existence of a toric one. In \S
\ref{sec:zariski-decomp} we give a partial affirmative answer to this
question restricting to toric varieties defined over $\Q$ and
arithmetic $\R$-divisors  (Theorem \ref{thm:5}). 
Roughly speaking, arithmetic $\R$-divisors correspond to metrized
$\R$-divisors whose metrics at the non-Archimedean places are given by
a single integral model and they are closer to the more traditional 
language of arithmetic varieties, see Example \ref{exm:3} for the
precise definition and more details.

\medskip

Since Arakelov geometry can be developed in different
frameworks, we discuss briefly the one in the present text. We
have chosen to use the adelic language introduced in this context 
by Zhang in \cite{Zhang:_small} instead of the
language of arithmetic varieties of Gillet and Soul\'e as
in~\cite{GilletSoule:ait}. This point of view is more general and flexible, and
allows to treat the cases of number fields and of function fields in a
uniform way.  Moreover, since general proper toric varieties are not
necessarily projective nor smooth, we do not add any hypothesis of
projectivity or smoothness.  Also, we work in the framework of
$\R$-divisors since it is the appropriate one for
Dirichlet's unit theorem on varieties, and it is also suitable for
discussing the Zariski decomposition and Fujita approximation problems.

There are several different definitions in the literature for the
various notions of arithmetic positivity, depending on the used
framework. Adding the appropriate technical hypothesis, these
different definitions are equivalent but, in general, they are not.
Due to our choice of working framework, we had to adjust these
pre-existing definitions (Definition
\ref{def:22}).
A systematic study of the definitions we propose here, including the openness of the ample and the big cones, the closedness of the nef and the
pseudo-effective cones, and the continuity of the arithmetic volume,
falls outside the scope of the present text. Nevertheless, our results
show that these definitions behave as expected  in the toric
case.

\vspace{3mm}
\noindent{\bf Acknowledgements.} 
We thank S\'ebastien Boucksom, Carlo Gasbarri and Juan Carlos Naranjo
for many useful discussions and pointers to the literature.  {We
  are also grateful to François Balla\"y for pointing us a mistake in
  the published version of Corollary~6.2.}

Part of this work was done while the authors met at the Universitat de
Barcelona, the Instituto de Ciencias Matem\'aticas (Madrid), the
Institut de Math\'ematiques de Jussieu (Paris) and the Servei
d'Urg\`encies of the
Hospital del Mar de Barcelona. We thank these
institutions for their hospitality. 

\section{Global fields}
 \label{sec:global-fields-adelic}
 Throughout this text, by a \emph{valued field} we mean a field $K$
 together with an absolute value $|\cdot|$ that is either Archimedean
 or associated to a nontrivial discrete valuation.  If $(K,|\cdot|)$
 is a valued field, then we set
\begin{math}
  K^{\circ}=\{x\in K\mid |x|\le 1\}.
\end{math}
When $|\cdot|$ is non-Archimedean, the unit ball $K^{\circ}$ is a ring.

Let $\K$ be a field and $\mathfrak{M}$ a
  family of absolute values on $\K$ with positive real weights. For
  each $v\in \mathfrak{M}$ we denote by $|\cdot|_v$ the
  corresponding absolute value, by $n_v\in \R_{>0}$ the weight, and by
  $\K_{v}$ the completion of $\K$ with respect to $|\cdot|_{v}$. 
We also set
\begin{displaymath}
    \lambda_{v}=
\begin{cases}
1& \text{ if } |\cdot|_{v} \text{ is Archimedean},\\
-\log|\varpi_{v}|_{v} & \text{ otherwise,}
\end{cases}
\end{displaymath}
where $\varpi_{v}$ is a uniformizer of the maximal ideal of $\K^{\circ}_{v}$.

 We say
  that $(\K,\mathfrak{M})$ is an \emph{adelic field} if the
  following conditions hold:
  \begin{enumerate}
  \item for each $v\in \mathfrak{M}$, the completion $\K_{v}$ is a
    valued field;
  \item for each $\alpha\in
  \K^{\times}$, 
  $|\alpha|_v=1$ except for a finite number of $v$.
  \end{enumerate}
  The adelic field $(\K, \mathfrak{M})$ is said to satisfy the
  \emph{product formula} if, for 
  all $\alpha\in 
  \K^{\times}$, 
    $$
    \sum_{v\in \mathfrak{M}}n_v\log|\alpha|_v=0.
    $$
    Let $\F$ be a finite extension of $\K$. For each $v\in
    \mathfrak{M}$, let $\mathfrak{N}_{v}$ be the set of pairs
    $w=(|\cdot|_{w},n_{w})$ where $|\cdot|_{w}$ is an absolute value
    on $\F$ that extends $|\cdot|_v$ and
\begin{equation}
\label{eq:13}
n_w=\frac{[\F_w:\K_v]}{[\F:\K]}n_v. 
\end{equation}
If $\mathfrak{N}= \sqcup_{v} \mathfrak{N}_{v}$, then
$(\F,\mathfrak{N})$ is an adelic field. For $w\in \mathfrak{N}$, we
note $w\mid v$ if $|\cdot|_{w}$ extends $|\cdot|_{v}$. By
\cite[Proposition 4.3]{Lang:FDG}, if
$(\K,\mathfrak{M})$ 
satisfies the product formula and $\F$ is separable over $\K$, then
$(\F,\mathfrak{N})$ satisfies the product formula too. 

\begin{exmpl}\label{exm:1}
  Let $\mathfrak{M}_\Q$ be the set formed by the Archimedean and the $p$-adic
  absolute values of $\Q$, normalized in the standard way, with all
  weights equal to $1$.  Then $(\Q,\mathfrak{M}_{\Q})$ is an adelic
  field that satisfies the product formula. We identify
  $\mathfrak{M}_{\Q}$ with the set $\{\infty\}\cup \{\text{primes of }
  \Z\}$.  For a number field $\K$, the construction above gives an
  adelic field $(\K,\mathfrak{M}_{\K})$ which satisfies the product
  formula too.
\end{exmpl}

\begin{exmpl}\label{exm:2} 
  Consider the function field ${\rm K}(C)$ of a smooth projective curve $C$
  over a field $k$. For each closed point $v\in C$ and $\alpha \in
  {\rm K}(C)^{\times}$, we denote by $\ord_{v}(\alpha )$ the order of
  $\alpha $ in the discrete valuation ring $\mathcal{O}_{C,v}$.  We
  associate to each $v$ the absolute value and weight given by
  $$
  |\alpha|_v=c_{k}^{-\ord_v(\alpha)}, \quad n_{v}= [k(v):k]
  $$ 
  with
\begin{displaymath}
    c_{k}=
    \begin{cases}
      \#k\ & \text{if } \#k < \infty,\\
      e&\text{if } \#k = \infty.
    \end{cases}
  \end{displaymath}
  Let  $\mathfrak{M}_{{\rm K}(C)}$ denote this set of absolute values and
  weights. The pair $({\rm K}(C),\mathfrak{M}_{{\rm K}(C)})$ is an
  adelic field which satisfies the product formula, since the degree
  of a principal divisor is zero.  In this case, 
  $\lambda_{v}=\log(c_{k})$ for all $v$.

  More generally, let $\K$ be a finite
  extension of ${\rm K}(C)$. Applying the construction in~\eqref{eq:13}, we obtain an
  adelic field $(\K,\mathfrak{M}_{\K/{\rm K}(C)})$.
  In this geometric setting, this construction can be formulated as
  follows.  Let $\pi \colon B \to C$ be a dominant morphism of smooth
  projective curves over $k$ such that the finite extension
  ${\rm K}(C)\hookrightarrow \K$ identifies with $\pi ^{\ast}\colon
  {\rm K}(C)\hookrightarrow {\rm K}(B)$.  For a closed point $v\in C$, the
  absolute values of~$\K$ that extend $|\cdot|_{v}$ are in bijection
  with the closed points of the fibre of $v$. For each closed point
  $w\in \pi^{-1}(v)$, the corresponding absolute value and weight are
  given, for $\beta\in {\rm K}(B)^{\times}$, by
  \begin{equation} \label{eq:14}
    |\beta|_w=c_{k}^{-\frac{\ord_w(\beta)}{e_{w}}}, 
     \quad n_{w}= \frac{e_{w}[k(w):k]}{[{\rm K}(B):{\rm K}(C)]},
  \end{equation}
  where $e_{w}$ is the ramification index of $w$ over $v$. We have
  \begin{equation}\label{eq:3}
    \lambda _{w}=\log(c_{k})/e_{w}.
  \end{equation}
  Observe that this structure of adelic field on $\K$ depends on the
  extension and not just on the field ${\rm K}(B)$. 
  For instance,  $({\rm K}(C), \mathfrak{M}_{{\rm K}(C)})$ corresponds to the
  identity map $C\to C$, but another finite morphism
  $\pi\colon C\to C$ may give a different structure of adelic field on
  ${\rm K}(C)$. The projection formula for the map $\pi $ implies that, for each
  $v\in \mathfrak{M}_{{\rm K}(C)}$, the equation
  \begin{displaymath}
    [\K:{\rm K}(C)]=\sum_{\substack{w\in \mathfrak{M}_{\K/{\rm K}(C)}\\v\mid w}}[\K_{w}:{\rm K}(C)_{v}]
  \end{displaymath}
  is satisfied. From which we obtain that $(\K,\mathfrak{M}_{\K/{\rm K}(C)})$
  also satisfies the product formula.
\end{exmpl}

\begin{defn} \label{def:6} A \emph{global field} is a finite extension
  $\K/\Q$ or $\K/{\rm K}(C)$ for a smooth projective curve $C$ over a field
  $k$, with the structure of adelic field given in
  examples~\ref{exm:1} or \ref{exm:2}, respectively. To lighten the
  notation, we will usually denote those global fields as $\K$ 
  although, in the function field case, the structure of adelic field
  depends on the particular extension. 
  In both cases, we will denote by  $\mathfrak{M}_{\K}$ the set of
  places and by $d_{\K}$ the degree of the extension.
\end{defn}

Note that our use of the terminology ``global field'' is slightly more
general than the usual one since, in
the function field case, we allow an arbitrary base field. The price to pay for this
greater generality is that, in the function field case, we can not use
the nice topology of the adeles. Instead, we will have to use geometric
arguments. Following Weil \cite{Weil:bnt}, we will use the terminology
``{\it A}-field'' for the global fields which are either a number field or
a finitely generated extension of degree of
transcendence 1 of a finite field. 

We recall the notion of
$\mathfrak{M}_{\K}$-divisor, which can be also found in \cite[Chapter~V]{Lang:ANT} for  the case of number fields.

\begin{defn} \label{def:16} Let $\K$ be a global field. An
  \emph{$\mathfrak{M}_{\K}$-divisor} is a collection
  $\mathfrak{c}=\{\mathfrak{c}_{v}\}_{v\in \mathfrak{M}_{\K}}$ of
  positive real numbers such that $\mathfrak{c}_{v}=1$ for all but
  finite number of~$v$ and such that $\mathfrak{c}_{v}$ belongs to the
  image of $|\cdot|_{v}$ for all non-Archimedean $v$. We set
\begin{displaymath}
\whL(\mathfrak{c})=\{ \gamma\in \K \ |\ |\gamma|_{v}\le
\mathfrak{c}_{v} \text{ for all } v\}
 \end{displaymath}
and 
\begin{displaymath}
\whl(\mathfrak{c})=
  \begin{cases}
    \log (\# \whL(\mathfrak{c}))&\text{ if $\K$ is a number field,} \\
    \log(c_{k}) \dim_{k} (\whL(\mathfrak{c})) &\text{ if $\K$ is a
      function field.}
  \end{cases}
\end{displaymath}
We also set $\wh \deg(\mathfrak{c})= \sum_{v} d_{\K}\, n_{v}\log(\mathfrak{c}_{v})$. 
\end{defn}

\begin{exmpl} \label{exm:10} Let $\K={\rm K}(B)/{\rm K}(C)$ be an extension of
  function fields viewed as a global field as in Example
  \ref{exm:2}. Let $\mathfrak{c}=(\mathfrak{c}_{v})_{v}$ be an
  $\mathfrak{M}_{\K}$-divisor. For each closed point $v\in B$, the
  condition that $\mathfrak{c}_{v}$ belongs to the image of the
  absolute value $|\cdot|_{v}$ is equivalent to $\log(\mathfrak
  c_{v})/\lambda _{v}\in \Z$. Consider the Weil divisor  on
  $B$ given by 
  \begin{displaymath}
  D(\mathfrak{c})=\sum_{v} d_{v} [v]  
  \end{displaymath}
  with $d_{v}=\log(\mathfrak c_{v})/\lambda _{v}$. Let
  $\LL(D(\mathfrak{c}))$ be the associated linear series and $\lL(D(\mathfrak{c}))$ its dimension. Then
\begin{equation}
  \label{eq:28}
 \whL(\mathfrak{c})=\LL(D(\mathfrak{c})),\quad  
\whl(\mathfrak{c})=\log(c_{k})\lL(D(\mathfrak{c})),\quad \wh
\deg(\mathfrak{c}) 
=\log(c_{k})\deg(D(\mathfrak{c})).
\end{equation}
These equalities follow easily from the definitions. For instance, we
prove the last equation with the help of \eqref{eq:14} and
\eqref{eq:3}:
\begin{multline*}
 \wh
\deg(\mathfrak{c}) 
=\sum_{v}d_{\K}n_{v}\log(\mathfrak{c}_{v})=
\sum_{v}d_{\K}\frac{e_{v}[k(v):k]}{d_{\K}}\lambda_{v} 
d_{v}\\=\log(c_{k})\sum_{v}d_{v}[k(v):k]=
\log(c_{k})\deg(D(\mathfrak{c})). 
\end{multline*}
Thus, 
an $\mathfrak{M}_{\K}$-divisor can be identified with a Weil divisor on the
curve $B$. This identification respects their linear series and the
associated invariants, up to the multiplicative constant
$\log(c_{k})$. 
\end{exmpl}

\begin{lem}\label{lemm:2}
  Let $\K$ be a global field. Then, there exists $\kappa>0$ depending
  only on~$\K$ such that,
  for any $\mathfrak{M}_{\K}$-divisor $\mathfrak{c}$, 
    \begin{displaymath}
\big|\whl(\mathfrak{c})-
    \max(0,\wh \deg(\mathfrak{c}))\big|\leq \kappa.
    \end{displaymath}
\end{lem}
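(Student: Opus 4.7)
The plan is to treat the two cases of Definition 2.6 separately: in the function field case reduce to Riemann–Roch on a curve via Example 2.10, and in the number field case reduce to the geometry of numbers.

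For the function field case, assume $\K=K(B)/K(C)$. By Example \ref{exm:10}, any $\mathfrak{M}_{\K}$-divisor $\mathfrak{c}$ corresponds to a Weil divisor $D=D(\mathfrak{c})$ on $B$, and the identities in \eqref{eq:28} reduce the lemma to the statement
\begin{displaymath}
|\lL(D) - \max(0,\deg(D))| \le g_{B},
\end{displaymath}
with $g_{B}$ the genus of $B$. If $\deg(D)<0$, then $\lL(D)=0$ and both sides vanish. If $\deg(D)\ge 0$, Riemann–Roch on $B$ gives $\lL(D)=\deg(D)-g_{B}+1+\lL(K_{B}-D)$; the lower bound $\lL(K_{B}-D)\ge 0$ and the upper bound $\lL(K_{B}-D)\le g_{B}$ (the latter obtained, when $\lL(D)>0$, by replacing $D$ by a linearly equivalent effective divisor and using the injection $H^{0}(B,K_{B}-D)\hookrightarrow H^{0}(B,K_{B})$; when $\lL(D)=0$ the inequality is trivial) yield the claim, and $\kappa=g_{B}\log(c_{k})$ depends only on the extension defining $\K$.

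For the number field case, write $\whL(\mathfrak{c})=\mathfrak{a}\cap B_{\infty}$, where $\mathfrak{a}=\{\gamma\in \K\mid |\gamma|_{v}\le\mathfrak{c}_{v}\ \forall v\nmid\infty\}$ is a fractional ideal of $\mathcal{O}_{\K}$ and $B_{\infty}\subset \K\otimes_{\Q}\R\cong \R^{r_{1}}\times\C^{r_{2}}$ is the symmetric convex body cut out by the archimedean conditions. Under the Minkowski embedding, $\mathfrak{a}$ is a lattice whose covolume is $|\Delta_{\K}|^{1/2}N(\mathfrak{a})$ up to a standard factor, and the volume of $B_{\infty}$ is a universal constant times $\prod_{v\mid\infty}\mathfrak{c}_{v}^{n_{v}}$; a direct computation shows that $\log(\mathrm{vol}(B_{\infty})/\mathrm{covol}(\mathfrak{a}))=\wh\deg(\mathfrak{c})+O(1)$, where the $O(1)$ depends only on $\K$. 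I would then apply Minkowski's theorems: the upper bound in the second theorem yields $\#(\mathfrak{a}\cap B_{\infty})\le c_{1}\max(1,\exp(\wh\deg(\mathfrak{c})))$, and conversely, if $\wh\deg(\mathfrak{c})<-c_{2}$ then $B_{\infty}$ contains no nonzero point of $\mathfrak{a}$, giving $\whl(\mathfrak{c})=0$. For $\wh\deg(\mathfrak{c})\ge -c_{2}$, a standard successive-minima argument (or rescaling $\mathfrak{c}$ by an idele of controlled content to reduce to the case $\wh\deg(\mathfrak{c})=O(1)$ and then back) yields the matching lower bound $\whl(\mathfrak{c})\ge \max(0,\wh\deg(\mathfrak{c}))-c_{3}$.

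The main obstacle is the number field case near the boundary $\wh\deg(\mathfrak{c})\approx 0$, where Minkowski's upper bound on the number of lattice points in a convex body only controls $\whl(\mathfrak{c})$ up to an additive constant depending on the shape of $B_{\infty}$. The key point is that $B_{\infty}$ has bounded \emph{eccentricity} uniformly in $\mathfrak{c}$ only up to multiplication by a diagonal archimedean idele; this is exactly the freedom provided by the product formula-compatible class of $\mathfrak{M}_{\K}$-divisors, and exploiting it (together with finiteness of the class group of $\K$) allows one to absorb the shape into a constant depending solely on $\K$.
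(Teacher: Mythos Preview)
Your function field argument is essentially the paper's: both reduce via Example~\ref{exm:10} to Riemann--Roch on $B$. The paper uses the cruder direct bound $\lL(D)\le\deg(D)+1$ instead of estimating $\lL(K_{B}-D)$, obtaining $\kappa=(g(B)-1)\log(c_{k})$, but the idea is identical.

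For the number field case, the paper takes a different and much shorter route: it simply identifies $\whl(\mathfrak{c})=\log\lambda(\mathfrak{c})$ and $\wh\deg(\mathfrak{c})=\log\|\mathfrak{c}\|_{\K}$ in Lang's notation and invokes \cite[Chapter~V, Theorem~0]{Lang:ANT}, which is precisely the desired inequality. What you have sketched is, in effect, an outline of the proof of that theorem, and the ingredients you list (Minkowski embedding, geometry of numbers, finiteness of the class group, rescaling by an idele of controlled content) are the correct ones. Be aware, though, that the upper bound on $\#(\mathfrak{a}\cap B_{\infty})$ is not literally ``Minkowski's second theorem''; one needs a lattice-point counting lemma comparing $\#(L\cap B)$ to $\mathrm{vol}(B)/\mathrm{covol}(L)$ with the shape of $B$ controlled, and your last paragraph correctly locates where that work lies. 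There is no real gap, but if you want a self-contained argument you must actually carry out that counting, whereas the paper bypasses the issue entirely by citing Lang.
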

\begin{proof} If $\K$ is a number field, then, in the
notation of  \cite[page 101]{Lang:ANT}, 
\begin{displaymath}
  \whl(\mathfrak{c})=\log(\lambda
  (\mathfrak{c})), \quad \wh \deg(\mathfrak{c})= \log\|\mathfrak{c}\|_{\K} 
\end{displaymath}
and \cite[Chapter V, Theorem 0]{Lang:ANT} gives the result. 

Hence, we only have to consider the case when $\K$ is the function
field of a smooth projective curve $B$. Let $D=D(\mathfrak{c})$ be the
Weil divisor associated to the $\mathfrak{M}_{\K}$-divisor
$\mathfrak{c}$ as in Example \ref{exm:10}.  If $\deg(D)<0$, then
$\lL(D)=0$ and so $\whl(\mathfrak{c})=\max(0,\wh \deg(\mathfrak{c}))$
by \eqref{eq:28}, and the lemma is proved in this case. When
$\deg(D)\ge 0$, we have that  $\lL(D)\le \deg(D)+1$ and, by the
Riemann-Roch theorem \cite[Theorem 3.17]{Liu:agac}, 
\begin{displaymath}
  \lL(D)\ge \deg(D)-(g(B)-1)
\end{displaymath}
where $g(B)$ is the genus of $B$. Hence, $ |\whl(\mathfrak{c})-
    \max(0,\wh \deg(\mathfrak{c}))|\leq (g(B)-1)\log(c_k)$, thus
    proving the lemma.
\end{proof}

\begin{lem}\label{lemm:10}
  Let $\K$ be a global field and $S\subset  \mathfrak{M}_{\K}$ a finite
  subset. Let $\{\gamma _{v}\}_{v\in \mathfrak{M}_{\K}}$ be a collection of
  positive real numbers such that  $\gamma _{v}=1$ for all except a
  finite number of $v$, and such that
  \begin{displaymath}
    \prod_{v\in \mathfrak{M}_{\K}}\gamma_{v} ^{n_{v}}<1.
  \end{displaymath}
  Let $0<\eta\le 1$ be a real number.  Then, there is an integer
  $\ell_{0}\ge 1$ such that, for all~$\ell\ge \ell_{0}$, there exists
  $\alpha \in \K^{\times}$ with $|\alpha |_{v}\gamma_{v}^{\ell}\le 1$
  for all $v\in \mathfrak{M}_{\K}$ and $|\alpha
  |_{v}\gamma_{v}^{\ell}< \eta$ for all $v\in S$.
\end{lem}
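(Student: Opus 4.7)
My plan is to reduce the statement to an application of Lemma~\ref{lemm:2} (the Riemann--Roch-type bound for $\mathfrak{M}_{\K}$-divisors). Concretely, I will construct a family of $\mathfrak{M}_{\K}$-divisors $\mathfrak{c}^{(\ell)}$, depending essentially linearly on $\ell$, such that every nonzero element of $\whL(\mathfrak{c}^{(\ell)})$ satisfies the required inequalities, and such that $\whl(\mathfrak{c}^{(\ell)})>0$ for $\ell$ large enough (thereby guaranteeing the existence of a nonzero $\alpha\in\whL(\mathfrak{c}^{(\ell)})$).

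To build $\mathfrak{c}^{(\ell)}$, first fix $\eta'\in(0,\eta)$, for instance $\eta'=\eta/2$; the factor $\eta'<\eta$ is the buffer needed to convert the ``$\leq$'' built into the definition of $\whL$ into the strict inequality ``$<\eta$'' at places of $S$. Let $T=\{v:\gamma_{v}\neq 1\}$, which is finite by hypothesis, so that $S\cup T$ is finite. At each place $v$ set the target bound $\beta_{v}^{(\ell)}=\gamma_{v}^{-\ell}$ if $v\notin S$, and $\beta_{v}^{(\ell)}=\eta'\gamma_{v}^{-\ell}$ if $v\in S$. Take $\mathfrak{c}_{v}^{(\ell)}=\beta_{v}^{(\ell)}$ at Archimedean places; at non-Archimedean places of $S\cup T$, take $\mathfrak{c}_{v}^{(\ell)}$ to be the largest element of $|\K_{v}^{\times}|$ that is $\leq\beta_{v}^{(\ell)}$ (which exists and differs from $\beta_{v}^{(\ell)}$ by at most a factor $e^{-\lambda_{v}}$); at non-Archimedean places outside $S\cup T$, set $\mathfrak{c}_{v}^{(\ell)}=1$. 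This yields an $\mathfrak{M}_{\K}$-divisor.

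The key estimate is
\[
\wh\deg(\mathfrak{c}^{(\ell)})=-\ell\,d_{\K}\sum_{v\in\mathfrak{M}_{\K}}n_{v}\log\gamma_{v}+O(1),
\]
where the error term is a constant coming from the non-Archimedean rounding and the factor $\eta'$ at the places in $S$. The hypothesis $\prod_{v}\gamma_{v}^{n_{v}}<1$ is equivalent to $\sum_{v}n_{v}\log\gamma_{v}<0$, so $\wh\deg(\mathfrak{c}^{(\ell)})\to+\infty$ linearly in $\ell$. Hence for $\ell\geq\ell_{0}$ large enough, $\wh\deg(\mathfrak{c}^{(\ell)})>\kappa$ with $\kappa$ as in Lemma~\ref{lemm:2}, which yields $\whl(\mathfrak{c}^{(\ell)})>0$ and therefore a nonzero $\alpha\in\whL(\mathfrak{c}^{(\ell)})$. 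By construction $|\alpha|_{v}\gamma_{v}^{\ell}\leq\mathfrak{c}_{v}^{(\ell)}\gamma_{v}^{\ell}\leq 1$ for all $v$, while at places $v\in S$ one has $|\alpha|_{v}\gamma_{v}^{\ell}\leq\eta'<\eta$.

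The only genuine technical point is the non-Archimedean rounding step: $\mathfrak{c}_{v}^{(\ell)}$ must lie in $|\K_{v}^{\times}|$, so one loses at most $\lambda_{v}$ per place in the finite set $S\cup T$. Since this total loss is a bounded constant and gets absorbed into the linear growth of the main term, I do not anticipate any serious obstacle beyond this bookkeeping and the uniformity of the constant $\kappa$, which is exactly what Lemma~\ref{lemm:2} provides.
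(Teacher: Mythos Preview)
Your proposal is correct and takes essentially the same approach as the paper: construct an $\mathfrak{M}_{\K}$-divisor whose $\wh\deg$ grows linearly in $\ell$ (by the hypothesis $\sum_v n_v\log\gamma_v<0$) and apply Lemma~\ref{lemm:2} to obtain a nonzero small element. The only cosmetic difference is in how the strict inequality at $S$ and the non-Archimedean rounding are handled---the paper introduces auxiliary $\wt\gamma_v>\gamma_v$ and uses the growing gap $\ell(\log\wt\gamma_v-\log\gamma_v)$ to fit a value in $|\K_v^{\times}|$, whereas you take a direct floor and a fixed buffer $\eta'<\eta$; both devices serve the same purpose and produce the same $O(1)$ error absorbed by the linear main term.
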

\begin{proof}
Consider the finite set of places
  \begin{displaymath}
    S'=S\cup \{v\in \mathfrak{M}_{\K}\mid \gamma _{v}\not = 1\}\cup
    \{v\in \mathfrak{M}_{\K}\mid v \text{ is Archimedean}\}.
  \end{displaymath}
 For each $v\in S'$ we pick $\wt
  \gamma_{v}> \gamma_{v}$ in such a way that
  $\prod_{v}\wt \gamma_{v}^{n_{v}}<1$ while, for  $v\in
  \mathfrak{M}_{\K}\setminus S'$, we set $ \wt
  \gamma_{v}=\gamma_{v}=1$. Then for each  $v\in S'$  we can find
  an integer $\ell_{v}$ such that
  \begin{equation}
    \label{eq:5}
   \ell_{v}(\log (\wt \gamma _{v})-\log (\gamma _{v}))>\lambda_{v}-\log(\eta).
  \end{equation}
Choose $\ell_{0}\ge \max_{v\in S'}{\ell_{v}}$ satisfying also that
  \begin{displaymath}
    -\ell_{0}\sum_{v}d_{\K}\,n_{v}\log(\wt \gamma _{v})>\kappa,
  \end{displaymath}
  where $\kappa$ is the constant in Lemma \ref{lemm:2}.  Let $\ell\ge
  \ell_{0}$.  By \eqref{eq:5}, for each $v\in S'$ the interval $[-\ell
  \log (\wt \gamma _{v}),\log (\eta)-\ell\log (\gamma _{v})]$ has
  length bigger than $\lambda _{v}$. Therefore, we can choose
  $x_{v}\in \K_{v}^{\times}$ with
  \begin{displaymath}
    \frac{1}{\wt\gamma ^{\ell}_{v}}\le |x_{v}|_{v}<\frac{\eta}{\gamma
      ^{\ell}_{v}}. 
  \end{displaymath}
 Set $\mathfrak{c}_{v}=|x_{v}|_{v}$ for $v\in S'$ and
  $\mathfrak{c}_{v}=1$ for $v\not \in S'$. Then
  $\mathfrak{c}=(\mathfrak{c}_{v})_{v}$ is an $\mathfrak{M}_{\K}$-divisor with
  \begin{displaymath}
    \wh \deg (\mathfrak{c})=\sum_{v}d_{\K}\,n_{v}\log|x_{v}|_{v}\ge -\ell \sum_{v}d_{\K}\,n_{v}\log(\wt \gamma _{v})>\kappa.
  \end{displaymath}
 Lemma \ref{lemm:2} then implies that 
  $L(\mathfrak{c})\ne\{0\}$. Hence, we can find $\alpha \in
  \K^{\times}$ such that $|\alpha |_{v}\le \mathfrak{c}_{v}\le \gamma
  _{v}^{-\ell}$ for all $v\in \mathfrak{M}_{\K}$ and
  \begin{math}
    |\alpha |_{v}\le \mathfrak{c}_{v}< \eta \gamma _{v}^{-\ell}\end{math}
  for $v\in S'$, proving the result.
\end{proof}

\section{Adelic vector spaces}
\label{sec:adelic-vector-spaces}

Let $(K,|\cdot|)$ be a valued field and $V$ a vector space over
$K$. By a 
\emph{norm} on $V$ we will mean a norm in the usual sense in the
Archimedean case and a norm satisfying the ultrametric inequality in
the non-Archimedean case.
Let $(V,\|\cdot\|)$ be a normed vector space over $K$.  
If $E\subset K$ and $F\subset V$ we write
\begin{displaymath}
  |E|=\{|\alpha |\mid \alpha \in E\}\subset \R_{\ge 0}, \quad
  \|F\|=\{\|x \|\mid x \in F\}\subset \R_{\ge 0}.
\end{displaymath}
Let 
\begin{math}
  V^{\circ}=\{x\in V\mid \|x\|\le
  1\}
\end{math} be the \emph{unit ball} of $V$.  When $K$ is non-Archimedean,
$V^{\circ}$ is a $K^{\circ}$-module.

\begin{exmpl} \label{exm:12} Let $K$ be a valued field and $r\ge0$. We
  can give a structure of normed vector space to $K^{r}$ by
  considering, if $|\cdot|$ is Archimedean, the Euclidean norm and,
  if $|\cdot|$ is non-Archimedean, the $\ell^{\infty}$-norm. In
  precise terms, for $x=(x_{1},\dots,x_{r})\in K^{r}$,
\begin{displaymath}
 \Vert x\Vert= \begin{cases}
 \big(\sum_{i=1}^{r}|x_{i}|^{2}\big)^{1/2}  & \text{ if }
 |\cdot|\text{ is Archimedean},\\
\max_{i}|x_{i}|_{v} & \text{ otherwise}.
  \end{cases}
\end{displaymath}
This choice of norm gives the \emph{standard} structure of normed
vector space on $K^{r}$.
\end{exmpl}

We recall the notion of orthogonality of a basis in a normed
vector space.

\begin{defn} \label{def:10}
  Let $K$ be a valued field and $V$ a normed vector space over $K$. A
  set of vectors 
  $\{b_{1},\dots,b_{r}\}$ of $V$ is \emph{orthogonal} if, for all
  $\gamma  _{1},\dots,\gamma _{r}\in K$,
  \begin{displaymath}
    \left\| \sum_{i=1}^{r}\gamma _{i}b_{i}
    \right\|\ge \max_{i}\|\gamma _{i}b_{i}\|.
  \end{displaymath}
  An orthogonal set of vectors $\{b_{1},\dots,b_{r}\}$ is 
  \emph{orthonormal} if $\|b_{i}\|=1$ for all $i$.
\end{defn}

For an $r$-dimensional normed vector space, the presence of an
orthogonal basis allows to compare its unit ball with an ellipsoid in
the standard normed vector space $K^{r}$.

\begin{lem} \label{lemm:1} Let $(K,|\cdot|)$ be a valued field and
  $(V,\|\cdot\|)$ a normed vector space over $K$ of finite dimension
  $r$. Suppose that there is an orthogonal basis
  $b=\{b_{1},\dots,b_{r}\}$ of $V$ and let $\phi_{b}\colon V\to K^{r}$
  be the induced isomorphism.
 \begin{enumerate}
  \item \label{item:11} Suppose that $|\cdot|$ is Archimedean and consider the
    ellipsoid 
    \begin{displaymath}
E=\Big\{ (\gamma_{1},\dots, \gamma_{r})\in K^{r} \ \Big|\ 
    \sum_{i=1}^{r}|\gamma_{i}|^{2} \Vert b_{i} \Vert^{2}\le 1\Big\}.
    \end{displaymath}
 Then ${r}^{-1}E\subset \phi_{b}(V^{\circ})\subset \sqrt{r}E$.
\item \label{item:12} Suppose that $|\cdot|$ is 
  associated to a nontrivial discrete valuation with uniformizer
  $\varpi$. Let  $\alpha_{i}\in K$ such that $\|b_{i}\|\le |\alpha_{i}|
  <|\varpi|^{-1}\|b_{i}\|$. Then the vectors
  $\{\alpha_{1}^{-1}b_{1},\dots,\alpha_{r}^{-1}b_{r}\}$ form a basis
  of $V^{\circ}$.
In particular, if we consider the set 
    \begin{displaymath}
E=\big\{ (\gamma_{1},\dots, \gamma_{r})\in K^{r} \ \big|\ 
    \max_{i}|\gamma_{i}| \, \Vert b_{i}\Vert\le 1\big\},
\end{displaymath}
then $\phi_{b}(V^{\circ})= E$. 
\end{enumerate}
\end{lem}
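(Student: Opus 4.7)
The plan is to write each vector $x \in V$ as $x = \sum_{i=1}^{r} \gamma_i b_i$ and to extract information about the unit ball $V^\circ$ from the orthogonality hypothesis, which gives the lower bound $\|x\| \ge \max_i |\gamma_i|\,\|b_i\|$. The two parts then proceed in parallel, with the non-Archimedean case profiting from the extra power of the ultrametric inequality.

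For part \eqref{item:11}, orthogonality directly gives $|\gamma_i|\,\|b_i\| \le \|x\|$ for all $i$, so $\|x\| \le 1$ forces $\sum_i |\gamma_i|^2\|b_i\|^2 \le r$, which proves $\phi_b(V^\circ) \subset \sqrt{r}\,E$. The reverse inclusion $r^{-1}E \subset \phi_b(V^\circ)$ will follow from the triangle inequality combined with Cauchy--Schwarz: if the coordinates of $x$ lie in $r^{-1}E$, then
\[
\|x\| \le \sum_{i=1}^{r} |\gamma_i|\,\|b_i\| \le \sqrt{r}\,\Big(\sum_{i=1}^{r} |\gamma_i|^2\|b_i\|^2\Big)^{1/2} \le \sqrt{r}\cdot r^{-1} = r^{-1/2}\le 1.
\]

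For part \eqref{item:12}, the ultrametric inequality upgrades the orthogonality bound to the exact identity $\|x\| = \max_i |\gamma_i|\,\|b_i\|$, from which $\phi_b(V^\circ) = E$ is immediate. For the basis claim, the inequality $\|b_i\| \le |\alpha_i|$ puts each $\alpha_i^{-1}b_i$ into $V^\circ$, and linear independence over $K^\circ$ is inherited from linear independence over $K$. The core step is to show that these $r$ vectors generate $V^\circ$ as a $K^\circ$-module: given $x = \sum_i \gamma_i b_i \in V^\circ$, I would set $\beta_i = \gamma_i \alpha_i$ so that $x = \sum_i \beta_i (\alpha_i^{-1} b_i)$, and then verify that each $\beta_i$ lies in $K^\circ$.

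The main (though small) obstacle is precisely this verification. The bounds $|\gamma_i|\,\|b_i\| \le 1$ and $|\alpha_i| < |\varpi|^{-1}\|b_i\|$ only yield $|\beta_i| < |\varpi|^{-1}$ a priori, which does not directly say $|\beta_i| \le 1$. One must invoke the discreteness of the value group: since the nonzero values of $|\cdot|$ on $K$ are exactly the powers of $|\varpi|$, the only value strictly between $1$ and $|\varpi|^{-1}$ is none at all, so $|\beta_i| < |\varpi|^{-1}$ forces $|\beta_i| \le 1$. This rounding step is the sole reason for requiring the strict upper bound $|\alpha_i| < |\varpi|^{-1}\|b_i\|$ rather than a mere $\le$, and once it is in place the decomposition $x = \sum_i \beta_i (\alpha_i^{-1} b_i)$ finishes the proof.
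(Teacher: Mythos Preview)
Your proof is correct and follows essentially the same approach as the paper's: both directions in the Archimedean case use orthogonality for the outer inclusion and the triangle inequality for the inner one (you apply Cauchy--Schwarz where the paper uses the cruder bound $\|x\|\le r\max_i|\gamma_i|\|b_i\|$, but both reach the same conclusion), and in the non-Archimedean case the key step in both arguments is exactly the discreteness-of-the-value-group observation you spell out, namely that $|\beta_i|<|\varpi|^{-1}$ forces $|\beta_i|\le 1$.
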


\begin{proof}
We consider first the Archimedean case. On the one hand, let $x\in V^{\circ}$ and write
$x= \sum_{i}\gamma_{i}b_{i}$ with $\gamma_{i}\in K$. Then
$\max_{i}|\gamma_{i}|\|b_{i}\|\le \|x\|\le 1$, since the basis $b$ is
orthogonal. Hence $ \sum_{i}|\gamma_{i}|^{2}\|b_{i}\|^{2}\le r$,
which implies that $\phi_{b}(V^{\circ})\subset \sqrt{r}E$. On the
other hand, let $(\gamma_{1},\dots, \gamma_{r})\in {r}^{-1}E$ and
write $x=\sum_{i}\gamma_{i}b_{i}$ for the corresponding point of
$V$. We have 
$\|x\|\le r\max_{i}|\gamma_{i}|\|b_{i}\|\le
r (\sum_{i}|\gamma_{i}|^{2}\|b_{i}\|^{2})^{1/2}\le 1,
$
which implies that ${r}^{-1}E\subset V^{\circ}$.

Now consider the non-Archimedean case. Let $x=
\sum_{i}\gamma_{i}b_{i}\in V$. If $x\in V^{\circ}$, then
$$\max_{i}|\gamma_{i} | \, |\alpha_{i}|<
|\varpi|^{-1}\max_{i}|\gamma_{i}|\|b_{i}\|\le
|\varpi|^{-1}\Big\|\sum_{i}\gamma_{i}b_{i}\Big\|
\le|\varpi|^{-1}.
$$ 
Since the first inequality is strict, this implies that
$\max_{i}|\gamma_{i} | \, |\alpha_{i}|\le 1$, hence
$\gamma_{i} \alpha_{i}\in K^{\circ}$ and so
$V^{\circ}\subset\sum_{i}K^{\circ}\alpha_{i}^{-1}b_{i}$.  Conversely,
let $x\in \sum_{i}K^{\circ}\alpha_{i}^{-1}b_{i}$. Then $\|x\|\le
\max_{i}|\alpha^{-1}_{i}|\|b_{i}\| \le 1$, which proves the reverse
inclusion.
\end{proof}

The notion of orthogonality on general normed vector spaces is
delicate in the Archimedean case. By contrast this notion behaves
nicely in the non-Archimedean case. For instance, if $(V,\|\cdot\|)$
is a normed 
vector space of dimension $r$ over a
non-Archimedean valued field and $\{b_{1},\dots,b_{r}\}$
is a orthogonal basis of $V$, then
\begin{equation}
  \label{eq:18}
\left\| \sum_{i=1}^{r}\gamma _{i}b_{i}
    \right\|= \max_{i}\|\gamma _{i}b_{i}\|.  
\end{equation}
Moreover,  orthogonal bases always exist in the non-Archimedean
case. 

\begin{prop} \label{prop:18}
  Let $(V,\|\cdot\|)$
be a normed 
vector space of dimension $r$ over a
non-Archimedean valued field $(K,|\cdot|)$. Then there exists an
orthogonal basis of $V$.
\end{prop}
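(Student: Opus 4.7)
\medskip\noindent\textbf{Proof proposal.} I would proceed by induction on $r=\dim V$; the case $r=1$ is immediate, since any nonzero $b_{1}\in V$ is orthogonal by homogeneity of the norm. For the inductive step, fix a hyperplane $W\subset V$ and let $b_{1},\dots,b_{r-1}$ be an orthogonal basis of $W$ supplied by induction. It then suffices to produce $b_{r}\in V\setminus W$ such that
\[
  \|b_{r}+w\|\ge\|b_{r}\|\quad\text{for every }w\in W,
\]
equivalently, $\|b_{r}\|=d(b_{r},W):=\inf_{w\in W}\|b_{r}+w\|$. Indeed, given such a $b_{r}$, orthogonality of $\{b_{1},\dots,b_{r}\}$ is a short verification: for $\gamma_{r}\ne 0$, set $w=\sum_{i<r}(\gamma_{i}/\gamma_{r})b_{i}\in W$ and use \eqref{eq:18} inside $W$ to compute $\|w\|$; the assumption $\|b_{r}+w\|\ge\|b_{r}\|$ combined with the ultrametric inequality forces $\|b_{r}+w\|=\max(\|b_{r}\|,\|w\|)$, and multiplying by $|\gamma_{r}|$ yields $\|\sum_{i}\gamma_{i}b_{i}\|=\max_{i}\|\gamma_{i}b_{i}\|$; the case $\gamma_{r}=0$ is immediate from the inductive orthogonality.

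The content of the proposition thus lies in the construction of $b_{r}$, for which I would fix an arbitrary $v\in V\setminus W$ and show that the infimum $d(v,W)$ is attained. Here the hypothesis that the valuation on $K$ is nontrivial and discrete, with uniformizer $\varpi$, enters essentially. The plan is: since $W$ carries an orthogonal basis, the description of its unit ball in Lemma~\ref{lemm:1}\eqref{item:12} shows that the topology on $W$ agrees with the product topology under the identification with $K^{r-1}$, so $W$ is closed in $V$ and $d(v,W)>0$. Next, the set $\|(Kv+W)\setminus\{0\}\|\subset\R_{>0}$ lies in a finite union of cosets of $|K^{\times}|$: on $W$ the equality \eqref{eq:18} yields the $r-1$ cosets $|K^{\times}|\|b_{i}\|$, and adjoining $v$ can add at most one further coset, governed by $d(v,W)$. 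Since $|K^{\times}|=|\varpi|^{\Z}$ is a discrete subgroup of $\R_{>0}$, each such coset meets the bounded interval $(0,\|v\|]$ in finitely many points; hence $d(v,W)$ is actually attained at some $w_{0}\in W$, and we may take $b_{r}=v+w_{0}$.

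The main obstacle I expect is making rigorous the claim that at most one new $|K^{\times}|$-coset of norms is introduced by adjoining $v$ to $W$ without creating a circular dependence with the proposition itself. A more robust alternative is to first construct, for arbitrarily small $\varepsilon>0$, a vector $b_{r}^{(\varepsilon)}\in V\setminus W$ satisfying the weakened bound $\|b_{r}^{(\varepsilon)}+w\|\ge(1-\varepsilon)\|b_{r}^{(\varepsilon)}\|$ for all $w\in W$, via a straightforward minimizing-sequence argument applied to $w\mapsto\|v+w\|$, and then exploit the gap $|K^{\times}|\cap(|\varpi|,1)=\emptyset$ in the value group to upgrade this weak orthogonality to genuine orthogonality as soon as $\varepsilon<1-|\varpi|$. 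It is precisely this final gap argument that requires the valuation to be discrete and constitutes the technical core of the result; without this discreteness, the infimum $d(v,W)$ need not be attained and the analogous statement can fail unless $K$ is complete.
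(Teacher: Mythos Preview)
Your approach is essentially the same as the paper's: induct on $r$, and at each step find a vector realizing the distance to the span of the previously constructed orthogonal vectors, using the discreteness of the value group to guarantee the infimum is attained. Your verification that a distance-minimizer is orthogonal to the subspace is correct.

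Your worry about circularity in variant~(a) is unfounded, and the paper dispels it cleanly before starting the induction: if nonzero vectors $x_{1},\dots,x_{k}$ have norms lying in pairwise distinct cosets of $|K^{\times}|$, then for any $\gamma_{i}\in K$ the nonzero values $\|\gamma_{i}x_{i}\|$ remain pairwise distinct, whence the strict ultrametric inequality gives $\|\sum_{i}\gamma_{i}x_{i}\|=\max_{i}\|\gamma_{i}x_{i}\|$; so such vectors are automatically orthogonal, hence linearly independent. This elementary observation (no induction, no proposition) shows immediately that $\|V\setminus\{0\}\|$ meets at most $r$ cosets of $|K^{\times}|$ and is therefore a discrete subset of $\R_{>0}$, so every infimum of norms over a nonzero set is a minimum. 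With this in hand your construction goes through directly and your backup variant~(b) becomes unnecessary, though it is a perfectly valid alternative.
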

\begin{proof}
  When $K$ is locally compact, the proof can be found in
  \cite[Proposition~II.3]{Weil:bnt}. For completeness we include a
  proof for an arbitrary discrete valuation. 

  Let $|K^{\times}|$ be the set of nonzero values of $K$. This is a
  discrete subgroup of $\R_{>0}$.  It can be verified that, if
  $x_1,\dots,x_k$ are nonzero vectors of $V$ such that the norms
  $\|x_{i}\|$ belong to different cosets with respect to
  $|K^{\times}|$, then these vectors are orthogonal. Since orthogonal
  vectors are linearly independent, we deduce that the set of norms
  $\|V\setminus \{0\}\|$ is a finite union of at most $r$ cosets of
  $|K^{\times}|$. Hence, this is a discrete subset of $ \R_{>0}$.
  
  Let now $b_{1},\dots,b_{r}$ be a basis of $V$. We construct
  a orthogonal basis inductively. Put $e_1=b_1$. For  $2\le k\le r-1$, assume that we have
  already chosen a set $e_1,\dots,e_k$ of orthogonal
  vectors that span 
  the same subspace as $b_1,\dots,b_k$. Choose a vector 
$e_{k+1}=b_{k+1}+\sum_{j=1}^{k}\alpha  _{j}e_{j}$ with the property that
\begin{equation}\label{eq:21}
  \|e_{k+1}\|=\inf\Big\{\Big \|b_{k+1}+\sum_{j=1}^{k}\alpha 
  _{j}e_{j}\Big\|\ \Big| \
  \alpha _{1},\dots,\alpha _{k}\in K\Big\}.
\end{equation}
This vector exists because of the discreteness of
$\|V\setminus\{0\}\|$. Condition \eqref{eq:21} implies that the set
$e_1,\dots,e_{k+1}$ is orthogonal.
\end{proof}

\begin{cor} \label{cor:3}
  The unit ball $V^{\circ}$ is a free $K^{\circ}$-module of rank $r$.
\end{cor}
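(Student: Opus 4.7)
The statement follows almost immediately from the two preceding results. My plan is:

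First I would invoke Proposition \ref{prop:18} to obtain an orthogonal basis $\{b_{1},\dots,b_{r}\}$ of $V$. Next, for each $i$, I would select an element $\alpha_{i}\in K^{\times}$ satisfying $\|b_{i}\|\le |\alpha_{i}|<|\varpi|^{-1}\|b_{i}\|$. Such a choice is possible because $|K^{\times}|$ is the cyclic subgroup of $\R_{>0}$ generated by $|\varpi|^{-1}$, so every half-open interval of the form $[s,|\varpi|^{-1}s)$ with $s>0$ contains exactly one element of $|K^{\times}|$ (the exponents $n\in\Z$ with $|\varpi|^{n}\in[s,|\varpi|^{-1}s)$ form a translate of a length-one interval and thus contain precisely one integer).

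Then I would apply Lemma \ref{lemm:1}\eqref{item:12}, which asserts exactly that under these hypotheses the vectors $\{\alpha_{1}^{-1}b_{1},\dots,\alpha_{r}^{-1}b_{r}\}$ form a basis of $V^{\circ}$ as a $K^{\circ}$-module. In particular $V^{\circ}$ is free of rank~$r$.

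There is no real obstacle here beyond verifying that the auxiliary $\alpha_{i}$ can be chosen, which is immediate from the structure of the discrete value group. The content of the corollary has already been absorbed into the proofs of Proposition \ref{prop:18} and Lemma \ref{lemm:1}\eqref{item:12}; the corollary just packages their combination.
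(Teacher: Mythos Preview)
Your proof is correct and follows essentially the same approach as the paper: invoke Proposition~\ref{prop:18} for an orthogonal basis and then Lemma~\ref{lemm:1}\eqref{item:12} to conclude. The paper omits the verification that the $\alpha_i$ exist, but your added justification is fine and changes nothing substantive.
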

\begin{proof}
  By Proposition \ref{prop:18}, $V$ admits an orthogonal basis. Thus,
  the statement follows from Lemma \ref{lemm:1}\eqref{item:12}.
\end{proof}

In the Archimedean case, a norm is determined by its unit ball. This
is not true in the discrete valuation case. For a normed space
$(V,\|\cdot\|)$ over a valued field $(K,|\cdot|)$, the \emph{norm associated to
the unit ball} is defined, for $x\in V$, as
\begin{equation}
  \label{eq:22}
  \|x\|_{V^{\circ}}=\inf\{|\alpha | \mid \alpha \in K, x\in \alpha
  V^{\circ}\}
\end{equation}
In general,  $\|x\|\le \|x\|_{V^{\circ}}$.
Following \cite{Gaudron:gnals}, we say that the normed space
$(V,\|\cdot\|)$ is \emph{pure} if
  $\|x\|= \|x\|_{V^{\circ}}$ for all $x\in V$. The \emph{purification}
  of $(V,\|\cdot\|)$
  is the normed vector space
  $(V,\|\cdot\|_{V^{\circ}})$. 

All normed spaces over an Archimedean field are pure. In the
non-Archimedean case, we have the following criterion.

\begin{prop}\label{prop:9} Let $(V,\|\cdot\|)$ be a normed space over
  a non-Archimedean valued field $(K,|\cdot|)$. Then the following
  conditions are equivalent:
  \begin{enumerate}
  \item \label{item:19} $(V,\|\cdot\|)$ is pure;
  \item \label{item:20} $\|V\|=|K|$;
  \item \label{item:21} there exists an orthonormal basis of $V$;
  \item \label{item:22} every $K^{\circ}$-basis of $V^{\circ}$ is orthonormal.
  \end{enumerate}
\end{prop}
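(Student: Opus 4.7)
The plan is to proceed via $(3)\Rightarrow(1),(2),(4)$ together with the converses $(1)\Rightarrow(2)$, $(2)\Rightarrow(3)$, and $(4)\Rightarrow(3)$, using condition $(3)$ as the natural hub. Given an orthonormal basis $\{e_1,\dots,e_r\}$ of $V$, the identity \eqref{eq:18} combined with $\|e_i\|=1$ yields $\|\sum_i\gamma_ie_i\|=\max_i|\gamma_i|$, from which the properties in $(1)$, $(2)$ and $(4)$ will follow quickly.

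From this last expression, $\|V\|=\{\max_i|\gamma_i|\mid\gamma_i\in K\}=|K|$, proving $(2)$. For $(1)$, I observe that $x=\sum_i\gamma_ie_i$ lies in $\alpha V^{\circ}$ if and only if $|\alpha|\ge\|x\|$; since $\|x\|\in|K|$, the infimum defining $\|x\|_{V^{\circ}}$ in \eqref{eq:22} is attained and equals $\|x\|$. For $(4)$, I first check that $V^{\circ}=\bigoplus_iK^{\circ}e_i$, so that $\{e_i\}$ is already a $K^{\circ}$-basis of $V^{\circ}$. Given any other $K^{\circ}$-basis $\{f_1,\dots,f_r\}$, the transition matrix $A\in M_r(K^{\circ})$ is invertible with inverse in $M_r(K^{\circ})$, hence $|\det(A)|=1$. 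If some $f_j$ satisfied $\|f_j\|<1$, then the corresponding column of $A$ would lie in the maximal ideal of $K^{\circ}$, contradicting $|\det(A)|=1$; thus $\|f_j\|=1$ for all $j$. Orthogonality of $\{f_j\}$ then follows by expressing a generic $x=\sum_j\delta_jf_j=\sum_i\gamma_ie_i$ in both bases and bounding $\max_j|\delta_j|$ above and below by $\|x\|=\max_i|\gamma_i|$ using the entries of $A$ and $A^{-1}$, both of which have absolute value at most $1$.

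For the converses: the implication $(1)\Rightarrow(2)$ rests on the discreteness of $|K^{\times}|$ in $\R_{>0}$ (established in the proof of Proposition \ref{prop:18}), which forces the infimum in \eqref{eq:22} to be attained, so that purity yields $\|V\setminus\{0\}\|\subset|K^{\times}|$; the reverse inclusion $|K|\subset\|V\|$ is obtained by rescaling any fixed nonzero vector. For $(2)\Rightarrow(3)$, I take the orthogonal basis $\{b_1,\dots,b_r\}$ provided by Proposition \ref{prop:18}, use $(2)$ to choose $\alpha_i\in K^{\times}$ with $|\alpha_i|=\|b_i\|$, and replace $b_i$ by $\alpha_i^{-1}b_i$; the resulting basis remains orthogonal by \eqref{eq:18} and all its vectors have norm $1$. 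Finally, $(4)\Rightarrow(3)$ is immediate from Corollary \ref{cor:3}: $V^{\circ}$ is a free $K^{\circ}$-module of rank $r$, so it admits a $K^{\circ}$-basis, which by hypothesis $(4)$ is an orthonormal basis of $V$. The main technical step is the $(3)\Rightarrow(4)$ direction, where one must combine the unimodularity of the transition matrix with a reduction-modulo-the-maximal-ideal argument to rule out a $K^{\circ}$-basis vector of norm strictly less than $1$; every other implication reduces essentially to the identity \eqref{eq:18} or to a direct appeal to Proposition \ref{prop:18} or Corollary \ref{cor:3}.
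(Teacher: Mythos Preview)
Your proof is correct and complete. The logical scaffolding differs from the paper's: you use $(3)$ as a hub, proving $(3)\Rightarrow(1),(2),(4)$ directly and then closing the cycle with $(1)\Rightarrow(2)\Rightarrow(3)$ and $(4)\Rightarrow(3)$. The paper instead establishes $(1)\Leftrightarrow(2)$ first via the formula $\|x\|_{V^{\circ}}=\min\{t\in|K|\mid t\ge\|x\|\}$, handles $(2)\Leftrightarrow(3)$ and $(4)\Rightarrow(3)$ as you do, and closes with $(1)\Rightarrow(4)$.

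The substantive difference is in how condition $(4)$ is reached. Your $(3)\Rightarrow(4)$ is a direct computation: the transition matrix $A$ between two $K^{\circ}$-bases of $V^{\circ}$ is unimodular, a column in the maximal ideal would force $|\det A|<1$, and the two-sided bound from $A$ and $A^{-1}$ gives $\max_j|\delta_j|=\|x\|$. The paper's $(1)\Rightarrow(4)$ is shorter and more conceptual: a $K^{\circ}$-basis $b$ of $V^{\circ}$ gives an isomorphism $\phi_b\colon V\to K^r$ carrying $V^{\circ}$ to the standard unit ball; when $V$ is pure, both norms are determined by their unit balls, so $\phi_b$ is an isometry and $b$ is orthonormal because the standard basis of $K^r$ is. Your argument is more explicit and self-contained; the paper's exploits the purity hypothesis in a cleaner way and avoids the determinant computation entirely.
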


\begin{proof}
Since the valuation is
discrete, we have that 
\begin{displaymath}
 \|x\|_{V^{\circ}}=\min\{ t\in |K|\mid t\ge \|x\|\}.  
\end{displaymath}
The equivalence of \eqref{item:19} and \eqref{item:20} follows easily
from this.  The fact that \eqref{item:21} implies \eqref{item:20} is
clear, whereas the reverse implication follows from Proposition
\ref{prop:18}. By Corollary~\ref{cor:3},  $V^{\circ}$ admits a
$K^{\circ }$ basis, and so \eqref{item:22} implies
\eqref{item:21}. Thus, it only remains to show that \eqref{item:19}
implies \eqref{item:22}.

Consider $K^{r}$ with its standard structure of normed vector space
as in Example~\ref{exm:12}. With this structure, the standard
basis is orthonormal. Let $b=\{b_{1},\dots,b_{r}\}$ be a
$K^{\circ}$-basis of $V^{\circ}$ and $\phi_{b} \colon V\to K^{r}$ the
isomorphism given by this basis. The image of $V^{\circ}$ by this
isomorphism is the unit ball of $K^{r}$. Therefore, if $V$ is pure,
$\phi_{b}$ is an isometry and $b$ is an orthonormal basis.
\end{proof}

Partly following \cite{Gaudron:gnals}, we introduce a notion of adelic
vector space. As Gaudron points out, this notion
extends that of Hermitian vector bundle, which is at the base of
Arakelov geometry.

\begin{defn}
  Let $(\K,\mathfrak{M})$ be an adelic field. An \emph{adelic
    vector space} over $(\K,\mathfrak{M})$ is a pair $\ov V= (V,\{\|\cdot\|_{v}\}_{v\in
    \mathfrak{M}})$ where $V$ is a vector space 
  over $\K$ and, for each $v\in \mathfrak{M}$, 
  $\|\cdot\|_{v}$ is a norm on the completion $V_{v}:=V\otimes
  \K_{v}$, satisfying, 
  for each 
  $x\in V\setminus \{0\}$, that $\|x\|_{v}=1$ for all but a finite
  number of $v$. 

  Let $\ov V$ be an adelic vector space over $(\K,\mathfrak{M})$. An
  element $x\in V$ is
  \emph{small} if $\|x\|_{v}\le 1$ for all $v\in \mathfrak{M}$. A
  small element $x\in V$ is \emph{strictly small} if $\prod_{v\in
    \mathfrak{M}}\|x\|^{n_{v}}_{v}< 1$. If $S\subset \mathfrak{M}$ is
  a finite set, then a small element $x\in V$ is
  \emph{strictly small on} $S$ if $\|x\|_{v}< 1$ for all $v\in S$.
  
  The adelic vector space $\ov V$ is called \emph{pure} if
  $(V_{v},\|\cdot\|_{v})$ is pure for all $v\in \mathfrak{M}$. The
  \emph{purification} of $\ov V$ is the adelic vector space $\ov
  V_{\pur}=(V,\{\|\cdot\|_{v,V_v^\circ}\}_{v})$.  If $\ov V$ is finite
  dimensional, it is called \emph{generically trivial} if there is a
  $K$-basis of $V$ that is an orthonormal basis of $V_{v}$ for all but
  a finite number of $v$.  Clearly, if $\ov V$ is generically trivial,
  the same is true for its purification.
\end{defn}

Note that Gaudron's definition of adelic vector space includes the
condition of being generically trivial. 

\begin{exmpl} \label{exm:7} Let $\K$ be a global field and
  $r\ge0$. The \emph{standard} structure of adelic vector space on
  $\K^{r}$ is defined by choosing the standard norm on $\K_{v}^{r}$ for each
  place $v\in \mathfrak{M}_{\K}$,
  as explained in Example \ref{exm:12}. The obtained adelic vector
  space is pure and generically trivial.
\end{exmpl}

\begin{exmpl} \label{exm:13}
  Let $\K$ be a global field. A \emph{normed vector bundle} is:
  \begin{enumerate}
  \item when $\K$ is a number field, a locally free
    $\mathcal{O}_{\K}$-module $\mathcal{E}$, together with the choice
    of a norm $\|\cdot\|_{v}$ on $\mathcal{E}\otimes \K_{v}$ for each
    Archimedean place $v$;
  \item when $\K={\rm K}(B)$ is the function field of a smooth projective
    curve, a locally free $\mathcal{O}_{B}$-module~$\mathcal{E}$.
  \end{enumerate}
  To a normed vector bundle $\mathcal{E}$, we associate the adelic
  vector space $\ov E=(E,\{\|\cdot\|_{v}\})$ given by the vector space
  $E=\mathcal{E}\otimes \K$, the given norm for each Archimedean
  place $v\in \mathfrak{M}_{\K}$, and the norm 
  \begin{displaymath}
    \|x\|_{v}=\inf\{|\alpha |_{v}\mid 
    \alpha \in \K, x\in \alpha \mathcal{E}_{v}\},
  \end{displaymath}
  for each non-Archimedean place $v$. Clearly, this adelic
  vector space is pure and generically trivial.
\end{exmpl}

The previous example covers all cases of pure and generically
trivial adelic vector spaces over a global field. 

\begin{prop} \label{prop:11} Let $\ov V$ be a finite dimensional
  adelic vector space over a global field. Assume that~$\ov V$ is pure and
  generically trivial. Then, it is the adelic vector space associated
  to a normed vector bundle.
\end{prop}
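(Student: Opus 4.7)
The plan is to reconstruct a global (locally free) module from the collection of local unit balls and then verify that the adelic norms induced in the sense of Example~\ref{exm:13} coincide with the given ones. Set $\ov V=(V,\{\|\cdot\|_v\})$ of dimension $r$. Because $\ov V$ is generically trivial, I can fix a $\K$-basis $b=\{b_1,\dots,b_r\}$ of $V$ and a finite set $S\subset\mathfrak{M}_{\K}$ containing every Archimedean place such that $b$ is orthonormal in $(V_v,\|\cdot\|_v)$ for every $v\notin S$. Corollary~\ref{cor:3} shows that at each non-Archimedean $v$ the unit ball $V_v^{\circ}$ is a free $\K_v^{\circ}$-module of rank $r$, and at $v\notin S$ purity together with Proposition~\ref{prop:9}\eqref{item:22} gives the explicit description $V_v^{\circ}=\bigoplus_{i=1}^{r}\K_v^{\circ} b_i$.

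Next I glue these local lattices into a global object. In the number field case, put $\cE_0=\bigoplus_{i}\cO_{\K} b_i\subset V$. Then $\cE_0\otimes_{\cO_{\K}}\cO_{\K,v}=V_v^{\circ}$ for every non-Archimedean $v\notin S$, while for the finitely many non-Archimedean $v\in S$, both $\cE_0\otimes\cO_{\K,v}$ and $V_v^{\circ}$ are full-rank $\cO_{\K,v}$-lattices in $V_v$ and therefore differ by a finitely generated fractional ideal. The standard gluing of local lattices over a Dedekind domain then produces a unique finitely generated $\cO_{\K}$-submodule $\cE\subset V$ with $\cE\otimes\cO_{\K,v}=V_v^{\circ}$ at every non-Archimedean place and $\cE\otimes\K=V$; since every localization is free of rank $r$, the $\cO_{\K}$-module $\cE$ is locally free of rank $r$. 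Endow $\cE$ with the given norms $\|\cdot\|_v$ at the Archimedean places. In the function field case $\K=K(B)$, the same idea yields a coherent $\cO_B$-submodule $\cE$ of the constant sheaf $V$ whose localization at every closed point $v$ is $V_v^{\circ}$, and which is locally free of rank $r$ because each such stalk is.

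It remains to check that the adelic vector space attached to the normed vector bundle $\cE$ in Example~\ref{exm:13} is $\ov V$. At the Archimedean places (only in the number field case) this holds by construction. At a non-Archimedean place $v$, the norm prescribed in Example~\ref{exm:13} is $x\mapsto\inf\{|\alpha|_v\mid x\in\alpha\cE_v\}$, that is, the norm $\|\cdot\|_{\cE_v}$ whose unit ball is $\cE_v=V_v^{\circ}$; this is exactly $\|\cdot\|_{V_v^{\circ}}$, and it equals $\|\cdot\|_v$ by the purity hypothesis and the definition recalled in \eqref{eq:22}.

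The main technical step is the lattice-gluing construction that produces the global $\cE$ from the prescribed local unit balls; this is a classical fact over Dedekind domains, and in the function field case it amounts to the elementary description of a torsion-free coherent sheaf on a smooth curve by its stalks at the closed points. Everything else reduces to invoking Corollary~\ref{cor:3}, Proposition~\ref{prop:9}, and the definition of purity.
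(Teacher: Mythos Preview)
Your proof is correct and follows essentially the same strategy as the paper: construct the locally free module whose local unit balls are the $V_v^{\circ}$, and then invoke purity to identify the induced norms with the given ones. The only difference is one of packaging: where you appeal to the standard gluing of local lattices over a Dedekind domain (or of stalks on a smooth curve), the paper writes down the sheaf directly as $\mathcal{E}_{\ov V}(U)=\{x\in V\mid \|x\|_v\le 1\ \forall v\in U\}$ and verifies local freeness by hand, picking a $\K_{v_0}^{\circ}$-basis of $V_{v_0}^{\circ}$ and using the generically trivial basis $e$ to show that the change-of-basis determinant is a unit on a neighbourhood of $v_0$. Your invocation of the gluing result thus absorbs exactly the argument the paper spells out; either presentation is fine. (One minor wording issue: two lattices in $V_v$ of the same rank do not ``differ by a fractional ideal'' but by an element of $GL_r(\K_v)$; this does not affect the validity of the gluing step.)
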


\begin{proof}
  We give the proof of this statement for the case of function fields only, the case of number
  fields being analogous. Let $\K={\rm K}(B)$ for a smooth projective
  curve~$B$. For each open subset $U\subset B$, we write
  \begin{displaymath}
    \mathcal{E}_{\ov V}(U)=\{x\in V \mid \|x\|_{v}\le 1, \ \forall
    v\in U\}.
  \end{displaymath}
  Clearly, $\mathcal{E}_{\ov V}$ is a sheaf of
  $\mathcal{O}_{B}$-modules. Let $v_{0}\in B$ and choose a
  $\K^{\circ}_{v_{0}}$-basis $b$ of $V^{\circ}_{v_{0}}$. Since $\ov V$
  is generically trivial, there is a basis $e$ of $V$ that is an
  orthonormal basis of $\ov V_{v}$ for all but a finite number of
  places $v$.  Let $U$ be the subset of $B$ containing the generic
  point, the point $v_{0}$, and all the closed points $v\in B$ such
  that $\det(e/b)$ is a unit of $\K^{\circ}_{v}$. Then, $U$ is a
  neighbourhood of $v_{0}$ such that $b$ is a $\K^{\circ}_{v}$-basis of
  $V^{\circ}_{v}$ for all closed points $v\in U$. This shows that
  $\mathcal{E}_{\ov V}$ is locally free.

Since
  $\ov V$ is pure, its norms agree with the norms induced by
  $\mathcal{E}_{\ov V}$, which completes the proof. 
\end{proof}

\begin{defn}\label{def:12}
  Let $\ov V$ be an adelic  vector space over a global field $\K$. 
  The set of small elements of $ \ov V$ is denoted by $\wh
  H^{0}(\ov V)$. We further write
  \begin{displaymath}
    \wh
  h^{0}(\ov V) =
  \begin{cases}
    \log (\# \wh H^{0}(\ov V))&\text{ if $\K$ is a number field,} \\
    \log(c_{k}) \dim_{k} (\wh H^{0}(\ov V)) &\text{ if $\K$ is a
      function field.}
  \end{cases}
  \end{displaymath}
\end{defn}
If $\K$ is a function field over a finite field, then
\begin{math}
      \wh
  h^{0}(\ov V) =
    \log (\# \wh H^{0}(\ov V))
\end{math}
since $c_{k}= \#k$. Thus, both definitions agree for $A$-fields.

\begin{exmpl}
  Let $\K$ be a global field and $\mathfrak{c}=(\mathfrak{c}_{v})_{v}$ an
  $\mathfrak{M}_{\K}$-divisor. We define a normed vector space $\ov
  V(\mathfrak{c})$, given by $V(\mathfrak{c})=\K$ and $\|\alpha
  \|_{v}=\mathfrak{c}_{v}^{-1}|\alpha |_{v}$. Then $\ov
  V(\mathfrak{c})$ is a pure and generically trivial adelic vector
  space over $\K$. Moreover, $\wh
  H^{0}(\ov V(\mathfrak{c}))=\whL(\mathfrak{c})$ and  $\wh
  h^{0}(\ov V(\mathfrak{c}))=\whl(\mathfrak{c})$.  
\end{exmpl}

When $\K$ is a function field and the adelic vector space
comes from a normed vector bundle, the sets $\wh H^{0}(\ov V)$ can be
interpreted as the space of global sections of the model defining the
metric.

\begin{exmpl}\label{exm:8} Let
  $\K={\rm K}(B)$ be the function field of a smooth projective curve
  with the structure of global field given by Example \ref{exm:2}.
  Let $\mathcal{E}$ be a locally free $\mathcal{O}_{B}$-module and
  $\ov E$ the associated adelic vector space as in Example
  \ref{exm:13}.  Then there is a canonical isomorphism
  \begin{displaymath}
    H^{0}(B,\mathcal{E})\simeq \wh H^{0}(\ov E),
  \end{displaymath}
  given by restriction to the generic fibre. In particular, 
  \begin{equation}\label{eq:24}
     \wh h^{0}(\ov E)=\log (c_{k}) h^{0}(B,\mathcal{E}).
  \end{equation}
\end{exmpl}

We next recall the definition of the Euler characteristic of an adelic
vector space.  For general function fields, the definition 
differs from that for  $A$-fields since, in that case,  we do not
dispose of a Haar measure on the corresponding space of adeles.

\begin{defn} \label{def:9} Let $\K$ be a global field and $\ov V$ a
  generically trivial 
  adelic vector space over $\K$ of finite dimension $r$.

Assume first that $\K$ is a number field. 
Let $\A$ be its ring of adeles, set $V_{\A}=V{\otimes}_{\K}\A$, and
consider the adelic unit ball 
\begin{displaymath}
 V^{\circ}_{\A}=\prod_{v}V_{v}^{\circ}\subset V_{\A}. 
\end{displaymath}
 Let $\mu$ be a Haar measure on $\K^{r}_{\A}$.  The
\emph{Euler characteristic} of $\ov V$ is defined as
\begin{displaymath}
  \wh\chi(\ov V)=\log \Big(\frac{\mu (V^{\circ}_{\A})}{\mu
    (V_{\A}/V)}\Big). 
\end{displaymath}
This number does not depend on the choice of $\mu$. In other words,
the Euler characteristic is the ratio between the volume of the unit
ball and the covolume of the lattice $V\subset V_{\A}$.

Next assume that $\K$ is a function field. Let $b$ be a basis of $V$
over $\K$ and, for each $v\in \mathfrak{M}_{\K}$, choose a basis $b_{v}$
of $V^{\circ}_{v}$ over $\K^{\circ}_{v}$. The \emph{Euler
  characteristic} of $\ov V$ is defined as
\begin{equation}\label{eq:33}
  \wh\chi(\ov V)=\sum_{v}d_{\K}\, n_{v}\log \left|\det(b_{v}/b)\right|_{v},
\end{equation}
where $b_{v}/b$ denotes the matrix of $b_{v}$ with respect to the
basis $b$.  This quantity does not depend on the choices of bases
because of the product formula. 

In both cases, the formula makes sense because the adelic vector space
is generically trivial.
\end{defn}

\begin{rem} \label{rem:1} Let $\K$ be a number field and $\ov V$  an
  adelic vector space over $\K$ of finite dimension $r$.  With
  notations as in Definition \ref{def:9}, for each Archimedean place $v$ , we consider the Lebesgue measure $\mu_{v}$ on
  $\K_{v}^{r}$. Therefore, for $v$ real, $\mu_{v}(
  (\K_{v}^{r})^{\circ})=2^{r} $ whereas, for $v$ complex, $\mu_{v}(
  (\K_{v}^{r})^{\circ})=\pi ^{r} $. We choose a basis $b$ of $V$ over $\K$
  and, for each non-Archimedean  $v$,
  we also choose a basis $b_{v}$ of $V^{\circ}_{v}$ over
  $\K^{\circ}_{v}$.  Then
\begin{equation}\label{eq:12}
  \wh\chi(\ov V)=  \sum_{v\mid \infty}\log(\mu
  _{v}(\phi_{b,v}(V^{\circ}_{v})))+\sum_{v\nmid
    \infty}d_{\K}\, n_{v}\log|\det 
  (b_{v}/b)|_{v},
\end{equation}
where $\phi_{b,v}\colon V_{v}\to \K_{v}^{r}$ is the isomorphism
induced by the basis $b$. This is the analogue for number fields of
formula \eqref{eq:33}.
\end{rem}

When $\ov V$ is an adelic vector space over a function field coming from a
normed vector bundle, its Euler characteristic coincides with the
Euler characteristic of the associated model, up to an additive
constant. 

\begin{exmpl} \label{exm:9} Let $\pi\colon B\to C$ be a dominant
  morphism of smooth projective curves over a field $k$ and consider the function field
  $\K={\rm K}(B)$ with the structure of global field as in Example
  \ref{exm:2}. Let $\mathcal{E}$ be a locally free
  $\mathcal{O}_{B}$-module of rank $r$ and $\ov E$ the associated
  generically trivial adelic vector space as in Example
  \ref{exm:13}. Choose a $\K$-basis $b$ of $E$ and a
  $\K^{\circ}_{v}$-basis $b_{v}$ of $E^{\circ}_{v}$ for each $v\in
  \mathfrak{M}_{\K}$. Then
  \begin{multline*}
    \wh \chi(\ov E)=\sum_{v}d_{\K}\, n_{v}\log |\det
    (b_{v}/b)|_{v}=
    \sum_{v}e_{v}[k(v):k]\log \Big(c_{k}^{\frac{-\ord_{v}(\det
        (b_{v}/b))}{e_{v}}}\Big)\\=\log (c_{k})
    \sum_{v}[k(v):k]\ord_{v}(\det
        (b/b_{v}))=\log (c_{k})\deg(\mathcal{E}),
  \end{multline*}
  where $e_{v}$ denotes the ramification index of $v$ over $\pi(v)$.
 The Riemann-Roch theorem for vector bundles on curves then implies
  \begin{equation*}
    \wh \chi(\ov E)=\log (c_{k})(\chi(\mathcal{E})+r(g(B)-1)),
  \end{equation*}
  where $\chi(\cE)$ is the Euler characteristic of $\cE$ and $g(B)$ is the genus of $B$.
\end{exmpl}

The space of small elements and the Euler characteristic of an adelic
vector space depend only on its purification, as it follows
immediately from the definitions. 

\begin{prop} \label{prop:10}
  Let $\ov V$ be a generically trivial adelic vector space over a
  global field. Then   \begin{displaymath}
    \wh H^{0}(\ov V)=\wh H^{0}(\ov V_{\pur}), \quad \wh
    \chi(\ov V)= \wh\chi(\ov V_{\pur}).
  \end{displaymath}
\end{prop}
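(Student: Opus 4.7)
The plan is to reduce both equalities to the statement that, for each place $v$, the unit ball $V_v^{\circ}$ of $(V_{v},\|\cdot\|_{v})$ coincides with the unit ball of its purification $(V_{v},\|\cdot\|_{v,V_v^\circ})$. Once this is shown, the first equality follows because a small element is by definition one that lies in every $V_v^\circ$, and the second follows because the Euler characteristic only depends on the unit balls: in the number field case through $V_{\A}^{\circ}=\prod_{v}V_{v}^{\circ}$, and in the function field case through the choice, in formula \eqref{eq:33}, of a $\K_{v}^{\circ}$-basis $b_{v}$ of $V_{v}^{\circ}$ (which may be taken to be the same basis for $\ov V$ and $\ov V_{\pur}$).

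To verify that the unit balls coincide, I would split into the two types of places. For Archimedean $v$, the norm on $V_{v}$ is already determined by its unit ball (as noted in the text just before Proposition~\ref{prop:9}), so the purified norm equals the original and there is nothing to prove. For non-Archimedean $v$, the inclusion $V_{v}^{\circ}\subseteq\{y\in V_{v}\mid \|y\|_{v,V_v^\circ}\le 1\}$ is immediate by taking $\alpha=1$ in the definition \eqref{eq:22}. For the reverse inclusion, suppose $\|y\|_{v,V_v^\circ}\le 1$. Since $|\K_{v}^{\times}|$ is a discrete subgroup of $\R_{>0}$ containing $1$, the set $\{|\alpha|_{v}\mid y\in\alpha V_{v}^{\circ}\}$ is discrete, so the infimum is attained, and there exists $\alpha\in\K_{v}$ with $|\alpha|_{v}\le 1$ and $y\in\alpha V_{v}^{\circ}$. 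Because $V_{v}^{\circ}$ is a $\K_{v}^{\circ}$-module and $\alpha\in\K_{v}^{\circ}$, we get $\alpha V_{v}^{\circ}\subseteq V_{v}^{\circ}$, hence $y\in V_{v}^{\circ}$.

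Having established $V_{v}^{\circ}=\{y\in V_{v}\mid \|y\|_{v,V_v^\circ}\le 1\}$ at every place, the first equality $\wh H^{0}(\ov V)=\wh H^{0}(\ov V_{\pur})$ is immediate from Definition \ref{def:12}. For the Euler characteristic, in the number field case Definition \ref{def:9} gives $\wh\chi(\ov V)=\log(\mu(V_{\A}^{\circ})/\mu(V_{\A}/V))$, and both $V_{\A}^{\circ}$ and the lattice $V\subset V_{\A}$ are unchanged under purification. In the function field case, choosing the same basis $b$ of $V$ and the same $\K_{v}^{\circ}$-bases $b_{v}$ of $V_{v}^{\circ}$ for $\ov V$ and $\ov V_{\pur}$, formula \eqref{eq:33} gives the same value.

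No serious obstacle is expected: the only mildly delicate point is invoking the discreteness of $|\K_{v}^{\times}|$ to realize the infimum in the definition of the purified norm, and to conclude that the unit ball of the purification is exactly $V_{v}^{\circ}$ rather than merely its closure.
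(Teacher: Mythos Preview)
Your proof is correct and spells out what the paper leaves implicit (the paper simply asserts that both quantities ``depend only on its purification, as it follows immediately from the definitions''). One minor simplification: for the reverse inclusion in the non-Archimedean case you need not invoke discreteness to realize the infimum, since the inequality $\|y\|_{v}\le\|y\|_{v,V_v^\circ}$ noted right after \eqref{eq:22} already gives $\|y\|_{v,V_v^\circ}\le 1\Rightarrow \|y\|_{v}\le 1\Rightarrow y\in V_{v}^{\circ}$.
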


The presence of an orthogonal basis allows us to estimate the
Euler characteristic of an adelic vector space.

\begin{prop} \label{prop:7} Let $\ov V$ be a generically trivial
  adelic vector space over a global field $\K$ of finite dimension
  $r$. Let $b=\{b_{1},\dots,b_{r}\}$ be a basis of $V$ over $\K$ which is an
  orthogonal basis of $V_{v}$ for all $v\in \mathfrak{M}_{\K}$. Let
  $S\subset \mathfrak{M}_{\K}$ be the finite set of non-Archimedean
  places such that $\|b_{i}\|_{v}\not = 1$ for some $i$. Then
  \begin{displaymath}
    \left| \wh\chi(\ov V)-\sum_{v\in
        \mathfrak{M}_{\K}}\sum_{i=1}^{r}d_{\K}\, n_{v}\log
      (\|b_{i}\|_{v}^{-1})\right| 
    \le d_{\K}\, r\Big(\log(\pi r)+\sum_{v\in S}n_{v}\lambda_{v}\Big).
  \end{displaymath}
\end{prop}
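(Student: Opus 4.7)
The plan is to evaluate $\wh\chi(\ov V)$ directly at each place using the given orthogonal basis $b$, and then estimate the resulting deviation from $\sum_{v,i} d_\K n_v \log \|b_i\|_v^{-1}$ place by place via Lemma \ref{lemm:1}. The function field and number field cases of Definition \ref{def:9} are handled uniformly at non-Archimedean places, while the number field case requires an additional Archimedean step via Remark \ref{rem:1}.

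At each non-Archimedean place $v$, Lemma \ref{lemm:1}\eqref{item:12} supplies a $\K_v^{\circ}$-basis $b_v = (\alpha_{1,v}^{-1} b_1,\dots,\alpha_{r,v}^{-1} b_r)$ of $V_v^{\circ}$ with scalars satisfying $\|b_i\|_v \le |\alpha_{i,v}|_v < |\varpi_v|_v^{-1}\|b_i\|_v$; for $v \notin S$ we may take $\alpha_{i,v} = 1$. Plugging this basis into \eqref{eq:33} (or the non-Archimedean part of \eqref{eq:12} in the number field case) gives $d_\K n_v \log|\det(b_v/b)|_v = -\sum_i d_\K n_v \log|\alpha_{i,v}|_v$, whose discrepancy with the target summand $\sum_i d_\K n_v \log\|b_i\|_v^{-1}$ equals $\sum_i d_\K n_v(\log\|b_i\|_v - \log|\alpha_{i,v}|_v)$. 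By the defining inequality each summand has absolute value at most $d_\K n_v \lambda_v$, and this vanishes for $v \notin S$. Summing yields a total non-Archimedean error of at most $d_\K r \sum_{v \in S} n_v \lambda_v$, which already settles the function field case with room to spare.

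For Archimedean $v$ in the number field case, Lemma \ref{lemm:1}\eqref{item:11} supplies the sandwich $r^{-1}E_v \subset \phi_{b,v}(V_v^{\circ}) \subset \sqrt{r}\,E_v$, where $E_v$ is the ellipsoid of semi-axes $\|b_i\|_v^{-1}$. Since $\K_v$ has real dimension $d_\K n_v$, the change of variable $\eta_i = \gamma_i\|b_i\|_v$ gives
\begin{displaymath}
\log\mu_v(E_v) = -\sum_i d_\K n_v \log\|b_i\|_v + \log\mu_v((\K_v^r)^{\circ}),
\end{displaymath}
while taking real-dimension-$d_\K n_v r$ volumes in the sandwich yields $|\log\mu_v(\phi_{b,v}(V_v^{\circ})) - \log\mu_v(E_v)| \le d_\K n_v\, r\log r$. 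Using the bound $|\log\mu_v((\K_v^r)^{\circ})| \le r\log\pi$ from Remark \ref{rem:1}, then summing over Archimedean places and invoking $\sum_{v\mid\infty} d_\K n_v = d_\K$ together with $\#\{v\mid\infty\} \le d_\K$, produces an Archimedean contribution of at most $d_\K r(\log r + \log\pi)$. Adding the two bounds gives the proposition. The only delicate step is the Archimedean one, where one must keep careful track of the real dimension $d_\K n_v r$ in the volume scaling factors appearing in Lemma \ref{lemm:1}\eqref{item:11}; the non-Archimedean step is more straightforward because Lemma \ref{lemm:1}\eqref{item:12} produces an explicit $\K_v^{\circ}$-basis of $V_v^{\circ}$ adapted to $b$.
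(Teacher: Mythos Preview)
Your proof is correct and follows essentially the same approach as the paper's: apply Lemma~\ref{lemm:1}\eqref{item:12} at each non-Archimedean place to produce a $\K_v^\circ$-basis of $V_v^\circ$ adapted to $b$, apply Lemma~\ref{lemm:1}\eqref{item:11} together with the ellipsoid volume from Remark~\ref{rem:1} at each Archimedean place, and sum using formulas~\eqref{eq:33} and~\eqref{eq:12}. Your tracking of the real dimension $d_\K n_v\, r$ in the Archimedean volume scaling and your use of $\sum_{v\mid\infty} d_\K n_v = d_\K$ is slightly more explicit than the paper's treatment, but the argument is the same.
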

\begin{proof}
  We consider the case when $\K$ is a number field. We will freely use
  the notation in Remark \ref{rem:1}. Let $v$ be an Archimedean place
 and consider the ellipsoid
    \begin{math}
      E_{v}=\{ (\gamma_{1},\dots, \gamma_{r})\in \K_{v}^{r} \mid
      \sum_{i}|\gamma_{i}|^{2}_{v}\Vert b_{i}\Vert _{v}^{2}\le 1\}.
    \end{math}
    When $v$ is real, the volume of this ellipsoid is $\mu
    _{v}(E_{v})= 2^{r}\prod_{i}\Vert b_{i}\Vert _{v}^{-d_{\K}n_{v}}$
    whereas, when $v$ is complex, it is $\mu _{v}(E_{v})=
    \pi^{r}\prod_{i}\Vert b_{i}\Vert
    _{v}^{-d_{\K}n_{v}}$. Lemma~\ref{lemm:1}\eqref{item:11} then
    implies
\begin{equation}\label{eq:17}
- {r}\log(r)\le \log(\mu _{v}(\phi_{b,v}(V^{\circ}_{v})))  - \sum_{i} d_{\K}n_{v}
\log(\|b_{i}\|_{v}^{-1}) \le  \frac{r\log(r)}{2}+r\log \pi. 
\end{equation}
Now let $v$ be a non-Archimedean place.  Let $\alpha_{v,i}\in
\K_{v}$ such that $\|b_{i}\|_{v}\le |\alpha_{v,i}|_{v}
<|\varpi_{v}|^{-1}_{v}\|b_{i}\|_{v}$ for $v\in S$ and $\alpha_{v,i}=1$
for $v\notin S$. By Lemma
\ref{lemm:1}\eqref{item:12}, the vectors
$b_{v,i}:=\alpha_{v,i}^{-1}b_{i}$, $i=1,\dots, r$, form a basis of
$V_{v}^{\circ}$ and $ |\det (b_{v}/b)|_{v} =
\prod_{i}|\alpha_{v,i}|_{v}^{-1}$. Hence, for $v\in S$, 
\begin{equation}\label{eq:16}
-r\lambda_{v} \le \log  |\det (b_{v}/b)|_{v} -
\sum_{i}\log(\Vert b_{i}\Vert _{v}^{-1}) \le 0,  
\end{equation}
whereas $\log  |\det (b_{v}/b)|_{v} = \sum_{i}\log(\Vert b_{i}\Vert _{v}^{-1}) =0$ for
$v\notin S$. Adding up \eqref{eq:17} and \eqref{eq:16} and using the
formula \eqref{eq:12}, we conclude
\begin{displaymath}
- d_{\K}\, r\Big(\log(r)+\sum_{v\in S}n_{v}\lambda_{v}\Big)\le  \wh \chi
(\ov V) - \sum_{v}\sum_{i}d_{\K}\, n_{v}\log
  (\|b_{i}\|_{v}^{-1}) \le d_{\K} \, r\log(\pi r).  
\end{displaymath}

The case of a  function field follows similarly by applying Lemma
\ref{lemm:1}\eqref{item:12}. The resulting upper bound is better, since it does not
have the term $\log(\pi r)$.
\end{proof}

\section{Metrized $\R$-divisors}
\label{sec:metrized-r-divisors}

In this section, we introduce the basic definitions concerning
metrized $\R$-divisors and the problems we are interested in. We
start by recalling the geometric analogues for $\R$-divisors.  Details
on the theory of $\R$-divisors can be found in
\cite{Lazarsfeld:posit_I}.

Let $K$ be a field and $X$ a proper normal variety over $K$ of
dimension $n$.  We denote by $\Car(X)$ and $\Div(X)$ the groups of
Cartier divisors and of Weil divisors of $X$, respectively. 
The spaces of {$\R$-divisors} and of $\R$-Weil divisors of $X$ are
 defined as
\begin{displaymath}
\Car(X)_\R=\Car(X)\otimes_\Z \R,\quad   \Div(X)_\R=\Div(X)\otimes_\Z \R.
\end{displaymath}
Thus, an $\R$-Cartier divisor on $X$ is a formal linear combination $\sum_{i}\alpha
_{i}D_{i}$ with $\alpha _{i}\in \R$ and $D_{i}\in \Car(X)$, and
similarly for an $\R$-Weil divisor.  In this text, we will be  mainly
concerned with Cartier divisors and $\R$-Cartier divisors and so we
will call them just divisors and $\R$-divisors, for short.

Since $X$ is normal, there is an injective morphism
$\Car(X)\hookrightarrow \Div(X)$.  Then, the fact that $\Div(X)$ is a
free Abelian group implies that the maps $\Car(X)\to \Car(X)_{\R}$ and
$\Car(X)_{\R}\to \Div(X)_{\R}$ are injective. Given an $\R$-divisor
$D$ on $X$, its \emph{support} is defined as the support of the associated $\R$-Weil
divisor and is denoted by $|D|$.

Let ${\rm K}(X)^{\times}$ be the multiplicative group of nonzero rational
functions of $X$ and set
${\rm K}(X)^{\times}_\R:={\rm K}(X)^{\times}\otimes_\Z\R$. The map $\div\colon
{\rm K}(X)^{\times}\to \Car(X)$ extends by linearity to a map ${\rm K}(X)^{\times}_{\R}\to
\Car(X)_{\R}$, also denoted by $\div$.

The spaces of nonzero \emph{global sections} and \emph{global
  $\R$-sections} of an $\R$-divisor $D$ on $X$ are respectively defined as
\begin{align*}
  \Gamma(X,D)^{\times}&=\{(f,D)\mid f \in {\rm K}(X)^{\times},\
  \div(f)+D\ge 0\},\\
  \Gamma(X,D)^{\times}_{\R}&=\{(f,D)\mid f \in {\rm K}(X)_{\R}^{\times},\
  \div(f)+D\ge 0\}.
\end{align*}
The spaces of nonzero \emph{rational
  sections} and \emph{rational $\R$-sections} of $D$ are
respectively defined as
\begin{displaymath}
  \Rat(X,D)^{\times}:={\rm K}(X)^{\times}\times\{D\},\quad 
\Rat(X,D)_{\R}^{\times}:={\rm K}(X)_{\R}^{\times}\times\{D\}.
\end{displaymath}
For $s=(f,D)\in \Rat(X,D)_{\R}^{\times}$, we write
\begin{displaymath}
  \div(s)=\div(f)+D\in \Car(X)_{\R}.
\end{displaymath}
Let $\LL(D)=\Gamma(X,D)^{\times}\cup\{(0,D)\}$ be the
\emph{Riemann-Roch space} of $D$. Note that, contrary to the usual
convention, we have added the label ``$D$" to the elements of
$\LL(D)$.  This is a $K$-vector space and we set
$\lL(D)=\dim_{K}(\LL(D))$ for its dimension.
The \emph{volume} of $D$ is
  defined as
  \begin{displaymath}
    \vol(X,D)=\limsup_{\ell\rightarrow\infty}\frac{\lL(\ell D)}{\ell^{n}/n!}.
  \end{displaymath}
 
Let $Y$ be a $d$-dimensional subvariety of $X$ and $D_{1},\dots,D_{d}$
a family of  $\R$-divisors on $X$.  The {intersection product}
$(D_{1}\cdots D_{d}\cdot Y)$ is defined by multilinearity from the
intersection product of $Y$ with divisors.  The \emph{degree} of $Y$
with respect to an $\R$-divisor~$D$ is $\deg_{D}(Y)=(D^{d}\cdot Y)$.

  An $\R$-divisor is \emph{ample} (respectively
  \emph{big}, \emph{effective})
  if it is a linear 
  combination of ample (respectively
  big, effective)  divisors with positive
  coefficients. Given divisors $D_{1}$ and  $D_{2}$, the condition that
  $D_{1}-D_{2}$ is effective is denoted by $D_{1}\ge D_{2}$. 
  An $\R$-divisor $D$ on $X$ is \emph{nef} if $\deg_{D}(C)\ge0$ for
  every curve $C\subset X_{\ov K}$. It is \emph{pseudo-effective} if
  there exists a  birational map $\varphi\colon X'\to X$ of
  normal proper varieties over~$K$ and a divisor
  $E$ on $X'$ such that $\ell \varphi^{*}D +E$ is big for all $\ell
  \ge1$.

\begin{rem} \label{rem:18}
  The definition of pseudo-effective given here differs from the one
  in \cite[Definition 2.2.25]{Lazarsfeld:posit_I} because  we are not assuming that the
  variety $X$ is projective. For projective normal varieties, both definitions
  are equivalent.
\end{rem}

We now introduce metrized divisors and metrized $\R$-divisors on
varieties over global fields. Throughout the rest of this section,
$X$ will be  a normal  proper variety over a
global field
$\K$ of dimension $n$.  For each place $v\in \mathfrak M_{\K}$, we
denote by $X_{v}^{\an}$  the $v$-adic analytification of $X$. In the
Archimedean case, if $\K_v\simeq \C$, then $X_{v}^{\an}$ is an
analytic space over $\C$, whereas if $\K_v\simeq \R$, then $X_{v}^{\an}$ is an
analytic space over $\R$, that is, an
analytic space over $\C$ together with an antilinear involution (see
for instance~\cite[Remark 1.1.5]{BurgosPhilipponSombra:agtvmmh}).
In the non-Archimedean case, $X_{v}^{\an}$ is a Berkovich
space (see \cite[\S
1.2]{BurgosPhilipponSombra:agtvmmh}). Similarly, a line bundle $L$ on
$X$ defines a collection of analytic line bundles
$\{L^{\an}_{v}\}_{v\in \mathfrak{M}_{\K}}$.

Following {\it loc. cit.}, all considered metrics will be continuous, by
definition.  For $v\in \mathfrak M_{\K}$, a metric $\|\cdot\|_{v}$ on
$L_{v}^{\an}$ is \emph{semipositive} if it is the uniform limit of a
sequence of semipositive smooth (respectively algebraic) metrics in
the Archimedean (respectively non-Archimedean) case.  The metric
$\|\cdot\|_{v}$ is \emph{DSP} if it is the quotient of two
semipositive ones. A metric on $L$ is, by definition, an adelic
collection of metrics $\|\cdot\|_{v}$ on $L_{v}^{\an}$, ${v\in
  \mathfrak M_{K}}$. Such a collection 
is \emph{quasi-algebraic} if there is an integral model which defines
the metric $\|\cdot\|_{v}$ except for a finite number of 
$v$. We refer to \cite[Chapter 1]{BurgosPhilipponSombra:agtvmmh} for
the precise definitions and
more details.

\begin{defn} \label{def:14} For $v\in \mathfrak{M}_{\K}$, a
  \emph{$v$-adically metrized divisor} on $X$ is a pair
  $\ov D=(D,\|\cdot\|)$ formed by a divisor and a metric on the
  analytic line bundle $\mathcal{O}(D)_{v}^{\an}$. We say that $\ov D$
  is \emph{smooth} (in the Archimedean case), \emph{algebraic} (in the
  non-Archimedean case), or \emph{semipositive} (smooth or algebraic)
  if so is the metric $\|\cdot\|$. The \emph{$v$-adic Green function}
  of $\ov D$ is the function
  $g_{\ov D}\colon X_{v}^{\an}\setminus |D|\to \R$ given by
  \begin{displaymath}
    g_{\ov D}(p)=-\log\|s_{D}(p)\|,
  \end{displaymath}
  where $s_{D}$ is the canonical section of $\mathcal{O}(D)$.  The
  space of all $v$-adically metrized divisors on $X$ is denoted by
  $\wh \Car(X)_{v}$.

  A \emph{quasi-algebraic metrized divisor} on $X$ is a pair
  $\ov D=(D,\{\|\cdot\|_{v}\}_{v\in \mathfrak{M}_{\K}})$ formed by a
  divisor $D$ and a quasi-algebraic metric on the line bundle
  $\mathcal{O}(D)$.  The space of quasi-algebraic metrized divisors on
  $X$ is denoted by $\wh\Car(X)$.  {For $\ov D \in \wh\Car(X)$ and
  $v\in \mathfrak{M}_{\K}$ we denote by $g_{\ov D,v}$ the
  \emph{$v$-adic Green function} of $\ov D$, defined as the Green
  function of the $v$-adically metrized divisor $(D,\|\cdot\|_{v})$.}\todo{This  introduces
  the (so far missing) notation $g_{\ov D,v}$ for a metrized divisor}
\end{defn}

\begin{defn} \label{def:1} Let $X$ be a proper normal variety over $\K$.  The
  \emph{space of quasi-algebraic metrized $\R$-divisors} on $X$ is the
  quotient
  \begin{displaymath}
    \wh\Car(X)_{\R}=\left. \wh \Car(X)\otimes_\Z \R \right/ \sim
  \end{displaymath}
  where $\sim$ is the equivalence relation given by $ \sum_{i}\alpha
  _{i}\ov D_{i}\sim \sum_{j}\beta _{j}\ov E_{j} $ if and only if $
  \sum_{i}\alpha _{i} D_{i}= \sum_{j}\beta _{j} E_{j} $ and,
  for each $v\in \mathfrak{M}_{\K}$, there is a dense open subset
  $U_{v}$ of $ X^{\an}_{v}$ such that
  \begin{displaymath}
      \sum_{i}\alpha _{i} g_{\ov D_{i},v}(p)= \sum_{j}\beta _{j}g_{\ov
        E_{j},v} (p)\quad \text{ for } p\in U_{v}.
  \end{displaymath}
   {The function $g_{\ov D,v}\colon U_{v}\to \R$ defined by
    $g_{\ov D,v}(p)=\sum_{i}\alpha _{i} g_{\ov D_{i},v}$} is called
  the \emph{$v$-adic Green function} of $\ov D$. todo{Idem for a
    metrized $\mathbb{R}$-divisor} An element of
  $\ov D\in \wh\Car(X)_{\R}$ is called a \emph{quasi-algebraic
    metrized $\R$-divisor} or, for short, a \emph{metrized
    $\R$-divisor}.

  For $v\in \mathfrak{M}_{\K}$, we can similarly define the space of
  $v$-adically metrized $\R$-divisors on $X$, denoted by $\wh \Car(X)_{v,\R}$. A
  $v$-adically metrized $\R$-divisor $\ov D$ is called \emph{smooth}
  or \emph{algebraic} if it can be written as $\ov D=\sum \alpha
  _{i}\ov D_{i}$ with $\ov D_{i}\in \wh \Car(X)_{v}$ smooth or
  algebraic, respectively. In each of these cases, it is called
 \emph{semipositive}   (smooth or algebraic) if it can
  be written as $\ov D=\sum \alpha _{i}\ov D_{i}$, with $\alpha
  _{i}>0$ and $\ov D_{i}\in \wh \Car(X)_{v}$ semipositive (smooth or algebraic).
\end{defn}

\begin{defn}\label{def:20}
A rational function $f\in {\rm K}(X)^{\times}$ defines a metrized divisor
\begin{displaymath}
\wh \div(f)=(\div(f),\{\|\cdot\|_{f,v}\}_{v\in \mathfrak{M}_{\K}}),  
\end{displaymath}
where $\{\|\cdot\|_{f,v}\}_{v}$ is the metric on the line bundle
$\cO(\div(f))_{v}^{\an}$ which is given by
$\|f^{-1}(p)\|_{f,v}=1$ for the section $f^{-1}\in
\cO(\div(f))_{v}^{\an}$ and  $p\in X_{v}^{\an}$. 
This construction defines a group morphism from $\wh \div\colon
  {\rm K}(X)^{\times}\to \wh \Car(X)$, which extends by linearity to a
  group morphism
\begin{displaymath}
  \wh \div \colon  {\rm K}(X)^{\times}_{\R}\longrightarrow \wh \Car(X)_{\R}.
\end{displaymath}
Let $\ov D$ and $\ov {D'}$ be metrized $\R$-divisors. We say that they
are \emph{linearly equivalent} if there exists $f\in
{\rm K}(X)^{\times}_{\R}$ such that
\begin{displaymath}
  \ov D-\ov {D'}=\wh \div(f).
\end{displaymath}
\end{defn}

For $f\in {\rm K}(X)^{\times}_{\R}$ and  $v\in 
\mathfrak{M}_{\K}$, the $v$-adic Green function of $\wh \div (f)$ is
the function given by $g_{\wh \div(f),v}(p)=-\log |f(p)|_{v}$.

\begin{lem}\label{lemm:3} Let $v\in \mathfrak{M}_{\K}$ and $(D,\|\cdot\|)$ 
  a $v$-adically metrized $\R$-divisor on $X$. Let $r\ge1$ and
  consider a decomposition $D=\sum_{i=1}^{r} \alpha _{i}D_{i}$ with
  $\alpha _{i}\in \R$ and $D_{i}\in \Car(X)$. Then there are metrics
  $\|\cdot\|_{i}$ on $\mathcal{O}(D_{i})_{v}^{\an}$, $i=1,\dots, r$, 
  such that
  \begin{displaymath}
 (D,\|\cdot\|)=\sum_{i=1}^{r} \alpha _{i}\, (D_{i},\|\cdot\|_{i}).
  \end{displaymath}
\end{lem}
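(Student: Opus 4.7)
My plan is to start from an arbitrary choice of metrics on each $\mathcal{O}(D_i)_v^{\an}$ and then rescale one of them by a continuous function in order to absorb the discrepancy with the given metric on $\mathcal{O}(D)_v^{\an}$. To begin, I would pick any continuous metrics $\|\cdot\|_i'$ on the $\mathcal{O}(D_i)_v^{\an}$; these exist since one can take a smooth metric in the Archimedean case or an algebraic metric associated with any integral model in the non-Archimedean case. Setting
\[
\ov D' := \sum_{i=1}^r \alpha_i (D_i, \|\cdot\|_i') \in \wh\Car(X)_{v,\R},
\]
its underlying $\R$-divisor equals $\sum_i \alpha_i D_i = D$, so the difference $\ov F := \ov D - \ov D'$ lies in the subspace of $\wh\Car(X)_{v,\R}$ consisting of elements whose underlying $\R$-divisor is zero.

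The crux of the argument I propose is the following intermediate fact: any element $\ov F \in \wh\Car(X)_{v,\R}$ with trivial underlying $\R$-divisor is represented by $(0,\|\cdot\|_\phi)$ for some continuous function $\phi\colon X_v^{\an}\to\R$, where $\|1\|_\phi = e^{-\phi}$. To prove this I would write $\ov F$ as a representative $\sum_k \gamma_k (F_k, \|\cdot\|_{F_k})$ with $\sum_k \gamma_k F_k = 0$ in $\Car(X)_\R$ and, using local trivializations $s_{F_k}\leftrightarrow f_k$ and the identity $\|s_{F_k}\|_{F_k}=|f_k|_v\cdot u_k$ with $u_k$ positive and continuous, observe that the would-be logarithmic singularities $-\sum_k \gamma_k \log|f_k|_v$ cancel precisely because $\sum_k \gamma_k\,\ord_Y(F_k)=0$ along every prime Weil divisor $Y$. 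Hence the Green function of $\ov F$ extends continuously to all of $X_v^{\an}$, and by the equivalence relation in Definition~\ref{def:1} one may represent $\ov F$ by $(0,\|\cdot\|_\phi)$.

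Assuming some $\alpha_{i_0}\ne 0$ (the case where all $\alpha_i$ vanish is degenerate, since then $D=0$ and a nontrivial metric cannot be recovered from a zero formal sum), I would then define
\[
\|\cdot\|_{i_0} := e^{-\phi/\alpha_{i_0}}\|\cdot\|_{i_0}',\qquad \|\cdot\|_i := \|\cdot\|_i' \ \text{ for } i\ne i_0,
\]
which are again continuous metrics on the $\mathcal{O}(D_i)_v^{\an}$. A direct computation of Green functions then gives
\[
\sum_{i=1}^r \alpha_i g_{(D_i,\|\cdot\|_i)} = \phi + \sum_{i=1}^r \alpha_i g_{(D_i,\|\cdot\|_i')} = g_{\ov D'} + \phi = g_{\ov D}
\]
on a dense open subset of $X_v^{\an}$, and together with the equality of underlying $\R$-divisors this yields $(D,\|\cdot\|) = \sum_{i=1}^r \alpha_i (D_i,\|\cdot\|_i)$ in $\wh\Car(X)_{v,\R}$.

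The main obstacle will be establishing the intermediate fact in the second paragraph, namely that a $v$-adically metrized $\R$-divisor with trivial underlying $\R$-divisor corresponds to a continuous function on $X_v^{\an}$. Once the cancellation of the $-\log|f_k|_v$ singularities is carefully verified, the remainder of the argument is a formal manipulation of Green functions under the equivalence relation defining $\wh\Car(X)_{v,\R}$.
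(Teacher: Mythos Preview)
Your overall strategy coincides with the paper's: choose auxiliary metrics on the $D_i$, observe that the difference $\ov F=\ov D-\sum_i\alpha_i(D_i,\|\cdot\|_i')$ has trivial underlying $\R$-divisor, represent $\ov F$ by a single metric on the trivial bundle, and absorb it into one of the $\|\cdot\|_i'$ by rescaling. The only substantive difference is in how the intermediate fact is established.

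You propose to show continuity of the Green function of $\ov F$ by arguing that $\sum_k\gamma_k\ord_Y(F_k)=0$ for every prime $Y$ forces the singularities of $-\sum_k\gamma_k\log|f_k|_v$ to cancel. On a smooth variety this can be justified by factoring each local equation in the regular (hence UFD) local ring $\cO_{X,p}$, so that contributions along each component cancel pointwise. But the paper only assumes $X$ normal, and at a non-regular point no such factorization is available; the implication from ``zero as an $\R$-Weil divisor'' to continuity of $\sum_k\gamma_k\log|f_k|_v$ is not immediate there. The paper sidesteps this with a clean algebraic device: choose a $\Q$-basis $\gamma_1',\dots,\gamma_s'$ of the $\Q$-span of the coefficients and rewrite $\ov F=\tfrac{1}{m}\sum_i\gamma_i'\,\ov G_i$, where each $G_i\in\Car(X)$ is an \emph{integer} combination of the $F_k$. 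Since $\sum_i\gamma_i'G_i=0$ in $\Div(X)_\R$ with the $\gamma_i'$ linearly independent over $\Q$ and the $G_i$ integral, each $G_i=0$ as a Cartier divisor. Then $\cO(G_i)_v^{\an}=\cO_{X_v^{\an}}$, so each metric $\|\cdot\|_{G_i}$ is already a continuous positive function $\|1\|_{G_i}$ on $X_v^{\an}$, and one simply takes $\|1\|=\prod_i\|1\|_{G_i}^{\gamma_i'/m}$; no singularity analysis is needed. If you push your cancellation argument through on a general normal variety you will almost certainly arrive at this same trick.
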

\begin{proof}
  Set $\ov D= (D,\|\cdot\|)$ and let $\ov D=\sum_{j=1}^{l}\beta _{j}\ov E_{j}$ be a decomposition
  with $\beta _{j}\in \R$ and $\ov E_{j}\in \wh \Car(X)_{v}$.  We
  first consider the case when the decomposition is $D=0$. 

  Let $\{\gamma _{1},\dots,\gamma _{r}\}$ be a basis of the
  $\Q$-vector space generated by the numbers $\beta _{1},\dots,\beta
  _{l}$. Write $\beta _{j}=\frac{1}{m}\sum_{i=1}^{s}n_{i,j}\gamma_i$
  with $n_{i,j}\in \Z$ and $m\in \Z_{\ge1}$. Then
  \begin{displaymath}
    \ov
    D=\frac{1}{m}\sum_{i=1}^{s}\gamma _{i}\ov F_{i}     
  \end{displaymath}
  with $\ov F_{i}=\sum_{j}n_{i,j}\ov E_{j}\in \wh \Car(X)_{v}$. Since
   $D=0$ and $\gamma _{1},\dots,\gamma _{s}$ are
  linearly independent over $\Q$, we deduce that $F_{i}=0$ for all $i$.  Since
  $\mathcal{O}(0)^{\an}_{v}=\mathcal{O}_{X^{\an}_{v}}$, to give a
  metric $\|\cdot\|$ on this line bundle is equivalent
  to give the continuous function $\|1\|\colon X^{\an}_{v}\to
  \R_{>0}$. Therefore, we can gather together the different metrics $\|\cdot\|_{F_{i}}$
  on $\mathcal{O}(F_{i})^{\an}_{v}$ with their
  coefficients, into a 
  single metric on $\mathcal{O}_{X^{\an}_{v}}$ given by
  \begin{displaymath}
    \|1\|=\prod_{i=1}^{r}\|1\|_{F_i}^{\gamma _{i}/m}.
  \end{displaymath}
Then $\ov D= (0, \|\cdot\|)$,  which finishes the proof in this case.
  
  We now consider the general case. Choose metrics $\|\cdot\|_{1}'$ on
  $\mathcal{O}(D_{1})^{\an}_{v}$ and $\|\cdot\|_{i}$ on
  $\mathcal{O}(D_{i})^{\an}_{v}$, $i=2,\dots, r$, and denote by $\ov D_{1}'$
  and $\ov D_{i}$ the corresponding $v$-adically metrized divisors. By
  applying the previous case to the metrized $\R$-divisor $\sum_{j}\beta _{j}\ov
  E_{j}-\alpha _{1}\ov D'_{1} -\sum_{i=2}^{r}\alpha_{i}\ov D_{i}$, we
  obtain a metric $\|\cdot\|_{0}$ on the trivial line bundle.  Finally,
  we define a new metric on $\mathcal{O}(D_{1})^{\an}_{v}$ by
  $\|\cdot\|_{1}=\|1\|_{0}^{1/\alpha _{1}}\|\cdot\|'_{1}$ and we set $\ov
  D_{1}=(D_{1},\|\cdot\|_{1})$. Hence 
$ \ov D=\sum_{i=1}^{r} \alpha _{i} \ov D_{i}$, proving the result.
\end{proof}

  Let $\ov D$ be a $v$-adically metrized $\R$-divisor on $X$ and
  $s=(f,D)$ a rational $\R$-section of $D$. Consider the function $\|s\|$ given by
  \begin{displaymath}
    \|s(p)\|=|f(p)|_{v}\e^{-g_{\ov D}(p)}, 
  \end{displaymath}
  where, if $f=\prod_{i} f_{i}^{\alpha _{i}}$, then $|f(p)|_{v}$ is
  defined as $\prod_{i} |f_{i}(p)|^{\alpha _{i}}_{v}$.  This function
  is well-defined on a dense open subset of $X_{v}^{\an}$. The
  following result shows that it can be extended everywhere outside the
  support of $\div(s)$.

\begin{prop} \label{prop:5}
  Let $v\in \mathfrak{M}_{\K}$ and  $\ov D$  a $v$-adically metrized
  $\R$-divisor on $X$. Let   $s=(f,D)\in \Rat(X,D)^{\times}_{\R}$. Then  $\|s\|$
  can be extended to a continuous function from $X^{\an}_{v}\setminus
  |\div(s)|$ to $\R_{>0}$. 

In particular, the $v$-adic Green function $g_{\ov
    D}$ can be extended to a continuous function from
  $X^{\an}_{v}\setminus |D|$ to $\R$. \end{prop}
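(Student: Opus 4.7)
The plan is to reduce the continuity of $\|s\|$ to a local statement at each $p\in X_v^{\an}\setminus|\div(s)|$, and then handle the real exponents by a linear-algebra argument over $\Q$. The second assertion about the Green function $g_{\ov D}$ follows from the first by applying it to the canonical rational section $s_{D}=(1,D)$, for which $\div(s_{D})=D$ and $\|s_{D}\|=\e^{-g_{\ov D}}$.

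Fix a point $p\in X_v^{\an}\setminus|\div(s)|$ and let $x$ be its image in $X$. By Lemma~\ref{lemm:3}, I would decompose $\ov D=\sum_{i=1}^{r}\alpha_{i}\ov D_{i}$ with each $\ov D_{i}=(D_{i},\|\cdot\|_{i})$ a $v$-adically metrized Cartier divisor, and write $f=\prod_{j}f_{j}^{\beta_{j}}$ with $f_{j}\in K(X)^{\times}$ and $\beta_{j}\in\R$. Choose a Zariski open $U\subset X$ containing $x$ on which each $D_{i}$ admits a local equation $h_{i}\in K(X)^{\times}$. Trivialising $\mathcal{O}(D_{i})|_{U}$ by $h_{i}^{-1}$, the metric $\|\cdot\|_{i}$ corresponds to a continuous positive function $\rho_{i}$ on $U_v^{\an}$, and one obtains
\begin{displaymath}
\|s(q)\|=\Bigl(\prod_{j}|f_{j}(q)|_{v}^{\beta_{j}}\prod_{i}|h_{i}(q)|_{v}^{\alpha_{i}}\Bigr)\cdot\prod_{i}\rho_{i}(q)^{\alpha_{i}}
\end{displaymath}
on the dense open subset of $U_v^{\an}$ where all factors are defined. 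The right-hand product is already continuous and positive on all of $U_v^{\an}$, so the problem reduces to extending the parenthesised factor $\Phi(q):=\prod_{j}|f_{j}(q)|_{v}^{\beta_{j}}\prod_{i}|h_{i}(q)|_{v}^{\alpha_{i}}$ to a continuous positive function near $p$.

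The heart of the argument is to handle the real exponents in $\Phi$ by grouping the factors into integer combinations whose zeros and poles cancel. Relabel the rational functions and exponents as $g_{1},\dots,g_{m}\in K(X)^{\times}$ with $\gamma_{1},\dots,\gamma_{m}\in\R$, and let $V_{1},\dots,V_{N}$ be the prime Weil divisors of $X$ containing $x$. The condition $p\notin|\div(s)|$ says exactly that $\sum_{k}\gamma_{k}\ord_{V_{l}}(g_{k})=0$ for every $l$, i.e.\ that $(\gamma_{k})_{k}$ lies in the kernel over $\R$ of the integer matrix $\Psi=(\ord_{V_{l}}(g_{k}))_{l,k}$. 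Since $\ker(\Psi_{\R})=\ker(\Psi_{\Q})\otimes_{\Q}\R$, I can write $(\gamma_{k})=\sum_{u}\delta_{u}(n_{u,k})_{k}$ with $\delta_{u}\in\R$ and $(n_{u,k})_{k}\in\ker(\Psi)\cap\Z^{m}$. The integer combinations $G_{u}:=\prod_{k}g_{k}^{n_{u,k}}\in K(X)^{\times}$ then have order zero along every $V_{l}$, hence are units of $\mathcal{O}_{X,x}$ by normality of $X$. Consequently each $|G_{u}|_{v}$ is continuous and positive in a $v$-adic neighbourhood of $p$, and $\Phi=\prod_{u}|G_{u}|_{v}^{\delta_{u}}$ is continuous and positive there.

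The main obstacle is precisely this passage to real exponents: individual factors $|f_{j}|_{v}^{\beta_{j}}$ or $|h_{i}|_{v}^{\alpha_{i}}$ with irrational exponent cannot be sensibly extended across their own zero loci, so some cancellation mechanism is needed before continuity can be established. The device of rewriting $(\gamma_{k})$ inside $\ker(\Psi_{\Q})\otimes_{\Q}\R$, combined with the identification $\mathcal{O}_{X,x}^{\times}=\{g\in K(X)^{\times}\mid\ord_{V}(g)=0\text{ for every prime divisor }V\ni x\}$ provided by normality of $X$, is what transforms $\Phi$ into a finite product of unit-valued rational functions raised to real powers, for which continuity near $p$ is immediate.
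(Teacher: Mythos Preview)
Your argument is correct and reaches the conclusion, but the route differs from the paper's. The paper works globally: it chooses a $\Q$-basis $\{\gamma_{l}\}$ of the $\Q$-span of all the real exponents $\alpha_{i},\beta_{j}$, rewrites $D=\sum_{l}(\gamma_{l}/m)E_{l}$ and $f=\prod_{l}g_{l}^{\gamma_{l}/m}$ with $E_{l}\in\Car(X)$ and $g_{l}\in K(X)^{\times}$, then applies Lemma~\ref{lemm:3} to distribute the metric as $\ov D=\sum_{l}(\gamma_{l}/m)\ov E_{l}$. The $\Q$-linear independence of the $\gamma_{l}$ forces $|\div(s)|=\bigcup_{l}|\div(s_{l})|$ with $s_{l}=(g_{l},E_{l})$, so the single global formula $\|s\|=\prod_{l}\|s_{l}\|_{l}^{\gamma_{l}/m}$ is continuous and positive on the whole complement of $|\div(s)|$ at once. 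Your approach instead fixes a point $p$, trivialises the metrics locally, and uses the rationality of $\ker(\Psi_{\R})$ for the order matrix $\Psi$ together with the normality of $X$ to regroup the rational-function factor into real powers of local units. The paper's method yields a clean global expression and avoids any explicit appeal to normality (it is hidden in the well-definedness of $\|s_{l}\|$ for Cartier divisors), whereas your method is more elementary in that it stays with local equations and makes transparent exactly where the hypothesis that $X$ is normal enters. A minor remark: your citation of Lemma~\ref{lemm:3} for the initial decomposition $\ov D=\sum\alpha_{i}\ov D_{i}$ is unnecessary---such a decomposition exists by the very definition of $\wh\Car(X)_{v,\R}$; Lemma~\ref{lemm:3} is rather what the paper uses to equip a \emph{new} decomposition of $D$ with compatible metrics.
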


\begin{proof} Write $\ov D=\sum_{i=1}^{r}\alpha_{i}\ov D_{i}$ and
  $f=\prod_{j=1}^{k}f_{j}^{\beta_{j}}$ with
  $\alpha_{i},\beta_{j}\in \R$, $\ov D_{i}\in \wh\Car(X)_{v}$ and
  $f_{j}\in  {\rm K}(X)^{\times}$. Let $\{\gamma_{l}\}_{l}$ be a basis of the
  $\Q$-linear space generated by the numbers
  $\alpha_{1},\dots,\alpha_{r},\beta_{1},\dots, \beta_{k}$ and write
  \begin{displaymath}
    D=\sum_{{l}}\frac{\gamma_{l}}{m}E_{l},\quad f=\prod_{l}g_{l}^{
    \gamma_{l}/m}
  \end{displaymath}
  with $m\in\Z_{\ge1}$, $E_{l}\in \Car(X)$ and $g_{l}\in
  {\rm K}(X)^{\times}$. The pair
$s_{l}=(g_{l},E_{l})$ defines a rational section of the Cartier
divisor $E_{l  }$. We have $|\div(s)|= \bigcup_{l}|\div(s_{l})|$
  because the $\gamma_{l}$ are $\Q$-linearly independent.

By Lemma \ref{lemm:3}, there are $v$-adic metrics
 $\Vert\cdot\Vert_{l}$ on each $E_{l}$ such that 
\begin{displaymath}
  \ov D=\sum_{l}\frac{\gamma_{l}}{m}\ov E_{l}
\end{displaymath}
with $\ov E_{l}=(E_{l},\Vert\cdot\Vert_{l})$. The function
$\Vert s_{l}\Vert_{l}$ is continuous from $X^{\an}_{v}\setminus
|\div(s_{l})|$ to $\R_{>0}$ and there is an open dense subset $U_{v}\subset
X_{v}^{\an}$ such that, for $p\in U_{v}$,
\begin{displaymath}
\Vert s(p)\Vert = \prod_{l}\Vert s_{l}(p)\Vert_{l}^{\gamma_{l}/m}.
\end{displaymath}
This latter function is well-defined outside
$\bigcup_{l}|\div(s_{l})|=|\div(s)|$ and gives the sought extension of
$\|s\|$. The second statement follows from the
first one applied to the canonical section $s_{D}=(1,D)$.
\end{proof}

The definition below introduces the notion of $v$-adic metric of an
$\R$-divisor directly, without passing through $v$-adic metrics on
divisors as in Definition \ref{def:1}. The following proposition
shows that both points of view are equivalent.

\begin{defn}\label{def:26} Let
  $D$ be an $\R$-divisor on $X$ and  $v\in \mathfrak{M}_{\K}$.
  \begin{enumerate}
  \item \label{item:29} A \emph{$v$-adic Green function} on $D$ is a continuous
    function $g\colon X^{\an}_{v}\setminus |D|\to \R$ such that, for
    each point $p \in X_{v}^{\an}$ and a rational $\R$-section $s=(f,D)$
    with $p\not \in |\div(s)|$, the function $g(p)-\log|f(p)|_{v}$ can
    be extended to a continuous function in a neighbourhood of $p$.
  \item \label{item:30} A \emph{$v$-adic metric} on $D$ is
  an assignment that, to each rational $\R$-section~$s$ associates a
  continuous function $\|s\|\colon X^{\an}_{v}\setminus |\div (s)|\to
  \R_{>0}$ such that, for each $\gamma  \in  {\rm K}(X)^{\times}_{\R}$, it
  holds $\|(\gamma s)(p)\|=|\gamma (p)|_{v} \|s(p)\|$ on an open
  dense subset of $X^{\an}_{v}$.
  \end{enumerate}
\end{defn}

\begin{prop} \label{prop:24} Let $D$ be an $\R$-divisor on $X$ and $v\in
  \mathfrak{M}_{\K}$.  It is equivalent to choose a $v$-adically
  metrized $\R$-divisor $\ov D$ over $D$, to choose a $v$-adic Green
  function for $D$, or to choose a $v$-adic metric on $D$.
\end{prop}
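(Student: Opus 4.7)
The plan is to construct explicit maps between the three types of data and then verify that they are mutually inverse, modulo the equivalence relation defining $\wh\Car(X)_{v,\R}$. The easy directions go through the canonical Green function of a representative, while the crucial new content is the construction of a metrized $\R$-divisor from a prescribed Green function.

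First, I would address the passage from a metrized $\R$-divisor $\ov D = \sum_{i}\alpha_i \ov D_i$ with $\ov D_i \in \wh\Car(X)_v$ to a $v$-adic Green function. Proposition \ref{prop:5} already produces a continuous extension of $g_{\ov D}:=\sum_i \alpha_i g_{\ov D_i}$ to $X_v^{\an}\setminus |D|$, and the $\Q$-basis decomposition argument used in its proof also yields the local regularity required by Definition \ref{def:26}\eqref{item:29}. For the equivalence between Green functions and metrics, one passes from Green function to metric by setting $\|s\|:=|f|_v\exp(-g)$ for a rational $\R$-section $s=(f,D)$ and, conversely, from metric to Green function by $g := -\log\|s_D\|$ for the canonical section $s_D=(1,D)$. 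The compatibility $\|\lambda s\| = |\lambda|_v\|s\|$ is automatic, and the local regularity conditions in parts \eqref{item:29} and \eqref{item:30} of Definition \ref{def:26} translate directly into each other.

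The main step is to produce a metrized $\R$-divisor from a prescribed Green function $g$ on $D$. Fix any decomposition $D = \sum_{i=1}^r \alpha_i D_i$ with $D_i\in \Car(X)$ and pick arbitrary $v$-adic metrics $\|\cdot\|_i$ on $\mathcal{O}(D_i)_v^{\an}$, producing a metrized $\R$-divisor $\ov D_0 := \sum_i \alpha_i \ov D_i$ over $D$ with Green function $g_0$. I would then argue that $\phi := g - g_0$, a priori defined on $X_v^{\an}\setminus |D|$, extends to a continuous function on all of $X_v^{\an}$: at any point $p$, one can pick a rational $\R$-section $s=(f,D)$ with $p\notin |\div(s)|$ because locally $D$ is the divisor of an $\R$-power product of rational functions; then both $g-\log|f|_v$ and $g_0-\log|f|_v$ extend continuously near $p$, hence so does their difference $\phi$. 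The continuous function $\phi$ defines a $v$-adic metric on the trivial bundle $\mathcal{O}_{X_v^{\an}}$ by $\|1\|:=\exp(-\phi)$, giving an element $\ov E\in \wh\Car(X)_v$ over the zero divisor, and I set $\ov D := \ov D_0 + \ov E$. By construction the Green function of $\ov D$ is $g$.

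Finally, I would verify that these constructions are mutually inverse. By the equivalence relation in Definition \ref{def:1}, the map assigning to a metrized $\R$-divisor its Green function is well-defined and injective on $\wh\Car(X)_{v,\R}$, and the construction of the previous paragraph exhibits its inverse. The main obstacle I expect is the independence of the construction $g\mapsto \ov D$ from the chosen decomposition $D=\sum_i\alpha_iD_i$ and the auxiliary metrics $\|\cdot\|_i$; but any two such choices yield representatives in $\wh\Car(X)_v\otimes\R$ with the same underlying divisor and the same Green function, and Lemma \ref{lemm:3} combined with Definition \ref{def:1} shows that these representatives are equivalent under $\sim$, so the resulting class in $\wh\Car(X)_{v,\R}$ is the same.
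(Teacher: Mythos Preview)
Your proposal is correct and follows essentially the same approach as the paper. The paper organizes the argument as a cycle $\ov D\mapsto g\mapsto\|\cdot\|\mapsto\ov D$ and checks that the composite is the identity, whereas you center everything on the Green function; but the key construction is identical in both cases---pick a reference metrized $\R$-divisor $\ov D_0$ over $D$ via an arbitrary decomposition and arbitrary metrics on the pieces, observe that the discrepancy extends to a continuous function on all of $X_v^{\an}$, package it as a metric on $\mathcal{O}_{X_v^{\an}}$, and add.
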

\begin{proof}
  Let $\ov D$ be a $v$-adically metrized $\R$-divisor over $D$. The
  fact that $g_{\ov D}$ is a $v$-adic Green function for $D$ in the
  sense of Definition \ref{def:26}\eqref{item:29} follows from
  Proposition~\ref{prop:5}.

  Now, if $g$ is a $v$-adic Green function for $D$, the assignment
  that to each $s=(f,D)\in \Rat(X,D)^{\times}_{\R}$ associates the
  function
  \begin{displaymath}
    \Vert s(p)\Vert_{v}=|f(p)|_{v}\e^{-g(p)}   
  \end{displaymath}
  is a metric on $D$ in the sense of Definition \ref{def:26}\eqref{item:30}

  Let now $\|\cdot\|$ be a metric on $D$. Choose a decomposition
  $D=\sum_{i} \alpha _{i}D_{i}$ with $\alpha _{i}\in \R$ and
  $D_{i}\in\Car(X)$. For each $i$, choose a metric $\|\cdot\|_{i}$ on
  $\mathcal{O}(D_{i})$. Let $f\in {\rm K}(X)^{\times}$ and
  $s_{i}=(f_{i},D_{i})\in \Rat(X,D_{i})^{\times}_{\R}$ and 
  $s=(f\prod_{i} f_{i}^{\alpha _{i}},D)\in \Rat(X,D)^{\times}_{\R}$. It
  is easy to check that the formula
  \begin{displaymath}
 \|(f,0)\|=\|s\|\cdot \prod_{i} \|s_{i}\|^{-\alpha
    _{i}} 
  \end{displaymath}
  defines a metric on the trivial line bundle
  $\mathcal{O}_{X^{\an}_{v}}$. Denote by $\ov 0$ the zero  divisor equipped
  with this metric. Then $\ov D=\sum \alpha _{i}\ov D_{i}+\ov 0$ is a
  $v$-adically metrized $\R$-divisor in the sense of Definition \ref{def:1}.

  If we apply cyclically this three constructions starting at
  any point, we obtain the identity, showing the equivalence of the
  three points of view.
\end{proof}

\begin{rem} \label{rem:14}
Since the points of view of metrized $\R$-divisors, Green functions
and metrics are equivalent, we will apply the introduced terminology
to any of them. For instance, it makes sense to talk of algebraic or
smooth  metrics on an $\R$-divisor. 
\end{rem}

Let $D$ be an $\R$-divisor on $X$ and $v\in \mathfrak M_{\K}$. There
is a natural action of $\cC^{0}(X_{v}^{\an},\R)$, the space of
real-valued continuous functions on $X_{v}^{\an}$, on the space of
$v$-adic metrics on $D$. This action is given, for $f\in
\cC^{0}(X_{v}^{\an},\R) $, by the formula $\Vert\cdot\Vert\mapsto
\e^{f}\Vert\cdot \Vert$.  By Proposition \ref{prop:24}, this action
might be equivalently written in terms of $v$-adic Green functions as
\begin{math}
  g_{\ov D}\mapsto g_{\ov D}-f.
\end{math}
From this description, the following corollary follows immediately. 

\begin{cor} \label{cor:7} Let $D\in \Car(X)_{\R}$ and $v\in
  \mathfrak M_{\K}$.  The action of $\cC^{0}(X_{v}^{\an},\R)$ on the
  space of $v$-adic metrics on $D$ is free and transitive.
\end{cor}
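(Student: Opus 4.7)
The plan is to verify freeness and transitivity separately, working with the concrete description of the action given just before the corollary: in terms of metrics the action is $(f,\|\cdot\|)\mapsto \e^{f}\|\cdot\|$, and in terms of Green functions it is $(f,g_{\ov D})\mapsto g_{\ov D}-f$.

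For freeness, suppose $f\in \cC^{0}(X_{v}^{\an},\R)$ fixes some $v$-adic metric $\|\cdot\|$ on $D$. Choose any rational $\R$-section $s=(h,D)$ of $D$. On the dense open subset $X_{v}^{\an}\setminus|\div(s)|$ the function $\|s\|$ takes strictly positive values, so the equality $\e^{f}\|s\|=\|s\|$ forces $f=0$ on this set. Since $f$ is continuous and $X_{v}^{\an}\setminus|\div(s)|$ is dense, this gives $f\equiv 0$.

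For transitivity, let $\|\cdot\|_{1}$ and $\|\cdot\|_{2}$ be two $v$-adic metrics on $D$ and let $g_{1}$, $g_{2}$ be the corresponding Green functions (via Proposition \ref{prop:24}). I would define $f:=g_{1}-g_{2}$ on $X_{v}^{\an}\setminus|D|$ and show that $f$ extends to a continuous function on all of $X_{v}^{\an}$. For any point $p\in X_{v}^{\an}$, pick a rational $\R$-section $s=(h,D)$ with $p\notin|\div(s)|$; this is possible by a standard argument producing rational sections of $D$ whose divisor avoids $p$ (for instance decompose $D$ into Cartier divisors and combine local trivializations). By Definition \ref{def:26}\eqref{item:29}, each $g_{i}-\log|h|_{v}$ extends continuously to a neighbourhood of $p$, hence so does their difference $g_{1}-g_{2}$, which agrees with $f$ on the complement of $|D|$. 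This produces the desired continuous extension, and by construction $g_{2}=g_{1}-f$, i.e.\ $\|\cdot\|_{2}=\e^{f}\|\cdot\|_{1}$.

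The main thing to watch is that the local extensions coming from different choices of rational sections agree: this is automatic because $f$ is already defined and continuous on the dense open $X_{v}^{\an}\setminus|D|$, so any two continuous extensions to a neighbourhood of $p$ coincide. The only conceptual point is therefore knowing that rational $\R$-sections not vanishing at a prescribed point exist, which reduces via the decomposition $D=\sum\alpha_{i}D_{i}$ to the classical statement for Cartier divisors; everything else is formal from the preceding framework.
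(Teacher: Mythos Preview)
Your proof is correct and essentially matches the paper's approach: the paper simply says the corollary ``follows immediately'' from the description of the action in terms of Green functions (together with Proposition~\ref{prop:24}), and your argument is exactly the unpacking of that phrase---freeness is clear since $f$ vanishes on the dense open $X_v^{\an}\setminus|D|$, and transitivity is precisely the observation that the difference of two Green functions extends continuously across $|D|$ via Definition~\ref{def:26}\eqref{item:29}. The paper in fact states this extension property explicitly just after the corollary when defining the distance between metrics, so you have identified the substance of the argument.
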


Given two $v$-adic metrics $\Vert\cdot\Vert$ and  $\Vert\cdot\Vert'$
on $D$, the difference of their Green functions
\begin{math}
  g_{D,\Vert\cdot\Vert} -  g_{D,\Vert\cdot\Vert'}
\end{math}
extends to a continuous function on $X^{\an}_{v}$. The \emph{distance}
between  $\Vert\cdot\Vert$ and  $\Vert\cdot\Vert'$ is defined as 
\begin{displaymath}
  \dist(\Vert\cdot\Vert,\Vert\cdot\Vert')= \sup_{p\in X_{v}^{\an}} \big|g_{D,\Vert\cdot\Vert}(p) -  g_{D,\Vert\cdot\Vert'}(p)\big|.
\end{displaymath}

\begin{cor}\label{cor:8}
  Let $D\in \Car(X)_{\R}$ and $v\in \mathfrak M_{\K}$. The space of
  $v$-adic metrics on~$D$ is complete with respect to the topology
  defined by $\dist$.
\end{cor}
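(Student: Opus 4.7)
The plan is to use Corollary \ref{cor:7}, which provides a free and transitive action of $\cC^{0}(X_{v}^{\an},\R)$ on the space of $v$-adic metrics on $D$, to identify this space (after fixing a basepoint) with the Banach space $\cC^{0}(X_{v}^{\an},\R)$ equipped with the supremum norm.

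First I would fix any reference $v$-adic metric $\Vert\cdot\Vert_{0}$ on $D$; such a metric exists by picking a decomposition $D=\sum_{i}\alpha _{i}D_{i}$ into integral Cartier divisors, choosing any $v$-adic metric on each $\mathcal{O}(D_{i})^{\an}_{v}$, and forming the associated combination via Lemma \ref{lemm:3}. By Corollary \ref{cor:7}, the map
\begin{displaymath}
\Phi \colon \cC^{0}(X_{v}^{\an},\R)\longrightarrow \{v\text{-adic metrics on }D\},\qquad f\longmapsto \e^{-f}\Vert\cdot\Vert_{0},
\end{displaymath}
is a bijection. Translating through Green functions, if $\ov D$ corresponds to $\Phi(f)$ then $g_{\ov D}=g_{\ov D_{0}}+f$, where $\ov D_{0}=(D,\Vert\cdot\Vert_{0})$. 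Hence, for any $f,f'\in \cC^{0}(X_{v}^{\an},\R)$,
\begin{displaymath}
\dist(\Phi(f),\Phi(f'))=\sup_{p\in X_{v}^{\an}}\bigl|(g_{\ov D_{0}}+f)(p)-(g_{\ov D_{0}}+f')(p)\bigr|=\Vert f-f'\Vert_{\infty}.
\end{displaymath}
Thus $\Phi$ is an isometric bijection between $\cC^{0}(X_{v}^{\an},\R)$ with the supremum norm and the space of $v$-adic metrics on $D$ with $\dist$.

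Since $X$ is proper over $\K$, the analytic space $X_{v}^{\an}$ is compact, both at Archimedean places (by properness of the associated complex or real analytic space) and at non-Archimedean places (by compactness of the Berkovich analytification of a proper variety). Therefore $\cC^{0}(X_{v}^{\an},\R)$ with the supremum norm is a Banach space, and in particular complete. Pulling back via the isometry $\Phi$, we conclude that the space of $v$-adic metrics on $D$ is complete with respect to $\dist$.

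The only nontrivial ingredient is the compactness of $X_{v}^{\an}$, which I am taking as a standing background fact; everything else is a formal consequence of Corollary \ref{cor:7} together with the observation that $\dist$ pulls back precisely to the sup norm under the parametrization $\Phi$.
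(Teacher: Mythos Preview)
Your proof is correct and follows essentially the same approach as the paper: the paper's proof is the one-line observation that the result follows immediately from Corollary \ref{cor:7} together with the completeness of $\cC^{0}(X_{v}^{\an},\R)$ under uniform convergence, and you have simply unpacked this by fixing a basepoint, exhibiting the isometry explicitly, and justifying the completeness of $\cC^{0}(X_{v}^{\an},\R)$ via compactness of $X_{v}^{\an}$.
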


\begin{proof}
  This follows immediately from Corollary \ref{cor:7} and the fact
  that the space  $\cC^{0}(X_{v}^{\an},\R)$ is complete with respect
  to the uniform convergence of functions. 
\end{proof}

\begin{defn} \label{def:24} Let $D$ be an $\R$-divisor on $X$. For
  $v\in \mathfrak M_{\K}$, a $v$-adic metric $\Vert\cdot\Vert$ on~$D$
  is \emph{semipositive} if it is the limit of a sequence of
  semipositive smooth
  (in the Archimedean case) or algebraic (in the non-Archimedean case)
 metrics on~$D$.

 A quasi-algebraic metric
 $\{\Vert\cdot\Vert_{v}\}_{v\in \mathfrak M_{\K}}$ on $D$ is
 \emph{semipositive} if $\Vert\cdot\Vert_{v}$ is {a semipositive}
 $v$-adic metric for all $v$. A quasi-algebraic metrized $\R$-divisor
 is \emph{DSP} if there are semipositive metrized $\R$-divisors
 $\ov D_{1},\ov D_{2}$ on $X$ such that $\ov D= \ov D_{1}-\ov D_{2}$.
\end{defn}

Let $Y$ be a $d$-dimensional cycle on $X$ and $v\in \mathfrak M_{\K}$.
Let 
$\ov D_{i}$, $i=0,\dots, d$, be semipositive $v$-adically metrized
$\R$-divisors on~$X$ such that $D_{0},\dots, D_{d}$ meet $Y$ properly
in the sense of \cite[Definition
 1.4.9]{BurgosPhilipponSombra:agtvmmh}, that is, for all $I\subset \{0,\dots,d\}$, every component
of the intersection 
 \begin{displaymath}
   Y\cap \bigcap _{i\in I}|D_{i}|
 \end{displaymath}
 is of dimension $d-\# I$.  The \emph{$v$-adic height} of $Y$ with
 respect to $\ov D_{0},\dots,\ov D_{d}$, denoted $\h_{v,\ov
   D_{0},\dots,\ov D_{d}}(Y)$, is defined by multilinearity and
 continuity from the local height of a
 cycle with respect to semipositive line bundles \cite[Definition
 1.4.11]{BurgosPhilipponSombra:agtvmmh}. These $v$-adic heights satisfy
 the \emph{B\'ezout formula}
\begin{multline} \label{eq:20}
  \h_{v,\ov D_{0},\dots,\ov
  D_{d}}(Y)= \h_{v,\ov D_{0},\dots,\ov
  D_{d-1}}(Y\cdot D_{d}) \\-\int_{X_{v}^{\an}}\log\Vert s_{D_{d}}\Vert_v
\chern_{1}(\ov D_{0})\wedge \dots\wedge \chern_{1}(\ov D_{d-1}) \wedge
\delta_{Y}, 
\end{multline}
where $\chern_{1}(\ov D_{0})\wedge \dots\wedge \chern_{1}(\ov D_{d-1}) \wedge
\delta_{Y}$ is the measure on $X^{\an}_{v}$ defined by multilinearity
and continuity from the corresponding measures associated to semipositive 
(smooth or algebraic) metrized line bundles. 

For semipositive metrized $\R$-divisors $\ov D_{i}\in \Car(X)_{\R}$,
$i=0,\dots, d$, the \emph{global height} of $Y$ with respect
to $\ov D_{0},\dots,\ov D_{d}$, denoted by $\h_{\ov
  D_{0},\dots,\ov D_{d}}(Y)$, can be defined from local heights
similarly as in Definition 1.5.9
of {\it loc. cit.}. If $D_{0},\dots, D_{d}$ meet $Y$ properly, then
\begin{displaymath}
  \h_{\ov D_{0},\dots,\ov
  D_{d}}(Y)= \sum_{v\in \mathfrak M_{\K}} n_{v }\h_{v,\ov D_{0},\dots,\ov
  D_{d}}(Y).
\end{displaymath}
Finally, the notions of local and global heights of cycles extend to DSP
metrized $\R$-divisors by multilinearity. 

Next, we define arithmetic linear series, arithmetic volume and
$\chi$-arithmetic volume of a metrized $\R$-divisor $\ov D$ on $X$, extending to our
framework the previous definitions by Moriwaki and Yuan. Let $s\in
\Gamma(X,D)^{\times}$ be a nonzero global section of $D$. For each place
$v$, the function $\Vert s\Vert_{v}\colon X^{\an}_{v}\setminus
|\div(s)|\to \R_{>0}$ can be extended to a continuous function
$X^{\an}_{v}\to \R_{\ge0}$ because $\div(s)$ is effective.  We set
\begin{displaymath}
  \|s\|_{v,\sup}=\sup_{p\in X_{v}^{\an}}\|s(p)\|_{v}.
\end{displaymath}
This induces a $v$-adic norm on the $\K_{v}$-vector space
$\LL(D)\otimes \K_{v}$. The collection of norms
$\{\|\cdot\|_{v,\sup}\}_{v}$ gives a structure of generically trivial
adelic vector space on $\LL(D)$, that we denote by $\LL(\ov D)$. The
small (respectively strictly small, strictly small on a set of places)
elements of $\LL(\ov D)$ will be called \emph{small} (respectively
\emph{strictly small}, \emph{strictly small on a set of places})
sections.  We write $\whL(\ov D)=\wh H^{0}(\LL(\ov D))$ for the set of
small sections of $\LL(\ov D)$ and we also set $ \whl(\ov D) =\wh h^{0}(\LL(\ov
D))$ (Definition~\ref{def:12}). Hence,
  \begin{displaymath}
    \whl(\ov D) =
  \begin{cases}
    \log (\# \whL(\ov D))&\text{ if $\K$ is a number field,} \\
    \log(c_{k}) \dim_{k} (\whL(\ov D)) &\text{ if $\K$ is a
      function field.}
  \end{cases}
  \end{displaymath}

  \begin{defn} \label{def:3}
The \emph{arithmetic volume} and the \emph{$\chi$-arithmetic volume}
of a metrized $\R$-divisor $\ov D$ on $X$ are respectively defined as 
  \begin{align*} 
    \avol(X,\ov D)&=\frac{1}{d_{\K}}\limsup_{\ell\to \infty} \frac{
      \whl(\ell \ov D) }{\ell^{n+1}/(n+1)!},\\
  \avol_{\chi}(X,\ov D)&=\frac{1}{d_{\K}}\limsup_{\ell\to \infty} \frac{
      \wh\chi(\LL(\ell \ov D)) }{\ell^{n+1}/(n+1)!}. 
  \end{align*}
  \end{defn}

  \begin{rem}\label{rem:9}
    For metrized divisors on a variety over a number field, the $\limsup$ in the
    above definition is actually a limit \cite[Theorems 2.8 and
    3.1]{BoucksomChen:Obfs}.  We will not use this fact in this text.
  \end{rem}

  By extension, a global $\R$-section $s\in \Gamma(X,D)^{\times}_{\R}$
  is called \emph{small} if $\|s\|_{v,\sup}\le 1$ for all $v\in
  \mathfrak{M}_{\K}$. The set of small elements of
  $\Gamma(X,D)^{\times}_{\R}$ is denoted by $\wh \Gamma(X,\ov
  D)^{\times}_{\R}$. 

  A small $\R$-section $s$ of $\ov D$ is called
  \emph{strictly small} if $\prod_{v}\|s\|_{v,\sup}^{n_{v}}< 1$. If
  $S\subset\mathfrak{M}_{\K}$ is a finite set of places, the $\R$-section $s$
  is \emph{strictly small on} $S$ if $\|s\|_{v,\sup}< 1$ for all $v\in
  S$.

  Lemma~\ref{lemm:10}  has the following useful consequence for the
  existence of small $\R$-sections which are strictly small on a given
  set of places.

\begin{prop}\label{prop:2}
  Let $\ov D$  be a metrized $\R$-divisor on $X$ and  $s\in 
  \Gamma(X,D)^{\times}_{\R}$ a global $\R$-section such that
  \begin{displaymath}
    \prod_{v\in \mathfrak{M}_{\K}}\|s\|_{v,\sup}^{n_{v}}< 1.
  \end{displaymath}
  Let $S\subset \mathfrak{M}_{\K}$ be a finite subset and $0<\eta\le
  1$ a real number. Then there is an integer $\ell_{0}\ge 1$ such that,
  for each $\ell\ge \ell_{0}$, there exists $\alpha_{\ell} \in
  \K^{\times}$ such that $\|\alpha_{\ell} s^{\ell}\|_{v,\sup}\le 1$
  for all $v\in \mathfrak{M}_{\K}$ and
  \begin{math}
    \|\alpha_{\ell} s^{\ell}\|_{v,\sup}< \eta
  \end{math}
  for all $v\in S$. 

In particular, $\alpha _{\ell}s^{\ell}\in
  \wh\Gamma(X,\ell \ov D)^{\times}_{\R}$ is strictly small on $S$.
\end{prop}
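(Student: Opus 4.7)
The plan is to reduce the proposition to Lemma~\ref{lemm:10} applied to the collection $\gamma_v := \|s\|_{v,\sup}$. Three hypotheses of that lemma need to be verified: positivity of each $\gamma_v$, that $\gamma_v = 1$ for all but finitely many $v$, and the product inequality $\prod_v \gamma_v^{n_v} < 1$. Positivity follows from $s$ being a nonzero global $\R$-section together with the continuous extension of $\|s\|$ provided by Proposition~\ref{prop:5}, and the product condition is the standing hypothesis. The finiteness assertion is the principal technical point.

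To verify finiteness, I would decompose $s = (f,D)$ by writing $D = \sum_i \alpha_i D_i$ with integer Cartier divisors $D_i$ and $f = \prod_j g_j^{\beta_j}$ with $g_j \in \K(X)^\times$. Choosing a $\Q$-basis $\{\gamma_l\}$ of the $\Q$-span of $\{\alpha_i,\beta_j\}$ as in the proofs of Lemma~\ref{lemm:3} and Proposition~\ref{prop:5}, one expresses $\ov D = \sum_l (\gamma_l/m)\,\ov E_l$ with each $\ov E_l$ a quasi-algebraic (integer) metrized divisor, and correspondingly $s$ as a product of integer global sections $t_l \in \Gamma(X,E_l)^\times$ raised to the real exponents $\gamma_l/m$. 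Since each adelic vector space $\LL(\ov E_l)$ is generically trivial, $\|t_l\|_{v,\sup} = 1$ for all but finitely many $v$; taking the product then yields $\gamma_v = 1$ outside a finite set of places.

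With Lemma~\ref{lemm:10} applicable to $\{\gamma_v\}$, $S$, and $\eta$, it produces $\ell_0 \ge 1$ and, for each $\ell \ge \ell_0$, an element $\alpha_\ell \in \K^\times$ with $|\alpha_\ell|_v \gamma_v^\ell \le 1$ for all $v \in \mathfrak{M}_\K$ and $|\alpha_\ell|_v \gamma_v^\ell < \eta$ for $v \in S$. The homogeneity of $v$-adic metrics under scalar multiplication (Definition~\ref{def:26}\eqref{item:30}), combined with the pointwise identity $\|s^\ell(p)\|_v = \|s(p)\|_v^\ell$ (immediate from the description of $\|s\|$ in Proposition~\ref{prop:5}), gives $\|\alpha_\ell s^\ell\|_{v,\sup} = |\alpha_\ell|_v \gamma_v^\ell$, so the two required bounds transfer directly. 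Since $\eta \le 1$, the section $\alpha_\ell s^\ell$ lies in $\wh\Gamma(X,\ell\ov D)^\times_\R$ and is strictly small on $S$.

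The main obstacle is the finiteness step, because $s$ is an $\R$-section while the generic triviality statement holds a priori only for ordinary integer sections; this is handled by the decomposition trick via a $\Q$-basis of real coefficients together with the multiplicativity of $\sup$-norms under products. The remainder of the argument is bookkeeping applied to the output of Lemma~\ref{lemm:10}.
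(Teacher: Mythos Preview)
Your approach is exactly the paper's: set $\gamma_v = \|s\|_{v,\sup}$ and invoke Lemma~\ref{lemm:10}. The paper's proof is a single sentence and does not verify the hypotheses of that lemma, so your added care is appropriate in spirit.

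There is, however, a slip in your finiteness step. After decomposing along a $\Q$-basis $\{\gamma_l\}$ you assert that the resulting $t_l = (g_l, E_l)$ are \emph{global} sections of $E_l$, so that generic triviality of $\LL(\ov E_l)$ applies. But effectiveness of $\div(s) = \sum_l (\gamma_l/m)\,\div(t_l)$ does not force each $\div(t_l) \ge 0$: at a fixed prime divisor this reads $\sum_l \gamma_l a_l \ge 0$ with $a_l \in \Z$ and $\gamma_l$ $\Q$-linearly independent reals, which does not imply $a_l \ge 0$ (e.g.\ $\gamma_1 = 1$, $\gamma_2 = \sqrt{2}$, $a_1 = 2$, $a_2 = -1$). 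Since the $t_l$ need not be global, $\|t_l\|_{v,\sup}$ may be infinite and the factorisation of sup-norms you use breaks down. A cleaner route to $\gamma_v = 1$ for almost all $v$ avoids the decomposition of $s$: by quasi-algebraicity of $\ov D$, outside a finite set of places the $v$-adic metric is induced by a fixed integral model of $(X,D)$, and the finitely many $f_j \in \K(X)^\times$ appearing in $f$ have $v$-integral reduction at almost all $v$; at such places one checks directly that $\|s(p)\|_v \le 1$ on $X_v^{\an}$ with equality generically, whence $\|s\|_{v,\sup} = 1$.
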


\begin{proof}
This follows by applying Lemma \ref{lemm:10} to the real numbers $\gamma
  _{v}=\|s\|_{v,\sup}$. 
\end{proof}

\begin{lem} \label{lemm:5} Let $\ov D$ be a metrized divisor on $X$. If $\ov D$ has
  a strictly small $\R$-section $s$, then there exists a positive
  integer $\ell$ such that $\ell \ov D$ has a strictly small section
  $s_{0}$ with $|\div(s_{0})|= |\div(s)|$.     
\end{lem}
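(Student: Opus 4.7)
The plan is to rationalize the real exponents defining $s$ while keeping both the divisor support and the strict smallness condition, and then to clear denominators by raising the resulting section to a suitable integer power.

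Write $s = (f,D)$ with $f = \prod_{j=1}^{k} f_{j}^{\beta_{j}}$, where $f_{j}\in K(X)^{\times}$ and $\beta_{j}\in\R$. Let $P\subset \Div(X)$ be the finite set of prime Weil divisors with nonzero coefficient in $D$ or in some $\div(f_{j})$, write $d_{Z}$ for the coefficient of $Z\in P$ in $D$ and $v_{Z}(f_{j})$ for its coefficient in $\div(f_{j})$, and set $P_{e} = P \setminus |\div(s)|$. Consider the affine subspace
\[
L = \Big\{\bft \in \R^{k} \ \Big|\ d_{Z} + \sum_{j} t_{j}\,v_{Z}(f_{j}) = 0 \text{ for every } Z\in P_{e}\Big\}.
\]
This is defined by rational linear equations and contains $\beta$, so $L$ is non-empty and the rational points $L\cap\Q^{k}$ are dense in $L$. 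Moreover, for every $\bft\in L$, the $\R$-section $s_{\bft} := (\prod_{j}f_{j}^{t_{j}}, D)$ has $\div(s_{\bft})$ supported in $|\div(s)|$.

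The core step is to choose $\tilde\beta \in L\cap\Q^{k}$ close enough to $\beta$ that: (a) $d_{Z} + \sum_{j}\tilde\beta_{j}\,v_{Z}(f_{j}) > 0$ for every $Z\in |\div(s)|$, ensuring $|\div(s_{\tilde\beta})| = |\div(s)|$; and (b) $\tilde s := s_{\tilde\beta}$ remains strictly small, i.e.\ $\prod_{v}\|\tilde s\|_{v,\sup}^{n_{v}}<1$. Condition (a) is an open condition on $L$ satisfied at $\beta$, hence stable under small perturbations. Condition (b) is the main obstacle and requires the continuity of the map $\bft\mapsto\prod_{v}\|s_{\bft}\|_{v,\sup}^{n_{v}}$ at $\beta$. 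For each fixed $v$, the continuity of $\bft\mapsto\|s_{\bft}\|_{v,\sup}$ follows from the joint continuity of $(\bft,p)\mapsto\|s_{\bft}(p)\|_{v}$ on $L\times X_{v}^{\an}$ together with the compactness of $X_{v}^{\an}$; the delicate points are the $p\in|\div(s)|$, where $\|s_{\bft}(p)\|_{v}=0$ vanishes continuously because the multiplicities of $\div(s_{\bft})$ along components of $|\div(s)|$ stay uniformly bounded away from $0$ for $\bft$ near $\beta$. The quasi-algebraicity of $\ov D$ together with the finiteness of the collection $\{f_{j}\}$ confines the places where $\|s_{\bft}\|_{v,\sup}\ne 1$ to a fixed finite subset of $\mathfrak M_{\K}$, reducing the product to a finite one and giving its continuity.

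Granted such $\tilde\beta$, let $\ell$ be any positive integer such that $\ell\tilde\beta_{j}\in\Z$ for every $j$. Then
\[
s_{0} := \tilde s^{\ell} = \Big(\prod_{j} f_{j}^{\ell\tilde\beta_{j}},\ \ell D\Big)
\]
is an honest global section of $\ell D$ because $\prod_{j}f_{j}^{\ell\tilde\beta_{j}}\in K(X)^{\times}$. By (a) its support is $|\div(s_{0})| = |\div(\tilde s)| = |\div(s)|$, and it is strictly small because $\prod_{v}\|s_{0}\|_{v,\sup}^{n_{v}} = \bigl(\prod_{v}\|\tilde s\|_{v,\sup}^{n_{v}}\bigr)^{\ell} < 1$. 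This produces the desired section.
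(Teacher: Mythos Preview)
Your approach is essentially the same as the paper's---rationalize the real exponents using a density argument, then clear denominators---but your proof has a genuine gap at the very last step. In this paper, \emph{strictly small} means small (that is, $\|s_{0}\|_{v,\sup}\le 1$ for every $v$) \emph{and} $\prod_{v}\|s_{0}\|_{v,\sup}^{n_{v}}<1$. Your final sentence only checks the product condition. Nothing in your argument guarantees that $\|\tilde s\|_{v,\sup}\le 1$ for every place: the original $s$ is small, but at a place where $\|s\|_{v,\sup}=1$ the perturbation to $\tilde\beta$ can push $\|\tilde s\|_{v,\sup}$ strictly above~$1$, and then $\|s_{0}\|_{v,\sup}=\|\tilde s\|_{v,\sup}^{\ell}>1$, so $s_{0}$ is not small at all.

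The fix is exactly what the paper does: once you have a rational $\R$-section $\tilde s$ with $\prod_{v}\|\tilde s\|_{v,\sup}^{n_{v}}<1$, invoke Proposition~\ref{prop:2} (which is Lemma~\ref{lemm:10} applied to $\gamma_{v}=\|\tilde s\|_{v,\sup}$) to obtain $\ell\ge 1$ and $\gamma\in\K^{\times}$ such that $s_{0}:=\gamma\,\tilde s^{\,\ell}$ is small at every place and strictly small on a chosen finite set; multiplying by $\gamma\in\K^{\times}$ does not alter the support of the divisor, so $|\div(s_{0})|=|\div(\tilde s)|=|\div(s)|$ is preserved. Apart from this missing step, two minor differences from the paper are worth noting: the paper handles continuity of $\bft\mapsto\prod_{v}\|s_{\bft}\|_{v,\sup}^{n_{v}}$ via log-convexity of each factor (which immediately yields continuity on the open set $\Lambda$), avoiding your compactness analysis near $|\div(s)|$; and the paper parametrizes by first choosing a $\Q$-basis of the span of the exponents with one rational coordinate, whereas you work on the rational affine subspace~$L$---both choices achieve the same end.
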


\begin{proof} Write $s=(f,D)$ with
  $f=\prod_{i=1}^{d}f_{i}^{\alpha_{i}}$ where $\alpha_{1},\dots, \alpha_{d}$
  are $\Q$-linearly independent real numbers and $\alpha_{1} \in \Q$. 
  The associated $\R$-Weil divisor can be written down as
  $[\div(s)]=\sum_{j}r_{j}Z_{j}$  with $r_{j}>0$ and $Z_{j}$ an
  irreducible hypersurface. Then
  \begin{displaymath}
    |\div(s)|= \bigcup_{j}Z_{j}= |\alpha_{1}\div(f_{1})+D|\cup\bigcup_{i=2}^{d}|\div(f_{i})|.
  \end{displaymath}
  
  Let $\sigma\colon \R^{d-1}\to \Rat(X,D)_{\R}^{\times}$ be the map defined by 
  \begin{displaymath}
    \sigma(\beta_{2},\dots, \beta_{d})= \Big(
    f_{1}^{\alpha_{1}}\cdot\prod_{i=2}^{d}f_{i}^{\beta_{j}}, D\Big).
  \end{displaymath}
  Then, for each $\beta\in \R^{d-1}$,  $[\div(\sigma(\beta))]=
  \sum_{j}r_{j}(\beta) Z_{j}$ where $r_{j}$ is an affine function. Consider
  the open polyhedral set
  \begin{displaymath}
    \Lambda=\{ \beta\in \R^{d-1}\mid r_{j}(\beta) >0 \text{ for all }
    j\}.
  \end{displaymath}
  This set is nonempty because $(\alpha _{2},\dots,\alpha
  _{d})\in\Lambda $. Let $ \tau\colon\Lambda\to \R$ be the function
  defined by
  \begin{displaymath}
    \tau(\beta)= \prod_{v}\Vert\sigma(\beta)\Vert_{\sup,v}^{n_{v}}.
  \end{displaymath}
For $v\in \mathfrak M_{\K}$, $\beta,\beta'\in \Lambda$
and $0\le \theta\le 1$,  
\begin{displaymath}
  \Vert\sigma(\theta\beta+(1-\theta)\beta')\Vert_{\sup,v}\le
  \Vert\sigma(\beta)\Vert_{\sup,v}^{\theta}
  \Vert\sigma(\beta')\Vert_{\sup,v}^{1-\theta}.
\end{displaymath}
From this, we deduce that $\tau$ is log-concave on $\Lambda$ and, a
fortiori, continuous. Since $\tau(\alpha_{2},\dots,\alpha_{d}) <1$, there exists $\beta\in \Lambda\cap
\Q^{d-1}$ such that $\tau(\beta)<1$. By Proposition~\ref{prop:2},
there exists $\gamma\in  \K^{\times}$ and a positive integer $\ell$
such that $s_{0}=\gamma\, \sigma(\beta)^{\ell}$ is a strictly small section of
$\ell \ov D$.  By construction, 
\begin{displaymath}
  |\div(s_{0})|= \bigcup_{j}Z_{j}= |\div(s)|,
\end{displaymath}
as stated.
  \end{proof}

We introduce the different notions of arithmetic positivity for a
metrized $\R$-divisor. 

  \begin{defn} \label{def:22} Let $X$ be a proper normal variety over $\K$
    and $\ov D$ a metrized $\R$-divisor on $X$.
  \begin{enumerate}
  \item \label{item:66} $\ov D$ is \emph{generated by small
      $\R$-sections} if, for each $p\in X({\ov K})$, there exists
    $s=(f,D)\in\wh \Gamma(X,\ov D)^{\times}_{\R}$ such that $p\not \in
    |\div (s)|$.
  \item \label{item:41} $\ov D$ is  \emph{ample} if the following
    conditions hold: 
  \begin{enumerate}
  \item \label{item:60} the $\R$-divisor $D$ is ample;
  \item \label{item:61} the metric is semipositive;
  \item \label{item:62} for each $\ov M\in \wh \Car(X)_{\R}$ there exists
    an  $\ell_{0}$ such that, for all real numbers $\ell\ge \ell_{0}$,
    the metrized $\R$-divisor $\ov M+\ell \ov D$ is generated by small
    $\R$-sections.
  \end{enumerate}
  \item \label{item:42} $\ov D$ is \emph{nef} if the following conditions hold:
  \begin{enumerate}
  \item \label{item:63} the $\R$-divisor $D$ is nef;
  \item \label{item:64} the metric is semipositive;
  \item \label{item:65} for every  point $p \in X({\ov\K})$ it holds $h_{\ov D}(p)\ge 0$.
  \end{enumerate}
  \item \label{item:57} $\ov D$ is \emph{big} if
  $\avol(X,\ov D)> 0.$
\item \label{item:58} $\ov D$ is \emph{pseudo-effective} if there
  exists a birational map $\varphi\colon X'\to X$ of normal  proper
  varieties over $K$ and a
  metrized $\R$-divisor $\ov E$ on $X'$ such that $ \ell \varphi
  ^{\ast}\ov D +\ov E$ is big for all $\ell \ge 1$.
\item \label{item:59} $\ov D$ is \emph{effective} if $(1,D)\in
  \whL(\ov D)$. Given $\R$-divisors $\ov D_{1},\ov D_{2}$ on $X$, the
  fact that $\ov D_{1}-\ov D_{2}$ is effective is denoted by $\ov
  D_{1}\ge \ov D_{2}$.
  \end{enumerate}
\end{defn}

The notion of metrized $\R$-divisor contains that of arithmetic
$\R$-divisor introduced by Moriwaki \cite{Moriwaki:zdas}.

\begin{exmpl}\label{exm:3} 
  Let $\cX$ be normal projective flat scheme over $\Z$ with
  smooth generic fiber $X=\cX\times \Spec (\Q)$.  An \emph{arithmetic
    $\R$-Cartier divisor} on $\cX$ is a pair $\ov {\cD}=(\cD,g)$
  where $\cD$ is an $\R$-Cartier divisor on $\cX$ and $g$ is a locally
  integrable real function on $X(\C)$ that is invariant under complex
  conjugation. Let $D$ be the restriction of $\cD$ to $X$. The
  function $g$ is called a \emph{Green function for $D$ of $C^{0}$-type}
  (respectively \emph{$C^{\infty}$-type, PSH-type}) if, for every
  $p\in X(\C)$, it can be locally written as
  \begin{displaymath}
    g(x)=u(x)+\sum_{i=1}^{k} (-\alpha_{i})\log|f_{i}(x)|^{2},
  \end{displaymath}
  where $D=\sum_{i=1}^{k} \alpha_{i} D_{i}$ is the decomposition of $D$ into
  irreducible components, $f_{i}$ is a local equation
  for $D_{i}$, and $u$ is a continuous function
  (respectively a smooth function, a plurisubharmonic
  function). If $g$ is a Green function of $C^{0}$-type (respectively
  $C^{\infty}$-type, PSH-type), then $\ov D=(\cD,g)$ is called an
  \emph{arithmetic divisor of $C^{0}$-type} (respectively
  \emph{$C^{\infty}$-type, PSH-type}).

  On the one hand, extending the method of Zhang \cite{Zhang:_small}
  (see also \cite[\S 1.3]{BurgosPhilipponSombra:agtvmmh}) to
  $\R$-Cartier divisors, the
  integral model $\cD$ defines an algebraic $v$-adic metric on
  $D$ for each  place $v\in \mathfrak{M}_{\Q}\setminus \{\infty\}$. On the
  other hand, if $g$
  is a Green function for~$D$ of $C^{0}$-type, then the function
  $\frac{1}{2}g$ is an $\infty$-adic Green function for $D$ which, by
  Proposition \ref{prop:24}, induces an $\infty$-adic metric on
  $D$, The factor $1/2$ comes from the different normalization of
  Green functions in \cite{GilletSoule:ait} and~\cite{Burgos:CDB}.
  
  Therefore, each arithmetic $\R$-divisor $\ov \cD$ of $C^{0}$-type
  defines a quasi-algebraic metrized $\R$-divisor on $X$ that we denote by
  $\ov D$. The metrized $\R$-divisors that arise in this way are
  called \emph{algebraic}. 
\end{exmpl}

We now discuss the relationship between the
  different  {notions of  positivity} for arithmetic
  $\R$-divisors that appear in \cite{Moriwaki:zdas} and
  those in the present text.  Let $\ov \cD=(\ov D,g)$ be an arithmetic
  $\R$-divisor of $C^{0}$-type and $\ov D$  its associated
  algebraic metrized $\R$-divisor.
\begin{enumerate}
\item $\ov \cD$ is of $PSH$-type if and only  {if} the $\infty$-adic
  metric induced by its Green function is semipositive.
\item \label{item:1} If the $\R$-divisor $\cD$ is relatively nef, then
  the  induced $v$-adic metrics are semipositive  for all
  $v\not=\infty$. The
  converse of this result is not established yet, except in the
  case of  equal characteristic zero \cite[Theorem 2.17]{boucksom:snama}.
 
\item $\ov \cD$ is effective (respectively big, pseudo-effective) in
  the sense of \cite{Moriwaki:zdas} if and only if $\ov D$ is
  effective (respectively big, pseudo-effective) in the sense of
  Definition \ref{def:22}.
\item If $\ov \cD$ is nef in the sense of \cite[\S 6.1]{Moriwaki:zdas}
  then $\ov D$ is nef in the sense of Definition \ref{def:22}.  The
  converse is not known because the converse of \eqref{item:1} is not
  known.
\item If $\ov \cD$ is of $C^{\infty}$-type, then it is ample in the
  sense of \cite[\S 
  6.1]{Moriwaki:zdas} if and only if $\ov D$ is ample in the sense of the
  present text. However, 
  the definition of ampleness in \emph{loc. cit.} 
  includes being of $C^{\infty}$-type, while in the present text an
  ample metrized $\R$-divisor is not necessarily of
  $C^{\infty}$-type. 
\end{enumerate}

The following statements contain some  of the  basic properties of
metrized $\R$-divisors.

\begin{prop} \label{prop:6}
  Let $\ov D$ be an ample metrized $\R$-divisor on $X$. Then
  \begin{enumerate}
  \item \label{item:16} $ \ov D$ is generated by strictly small
    $\R$-sections;
  \item \label{item:75} for all subvarieties $Y$ of $X_{\ov\K}$, it holds $\h_{\ov D}(Y)>0$.
  \end{enumerate}
\end{prop}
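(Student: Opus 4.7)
The key idea is to apply condition (c) of ampleness with an arithmetically negative auxiliary $\ov M$ having underlying divisor zero. Fix $p\in X(\ov\K)$, choose a finite subset $S\subset \mathfrak{M}_{\K}$ and positive reals $\{\delta_{v}\}_{v\in S}$ with $\sum_{v\in S}n_{v}\delta_{v}>0$, and let $\ov N$ be the metrized $\R$-divisor with underlying divisor $0$ and $\|1\|_{v}=e^{\delta_{v}}$ for $v\in S$, $\|1\|_{v}=1$ otherwise. Applying ampleness to $\ov M=\ov N$ yields, for some sufficiently large real $\ell$, a small $\R$-section $s'=(f',\ell D)\in \wh\Gamma(X,\ov N+\ell\ov D)^{\times}_{\R}$ with $p\notin |\div(s')|$. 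A direct comparison of Green functions gives $\|s'\|_{v,\sup,\ell\ov D}\le e^{-\delta_{v}}$ for each~$v$, so $\prod_{v}\|s'\|_{v,\sup,\ell\ov D}^{n_{v}}<1$. Setting $s=((f')^{1/\ell},D)$ produces an $\R$-section of $\ov D$ with the same divisor support and with sup-norms equal to the $1/\ell$-powers of those of $s'$; hence $s$ is a strictly small $\R$-section of $\ov D$ avoiding $p$.

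\textbf{Proof plan for part (ii).} I would argue by induction on $d=\dim Y$, using part (i) in each case. For $d=0$, $Y=\{p\}$, and (i) furnishes a strictly small $s=(f,D)$ avoiding $p$. Using the product formula (extended to $\R$-sections and algebraic points via the adelic structure on the residue field of $p$),
\[
\h_{\ov D}(p)=-\sum_{v}n_{v}\log\|s(p)\|_{v}\ge -\sum_{v}n_{v}\log\|s\|_{v,\sup}>0,
\]
where the last inequality uses the strict smallness of $s$. For $d\ge 1$, apply (i) at the generic point of $Y$ to produce a strictly small $\R$-section $s=(f,D)$ of $\ov D$ with $Y\not\subset|\div(s)|$. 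Then $\div(s)$ meets $Y$ properly and $\div(s)\cdot [Y]=\sum_{j}m_{j}Y_{j}$ with $m_{j}>0$ and $\dim Y_{j}=d-1$. B\'ezout's formula~\eqref{eq:20}, extended by linearity to $\R$-sections and summed over places with weights $n_{v}$, gives
\[
\h_{\ov D}(Y)=\sum_{j}m_{j}\h_{\ov D}(Y_{j}) - \sum_{v}n_{v}\int_{X_{v}^{\an}}\log\|s\|_{v}\,\chern_{1}(\ov D)^{d}\wedge\delta_{Y}.
\]
The first term is strictly positive by the induction hypothesis. Since $\ov D$ is semipositive and $D$ is ample, the measure $\chern_{1}(\ov D)^{d}\wedge\delta_{Y}$ is positive of total mass $\deg_{D}(Y)>0$; combined with $\log\|s\|_{v}\le \log\|s\|_{v,\sup}$ and strict smallness, this yields
\[
-\sum_{v}n_{v}\int_{X_{v}^{\an}}\log\|s\|_{v}\,\chern_{1}(\ov D)^{d}\wedge\delta_{Y}\ge -\deg_{D}(Y)\sum_{v}n_{v}\log\|s\|_{v,\sup}>0,
\]
so $\h_{\ov D}(Y)>0$, completing the induction.

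The main obstacle is confirming that B\'ezout's formula extends correctly by multilinearity to $\R$-sections and that the section supplied by (i) can be arranged to meet $Y$ properly, so that $\div(s)\cdot[Y]$ is well-defined as an effective cycle. Both points reduce to machinery already in place: local and global heights extend to DSP metrized $\R$-divisors by multilinearity (as emphasized just before Definition~\ref{def:3}), and (i) applied at the generic point of $Y$ prevents $Y$ from being contained in $|\div(s)|$. A minor additional check is to justify the product formula for $\R$-sections at algebraic points, which decomposes into integer-exponent cases via $K(X)^{\times}_{\R}=K(X)^{\times}\otimes\R$ and then reduces to the usual product formula on the residue field.
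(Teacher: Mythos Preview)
Your proof is correct and follows essentially the same approach as the paper: for (i) you apply the ampleness condition~(c) to the trivial divisor with artificially inflated norms at a finite set of places (the paper uses a single place $v_{0}$ with $\|1\|_{v_{0}}=2$), and for (ii) you run the same induction via B\'ezout's formula, bounding the integral term using semipositivity of the measure and strict smallness of the section. One minor imprecision: in the inductive step you should apply (i) at a closed $\ov\K$-point of $Y$ rather than its generic point, since (i) is only stated for $p\in X(\ov\K)$; picking any such point gives $Y\not\subset|\div(s)|$, which is all you need.
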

  
\begin{proof}
  \eqref{item:16} Let $v_{0}\in \mathfrak{M}_{\K}$ and consider the
  trivial divisor $0\in \Car(X)$ with the metric defined by $\|1\|_{v}=1$ for $v\not = v_{0}$ and
  $\|1\|_{v_{0}}=2$. By the ampleness of $\ov D$, there exists $\ell \ge 0$ such that
  $\ov 0+\ell \ov D$ is generated by small $\R$-sections. These
  small $\R$-sections are  strictly small $\R$-sections of $\ell\ov
  D$, which proves the statement. 

  \eqref{item:75} We prove this by induction on $d=\dim(Y)$. Consider
  first the case $d=0$. Let $Y=\{p\}$ where $p$ is a
  point defined over a finite extension $\F$ of
  $\K$. By~\eqref{item:16}, there is a strictly small $\R$-section $s$
  of $\ov D$ such that $p\notin |\div(s)|$. Then
\begin{displaymath}
  \h_{\ov D}(p)=-\sum_{w\in \mathfrak{M}_\F}n_{w} \log \Vert s(p)\Vert_{w}>0. 
\end{displaymath}
Assume now that $Y$ is a subvariety of dimension $d\ge1$ defined over $\F$. Let
  $s$ be a strictly small
$\R$-section of $\ov D$ which meets $Y$ properly. By B\'ezout's
formula \eqref{eq:20},
  \begin{displaymath}
    \h_{\ov D}(Y)=\h_{\ov D}(Y\cdot \div s)-
    \sum_{w\in \mathfrak{M}_{\F}}n_{w}\int_{X^{\an}_{w}}\log
    \|s\|_{w} \,c_{1}(D,\Vert\cdot\Vert_{w})^{\land d}\land \delta _{Y}.
  \end{displaymath} 
  Since $D$ is ample, $Y\cdot\div(s)$ is a nonzero $(d-1)$-dimensional
  effective cycle.  Applying linearity and the induction hypothesis,
  $ {\h}_{\ov D}(Y\cdot \div s)> 0$. The fact that the metric is
  semipositive implies that, for each $w\in \mathfrak{M}_{\F}$, the
  signed measure $c_{1}(D,\Vert\cdot\Vert_{w})^{\land d}\land \delta
  _{Y}$ is positive and its total mass is $\deg_{D}(Y)$. Therefore
  \begin{multline*}
    \h_{\ov D}(Y) > -
    \sum_{w\in \mathfrak{M}_{\F}}n_{w}\int_{X^{\an}_{w}}\log
    \|s\|_{w} \,\chern_{1}(D,\Vert\cdot\Vert_{w})^{\land d}\land \delta
    _{Y} \\ \ge -
    \sum_{w\in \mathfrak{M}_{\F}}n_{w}\log
    \|s\|_{w,\sup } \int_{X^{\an}_{w}}\chern_{1}(D,\Vert\cdot\Vert_{w})^{\land d}\land
    \delta _{Y} > 0,
  \end{multline*}
since $s$ is a strictly small $\R$-section and the last integral is
equal to $\deg_D(Y)$ for all~$w$. 
\end{proof}

\begin{rem} \label{rem:16} In \cite{Zhang:plbas}, Zhang proved a
  Nakai-Moishezon numerical criterion of arithmetic ampleness in the
  form of a converse to Proposition \ref{prop:6}\eqref{item:75} for
  Hermitian line bundles, under some technical hypothesis. It would be
  interesting to know if such a result is true in full generality for
  metrized $\R$-divisors. Namely: let $\ov D$ be a semipositive
  metrized $\R$-divisor such that $\h_{\ov D}(Y)>0$ for all effective
  cycles $Y$ of $X$. Is it true that $\ov D$ is ample?
\end{rem}

\begin{lem}\label{lemm:7} Let 
  $\ov D$ be a metrized $\R$-divisor on $X$.
  \begin{enumerate}
  \item \label{item:87} Let $\varphi\colon Z\to X$ be a 
    birational morphism of normal proper varieties over $\K$. Then 
    \begin{displaymath}
 \wh \vol(X,\ov
    D)=\wh \vol(Z,\varphi^{\ast}\ov D).     
    \end{displaymath}
    In particular, $\ov D$ is big if and only if $\varphi^{\ast}\ov D$
    is big.
  \item \label{item:97} If $\ov D$ is big and $\ov E\in \wh \Car(X)$
    has a small section,  then $\ov D+\ov E$ is big.
  \item \label{item:98} If $\ell_{0} \ov D$ is big for some $\ell_{0} \ge 1$,
    then $\ov D$ is big.
  \end{enumerate}
\end{lem}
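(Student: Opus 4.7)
My plan for part \eqref{item:87} is to show that $\varphi^{\ast}$ induces a norm-preserving bijection $\whL(\ell\ov D)\to\whL(\ell\varphi^{\ast}\ov D)$ for every $\ell\ge 1$, from which the equality of arithmetic volumes is immediate by taking $\limsup$. Since $\varphi$ is birational, it identifies $K(X)^{\times}=K(Z)^{\times}$, and I would verify that $\div(f)+\ell D\ge 0$ on $X$ if and only if $\div(f)+\ell\varphi^{\ast}D\ge 0$ on $Z$: the forward direction uses that pulling back a Cartier divisor preserves effectivity, while the reverse reduces to the identity $\operatorname{ord}_{Y}(\div(f)+\ell D)=\operatorname{ord}_{\tilde Y}(\div(f)+\ell\varphi^{\ast}D)\ge 0$ at each prime Weil divisor $Y\subset X$ and its strict transform $\tilde Y\subset Z$. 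For the sup norms, since $\varphi$ is proper and dominant, its analytification $\varphi_{v}\colon Z_{v}^{\an}\to X_{v}^{\an}$ is surjective at every place $v$, whence $\|\varphi^{\ast}s\|_{v,\sup}=\|s\|_{v,\sup}$. The bigness equivalence is then immediate.

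For part \eqref{item:97}, let $s\in\whL(\ov E)$ be a small section. For each $\ell\ge 1$, multiplication by $s^{\ell}$ defines an injective $\K$-linear map $\LL(\ell\ov D)\to\LL(\ell(\ov D+\ov E))$, and submultiplicativity of the $v$-adic sup norms gives $\|t\,s^{\ell}\|_{v,\sup}\le\|t\|_{v,\sup}\|s\|_{v,\sup}^{\ell}\le 1$ whenever $t$ is small. Thus this map restricts to an injection $\whL(\ell\ov D)\hookrightarrow\whL(\ell(\ov D+\ov E))$, set-theoretic in the number field case and $k$-linear in the function field case (the $k$-linearity uses that $|\lambda|_{v}=1$ for every $\lambda\in k^{\times}$ at every $v$). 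In either case $\whl(\ell\ov D)\le\whl(\ell(\ov D+\ov E))$, and passing to $\limsup$ yields $\wh\vol(X,\ov D+\ov E)\ge\wh\vol(X,\ov D)>0$.

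Part \eqref{item:98} is the simplest of the three: extracting the subsequence $\ell=m\ell_{0}$ from the $\limsup$ defining $\wh\vol(X,\ov D)$ gives
\[
\wh\vol(X,\ov D)\ge\limsup_{m\to\infty}\frac{\whl(m\ell_{0}\ov D)}{d_{\K}(m\ell_{0})^{n+1}/(n+1)!}=\frac{1}{\ell_{0}^{n+1}}\wh\vol(X,\ell_{0}\ov D)>0.
\]
The only genuinely delicate point in the whole lemma is the surjectivity of $\varphi_{v}$ on $v$-adic analytifications invoked in part \eqref{item:87}: this is classical for Archimedean places, and for non-Archimedean ones it holds because analytification in the Berkovich sense preserves surjectivity of proper morphisms. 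Everything else reduces to multiplicativity of sup norms and elementary properties of $\limsup$ under subsequences.
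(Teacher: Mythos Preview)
Your proposal is correct and follows essentially the same approach as the paper. The only cosmetic differences are that for part \eqref{item:87} the paper phrases the reverse effectivity step via a local section of $\varphi$ over codimension-one points (using the valuative criterion and normality) rather than via strict transforms, and it asserts the sup-norm equality without spelling out the surjectivity of $\varphi_{v}^{\an}$ that you make explicit; parts \eqref{item:97} and \eqref{item:98} are identical to the paper's argument.
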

\begin{proof}
  \eqref{item:87} Let $s=(f,D)\in \Gamma (X,D)^{\times}$. Then 
  $\div (s)=\div (f)+D$ is effective and so 
  $\div (\varphi^{\ast}s)= \varphi^{\ast}\div(s)$ is also
  effective. Hence, 
  $\varphi^{\ast}s=(f\circ \varphi,\varphi^{\ast}D)\in \Gamma
  (Z,\varphi^{\ast}D)^{\times}$. Thus, there is  a well-defined map
  $$
  \varphi^{\ast}\colon 
  \Gamma (X,D)^{\times}\to \Gamma (Z,\varphi^{\ast}D)^{\times}.
  $$ 

  We claim that this map is a bijection. The injectivity is clear
  because the map $f\mapsto f\circ \varphi$ is a bijection between $
  {\rm K}(X)^{\times}$ and $ {\rm K}(Z)^{\times}$.  Let now $s=(f\circ \varphi
  ,\varphi^{\ast}D)\in \Gamma (Z,\varphi^{\ast}D)^{\times}$ and set
  $s'=(f,D)\in \Rat(X,D)^{\times}$. Let $[\div(s)]$ be the $\R$-Weil
  divisor associated to $\div(s)$. Since $\div(s)$ is effective, the
  same is true for $[\div(s)]$. Since~$X$ is normal and $Z$ is proper,
  for each codimension one point $x\in X^{(1)}$ there is a
  neighbourhood $U$ of $x$ and a section $U\to Z$ of
  $\varphi$ \cite[II, 7.3.5]{GrothendieckDieudonne:EGA}. This implies
  that $[\div(s')]$ is effective. By \cite[IV,
  (21.6.9.1)]{GrothendieckDieudonne:EGA}, it follows that $\div(s')$ is
  effective and so  $s'\in \Gamma (X,D)^{\times}$, proving the
  claim. 

Given $s\in \Gamma(X,D)^{\times}$, then $\|s\|_{v,\sup}=\|\varphi^{\ast}
  s\|_{v,\sup}$ for all  $v$ and so $\varphi^{*}$ induces an
  isomorphism between $\whL(\ov D)$ and $\whL(\varphi^{\ast}\ov D)$. Hence
  $\wh \vol(X,\ov D)=\wh \vol(Z,\varphi^{\ast}\ov D)$, which proves
  the first statement.

  \eqref{item:97} Let $s_{0}$ be a small section of $\ov E$. There is
  an injective map
  \begin{displaymath}
 \wh \LL(\ell\ov D)\hookrightarrow \wh\LL(\ell(\ov D+ \ov E)) 
  \end{displaymath}
given by $s\mapsto
  s_{0}^{\ell}s$. Hence, $\wh\vol(X,\ov D+\ov E) \ge \wh\vol(X,\ov D) $
  and the statement follows.

\eqref{item:98} Assume that $\ell_{0} \ov D$ is big. We have that
\begin{displaymath}
\frac{1}{d_{\K}}\limsup_{\ell\to \infty} \frac{
      \whl(\ell \ov D) }{\ell^{n+1}/(n+1)!}\\ \ge \frac{1}{d_{\K}}\limsup_{\ell\to \infty} \frac{
      \whl( \ell_{0}\ell \ov D) }{( \ell_{0}\ell)^{n+1}/(n+1)!} .
\end{displaymath}
Hence, $ \wh \vol(X,\ov D)  \ge
\wh \vol(X,\ell_{0}\ov D)/\ell_{0}^{n+1}>0$ and so $\ov D$ is big. 
\end{proof}

\begin{prop} \label{prop:27} Let $\ov D$ be a pseudo-effective
  metrized $\R$-divisor on $X$, $\rho\colon X''\to X$ a 
  birational map of normal proper varieties over $\K$, and $\ov A$ an
  ample metrized divisor on $X''$. Then $ \ell \rho^{\ast}\ov D+ \ov
  A$ is big for all $\ell\ge 1$.
\end{prop}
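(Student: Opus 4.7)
The plan is to pull everything back to a common birational model of $X'$ and $X''$, exploit the bigness witness afforded by pseudo-effectivity, and use that ampleness of $\ov A$ on $X''$ implies the bigness of its pullback in order to trade the original witness for a large multiple of $q^{\ast}\ov A$.

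First, by Definition~\ref{def:22}\eqref{item:58} applied to $\ov D$, I choose a birational map $\varphi\colon X'\to X$ of normal proper varieties over $\K$ and a metrized $\R$-divisor $\ov E$ on $X'$ such that $\ell\,\varphi^{\ast}\ov D+\ov E$ is big for every $\ell\ge 1$. I construct a common resolution: a normal proper variety $Z$ over $\K$ together with birational morphisms $p\colon Z\to X'$ and $q\colon Z\to X''$ satisfying $\varphi\circ p=\psi\circ q=:\pi$, for instance as the normalization of the closure of the graph of the rational map $X'\dashrightarrow X''$ inside $X'\times X''$. By Lemma~\ref{lemm:7}\eqref{item:87} applied to $p$, the metrized $\R$-divisor $\ell\,\pi^{\ast}\ov D+p^{\ast}\ov E$ is big on $Z$ for every $\ell\ge 1$; by the same lemma applied to $q$, it suffices to prove bigness of $\ell\,\pi^{\ast}\ov D+q^{\ast}\ov A$ on $Z$ for every $\ell\ge 1$.

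Since $\ov A$ is ample on $X''$ it is in particular big, whence by Lemma~\ref{lemm:7}\eqref{item:87} so is $q^{\ast}\ov A$ on $Z$. Invoking the continuity of $\wh\vol$ on $\wh\Car(Z)_{\R}$ (recorded in the introduction and available from \cite{Moriwaki:cav}) together with the homogeneity $\wh\vol(t\ov F)=t^{n+1}\wh\vol(\ov F)$ for $t>0$, I find an integer $N\ge 1$ for which the perturbation $q^{\ast}\ov A-\tfrac{1}{N}p^{\ast}\ov E$ still lies in the big cone; scaling by $N$ then gives bigness of $N\,q^{\ast}\ov A-p^{\ast}\ov E$ on $Z$. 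Unwinding the definition of bigness ($\wh\vol>0$) produces a positive integer $m$ such that $m(N\,q^{\ast}\ov A-p^{\ast}\ov E)$ admits a nonzero small integer section.

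Combining these facts through the decomposition
\begin{displaymath}
m\big(\ell\,\pi^{\ast}\ov D+N\,q^{\ast}\ov A\big)=m\big(\ell\,\pi^{\ast}\ov D+p^{\ast}\ov E\big)+m\big(N\,q^{\ast}\ov A-p^{\ast}\ov E\big),
\end{displaymath}
I apply Lemma~\ref{lemm:7}\eqref{item:97} (whose proof extends to the $\R$-Cartier setting as soon as the relevant small section is an integer section): the first summand is big and the second has a small section, so the sum is big. Lemma~\ref{lemm:7}\eqref{item:98} then drops the factor $m$, and Lemma~\ref{lemm:7}\eqref{item:87} transfers the bigness back to $X''$, yielding that $\ell\,\psi^{\ast}\ov D+N\ov A$ is big for every $\ell\ge 1$. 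A final rescaling $\ell\mapsto\ell/N$, using once more the homogeneity of $\wh\vol$, gives bigness of $\ell'\,\psi^{\ast}\ov D+\ov A$ for every $\ell'\ge 1/N$, which a fortiori covers all $\ell'\ge 1$. The main obstacle is the perturbation step above: continuity of $\wh\vol$ is what allows $q^{\ast}\ov A$ to ``absorb'' the $q$-exceptional components of $p^{\ast}\ov E$, an absorption that cannot be produced by any purely geometric pull-back comparison, since no metrized $\R$-divisor $\ov M$ on $X''$ has a pullback $q^{\ast}\ov M$ dominating a $q$-exceptional effective divisor on $Z$.
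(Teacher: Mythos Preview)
Your overall strategy mirrors the paper's: pass to a common model, decompose $j(\ell\,\pi^{\ast}\ov D+q^{\ast}\ov A)$ as a big piece plus something with a small section, then descend. The difference lies in how you produce the small section of $N\,q^{\ast}\ov A-p^{\ast}\ov E$ (equivalently, how you find $N$). You rely on two facts that are \emph{not} available in the framework of the paper:
\begin{itemize}
\item the implication ``$\ov A$ ample $\Rightarrow$ $\ov A$ big'', which is never established for general metrized $\R$-divisors here;
\item more seriously, the continuity of $\wh\vol$ on $\wh\Car(Z)_{\R}$.
\end{itemize}
The introduction mentions Moriwaki's continuity result only as historical background for \emph{arithmetic} divisors over $\Spec(\Z)$; a few paragraphs later it explicitly says that the continuity of the arithmetic volume in the present adelic/global-field setting ``falls outside the scope of the present text''. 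Since Proposition~\ref{prop:27} is later invoked for arbitrary global fields (including function fields over infinite constant fields, where no such continuity statement is on record), your perturbation step $q^{\ast}\ov A\rightsquigarrow q^{\ast}\ov A-\tfrac{1}{N}p^{\ast}\ov E$ is a genuine gap, not a cosmetic one.

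The paper circumvents both issues by working geometrically first and arithmetically second. From the \emph{geometric} ampleness of $A$ one gets geometric bigness of $p_{2}^{\ast}A$, hence an integer $j_{0}$ and a nonzero (not necessarily small) section $s_{0}$ of $j_{0}\,p_{2}^{\ast}A-p_{1}^{\ast}E$. Smallness is then manufactured from the \emph{arithmetic} ampleness of $\ov A$: by Proposition~\ref{prop:6}\eqref{item:16}, Lemma~\ref{lemm:5} and Proposition~\ref{prop:2} one finds $j_{1}$ and a small section $s_{1}$ of $j_{1}\,p_{2}^{\ast}\ov A$ which is strictly small, with margin $<\eta$, on the finite set of places where $\|s_{0}\|_{v,\sup}>1$. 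Then $s_{1}s_{0}$ is a small section of $(j_{0}+j_{1})\,p_{2}^{\ast}\ov A-p_{1}^{\ast}\ov E$, and the rest of the argument proceeds as you indicate. This replaces your appeal to continuity by an explicit, elementary construction valid over any global field.
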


\begin{proof}
  By the pseudo-effectiveness of $\ov D$, there is a birational map
  $\varphi\colon X'\to X$ from a normal proper variety $X'$ and a metrized divisor $\ov E$ on $X'$ such
  that $q\varphi^{*}\ov D+ \ov E$ is big for all $q \ge 1$. Consider
  the fibre product  of $X'$ and $X''$ over $X$ and the
  corresponding commutative diagram of varieties
  \begin{displaymath}
    \xymatrix{
      Y\ar[r]^{p_{1}}\ar[d]_{p_{2}} & X'\ar[d]^{\varphi}\\
      X''\ar[r]_{\rho}& X
    }
  \end{displaymath}
  where $p_{1}$ and $p_{2}$ are  birational maps.  Set
  $\phi=\varphi\circ p_{1}$. The  map $\phi\colon Y\to X$ is
  birational.  Since $A$ is ample, $p_{2}^{*}A$ is big, which implies
  that there is an integer $j_{0}\ge 1$ such that $j_{0}p_{2}^{*}A-
  p_{1}^{*} E$ has a nonzero section $s_{0}$.

  Let $S\subset \mathfrak M_{\K}$ be a finite subset such that
  $\|s_{0}\|_{\sup, v}\le 1$ for all $v\notin S$.  Let
  $\eta=(\sup_{v\in S}\|s_{0}\|_{v,\sup})^{-1}$.  Combining
  Proposition \ref{prop:6}\eqref{item:16}, Lemma \ref{lemm:5} and
  Proposition~\ref{prop:2}, we deduce that there exists $j_{1}\ge 1$
  and a small section $s_{1}$ of $j_{1}p_{2}^{*}\ov A$ such that
  $\|s_{1}\|_{v,\sup}<\eta$ for $v\in S$.  Hence, $s_{1}s_{0}$ is a
  small section of $(j_{0}+j_{1})p_{2}^{*}\ov A- p_{1}^{*}\ov E$. 

  Set $j_{2}=j_{0}+j_{1}$. Then
  \begin{displaymath}
    j_{2}(\ell\phi^{*} \ov D+ p_{2}^{*}\ov A)=(j_{2}\ell\phi^{*} \ov D+
    p_{1}^{*}\ov E)+ (j_{2}p_{2}^{*}\ov A- p_{1}^{*}\ov E).  
  \end{displaymath}
  Since $(j_{2}\ell\phi^{*} \ov D+ p_{1}^{*}\ov
  E)=p_{1}^{\ast}(j_{2}\ell \varphi^{\ast}\ov D+\ov E)$, this is a big
  metrized $\R$-divisor thanks to Lemma \ref{lemm:7}\eqref{item:87}.
  Lemma \ref{lemm:7}\eqref{item:97} and the fact that
  $j_{2}p_{2}^{*}\ov A- p_{1}^{*}\ov E$ has a small section imply
  that $j_{2}(\ell\phi^{*} \ov D+ p_{2}^{*}\ov A)$ is big. By Lemma
  \ref{lemm:7}\eqref{item:98}, $\ell\phi^{*} \ov D+ p_{2}^{*}\ov
  A=p_{2}^{\ast}(\ell \rho ^{\ast}\ov D+\ov A)$ is big. By Lemma
  \ref{lemm:7}\eqref{item:87}, we conclude that $\ell \rho ^{\ast}\ov
  D+\ov A$ is big.
\end{proof}

  In \cite{Moriwaki:tdutav}, Moriwaki proposed an extension of Dirichlet's
  unit theorem to the higher-dimensional case. The
  following is the natural extension of this question to our more general
  setting. 

\begin{ques}\label{ques:1}
  (Dirichlet's unit theorem) Let $\K$ be an $A$-field, $X$ a normal proper  variety
  over~$\K$ and $\ov D$ a metrized $\R$-divisor on $X$. Are the
  following conditions \eqref{item:48} and \eqref{item:52} equivalent?
   \begin{enumerate}
   \item \label{item:48} $\ov D$ is pseudo-effective; 
   \item \label{item:52} there exists $f\in  {\rm K}(X)^\times_{\R}$ such that
     $\ov D+\wh \div(f)\ge 0$.
   \end{enumerate}
\end{ques}

In the setting of Question \ref{ques:1}, the classical Dirichlet's unit
theorem  shows up when considering the
zero-dimensional case. Let $\K$ be an $A$-field and $S\subset \mathfrak{M}_{\K}$  a
finite subset containing the Archimedean
places. Let $U_{S}$ be the group of $S$-units of $\K$ and $H_{S}$ the
hyperplane of $\R^{S}$ defined by $\sum_{v\in S}n_{v}\xi
_{v}=0$. The adelic version of Dirichlet's unit theorem states that the
regulator map
$U_{S}\otimes_{\Z}\R\to H_{S}$ given by $u\mapsto(\log|u|_{v})_{v\in S}$
is an isomorphism \cite[Chapter IV, \S 4, Theorem 9]{Weil:bnt}.

Let now $X=\Spec(\K)$, $D=0$ the zero divisor on $X$ and
$(\xi_v)_{v\in S}\in H_{S}$. We put $\xi_{v}=0$ for $v\not \in
S$. Each $\xi_{v}$ gives a $v$-adic Green function for $D$ and we
denote by $\ov D$ the associated metrized divisor. It can be verified
that this metrized divisor is pseudo-effective (see for instance
Theorem \ref{thm:2}\eqref{item:100}).  Condition \eqref{item:52} for
$\ov D$ is equivalent to the existence of
$u\in\K^\times\otimes_{\Z}\R$ such that $\xi_v-\log|u|_v \geq 0$ for
all $v$. Since $\sum_vn_{v}\log|u|_v=0=\sum_vn_{v}\xi_v$ because of
the product formula, the previous inequality forces $\log|u|_v=\xi_v$
for all places and, in particular, $u\in U_S\otimes_\Z\R$.  Hence, the
implication \eqref{item:48}$\Rightarrow$\eqref{item:52} in Question
\ref{ques:1} is equivalent to the surjectivity of the regulator map in
Dirichlet's unit theorem.

In higher dimension, the fact that \eqref{item:52} implies
\eqref{item:48} follows from the facts that effective metrized
$\R$-divisors are pseudo-effective and that pseudo-effectiveness is
invariant with respect to linear equivalence.  Moriwaki has
proven the reverse implication  when $X$ is smooth
and projective, $D$ is numerically trivial and the metrics at the
finite places come from a common normal projective model over
$\cO_{\K}$ \cite{Moriwaki:tdutav}.

\begin{rem} \label{rem:17}
If $\K$ is a general function field, the answer is
``no'', simply because the analogue of the classical Dirichlet's unit
theorem does not hold. The simplest example is the following. Let $C$
be an elliptic 
curve over a field $k$ of characteristic zero, $\K={\rm K}(C)$ and
$X=\Spec(\K)$. Let 
$\mathcal{D}$ be a divisor of degree zero on $C$ whose class in the
Picard group is non-torsion
and let $\ov D$ be the corresponding metrized $\R$-divisor on
$X$. Then $\ov D$ is pseudo-effective but there is no $f\in
 {\rm K}(X)^{\times}_{\R}$ such that $\ov D+\wh \div (f)\ge 0$. Assume that
 such $f$ exists. Then $\mathcal{D}+\div (f)$ would be effective
 and of degree 0. Hence $\mathcal{D}+\div (f)=0$. Since $\mathcal{D}$
 is a divisor the previous equation implies that we can find a $g\in
 {\rm K}(X)^{\times}$ and $m\in \Z_{\ge 1}$ with $m \mathcal{D}+\div
 (g)=0$. Therefore the 
 class of $\mathcal{D}$ in the Picard group is torsion, contradicting
 the hypothesis. 
\end{rem}

In \S \ref{sec:dirichl-unit-theor} we will show that, for toric
varieties, the answer to Question \ref{ques:1} is positive.

We now turn our attention towards the approximation of pseudo-effective and big
divisors by nef and ample ones. 

\begin{defn} \label{def:7}
  Let $X$ be a normal  proper variety over $\K$
and $\ov D$ a pseudo-effective  metrized $\R$-divisor on~$X$. 
A \emph{Zariski decomposition} of $\ov D$ is a 
birational map $\varphi\colon X'\to X$ of normal proper varieties over $\K$ and a decomposition
\begin{equation*}
 \varphi^{*}\ov D=\ov P+\ov E 
\end{equation*}
with $\ov P,\ov E\in\wh\Car(X')_{\R}$ such that 
$\ov P$ is nef,  $\ov E$ is effective and 
\begin{math}
 \wh\vol(X',\ov P)= \wh\vol(X,\ov D).
\end{math}
\end{defn}

Sometimes it is convenient to consider the following
variant. 

\begin{defn} \label{def:2} Let $X$ be a normal proper  variety over $\K$ and
  $\ov D$ a pseudo-effective metrized $\R$-divisor on~$X$.  Let
  $\Upsilon(\ov D)$ be the set of pairs $(\varphi,\ov P)$, where
  $\varphi\colon X'\to X$ is a birational map of normal proper 
  varieties over $\K$ and $\ov P$ is a
  nef metrized $\R$-divisor on $X'$ such that $\varphi^{\ast}\ov D-\ov
  P\ge 0$.  On $\Upsilon(\ov D)$ we consider the equivalence relation
  $(\varphi,\ov P)\sim (\varphi_1,\ov P_1)$ whenever there exists a
  commutative diagram of birational morphisms
\begin{displaymath}
  \xymatrix{Z \ar[r]^{\nu} \ar[d]^{\nu_1} & X'\ar[d]^{\varphi}\\
  X'_{1}\ar[r]^{\varphi_{1}}& X}
\end{displaymath}
such that $\nu ^{\ast}\ov P=\nu_{1}^{\ast}\ov P_{1}$. On the set of
equivalence classes $\Upsilon(\ov D)/\sim$, we consider the order
relation given by $[(\varphi,\ov P)]\le [(\varphi_{1},\ov P_{1})]$
whenever there is a commutative diagram of birational maps as above
with $\nu ^{\ast}\ov P\le\nu_{1}^{\ast}\ov P_{1}$. 

The \emph{strong Zariski decomposition} of $\ov D$ is the greatest
element of $\Upsilon (\ov D)/\sim$, if it exists.
\end{defn}

If $\ov D$ is a big metrized $\R$-divisor on $X$, then a strong
Zariski decomposition of $\ov D$ gives a Zariski decomposition in the
sense of Definition \ref{def:7} \cite[Proposition~B.1]{Moriwaki:cnadas}.

\begin{ques}\label{ques:3}
  Let $\ov D$ be a pseudo-effective metrized $\R$-divisor on $X$. 
When does~$\ov D$ admit a Zariski decomposition or a strong Zariski decomposition?
\end{ques}

In \cite{Moriwaki:zdas}, Moriwaki showed that a strong Zariski
decomposition of $\ov D$ exists if $\K$ is a number field, $X$ is a
curve, $\ov D$ is big and the metrics at the finite places come from
a common normal projective model over $\cO_{\K}$.  In higher
dimension, it is not true that every big metrized $\R$-divisor admits
a Zariski decomposition. Indeed, there are examples of toric big
metrized divisors on $\P^{2}$ that do not admit a Zariski
decomposition and, a fortiori, do not admit a strong Zariski
decomposition~\cite{Moriwaki:tdutav}. In \S
\ref{sec:dirichl-unit-theor}, we will consider toric Zariski
decompositions and toric strong Zariski decompositions, and we will
give a criterion for such decompositions to exist. Furthermore, in \S
\ref{sec:zariski-decomp} we will show that, under some hypothesis, the
existence of a non-necessarily toric Zariski decomposition of a big
toric metrized $\R$-divisor implies the existence of a toric one.

In the absence of a Zariski decomposition, one can ask for the
existence of a Fujita approximation. 

\begin{ques} \label{ques:4} (Fujita approximation) Let $\ov D$ be a
  big metrized $\R$-divisor on $X$. Let $\varepsilon >0$ be a positive
  real number. Does there exist a  birational proper map $\varphi\colon X'\to
  X$ and metrized $\R$-divisors $\ov A,\ov E\in\wh\Car(X')_{\R}$ such
  that $\ov A$ is ample, $\ov E$ is effective,
\begin{displaymath}
  \varphi^{*}\ov D=\ov A+\ov E\quad\text{and}\quad  \wh\vol(X',\ov
 A)\ge \wh\vol(X,\ov D)-\varepsilon?
\end{displaymath}
\end{ques}

The existence of an arithmetic Fujita approximation was independently
obtained by Yuan \cite{Yuan:valb} and by Chen \cite{Chen:afa} in the
case when $\K$ is a number field, $D$ is a divisor and the metrics at
the infinite places are smooth and those at the finite places come
from a common projective model over $\cO_{\K}$.  More recently,
Boucksom and Chen \cite{BoucksomChen:Obfs} have given a more
elementary proof of this fact. 

In \S \ref{sec:dirichl-unit-theor} we will give a proof of the
Fujita approximation theorem for big toric metrized $\R$-divisors on a toric variety.  

\section{Toric metrized $\R$-divisors} \label{sec:toric-metrized-r}

In this section, we recall the necessary background on the algebraic
and arithmetic geometry of toric varieties from
\cite{BurgosPhilipponSombra:agtvmmh} and
we extend some of the results in this reference to toric metrized
$\R$-divisors. We will follow the notations and conventions in
\cite[Chapters 3 and 4]{BurgosPhilipponSombra:agtvmmh}.

Let $N\simeq \Z^{n}$ be a lattice and $M=N^{\vee}$ the dual
lattice. Set $N_{\R}=N\otimes \R$ and $M_{\R}=M\otimes \R$. The
pairing between $x\in M_{\R}$ and $u\in N_{\R}$ is denoted by $\langle
x,u\rangle$.

Let $K$ be a field and set $\T=\Spec(K[M])\simeq \G_{m}^{n}$ for the
split torus over $K$ corresponding to $N$.  Let $\Sigma $ be a
complete (rational) fan on $N_{\R}$ and $X_{\Sigma }$ the proper toric variety
over $K$ defined by $\Sigma $. We write $X=X_{\Sigma}$ for short. This
is a normal variety of dimension $n$ with an open dense immersion
$\T\hookrightarrow X$ and an action of $\T$ on $X$ that extends the
action of $\T$ on itself by translations.

The toric variety $X$ has a distinguished point $x_{0}$ in its
principal open subset $X_{0}$, corresponding to the unit of the
torus $\T$. A {toric line bundle} is a line bundle $L$ on~$X$
together with the choice of a nonzero point $z_{0}\in L_{x_{0}}$
\cite[Definition 3.3.4]{BurgosPhilipponSombra:agtvmmh}. A {toric
  section} of $L$ is
a section $s$ that is regular and nowhere vanishing on the principal
open subset $X_{0}$, and such that $s(x_{0})=z_{0}$. There is a
bijection between toric divisors and isomorphism classes of toric line
bundles with a toric section. If $(L,s)$ is a
toric line bundle with a toric section, then $\div(s)$ is a
toric divisor \cite[Theorem 3.3.7]{BurgosPhilipponSombra:agtvmmh}.  Conversely,
given a toric divisor $D$ on $X$, the line bundle $\cO(D)$ is a
subsheaf of $\cK_{X}$ and the rational function $1\in  {\rm K}(X)$ provides a
distinguished rational section $s_{D}$ of $\cO(D)$ such that
$\div(s_{D})=D$. This
section does not vanish on $X_{0}$ and so $(\cO(D),s_{D}(x_{0}))$ is a
toric line bundle.  The correspondence $D\mapsto
((\cO(D),s_{D}(x_{0})),s_{D})$ is the inverse of the correspondence
defined by $(L,s)\mapsto \div(s)$.  
Thus, the languages of toric line bundles with toric sections and
that of toric divisors are equivalent.  In the sequel, we will mostly
use the latter and, more generally, that of
$\R$-divisors. We  denote by $\Car_{\T}(X)$ the group of toric divisors on the toric variety $X$.

\begin{defn} \label{def:19} An \emph{$\R$-virtual support function} on
    $\Sigma$ is a function $\Psi$ on $N_{\R}$ such that, for each
  cone $\sigma\in \Sigma$, there exists $m_{\sigma}\in M_{\R}$ such
  that $\Psi(u)= \langle m_{\sigma},u\rangle$ for all $u\in \sigma$.
  A set of functionals $\{m_{\sigma}\}_{\sigma\in \Sigma}$ as above is
  called a set of \emph{defining vectors} of~$\Psi$. If we can choose
  $m_{\sigma}\in M$ for all $\sigma$, then $\Psi$ is called a \emph{virtual support
    function} on $\Sigma$. We respectively denote by $\VSF(\Sigma)$ and by
  $\VSF(\Sigma)_{\R}$ the spaces of  {virtual support functions}
  and of $\R$-virtual support functions.

  A concave virtual support function (respectively, $\R$-virtual
  support function) on $\Sigma$ is called a \emph{support function}
  (respectively, an \emph{$\R$-support function}) {on
    $\Sigma$}. We denote by $\SF(\Sigma)$ the semigroup of support
  functions on $\Sigma$ and by $\SF(\Sigma)_{\R}$ the convex cone of
  $\R$-support functions on $\Sigma$.
\end{defn}

To a toric divisor $D$ one associates a virtual support
function, denoted by $\Psi_{D}$, in the following way: for each $\sigma\in \Sigma$, there is
$m_{\sigma}\in M$ such that $D=\div(\chi^{-m_{\sigma}})$ on the affine
open set $X_{\sigma}$. Then, $\{m_{\sigma}\}_{\sigma}$ is a set of
defining vectors for $\Psi_{D}$. 
The correspondence $D\mapsto \Psi_{D}$ is an isomorphism of
$\Z$-modules between $\Car_{\T}(X)$ and $\VSF(\Sigma)$.

\begin{defn} \label{def:18}
A \emph{toric} $\R$-divisor on $X$ is a finite linear combination
\begin{equation*} 
  D=\sum_{i}\alpha_{i}D_{i}
\end{equation*}
with $\alpha_{i} \in \R$ and $D_{i}$ a toric divisor. To a toric
$\R$-divisor $D$ as above, we associate the $\R$-virtual support
function $\Psi_{D}= \sum_{i}\alpha_{i}\Psi_{D_{i}}$. We
set 
\begin{displaymath}
\Car_{\T}(X)_{\R}=\Car_{\T}(X)\otimes_\Z \R   
\end{displaymath}
for the linear space of toric $\R$-divisors on $X$.  
\end{defn}

There is a group morphism $M\to  {\rm K}(X)^{\times}$ given by
$m\mapsto \chi^{m}$, where $\chi^{m}\in \Hom(\T,\G_{m})$ is
the character corresponding to $m$.  By linearity, we can extend it to
a group morphism $M_{\R}\to  {\rm K}(X)^{\times}_{\R}$. We also denote by
$\chi^{m}$ the image of $m$ under this map. Composing with the map
$\div$, each element $m\in M_{\R}$ gives rise to a toric $\R$-divisor
$\div(\chi^{m})$. For $m\in M_{\R}$, we set $s_{m}=(\chi^{m},D)$ for
the corresponding rational $\R$-section of $D$.

\begin{prop} \label{prop:15} The correspondence $D\mapsto \Psi_{D}$ is an
    isomorphism of linear spaces between
$\Car_{\T}(X)_{\R}$ and $\VSF(\Sigma)_{\R}$.
\end{prop}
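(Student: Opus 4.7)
The plan is to deduce this statement from the integer-coefficient version recalled just above Definition~\ref{def:18} by tensoring with $\R$ and invoking the flatness of $\R$ over $\Z$. Since $\Car_{\T}(X)$ is a subgroup of the free $\Z$-module $\bigoplus_{\rho \in \Sigma(1)}\Z D_{\rho}$ generated by the $\T$-invariant prime divisors, it is finitely generated and free, and so, via the integer isomorphism, is $\VSF(\Sigma)$. Tensoring that isomorphism with $\R$ yields an $\R$-linear isomorphism
\begin{displaymath}
  \Car_{\T}(X)_{\R} \;\xrightarrow{\,\sim\,}\; \VSF(\Sigma)\otimes_{\Z}\R
\end{displaymath}
sending $\sum_{i}\alpha_{i}D_{i}$ to $\sum_{i}\alpha_{i}\Psi_{D_{i}}$.

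What remains is to identify $\VSF(\Sigma)\otimes_{\Z}\R$ with $\VSF(\Sigma)_{\R}$ as subsets of $C^{0}(N_{\R},\R)$, which I would do by realising $\VSF(\Sigma)$ as an explicit kernel. Since $\Sigma$ is complete, a virtual support function $\Psi$ is uniquely determined by its defining vectors $(m_{\sigma})$ on the top-dimensional cones $\sigma$ of $\Sigma$, subject to the compatibility $m_{\sigma}-m_{\tau}\in M\cap(\sigma\cap\tau)^{\perp}$ for every pair $\sigma,\tau$. This yields an exact sequence
\begin{displaymath}
  0 \longrightarrow \VSF(\Sigma) \longrightarrow \bigoplus_{\sigma}M \longrightarrow \bigoplus_{(\sigma,\tau)} M\big/\bigl(M\cap(\sigma\cap\tau)^{\perp}\bigr)
\end{displaymath}
with second map $(m_{\sigma})\mapsto (m_{\sigma}-m_{\tau})$. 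The key technical point, and the one I expect to require the most care, is the torsion-freeness of the quotients $M/(M\cap(\sigma\cap\tau)^{\perp})$, which is what guarantees that the sequence stays exact after tensoring with $\R$; this follows from the saturation of $M\cap(\sigma\cap\tau)^{\perp}$ in $M$, itself a consequence of $(\sigma\cap\tau)^{\perp}$ being a rational subspace of $M_{\R}$.

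Granting this, flatness of $\R$ over $\Z$ gives
\begin{displaymath}
  \VSF(\Sigma)\otimes_{\Z}\R \;=\; \ker\!\Bigl(\bigoplus_{\sigma} M_{\R}\longrightarrow \bigoplus_{(\sigma,\tau)} M_{\R}\big/(\sigma\cap\tau)^{\perp}\Bigr),
\end{displaymath}
and by Definition~\ref{def:19} this latter kernel is exactly $\VSF(\Sigma)_{\R}$. Composing the two identifications produces the stated isomorphism between $\Car_{\T}(X)_{\R}$ and $\VSF(\Sigma)_{\R}$, under which a toric $\R$-divisor $D$ goes to its associated $\R$-virtual support function $\Psi_{D}$. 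The main substantive ingredient is thus the kernel description of $\VSF(\Sigma)$ together with the torsion-freeness remark; once these are in place everything else is a formal consequence of the integer case and elementary homological algebra.
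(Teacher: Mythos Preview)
Your proof is correct and follows essentially the same route as the paper: both reduce to showing $\VSF(\Sigma)\otimes_{\Z}\R=\VSF(\Sigma)_{\R}$ by observing that $\VSF(\Sigma)_{\R}$ sits inside $\prod_{\sigma\in\Sigma^{n}}M_{\R}$ as the subspace cut out by the conditions $m_{\sigma}-m_{\tau}\in(\sigma\cap\tau)^{\perp}$, and that these conditions are rational because $\Sigma$ is. The paper states this directly; you package it as a kernel and tensor.

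One small imprecision worth correcting: torsion-freeness of $M/(M\cap(\sigma\cap\tau)^{\perp})$ is \emph{not} what guarantees exactness after $\otimes_{\Z}\R$ --- flatness of $\R$ over $\Z$ already does that unconditionally. What you actually need, and what rationality of $(\sigma\cap\tau)^{\perp}$ gives you, is the identification $(M\cap(\sigma\cap\tau)^{\perp})\otimes_{\Z}\R=(\sigma\cap\tau)^{\perp}$, so that the tensored target $\bigl(M/(M\cap(\sigma\cap\tau)^{\perp})\bigr)\otimes_{\Z}\R$ becomes $M_{\R}/(\sigma\cap\tau)^{\perp}$ and the tensored kernel matches the definition of $\VSF(\Sigma)_{\R}$.
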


\begin{proof}
  The correspondence $D\mapsto \Psi_{D}$ is an isomorphism of
  $\Z$-modules between $\Car_{\T}(X)$ and $\VSF(\Sigma)$. Hence, it
  also defines an isomorphism between $\Car_{\T}(X)_{\R}$ and
  $\VSF(\Sigma)\otimes_{\Z} \R$. The space $\VSF(\Sigma)_{\R}$ can be
  identified with the linear subspace of $\prod_{\sigma\in
    \Sigma^{n}}M_{\R}$ defined by
  \begin{equation} \label{eq:19} \{(m_{\sigma})_{\sigma}\mid
    m_{\sigma}-m_{\tau} \in (\sigma\cap \tau)^{\bot} \text{ for all }
    \sigma,\tau\in \Sigma^{n}\}.
\end{equation} 
This subspace is defined over $\Q$ because the fan $\Sigma$ is
rational, and its restriction to $\prod_{\sigma\in \Sigma^{n}}M$
agrees with $\VSF(\Sigma)$. Hence,
$\VSF(\Sigma)_{\R}=\VSF(\Sigma)\otimes_{\Z}\R$, which proves the statement.
\end{proof}

\begin{defn} \label{def:13}
A nonempty compact subset  $C\subset M_{\R}$ is called a \emph{quasi-rational} polytope
if there are $u_{j}\in N_{\Q}$ and $\gamma_{j}\in \R$, $j=1,\dots, l$,
such that 
\begin{displaymath}
  C=\{x\in M_{\R}\mid \langle x,u_{j}\rangle \ge \gamma_{j}, j=1,\dots, l\}. 
\end{displaymath}
Let $\Sigma_{C}$ denote the normal fan of $C$. We say that $C$ is  \emph{compatible with
$\Sigma$} whenever $\Sigma$ refines $\Sigma_{C}$.
\end{defn}

To a toric $\R$-divisor $D$ on $X$ we associate the subset of $M_{\R}$
defined as
\begin{displaymath}
  \Delta_{D}=\stab(\Psi_{D}),
\end{displaymath}
the stability set of $\Psi_{D}$ (Definition \ref{def:4}).  This set is
either empty or a quasi-rational polytope compatible with $\Sigma$.
It encodes a lot of information about the geometry of the pair
$(X,D)$.  For instance, each element $m\in \Delta _{D}\cap M$
gives a toric section $s_{m}=(\chi^{m},D)\in \Gamma(X,D)^{\times}$. Analogously, each $m\in
\Delta_{D}$ defines a toric $\R$-section $s_{m}\in
\Gamma(X,D)^{\times}_{\R}$.
The set $\{s_{m}\}_{m\in \Delta_{D}\cap M}$ is a basis of $\LL
(D)$. The proofs of these statements are the same as those for Cartier
divisors \cite[\S 3]{Fulton:itv}.

\begin{prop} \label{prop:8} 
Let $D$ be a toric $\R$-divisor on
  $X$. Then 
  \begin{displaymath}
    \vol(X,D)=n!\vol_{M}(\Delta_{D}),
  \end{displaymath}
 where $\vol_{M}$ is
  the Haar measure of $M_{\R}$ normalized so that $M$ has covolume
  $1$.
\end{prop}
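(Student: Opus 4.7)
The plan is to reduce the computation to counting lattice points in dilates of $\Delta_D$, exactly as in the Cartier divisor case, and then invoke a standard asymptotic for Jordan-measurable sets.

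First, I would observe that $\Psi_{\ell D}=\ell\Psi_D$ for every positive integer $\ell$ and that, directly from the definition of stability set, $\stab(\ell\Psi_D)=\ell\stab(\Psi_D)$. Hence $\Delta_{\ell D}=\ell\Delta_D$. Combined with the already-recorded basis statement that, for every toric $\R$-divisor $D'$, the Riemann-Roch space $\LL(D')$ has $\{s_m\}_{m\in \Delta_{D'}\cap M}$ as a $K$-basis, this yields
$$\lL(\ell D)=\#(\ell\Delta_D\cap M).$$

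Next, I would apply the classical lattice point asymptotic: for any bounded Jordan-measurable subset $\Delta\subset M_\R$,
$$\lim_{\ell\to\infty}\frac{\#(\ell\Delta\cap M)}{\ell^n}=\vol_M(\Delta).$$
A quasi-rational polytope satisfies this hypothesis, its boundary being piecewise linear and therefore Lebesgue-negligible. Dividing by $\ell^n/n!$ and noting that the $\limsup$ in the definition of $\vol(X,D)$ is here actually a limit, one obtains $\vol(X,D)=n!\vol_M(\Delta_D)$. In the degenerate case $\dim(\Delta_D)<n$ (including $\Delta_D=\emptyset$), both sides vanish: the right-hand side is zero by definition, while $\ell\Delta_D$ is contained in a proper affine subspace of $M_\R$, so $\#(\ell\Delta_D\cap M)=O(\ell^{n-1})$.

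I do not anticipate a genuine obstacle: the argument is essentially the one for Cartier divisors, and the only novelty is that $\Delta_D$ is quasi-rational rather than lattice. Since the asymptotic lattice point count is insensitive to the rationality of the vertices — it only requires negligible boundary — the extension to $\R$-divisors is immediate.
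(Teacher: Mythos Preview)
Your proof is correct and follows essentially the same approach as the paper: both compute $\lL(\ell D)=\#(\ell\Delta_D\cap M)$ via the monomial basis and then apply the standard lattice-point asymptotic. The paper's version is a single displayed line with no further commentary, so your treatment of the Jordan-measurability and the degenerate case is in fact more thorough than the original.
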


\begin{proof}
We have that 
\begin{displaymath}
  \vol(X,D)=\lim_{\ell\to \infty} \frac{\lL(\ell D)}{\ell^{n}/n!}=
  n! \lim_{\ell\to \infty} \frac{\#(\ell\Delta_{D}\cap M)}{\ell^{n}}= n!\vol_{M}(\Delta_{M}).
\end{displaymath}
\end{proof}

\begin{prop} \label{prop:26}
Let $D$ be a toric $\R$-divisor on $X$. Then 
  \begin{enumerate}
\item \label{item:53} $D$ is ample if and only if $\Psi_{D}$ is
  strictly concave.
\item \label{item:83} The following conditions are equivalent:
  \begin{enumerate}
  \item \label{item:84} $D$ is nef;
\item \label{item:85} $\Psi_{D}$ is concave;
\item \label{item:86} there exists a finite number of nef divisors
  $D_{i}$ on $X$ and $\alpha_{i}>0$ such that $D=\sum_{i}{\alpha_{i}}D_{i}$. 
\end{enumerate}
\item  \label{item:46} If $D$ is nef, then $\deg_{D}(X)=n!\vol_{M}(\Delta_{D})$.
  \end{enumerate}
\end{prop}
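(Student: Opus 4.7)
My approach is to reduce each of the three statements to the classical corresponding result for Cartier divisors on toric varieties, exploiting the identification $\Car_{\T}(X)_{\R}\simeq \VSF(\Sigma)_{\R}$ from Proposition~\ref{prop:15} together with the rational structure of $\VSF(\Sigma)_{\R}$ coming from \eqref{eq:19}: the cones of (strictly) concave $\R$-virtual support functions cut out by concavity inequalities along each wall are (open) rational polyhedral cones inside $\VSF(\Sigma)_{\R}$.

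For (1), the forward direction is immediate: writing $D=\sum_{i}\alpha_{i}D_{i}$ with $\alpha_{i}>0$ and $D_{i}$ ample Cartier divisors, the classical dictionary makes each $\Psi_{D_{i}}$ strictly concave on $\Sigma$, hence so is the positive combination $\Psi_{D}$. For the converse, strict concavity is an open condition in $\VSF(\Sigma)_{\R}$ given by finitely many strict rational linear inequalities, so if $\Psi_{D}$ is strictly concave I would pick a small full-dimensional rational simplex around it contained in the open strict-concavity cone, and express $\Psi_{D}$ as a convex combination of its vertices. Each vertex is a rational strictly concave virtual support function; clearing denominators turns it into an ample Cartier divisor, yielding the required positive decomposition of $D$.

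For (2), the implication (a)$\Rightarrow$(b) uses the fact that the intersection number of $D$ with the invariant curve $V(\tau)$ attached to a wall $\tau$ is an $\R$-linear function of the defining vectors of $\Psi_{D}$, encoding precisely the concavity of $\Psi_{D}$ across $\tau$; nefness forces non-negativity on all invariant curves, hence concavity across every wall, hence concavity of $\Psi_{D}$ on $N_{\R}$. For (b)$\Rightarrow$(c), the cone $\SF(\Sigma)_{\R}$ is rational polyhedral by the same reasoning as in (1) (now with non-strict inequalities); Minkowski--Weyl provides rational generators of its extremal rays, which after integer scaling correspond to nef Cartier divisors, while its lineality space consists of $\R$-linear forms on $N_{\R}$, i.e.\ principal $\R$-divisors $\div(\chi^{m})$ with $m\in M_{\R}$, which in turn decompose as positive $\R$-combinations of integer principal divisors $\div(\chi^{m_{i}})$ with $m_{i}\in M$, themselves nef. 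The implication (c)$\Rightarrow$(a) is trivial from $\R$-linearity of $\deg_{D}$.

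For (3), I will apply (2)(c) to write $D=\sum_{i}\alpha_{i}D_{i}$ with $\alpha_{i}>0$ and $D_{i}$ nef Cartier divisors; since the $\Psi_{D_{i}}$ are concave, the stability set map is additive and $\Delta_{D}=\sum_{i}\alpha_{i}\Delta_{D_{i}}$ as a Minkowski sum. Expanding $(D^{n})$ by multilinearity of the intersection product, using the toric mixed degree formula $(D_{i_{1}}\cdots D_{i_{n}})=n!\,\MV_{M}(\Delta_{D_{i_{1}}},\dots,\Delta_{D_{i_{n}}})$ for nef toric divisors, and matching with Minkowski's polynomial expansion of $\vol_{M}(\sum_{i}\alpha_{i}\Delta_{D_{i}})$ gives the identity. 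Alternatively, both sides are continuous functions of $D$ on the nef cone of $\Car_{\T}(X)_{\R}$ and agree on rational nef divisors via the classical integer formula after clearing denominators, so they agree everywhere by density. The main obstacle is the rationality/density step underlying the converse of (1) and the implication (2)(b)$\Rightarrow$(c): passing from a merely real concavity statement about $\Psi_{D}$ to an explicit positive $\R$-decomposition of $D$ by ample or nef Cartier divisors. Everything ultimately hinges on the rational polyhedrality of the concave and strictly concave cones inside $\VSF(\Sigma)_{\R}$, which itself rests on the finite rational description~\eqref{eq:19} and on the rationality of the fan $\Sigma$.
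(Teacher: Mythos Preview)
Your proposal is correct and follows essentially the same route as the paper: both arguments rest on the fact that $\SF(\Sigma)_{\R}$ is a rational polyhedral cone inside $\VSF(\Sigma)_{\R}$ (via \eqref{eq:19}) whose interior is the strictly concave locus, and both reduce the $\R$-divisor statements to the classical Cartier case through this rationality. The only cosmetic differences are that for (2)(a)$\Rightarrow$(b) you spell out the wall-crossing intersection numbers while the paper simply cites \cite[Theorem~6.1.12]{CoxLittleSchenck:tv}, and for (3) the paper invokes multilinearity of mixed degree and mixed volume directly (your first alternative) rather than the density argument.
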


\begin{proof}
  \eqref{item:53} A toric divisor is ample if and only if its
  corresponding function is strictly concave. Hence, a toric
  $\R$-divisor $D$ is ample if and only if $\Psi_{D}$ is a linear
  combination, with positive real coefficients, of strictly concave
  support functions. For each cone $\sigma\in \Sigma^{n}$,
  choose $u_{\sigma}\in \ri(\sigma)\cap N$, where $\ri(\sigma)$
  denotes the relative interior of $\sigma$. Using the
  identification in~\eqref{eq:19}, we see that 
\begin{displaymath}
\SF(\Sigma)_{\R} =  \{(m_{\sigma})_{\sigma} \mid \langle
  m_{\sigma}-m_{\tau},u_{\sigma}\rangle \le 0 \text{ for all
  } \sigma,\tau\in \Sigma^{n}\}
\end{displaymath}
is a convex rational polyhedral  cone in
$\VSF(\Sigma)_{\R}$. Its interior
\begin{displaymath}
\SF(\Sigma)_{\R}^{\circ} =  \{(m_{\sigma})_{\sigma} \mid \langle
  m_{\sigma}-m_{\tau},u_{\sigma}\rangle < 0 \text{ for all
  } \sigma,\tau\in \Sigma^{n} \text{ such that } \sigma\ne \tau\}  
\end{displaymath}
can be identified with the subset of $\R$-support functions that are
strictly concave on $\Sigma $. Hence, any strictly concave $\R$-support
function can be written as a linear combination with positive real
coefficients of strictly concave support functions,
which proves the statement. 

\eqref{item:83} The equivalence of \eqref{item:84} and
\eqref{item:85} can be proved as in the case of divisors, see for
instance \cite[Theorem 6.1.12]{CoxLittleSchenck:tv}. The equivalence between
\eqref{item:85} and \eqref{item:86} follows from the
facts that a toric divisor is nef if and only if its corresponding
function lies in~$\SF(\Sigma)$ and that $\SF(\Sigma)_{\R}$ is a convex
rational polyhedral cone.

\eqref{item:46} This formula is well-known for toric divisors.  Using
\eqref{item:83}, the general case follows from this one together with
the multilinearity of the mixed degree and of the mixed volume.
\end{proof}

The set of quasi-rational polytopes of $M_{\R}$ which are compatible
with $\Sigma$ forms a convex cone with respect to the multiplication by scalars in $\R_{\ge0}$ and the Minkowski sum of sets.

\begin{prop} \label{prop:20} The correspondence $D\mapsto \Delta_{D}$
  gives an isomorphism between the convex cone of nef toric $\R$-divisors on
  $X$ and the convex cone of quasi-rational polytopes compatible with
  $\Sigma$.
\end{prop}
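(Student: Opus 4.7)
The plan is to set up the bijection via Legendre--Fenchel duality between concave piecewise-linear functions on $N_{\R}$ and compact convex subsets of $M_{\R}$, and then check separately that the two sides of the duality pick out exactly the nef toric $\R$-divisors on one side and the quasi-rational polytopes compatible with $\Sigma$ on the other. By Proposition \ref{prop:15}, giving a toric $\R$-divisor $D$ is the same as giving an $\R$-virtual support function $\Psi_{D}$ on $\Sigma$, and by Proposition \ref{prop:26}\eqref{item:83}, the nef condition on $D$ corresponds exactly to $\Psi_{D}\in\SF(\Sigma)_{\R}$, i.e.\ to $\Psi_{D}$ being concave. Thus the problem becomes: show that the assignment $\Psi\mapsto\stab(\Psi)$ restricts to a bijection between $\SF(\Sigma)_{\R}$ and the convex cone of quasi-rational polytopes compatible with $\Sigma$.

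For the forward direction, given $\Psi=\Psi_{D}$ concave with a set of defining vectors $\{m_{\sigma}\}_{\sigma\in\Sigma^{n}}$ in $M_{\R}$, I would observe that
\[
\stab(\Psi)=\{x\in M_{\R}\mid \langle x,u\rangle\ge \Psi(u)\ \text{for all}\ u\in N_{\R}\}
\]
is cut out by the finitely many inequalities $\langle x,u_{\rho}\rangle\ge\Psi(u_{\rho})$ where $\rho$ runs over the rays of $\Sigma$ and $u_{\rho}\in N$ is the primitive generator of $\rho$. These normals lie in $N_{\Q}$, so $\Delta_{D}$ is a quasi-rational polytope; and since $\Psi$ is linear on each maximal cone $\sigma$ with slope $m_{\sigma}\in M_{\R}$, the vertex set of $\Delta_{D}$ is contained in $\{m_{\sigma}\}_{\sigma}$, so the normal fan of $\Delta_{D}$ is refined by $\Sigma$, giving compatibility. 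For the inverse direction, given a quasi-rational polytope $\Delta$ compatible with $\Sigma$, I would set
\[
\Psi(u)=\min_{x\in\Delta}\langle x,u\rangle,
\]
which is concave and positively homogeneous. Compatibility with $\Sigma$ means that on each cone $\sigma\in\Sigma$ there is a single vertex $m_{\sigma}$ of $\Delta$ achieving this minimum, so $\Psi$ is linear on $\sigma$ with slope $m_{\sigma}\in M_{\R}$; hence $\Psi\in\VSF(\Sigma)_{\R}$ and is concave, so $\Psi\in\SF(\Sigma)_{\R}$. By biduality of closed convex sets, $\stab(\Psi)=\Delta$, and the two constructions invert each other.

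It then remains to verify that the bijection is a morphism of convex cones, i.e.\ $\Delta_{D_{1}+D_{2}}=\Delta_{D_{1}}+\Delta_{D_{2}}$ and $\Delta_{\lambda D}=\lambda\Delta_{D}$ for $\lambda\ge 0$ and nef toric $\R$-divisors $D_{1},D_{2},D$. Since $\Psi_{D_{1}+D_{2}}=\Psi_{D_{1}}+\Psi_{D_{2}}$ and $\Psi_{\lambda D}=\lambda\Psi_{D}$, and since each $\Psi_{D_{i}}$ is the (concave) support function of its stability set $\Delta_{D_{i}}$, both identities reduce to the standard Legendre--Fenchel fact that the support function of a Minkowski sum (resp.\ positive dilate) of compact convex sets is the sum (resp.\ positive dilate) of the support functions, combined with biduality.

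The only step requiring a bit of care is the surjectivity: one must check that starting from an arbitrary quasi-rational polytope compatible with $\Sigma$, the candidate support function $\Psi$ truly lies in $\VSF(\Sigma)_{\R}$, which amounts to verifying that its slopes $m_{\sigma}$ are well-defined elements of $M_{\R}$ and satisfy the compatibility condition $m_{\sigma}-m_{\tau}\in(\sigma\cap\tau)^{\perp}$ from \eqref{eq:19}. This is where the hypothesis that $\Sigma$ refines the normal fan of $\Delta$ is essential, and I expect it to be the main (though still routine) technical point, since without it $\Psi$ would only belong to $\VSF(\Sigma_{\Delta})_{\R}$ and not to $\VSF(\Sigma)_{\R}$.
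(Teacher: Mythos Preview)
Your proposal is correct and follows essentially the same approach as the paper: both reduce via Propositions~\ref{prop:15} and~\ref{prop:26}\eqref{item:83} to showing that $\Psi\mapsto\stab(\Psi)$ gives an isomorphism of convex cones between $\SF(\Sigma)_{\R}$ and the quasi-rational polytopes compatible with $\Sigma$, and both rely on the standard Legendre--Fenchel duality between conic concave functions and convex bodies. The only difference is one of presentation: the paper first records the identities $\stab(\lambda\Psi)=\lambda\stab(\Psi)$ and $\stab(\Psi+\Phi)=\stab(\Psi)+\stab(\Phi)$ to get the cone-morphism property, then simply asserts that the general isomorphism between conic concave functions and convex bodies restricts to the desired one between $\SF(\Sigma)_{\R}$ and quasi-rational polytopes compatible with $\Sigma$; you instead spell out explicitly why the image is quasi-rational (rational ray normals) and compatible with $\Sigma$ (the $m_{\sigma}$ are vertices), and construct the inverse by hand.
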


\begin{proof}
Let $\alpha \in \R_{\ge 0}$ and  $\Psi, \Phi$  two conic concave
functions on $N_{\R}$ (Appendix \ref{sec:conv-analys-asympt}). Then 
\begin{displaymath}
  \stab(\alpha  \Psi)=\alpha\stab(\Psi), \quad \stab(\Psi+\Phi)=
  \stab(\Psi)+\stab(\Phi). 
\end{displaymath}
Since $\Psi$ is recovered from $\stab(\Psi)$ as the Legendre-Fenchel
dual of the indicator function of this convex set (see \cite[Example
2.2.1]{BurgosPhilipponSombra:agtvmmh}), the map $\Psi\to
\stab(\Psi)$ is an isomorphism between the convex cone of conic
concave functions on $N_{\R}$ and that of convex bodies. This
isomorphism sends $\SF(\Sigma)_\R$ onto the convex cone of
quasi-rational polytopes compatible with $\Sigma$.  The statement
follows easily from this since, by propositions \ref{prop:15} and
\ref{prop:26}\eqref{item:83}, the convex cone of nef toric
$\R$-divisors is in one-to-one correspondence with $\SF(\Sigma)_\R$.
\end{proof}

\begin{prop} \label{prop:16}
Let $D$ be a toric $\R$-divisor on $X$. 
  \begin{enumerate}
\item \label{item:43} The following conditions are equivalent:
  \begin{enumerate}
  \item \label{item:37} $D$ is big;
  \item \label{item:76} there exists $m\in M_{\R}$ such that
    $\Psi_{D}(u) < \langle
    m,u\rangle $ for all $u\in N_{\R}\setminus \{0\}$;
  \item \label{item:77}  $\dim(\Delta_{D})=n$.
  \end{enumerate}
\item \label{item:44}  The following conditions are equivalent: 
  \begin{enumerate}
  \item \label{item:78} $D$ is pseudo-effective;
  \item \label{item:79} there exists $m\in M_{\R}$ such that
    $\Psi_{D}(u) \le \langle
    m,u\rangle $ for all $u\in N_{\R}$;
\item \label{item:82}   $\Delta_{D}\ne \emptyset$.
  \end{enumerate}
\item \label{item:45} $D$ is effective if and only if $\Psi_{D}\le
  0$ or, equivalently,  if and only if $0\in
  \Delta_{D}$.
\item \label{item:47} Let $P$ be a nef toric $\R$-divisor on $X$. Then
  $D\ge P$ if and only if $\Delta_{D}\supset \Delta_{P}$. 
  \end{enumerate}
\end{prop}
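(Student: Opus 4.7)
The plan is to establish the four items in the order (3), (4), (1), (2), since each builds on the previous via a systematic translation through the toric dictionary. For (3), I would use that the $\R$-Weil representation of $D$ is $[D]=\sum_{\rho}a_{\rho}V(\rho)$ with $a_{\rho}=-\Psi_{D}(v_{\rho})$ for $v_{\rho}$ the primitive generator of the ray $\rho$: since $\Psi_{D}$ is linear on each cone, $\Psi_{D}\le 0$ on $N_{\R}$ if and only if $a_{\rho}\ge 0$ for every $\rho$, which is the effectiveness of $D$. The equivalence with $0\in \Delta_{D}$ is the rephrasing in terms of the stability set. For (4), the condition $D\ge P$ means $D-P$ is effective, which by (3) translates to $\Psi_{D}\le \Psi_{P}$; since $P$ is nef, Proposition~\ref{prop:26} gives $\Psi_{P}$ concave, hence $\Psi_{P}(u)=\inf_{m\in \Delta_{P}}\langle m,u\rangle$, and so $\Psi_{D}\le \Psi_{P}$ is equivalent to $\Delta_{P}\subset \Delta_{D}$.

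For (1), the equivalence (b)$\Leftrightarrow$(c) follows from compactness of the unit sphere combined with the $1$-homogeneity of $\Psi_{D}$: interior points of $\Delta_{D}$ are characterized by $\min_{|u|=1}(\langle m,u\rangle-\Psi_{D}(u))>0$, which by homogeneity is precisely condition (b). For (a)$\Rightarrow$(c), bigness of $D$ forces $\vol(X,D)>0$ via the Brunn--Minkowski-type superadditivity of $\vol^{1/n}$, and Proposition~\ref{prop:8} then gives $\vol_{M}(\Delta_{D})>0$, i.e.\ $\dim\Delta_{D}=n$. The delicate direction is (c)$\Rightarrow$(a): after reducing to the case where $X$ is projective by pulling back along a toric refinement, let $P_{\Sigma}$ be any polytope with normal fan $\Sigma$ and pick $m_{0}$ in the (nonempty) interior of $\Delta_{D}$. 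For $\varepsilon>0$ small, $\Delta_{A}:=m_{0}+\varepsilon P_{\Sigma}\subset \Delta_{D}$ has normal fan $\Sigma$, so it defines an ample toric $\R$-divisor $A$ by Proposition~\ref{prop:26}; the concavity of $\Psi_{A}$ together with $\Delta_{A}\subset \Delta_{D}$ forces $\Psi_{A}\ge \Psi_{D}$, whence $E:=D-A$ satisfies $\Psi_{E}\le 0$ and is effective by (3). A regrouping argument then writes $D=A+E$ as a positive combination of big integer Cartier divisors: expressing $A=\sum_{j}\gamma_{j}A_{j}$ and $E=\sum_{k}\delta_{k}E_{k}$ with ample $A_{j}$ and effective $E_{k}$ integer Cartier, a suitable small rational redistribution of a distinguished $A_{j_{0}}$ among the effective pieces produces big integer divisors of shape $qE_{k}+pA_{j_{0}}$.

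For (2), the equivalence (b)$\Leftrightarrow$(c) is the definition of the stability set; (c)$\Rightarrow$(a) follows from (3), since $m\in \Delta_{D}$ makes $D-\div(\chi^{m})$ effective, hence pseudo-effective, and $D$ is linearly equivalent to it. For (a)$\Rightarrow$(c), reduce to the case where $X$ is projective via a toric refinement (pseudo-effectiveness and $\Delta_{D}$ are preserved), and choose an ample toric divisor $A$ on the refinement, so that the geometric analogue of Proposition~\ref{prop:27} yields $\ell D+A$ big for every $\ell\ge 1$. By (1), $\Delta_{\ell D+A}$ is nonempty; pick $m_{\ell}\in \Delta_{\ell D+A}$, so $\langle m_{\ell},u\rangle\ge \ell\Psi_{D}(u)+\Psi_{A}(u)$. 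Evaluating at $u=\pm e_{i}$ for a basis of $N_{\R}$ bounds each coordinate of $m_{\ell}/\ell$; extracting a convergent subsequence gives some $m\in \Delta_{D}$ in the limit. The main obstacle throughout is the (c)$\Rightarrow$(a) direction of (1), requiring both the projective reduction and the construction of an ample toric $\R$-divisor $A$ whose polytope sits inside $\Delta_{D}$ with normal fan exactly $\Sigma$.
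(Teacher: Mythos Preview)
Your overall strategy is sound, and parts (3), (4), and (2) are essentially correct. Two remarks on these: for (3), recall that on a singular toric variety the prime divisors $V(\rho)$ need not be Cartier, so ``$a_\rho\ge 0$ for every $\rho$'' only says the associated $\R$-Weil divisor is effective; to conclude that $D$ is a positive $\R$-combination of effective \emph{Cartier} divisors (the paper's definition of effective), one still needs the observation that $\{\Psi\in\VSF(\Sigma)_\R:\Psi\le 0\}$ is a rational polyhedral cone, hence generated by its lattice points. For (2), your explicit compactness argument bounding $m_\ell/\ell$ by evaluation at $\pm e_i$ is a perfectly good substitute for the paper's nested-intersection argument via Lemma~\ref{lemm:6}.

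The genuine gap is in (1)(c)$\Rightarrow$(a). Your projective reduction does not close: after pulling back to a projective refinement $\varphi\colon X'\to X$ and writing $\varphi^*D=A+E$ with $A$ ample and $E$ effective on $X'$, your regrouping exhibits $\varphi^*D$ as a positive combination of big Cartier divisors \emph{on $X'$}. But the conclusion you need is that $D$ is a positive combination of big Cartier divisors \emph{on $X$}, and the pieces you produce on $X'$ need not descend. Arguing via $\vol(X,D)=\vol(X',\varphi^*D)>0$ is circular, since ``$\vol>0\Rightarrow$ big'' for $\R$-divisors is exactly what is at stake. The paper sidesteps this entirely by noting that
\[
C=\{\Psi\in\VSF(\Sigma)_\R:\exists\,m\in M_\R\text{ with }\Psi(u)<\langle m,u\rangle\ \forall\,u\ne 0\}
\]
is an \emph{open} convex cone in the finite-dimensional space $\VSF(\Sigma)_\R$; since condition (c) places $\Psi_D\in C$, density of $\VSF(\Sigma)$ lets one write $\Psi_D=\sum_i\alpha_i\Psi_i$ with $\alpha_i>0$ and $\Psi_i\in C\cap\VSF(\Sigma)$, each corresponding to a big Cartier divisor on $X$ itself. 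This works uniformly for proper (not necessarily projective) $X$, so your projective reduction and the subsequent $A+E$ construction are unnecessary.
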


\begin{proof}
\eqref{item:43} Clearly, \eqref{item:76} and \eqref{item:77} are
equivalent. In case $D$ is a divisor, Proposition~\ref{prop:8}, implies that \eqref{item:37} and  \eqref{item:77} 
are equivalent.

Assume $D$ is a big $\R$-divisor and write $D=\sum_{\alpha_{i}}
\alpha_{i} D_{i}$ with $\alpha_{i}>0$ and $D_{i}$ big. Then there
exist $m_{i}\in M_{\R}$ such that $\Psi_{D_{i}}(u)< \langle m_{i},u\rangle
$  for all $u\in N_{\R}\setminus \{0\}$. Setting
$m=\sum_{i}\alpha_{i}m_{i}$, we have that 
$\Psi_{D}(u) < \langle m,u\rangle$  for all $u\in N_{\R}\setminus \{0\}$, which proves~\eqref{item:76}. 

Conversely, the set 
\begin{displaymath}
  C=\{ \Psi\in \VSF(\Sigma)_{\R}\mid \exists  m\in M_{\R} \text{ such
    that } \Psi(u)<\langle  m,u\rangle  \text{ for all } u\in N_{\R}\setminus \{0\}\}
\end{displaymath}
is an open convex cone.  Let $D$ be an $\R$-divisor such that
$\Psi_{D}\in C$.  Since $ \VSF(\Sigma)$ is dense in $
\VSF(\Sigma)_{\R}$, there exist a finite number of functions
$\Psi_{i}\in C\cap \VSF(\Sigma)$ and positive real numbers
$\alpha_{i}$ such that $\Psi_{D}=\sum_{i}\alpha_{i}\Psi_{i}$.  For
each $i$, let $D_{i}$ be the divisor corresponding to $\Psi_{i}$. Then
$D=\sum_{\alpha_{i}} \alpha_{i} D_{i}$. Hence, $D$ is big since each
$D_{i}$ is big. This proves the statement. 

\eqref{item:44} Let
$\varphi\colon X'\to X$ be a birational toric map and $B$ a toric effective  big
$\R$-divisor on $X'$. 

Suppose first that $\Delta_{\varphi^\ast D}=\Delta_{D}\ne\emptyset$ and let $\ell
>0$. By Lemma \ref{lemm:4}\eqref{item:35},
\begin{displaymath}
  \Delta_{\ell \varphi^\ast D+B}\supset \Delta_{\ell \varphi^\ast D}+\Delta_{B}.
\end{displaymath}
By \eqref{item:43}, the polytope $\Delta_{B}$ has dimension $n$
and so does $  \Delta_{\ell \varphi^\ast D+B}$. Hence, $\ell \varphi^{\ast } D+B$ is
big for all integers $\ell \ge 0$ and so $D$ is pseudo-effective.

Conversely, suppose that $D$ is pseudo-effective. By an argument
similar to the one in the proof of Proposition \ref{prop:27}, one can
verify that the $\R$-divisor $\ell
\varphi^{\ast} D+B$ is big for all $\ell\ge1$. Hence, $\Delta_{\ell
 \varphi^{\ast} D+B}$ is of dimension $n$ and, in particular, nonempty.  By Lemma
\ref{lemm:6}\eqref{item:36},
\begin{displaymath}
\Delta_{D}=\Delta_{\varphi^{\ast}D}= \bigcap_{\ell >0}\Delta_{\varphi^\ast D+\frac{1}{\ell} B}.  
\end{displaymath}
Hence, this polytope is nonempty, since it is the intersection of
nested compact sets. 

\eqref{item:45} Assume that $D$ is effective and write
$D=\sum_{i}\alpha_{i}D_{i}$ with $\alpha_{i}>0$ and $D_{i}$ an
effective divisor. Then $0\in \Delta_{D_{i}}$ for all $i$. By Lemma
\ref{lemm:4}\eqref{item:35}, 
\begin{displaymath}
  \sum_{i}\alpha_{i}\Delta_{D_{i}}=
  \sum_{i}\alpha_{i}\stab(\Psi_{D_{i}}) \subset \stab(\Psi_{D})= \Delta_{D}.
\end{displaymath}
Hence, $0\in\Delta_{D}$ or, equivalently, $\Psi_{D}\le 0$. 

Conversely, the set of functions $\Psi\in \VSF(\Sigma)_{\R}$ such that
$\Psi\le 0$ forms a rational convex cone. Hence, if $\Psi_{D}\le 0$
then there is a finite number of functions $\Psi_{i}\in \VSF(\Sigma)$
such that $\Psi_{i}\le 0$ and $\alpha_{i}>0$ such that
$\Psi_{D}=\sum_{i}\alpha_{i}\Psi_{i}$. Each $\Psi_{i}$ corresponds to
a toric divisor $D_{i}$ and $D=\sum_{i}\alpha_{i}D_{i}$. Each
$D_{i}$ is effective and so is~$D$.

\eqref{item:47} Suppose that $D\ge P$. Then $\Psi_{D-P}\le 0$, which
is equivalent to $\Psi_{D}\le \Psi_{P}$. Hence, $\Delta_{D}\supset
\Delta_{P}$. Conversely, suppose that $\Delta_{D}\supset
\Delta_{P}$. Then $\Delta_{D}\ne\emptyset$, since $P$ is nef and
$\Delta_{D}$ contains $\Delta_{P}$. Hence, the support function
$\Psi_{\Delta_{D}}$ coincides with the concave envelope of $\Psi_{D}$.
Then
\begin{math}
  \Psi_{D}\le \Psi_{\Delta_{D}}\le \Psi_{\Delta_{P}}=\Psi_{P}, 
\end{math}
which proves $\Psi_{D-P}\leq0$ and $D\geq P$. 
\end{proof}

In the toric case, the geometric analogues of Dirichlet's unit theorem
(Question~\ref{ques:1}), Zariski
decomposition (Question \ref{ques:3}) and Fujita approximation
theorem (Question \ref{ques:4}) are easy 
to treat, and all 
three admit a positive answer. Note however that the notion of toric strong Zariski
decomposition that appears in the proposition below is weaker than the
strong Zariski decomposition.  

\begin{prop} \label{prop:21} \ 
Let $D$ be a toric $\R$-divisor on $X$. 
  \begin{enumerate}
  \item \label{item:49} $D$ is pseudo-effective if and only if there
  exists $m\in \Delta_{D}$ such that
  \begin{displaymath}
 D+\div(\chi^{m})\ge 0.   
  \end{displaymath}

\item \label{item:50} Assume that $D$ is pseudo-effective.  Then there
  exist a birational toric map $\varphi\colon X'\to X$ of proper toric
  varieties and toric
  divisors $P, E\in \Car_{\T}(X')_{\R}$ such that $P$ is nef, $E$ is
  effective,
  \begin{displaymath}
\varphi^{*}D= P+E \quad \text{and}\quad    \vol(X,P)= \vol(X,D).
  \end{displaymath}
Moreover, for any other birational toric map
  $\varphi_{1}\colon X_{1}'\to X$ of proper toric varieties and a decomposition $\varphi_{1}^{*}D=
  P_{1}+E_{1}$ with   $P_{1}, E_{1}\in
  \Car_{\T}(X'_{1})_{\R}$ such that $P_{1}$ is nef and $E_{1}$
  is effective, there are proper birational toric maps  $\nu \colon X''\to
  X'$ and $\nu _{1}\colon X''\to X'_{1}$  of proper toric varieties satisfying $\nu^{*}P\ge \nu _{1}^{\ast}P_{1}$. 

\item \label{item:51} Assume that $D$ is big and let $\varepsilon>0$.
  Then there exist a birational toric map $\varphi\colon X'\to X$ 
   of proper toric varieties and $A, E\in \Car_{\T}(X')_{\R}$ such that $A$ is ample, $E$ is
  effective,
  \begin{displaymath}
\varphi^{*}D= A+E \quad \text{and}\quad  \vol(X,A)\ge
  \vol(X,D)-\varepsilon.
  \end{displaymath}

  \end{enumerate}
\end{prop}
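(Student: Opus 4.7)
The plan is to reduce each of the three assertions to the description of positivity notions for toric $\R$-divisors in terms of their stability polytopes, given in Propositions~\ref{prop:16}, \ref{prop:20} and \ref{prop:26}. For part \eqref{item:49}, I note that for $m\in M_{\R}$ one has $\Psi_{\div(\chi^{m})}=-\langle m,\cdot\rangle$, whence $\Delta_{D+\div(\chi^{m})}=\Delta_{D}-m$. The equivalence then follows at once from Proposition~\ref{prop:16}: pseudo-effectiveness is characterized by $\Delta_{D}\ne \emptyset$ in \eqref{item:44}, while effectiveness of $D+\div(\chi^{m})$ is equivalent to $0\in\Delta_{D}-m$, i.e.\ $m\in\Delta_{D}$, by~\eqref{item:45}.

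For part \eqref{item:50}, my strategy is to take $\Sigma'$ a common refinement of $\Sigma$ and of the normal fan $\Sigma_{\Delta_{D}}$, and set $X'=X_{\Sigma'}$ with its birational toric map $\varphi\colon X'\to X$. Since $\Delta_{D}$ is nonempty by~\eqref{item:44} and quasi-rational, and is compatible with~$\Sigma'$ by construction, Proposition~\ref{prop:20} yields a nef toric $\R$-divisor $P$ on~$X'$ with $\Delta_{P}=\Delta_{D}$. Pullback by a toric birational map preserves the virtual support function, hence $\Delta_{\varphi^{\ast}D}=\Delta_{D}=\Delta_{P}$, so Proposition~\ref{prop:16}\eqref{item:47} gives $\varphi^{\ast}D\ge P$, and $E:=\varphi^{\ast}D-P$ is the effective part; the volume identity is immediate from Proposition~\ref{prop:8}. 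For maximality I would dominate $X'$ and $X_{1}'$ by a common toric refinement $X''$; pulling back preserves polytopes, so $\Delta_{\nu^{\ast}P}=\Delta_{D}\supset\Delta_{P_{1}}=\Delta_{\nu_{1}^{\ast}P_{1}}$, and a final application of~\eqref{item:47} yields $\nu^{\ast}P\ge \nu_{1}^{\ast}P_{1}$.

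For part \eqref{item:51}, Proposition~\ref{prop:16}\eqref{item:43} gives $\dim\Delta_{D}=n$. I would choose a simplicial refinement $\Sigma'$ of both $\Sigma$ and $\Sigma_{\Delta_{D}}$, with primitive ray generators $u_{\rho}$, so that $\Delta_{D}=\bigcap_{\rho}\{x\mid \langle x,u_{\rho}\rangle\ge \Psi_{D}(u_{\rho})\}$, and then perturb each inequality inward by a generic small amount $\delta_{\rho}>0$ to obtain
\begin{displaymath}
  Q=\bigcap_{\rho}\{x\in M_{\R}\mid \langle x,u_{\rho}\rangle\ge \Psi_{D}(u_{\rho})+\delta_{\rho}\},
\end{displaymath}
a quasi-rational polytope contained in $\Delta_{D}$ with normal fan exactly~$\Sigma'$ and with $\vol_{M}(Q)\ge \vol_{M}(\Delta_{D})-\varepsilon/n!$. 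Letting $A$ be the toric $\R$-divisor on $X'=X_{\Sigma'}$ with $\Delta_{A}=Q$ provided by Proposition~\ref{prop:20}, the equality $\Sigma_{A}=\Sigma_{Q}=\Sigma'$ makes $\Psi_{A}$ strictly concave on $\Sigma'$, so $A$ is ample by Proposition~\ref{prop:26}\eqref{item:53}. Since $\Delta_{A}=Q\subset\Delta_{D}=\Delta_{\varphi^{\ast}D}$, Proposition~\ref{prop:16}\eqref{item:47} gives $A\le \varphi^{\ast}D$, and $E:=\varphi^{\ast}D-A$ is the required effective part; the volume bound comes from Proposition~\ref{prop:8}.

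The main obstacle is the convex-geometric construction of $Q$ in part \eqref{item:51}: one must simultaneously arrange that $Q$ be quasi-rational, have volume arbitrarily close to that of $\Delta_{D}$, and have every ray of $\Sigma'$ contribute a facet normal, so that $\Sigma_{Q}=\Sigma'$. Choosing $\Sigma'$ simplicial and the $\delta_{\rho}$ generic handles this, but it is the only step not immediately reduced to the toric dictionary; everything else is a direct application of Propositions~\ref{prop:16}, \ref{prop:20}, \ref{prop:26}\eqref{item:53} and \ref{prop:8}.
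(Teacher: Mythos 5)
Your treatment of \eqref{item:49} and \eqref{item:50} is correct and is essentially the paper's argument: the identity $\Delta_{D+\div(\chi^{m})}=\Delta_{D}-m$ together with Proposition~\ref{prop:16}\eqref{item:44} and \eqref{item:45} gives \eqref{item:49}, and for \eqref{item:50} the paper likewise takes a refinement compatible with $\Delta_{D}$, produces $P$ via Proposition~\ref{prop:20}, and settles maximality on a common refinement using Proposition~\ref{prop:16}\eqref{item:47} (the inclusion $\Delta_{P_{1}}\subset\Delta_{D}$, which you state without comment, is exactly that proposition applied to $\varphi_{1}^{*}D-P_{1}=E_{1}\ge 0$ with $P_{1}$ nef).

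The gap is in \eqref{item:51}, precisely at the step you flag. Ampleness of $A$ on $X'=X_{\Sigma'}$ means $\Psi_{A}$ is strictly concave on $\Sigma'$ (Proposition~\ref{prop:26}\eqref{item:53}), i.e.\ the normal fan of your $Q$ must be \emph{exactly} $\Sigma'$. A generic inward perturbation of the inequalities $\langle x,u_{\rho}\rangle\ge\Psi_{D}(u_{\rho})$ does produce a simple polytope, but its normal fan is whichever coherent simplicial fan corresponds to the chamber of right-hand sides you land in; it is not forced to be $\Sigma'$. Worse, any normal fan carries the strictly concave function $\Psi_{Q}$ and is therefore projective, while an arbitrary simplicial common refinement of $\Sigma$ and $\Sigma_{\Delta_{D}}$ need not be projective when $n\ge 3$; for such a $\Sigma'$ no choice of the $\delta_{\rho}$, generic or not, can give $\Sigma_{Q}=\Sigma'$ --- indeed $X_{\Sigma'}$ then carries no ample toric $\R$-divisor at all. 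So ``simplicial plus generic'' does not handle this: you must both choose $\Sigma'$ projective and perturb in the direction of a strictly concave support function on $\Sigma'$. This is what the paper does: it takes a regular (projective) refinement $\Sigma'$, an ample toric $\R$-divisor $D'$ on $X'$, and sets $A=\lambda\varphi^{*}P+\delta D'$, i.e.\ replaces your $Q$ by $\lambda\Delta_{D}+\delta\Delta_{D'}$ with $\lambda<1$ close to $1$ and $\delta$ small; then $\Psi_{A}=\lambda\Psi_{P}+\delta\Psi_{D'}$ is automatically strictly concave on $\Sigma'$, the inclusion $\Delta_{A}\subset\Delta_{D}$ gives effectivity of $E$ by Proposition~\ref{prop:16}\eqref{item:47}, and the volume estimate follows from Proposition~\ref{prop:8} exactly as in your write-up.
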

\begin{proof}
  \eqref{item:49} By Proposition \ref{prop:16}\eqref{item:44}, $D$ is
  pseudo-effective if and only if $\Psi_{D}-m\leq0$ for some $m\in
  M_\R$. But $\Psi_{D}-m=\Psi_{D+\div(\chi^m)}$ and, by Proposition~\ref{prop:16}\eqref{item:45}, the previous condition is
  equivalent to the fact that $D+\div(\chi^{m})\ge0$.

  \eqref{item:50} By Proposition \ref{prop:16}\eqref{item:44},
  $\Delta_{D}\ne\emptyset$. Let $\Sigma '$ be a refinement of $\Sigma
  $ compatible with $\Delta _{D}$. Let $\varphi\colon X'\to X$ be the
  corresponding birational toric map  of proper toric varieties. Let $P$ be the nef
  toric $\R$-divisor on $X'$ associated to $\Delta _{D}$ under the
  correspondence in Proposition \ref{prop:20}. Set
  $E=\varphi^{\ast}D-P$. By Proposition~\ref{prop:16}(\ref{item:47}),
  $E$ is effective and, by Proposition~\ref{prop:8}, $\vol(X',P)=
  \vol(X,D)$.  Furthermore, let $X'_{1}$, $P_{1}$ and $E_{1}$ as in
  the statement. Let $\Sigma _{1}$ be the fan that determines
  $X'_{1}$. Let $\Sigma ''$ be a common refinement of $\Sigma $,
  $\Sigma '$ and $\Sigma '_{1}$ and let $\nu \colon X''\to X'$ and
  $\nu _{1}\colon X''\to X'_{1}$ be the associated proper
  birational toric maps. By Proposition \ref{prop:16}\eqref{item:47},
  $\Delta_{P_{1}}\subset \Delta_{D} = \Delta_{P}$ and, by the same
  result, $\nu^{*}P\ge \nu^{\ast}_{1}P_{1}$.

  \eqref{item:51} Let $\Sigma'$ be a regular refinement of $\Sigma$.
  The toric variety $X':=X_{\Sigma'}$ is projective and there is a
  birational toric map $\varphi\colon X'\to X$. Let $D'$ be an ample
  toric $\R$-divisor on $X'$.  Let $P$ be the nef $\R$-divisor
  on $X$ given by \eqref{item:50}, for which we have
  $\Delta_{P}=\Delta_{D}$. Let $0\le \gamma <1$ and $\delta>0$
  such that
\begin{equation}\label{eq:15}
  \gamma \Delta_{D}+ \delta \Delta_{D'}\subset \Delta_{D}, \quad
  \vol_{M}(\gamma \Delta_{D}+ \delta \Delta_{D'})\ge
  \vol_{M}(\Delta_{D}) -\frac{\varepsilon}{n!}. 
\end{equation}
Set $A= \gamma \varphi^{*}{P}+ \delta {D'}$ and $E=
\varphi^{*}{D}-A$.   By
Proposition \ref{prop:20},  $\Delta_A = \gamma\Delta_{\varphi^\ast
  P}+\delta\Delta_{D'}\subset \Delta_D=\Delta_{\varphi^\ast D}$. Proposition
\ref{prop:16}(\ref{item:47}) then implies that $E$ is effective.
The virtual support function corresponding to $A$
is $\gamma\Psi_{P}+\delta\Psi_{D'}$, which is strictly concave on
$\Sigma'$. Hence, $A$ is ample. Finally, Proposition~\ref{prop:8} together with \eqref{eq:15} show $ \vol(X',A)\ge\vol(X,D)-\varepsilon$.
\end{proof}

Let $\K$ be a global field and $X$ a proper toric variety over $\K$ of dimension
$n$. 
For each place $v\in \mathfrak{M}_{\K}$, we associate to
the algebraic torus $\T$ an analytic space $\T^{\an}_{v}$. We denote
by $\SS^{\an}_{v}$ its compact torus. In the Archimedean case, it is
isomorphic to~$(S^{1})^{n}$. In the non-Archimedean  case, it is a
compact analytic group, see 
\cite [\S 4.2]{BurgosPhilipponSombra:agtvmmh} for a description. We
denote by $\val_v\colon X_{0,v}^\an\to N_\R$ the valuation map associated to
the place $v$ as in~\cite[(4.1.2)]{BurgosPhilipponSombra:agtvmmh}.

\begin{defn} \label{def:15} 
A $v$-adically metrized $\R$-divisor $\ov D=(D,\Vert\cdot\Vert)$ on $X$ is
  \emph{toric} if $D$ is a toric $\R$-divisor and its Green function is
  invariant with respect to the action of~$\SS^{\an}_{v}$. 
To a toric $v$-adically metrized $\R$-divisor
  $\ov D$ as above, we associate the
  function $\psi_{\ov D}\colon N_{\R}\to \R$ defined, for $u\in
  N_{\R}$, by
\begin{displaymath}
  \psi_{\ov D}(u)=-g_{\ov D}(p)
\end{displaymath}
for any $p\in X^{\an}_{0,v}$ such that $\val_{v}(p)=u$.  We will
alternatively denote this function by $ \psi_{D,\Vert\cdot\Vert}$.

A quasi-algebraic metrized $\R$-divisor $\ov D$ on $X$ is \emph{toric}
if $(D,\Vert\cdot\Vert_{v})$ is a toric $v$-adically metrized
$\R$-divisor for all $v\in\mathfrak M_{\K}$. For each place $v$, we
denote by $\psi_{\ov D,v}$ the function associated to the toric 
$v$-adically metrized $\R$-divisor $(D,\|\cdot\|_{v})$. A toric
quasi-algebraic metrized $\R$-divisor is also called a \emph{toric metrized
  $\R$-divisor}, for short.

 We denote by $\wh\Car_{\T}(X)_{\R,v}$ and $\wh\Car_{\T}(X)_{\R}$ the
 spaces of toric $v$-adically metrized $\R$-divisors on $X$ and of
 toric metrized $\R$-divisors on $X$.
\end{defn}

Let $D_{i}\in \wh\Car_{\T}(X)_{\R}$ and $\alpha_{i}\in \R$, $i=1,2$,
and $v\in \mathfrak M_{\K}$. It is immediate from the definitions
that 
\begin{equation}
  \label{eq:29}
\psi_{\alpha_{1}\ov D_{1}+\alpha_{2}\ov D_{2},v}  =\alpha_{1}\psi_{\ov D_{1},v}  +
\alpha_{2}\psi_{\ov D_{2},v}.  
\end{equation}

\begin{rem} \label{rem:12} When $\ov D$ is the metrized $\R$-divisor associated
  to the toric line bundle with section $(\ov L,s)$, the function
  $\psi_{\ov D,v}$ corresponds to 
  the function $\psi_{\ov L,s,v}$ in the notation of \cite[Definition
  4.3.5]{BurgosPhilipponSombra:agtvmmh}.
\end{rem}

Let $D$ be a toric divisor on $X$ and $v\in \mathfrak M_{\K}$.  Recall
that $D$ has a canonical $v$-adic metric, denoted
$\Vert\cdot\Vert_{D,v,\can}$ \cite[Proposition-Definition
4.3.15]{BurgosPhilipponSombra:agtvmmh}.  The function associated to this
metric agrees with the virtual support function $\Psi_{D}$.  We extend
this construction to toric $\R$-divisors.

\begin{defn}\label{def:25}
  Let $D$ be a toric $\R$-divisor on $X$ and $v\in \mathfrak
  M_{\K}$. Write $D=\sum_{i}\alpha_{i}D_{i}$ with $\alpha_{i}\in \R$
  and $D_{i}$ a toric divisor. For each $i$, let
  $\Vert\cdot\Vert_{D_{i},v,\can}$ be the canonical $v$-adic metric on
  $D_{i}$ and write $\ov
  D_{i,v,\can}=(D_{i},\Vert\cdot\Vert_{D_{i},v,\can})$. We define
  \begin{displaymath}
    \ov  D_{v,\can}=\sum_{i}\alpha_{i}\ov  D_{i,v,\can},
  \end{displaymath}
  and we denote by $\Vert\cdot\Vert_{D,v,\can}$ the corresponding
  \emph{$v$-adic canonical metric} on $D$.  This is a toric $v$-adic
  metric on $D$ and $\psi_{ \ov D_{v,\can}}=\Psi_{D}$. In particular,
  it is independent of the chosen decomposition of $D$.  
\end{defn}

The following result extends \cite[Proposition 4.3.10(2) and
Proposition 4.9.2(1)]{BurgosPhilipponSombra:agtvmmh} to toric $\R$-divisors.
As explained in \cite[\S 4.1]{BurgosPhilipponSombra:agtvmmh}, the
variety with corners $N_{\Sigma}$ is a compactification of the vector
space $N_{\R}$ and, for each $v\in \mathfrak{M}_{\K}$,  there is a
proper map  of topological spaces $\val_{v}\colon
X_{v}^{\an}\to N_{\Sigma}$. 

\begin{prop}\label{prop:22}
Let $D$ be a toric $\R$-divisor on $X$.
\begin{enumerate}
\item \label{item:70} Let $v\in \mathfrak M_{\K}$. The correspondence
  $\Vert\cdot\Vert \mapsto \psi_{D,\Vert\cdot\Vert}
$ is a bijection between the set of toric $v$-adic metrics on $
D$ and the set of functions $\psi\colon N_{\R}\to \R$ such that
$\psi-\Psi_{D}$ extends to a 
continuous function on $N_{\Sigma}$.
\item  \label{item:71}
The correspondence $\{\Vert\cdot\Vert_{v}\}_{v\in  \mathfrak
  M_{\K}}\mapsto \{\psi_{D,\Vert\cdot\Vert_{v}}\}_{v\in  \mathfrak
  M_{\K}}$  is a bijection between the set of quasi-algebraic toric
metrics on $D$ and the set of families of functions $\psi_{v}\colon
N_{\R}\to \R$  
such that $\psi_{v}-\Psi_{D}$ extends to a
continuous function on $N_{\Sigma}$ for all $v$, and 
$\psi_{v}=\Psi_{D}$ for all but a finite number of
$v$. 
\end{enumerate}
\end{prop}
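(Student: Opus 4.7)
The strategy is to reduce to the case of toric divisors (where the result is \cite[Prop.~4.2.4 and Thm~4.8.3]{BurgosPhilipponSombra:agtvmmh}) by using the canonical $v$-adic metric from Definition \ref{def:25} as a reference point. Recall from that definition that $\psi_{\ov D_{v,\can}}=\Psi_{D}$, and that the canonical Green function is $\SS_{v}^{\an}$-invariant by construction.

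For part \eqref{item:70}, fix $v\in \mathfrak M_{\K}$. By Corollary \ref{cor:7}, any $v$-adic metric $\|\cdot\|$ on $D$ can be written uniquely as $\|\cdot\|=\e^{-h}\|\cdot\|_{D,v,\can}$ for some continuous function $h\colon X_{v}^{\an}\to \R$. Taking Green functions gives $g_{\ov D,v}=g_{\ov D_{v,\can},v}+h$, so that for $p\in X_{0,v}^{\an}$ with $\val_{v}(p)=u$,
\begin{displaymath}
\psi_{D,\|\cdot\|}(u)=\Psi_{D}(u)-h(p)/\lambda_{v}.
\end{displaymath}
The metric $\|\cdot\|$ is toric, i.e.\ its Green function is $\SS_{v}^{\an}$-invariant, if and only if $h$ is $\SS_{v}^{\an}$-invariant. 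By \cite[Proposition~4.2.4]{BurgosPhilipponSombra:agtvmmh}, the map $\val_{v}\colon X_{v}^{\an}\to N_{\Sigma}$ induces a homeomorphism between the topological quotient $X_{v}^{\an}/\SS_{v}^{\an}$ and $N_{\Sigma}$. Hence $\SS_{v}^{\an}$-invariant continuous functions on $X_{v}^{\an}$ are in bijection with continuous functions on $N_{\Sigma}$. This establishes \eqref{item:70}: under $\|\cdot\|\mapsto \psi_{D,\|\cdot\|}$, the toric $v$-adic metrics on $D$ correspond bijectively to the functions $\psi\colon N_{\R}\to \R$ such that $\psi-\Psi_{D}$ (which equals $-h/\lambda_{v}$ restricted to $N_{\R}$) extends continuously to $N_{\Sigma}$; the inverse map sends $\psi$ to the metric $\e^{-h}\|\cdot\|_{D,v,\can}$ where $h=-\lambda_{v}(\psi-\Psi_{D})\circ \val_{v}$.

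For part \eqref{item:71}, apply \eqref{item:70} at each place. It remains to check that the quasi-algebraic condition on $\{\|\cdot\|_{v}\}_{v}$ matches the condition ``$\psi_{v}=\Psi_{D}$ for all but finitely many $v$''. By Definition \ref{def:25}, $\psi_{v}=\Psi_{D}$ is equivalent to $\|\cdot\|_{v}=\|\cdot\|_{D,v,\can}$. Writing $D=\sum_{i}\alpha_{i}D_{i}$ with $D_{i}$ toric divisors, each $D_{i}$ extends to a toric divisor on a common toric integral model $\cX_{\Sigma}$ of $X_{\Sigma}$, and by \cite[Thm~4.8.3]{BurgosPhilipponSombra:agtvmmh} this integral model induces precisely the canonical $v$-adic metric on $D_{i}$ at every non-Archimedean place. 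Thus the canonical metrics on $D$ at all non-Archimedean places come from a single integral model, proving one implication. Conversely, if the metric is quasi-algebraic, the integral model defining it at almost all places can, by the invariance property and the uniqueness in \eqref{item:70}, be replaced by the toric one, giving the canonical metric at those places.

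The main obstacle is the last step in part \eqref{item:71}: ensuring that a quasi-algebraic \emph{toric} metric does coincide with the canonical metric at all but finitely many places. This requires going back to the integral model interpretation in Definition \ref{def:14} and using that toric-invariance of the $v$-adic metric forces the model, at places of good behaviour, to agree with the toric one up to a unit; the translation into the combinatorial statement for $\R$-divisors then follows by linearity from the divisor case in \cite[Thm~4.8.3]{BurgosPhilipponSombra:agtvmmh}, combined with \eqref{eq:29}.
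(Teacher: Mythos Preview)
Your proof is correct and follows essentially the same approach as the paper: both use Corollary~\ref{cor:7} together with the canonical metric (Definition~\ref{def:25}) as a reference point to reduce part~\eqref{item:70} to the bijection between $\SS_{v}^{\an}$-invariant continuous functions on $X_{v}^{\an}$ and continuous functions on $N_{\Sigma}$, and both reduce part~\eqref{item:71} to \cite[Theorem~4.8.3]{BurgosPhilipponSombra:agtvmmh}. The paper's treatment of \eqref{item:71} is a single sentence invoking that theorem, whereas you spell out the two directions more explicitly; your ``main obstacle'' paragraph is not really an obstacle but precisely the content of that theorem combined with the linearity~\eqref{eq:29}.
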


\begin{proof} 
For the local case, Corollary \ref{cor:7} together with the properties 
of the canonical $v$-adic metric of $D$ implies that 
the map 
\begin{displaymath}
  \Vert\cdot\Vert\longmapsto g_{D,\Vert\cdot\Vert}-g_{{D,v,\can}}
\end{displaymath}
gives a bijection between the space of toric $v$-adic metrics on $D$
and that of continuous $\SS_{v}^{\an}$-invariant functions on
$X_{v}^{\an}$. We have that
$g_{D,\Vert\cdot\Vert}-g_{{D,v,\can}}=-(\psi_{D,\Vert\cdot\Vert}-\Psi_{D})\circ\val_{v}$.
Since the map $\val_{v}\colon X_{v}^{\an}\to N_{\Sigma}$ is proper,
the $\SS_{v}^{\an}$-invariant functions on $X_{v}^{\an}$ are in
one-to-one correspondence with continuous functions on $N_{\Sigma}$.

The global case follows from this and \cite[Proposition 4.9.2(1)]{BurgosPhilipponSombra:agtvmmh}.
\end{proof}

Let $\ov D$ be a toric metrized $\R$-divisor on $X$ and $v\in
\mathfrak M_{\K}$. By Proposition \ref{prop:22}\eqref{item:70}, the
function $\psi_{\ov D,v}$ is asymptotically conic (Definition
\ref{def:11}) and its stability set is $\Delta_{D}$, since it
agrees   with that of $\Psi_{D}$.

\begin{defn}\label{def:8} Let $\ov D$ be a toric metrized $\R$-divisor
  on $X$. For each $v\in \mathfrak{M}_{\K}$, the \emph{$v$-adic roof
    function} of $\ov D$ is the concave function on $\Delta_{D}$ defined as
$$
\vartheta
_{\ov D,v}=\psi _{\ov D,v}^{\vee},
$$ 
see Definition \ref{def:4}. The \emph{(global) roof function} of $\ov
D$ is defined as
$$
\vartheta _{\ov
  D}=\sum_{v\in \mathfrak{M}_{\K}}n_{v}\vartheta _{\ov D,v}. 
$$
\end{defn}

\begin{rem} \label{rem:3} In \cite[Definition
  5.1.4]{BurgosPhilipponSombra:agtvmmh}, the
$v$-adic roof function is defined only for toric semipositive metrized
  line bundles with a toric section. Definition~\ref{def:8} extends this definition to arbitrary
  toric metrized $\R$-divisors.
\end{rem}

The following extends \cite[Theorem 4.8.1 and
Proposition 4.9.2(2)]{BurgosPhilipponSombra:agtvmmh} to toric
$\R$-divisors.    

\begin{prop}\label{prop:23}
  Let $D$ be a toric $\R$-divisor on $X$. 
  \begin{enumerate}
  \item \label{item:72} Let $v\in \mathfrak M_{\K}$. The
    maps $\Vert\cdot\Vert \mapsto \psi_{D,\Vert\cdot\Vert}$
    and $\Vert\cdot\Vert\mapsto \vartheta_{D,\Vert\cdot\Vert}$ are
    bijections between the set of toric  semipositive $v$-adic metrics
    on $ D$ and, on one hand,
    the set of concave functions $\psi\colon N_{\R}\to \R$ such that
    $|\psi-\Psi_{D}|$ is bounded and, on the other hand, the set of
    continuous concave functions on $\Delta_{D}$.

  \item \label{item:73} The maps
    $\{\Vert\cdot\Vert_{v}\}_{v}\mapsto
    \{\psi_{D,\Vert\cdot\Vert_{v}}\}_{v} $ and
    $\{\Vert\cdot\Vert_{v}\}_{v}\mapsto
    \{\vartheta_{D,\Vert\cdot\Vert_{v}}\}_{v}$ are
    bijections between the set of toric semipositive  metrics on $ D$
    and, on one hand, the set of families of concave functions
    $\{\psi_{v}\colon N_{\R}\to \R\}_{v}$ such that
    $|\psi_v-\Psi_{D}|$ is bounded for all $v$ and $\psi_{v}=\Psi_{D}$
    for all but a finite number of $v$ and, on the other hand, the set
    of families of continuous concave functions $\{\vartheta_{v}\colon
    \Delta_{D}\to\R\}_{v}$ such that $\vartheta_{v}\equiv 0$ for all
    but a finite number of $v$.
  \end{enumerate}
\end{prop}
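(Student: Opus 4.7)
My plan is to lift the divisor case from Theorem~4.7.1 of \cite{BurgosPhilipponSombra:agtvmmh} to $\R$-divisors using the linear structure of Definition~\ref{def:1}, and then to derive the second bijection from the first by Legendre--Fenchel duality as developed in the appendix on convex analysis. The underlying bijection with general toric metrics (without semipositivity) is already provided by Proposition~\ref{prop:22}, so the task is to match, on each side, the semipositivity condition with the concavity condition. The second bijection follows by composing with $\psi\mapsto\psi^{\vee}$, which sends concave functions on $N_\R$ with $|\psi-\Psi_D|$ bounded (equivalently, with recession function $\Psi_D$) to continuous concave functions on $\stab(\Psi_D)=\Delta_D$.

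For the forward direction of part~(1), a toric semipositive $v$-adic metric on $D$ is, by Definition~\ref{def:24}, the uniform $\dist$-limit of toric smooth (resp.\ algebraic) semipositive $v$-adic metrics, each of which is expressible by Definition~\ref{def:1} as $\sum_i\alpha_i\ov D_i$ with $\alpha_i>0$ and $\ov D_i$ a smooth (resp.\ algebraic) semipositive toric $v$-adically metrized divisor such that $\sum_i\alpha_i D_i=D$. Theorem~4.7.1 of the reference identifies each $\psi_{\ov D_i,v}$ as concave with bounded difference from $\Psi_{D_i}$; the linearity relation~\eqref{eq:29} then yields the concavity and the boundedness of $|\psi_{\ov D,v}-\Psi_D|$ for the positive combination. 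Uniform convergence in $\dist$ corresponds, via the proper map $\val_v\colon X^{\an}_v\to N_\Sigma$, to uniform convergence of the associated functions on $N_\Sigma$, and both concavity and the bound on the difference from $\Psi_D$ pass to the limit.

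For the reverse direction, given $\psi$ concave with $|\psi-\Psi_D|$ bounded, the recession function of $\psi$ coincides with the positively homogeneous $\Psi_D$, so $\Psi_D$ is concave and $D$ is nef by Proposition~\ref{prop:26}. The same proposition provides a decomposition $D=\sum_{i=1}^{r}\alpha_i D_i$ with $\alpha_i>0$ and $D_i$ nef toric divisors, whence $\Delta_D=\sum_i\alpha_i\Delta_{D_i}$ as a Minkowski sum. To realize $\psi$ as the function of a semipositive metric on $D$, I would approximate the Legendre dual $\vartheta=\psi^{\vee}$ on $\Delta_D$ uniformly by concave functions $\vartheta_k$ that are smooth in the Archimedean case or rational piecewise affine on a polytopal subdivision of $\Delta_D$ in the non-Archimedean case, then exhibit each $\vartheta_k$ as a positive combination of continuous concave functions on the Minkowski summands $\Delta_{D_i}$, and apply the divisor case of Theorem~4.7.1 to each summand to obtain smooth (resp.\ algebraic) semipositive toric metrics on the $D_i$ that combine positively into a smooth (resp.\ algebraic) semipositive toric metric on $D$. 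Passage to the limit in $\dist$ produces a toric semipositive $v$-adic metric on $D$ whose associated function is $\psi$.

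Part~(2) then follows from part~(1) applied placewise, together with Proposition~\ref{prop:22}(2): at all but finitely many places $v$ the requirement $\psi_v=\Psi_D$ corresponds to the $v$-adic canonical metric of Definition~\ref{def:25}, which is algebraic and semipositive since $D$ is nef, so the local bijections assemble into the global one. The main obstacle will be the Minkowski-additive decomposition step in the reverse direction of part~(1), namely writing a continuous concave function on $\Delta_D=\sum_i\alpha_i\Delta_{D_i}$ as a positive combination of continuous concave functions on the summands $\Delta_{D_i}$; this is the crucial device that allows the divisor case of Theorem~4.7.1 to cover the $\R$-divisor situation by $\R$-linearity, and its execution is what the rest of the argument hinges on.
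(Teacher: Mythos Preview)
Your forward direction has a minor gap: Definition~\ref{def:24} says a semipositive metric is a limit of smooth/algebraic semipositive metrics on $D$, but does not say these approximants are toric, nor that the summands $\ov D_i$ in Definition~\ref{def:1} are toric. The paper handles this by writing the approximant as $\sum_i\alpha_i\ov D_i'$ with the $\ov D_i'$ not necessarily toric, then \emph{torifying} each $\ov D_i'$ via \cite[Propositions~4.3.4 and~4.4.12]{BurgosPhilipponSombra:agtvmmh}, and checking that the resulting toric metric remains $\varepsilon$-close because the original metric is $\SS_v^{\an}$-invariant. This step is not hard, but it is not automatic.

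Your reverse direction has a genuine gap, and it is exactly the one you flagged as the ``main obstacle''. Fixing a decomposition $D=\sum_i\alpha_i D_i$ with $D_i$ nef in advance and then trying to split a given concave function accordingly does not work. Concretely, take $X=\P^1\times\P^1$, $D_1,D_2$ the two rulings, $D=D_1+D_2$, so $\Delta_D=[0,1]^2$ and $\Delta_{D_i}$ are the two coordinate segments. On the $\psi$ side you need $\psi=\psi_1+\psi_2$ with $\psi_i$ concave and $\rec(\psi_i)=\Psi_{D_i}$; dually this forces $\vartheta(x_1,x_2)=f(x_1)+g(x_2)$ to be additively separable, and a generic concave $\vartheta$ on the square is not. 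Approximation does not help, since separable functions are closed under uniform limits. The paper avoids this by \emph{not} fixing the decomposition of $D$: it approximates $\psi$ by a rational piecewise affine concave $\phi$ on a rational polyhedral complex $\Pi$ with $\rec(\Pi)=\Sigma$, observes that the concave piecewise affine functions on $\Pi$ form a rational polyhedral cone in a finite-dimensional rational vector space, and writes $\phi=\sum_i\alpha_i\phi_i$ with $\phi_i$ lattice generators of that cone. The recessions $\Phi_i=\rec(\phi_i)$ then \emph{define} the divisors $D_i$, and one applies \cite[Corollary~4.4.9 or Theorem~4.7.1]{BurgosPhilipponSombra:agtvmmh} to each. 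The decomposition of $D$ is thus dictated by the approximant $\phi$, and changes with $\varepsilon$; this is the idea your outline is missing.
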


\begin{proof}
By \cite[Propositions 2.5.20(2) and
2.5.23]{BurgosPhilipponSombra:agtvmmh}, the first and the second part
of both statements are equivalent.

  For the local case, given a toric semipositive $v$-adic metric $\Vert\cdot\Vert$ on $D$, choose $\varepsilon>0$ and let
  $\Vert\cdot\Vert'$ be a (non necessarily toric) semipositive
  (smooth or algebraic) metric on $D$ such that
\begin{displaymath}
  \dist(\Vert\cdot\Vert,\Vert\cdot\Vert') <\varepsilon.
\end{displaymath}
Set $\ov D'=(D,\Vert\cdot\Vert')$ and write $\ov
D'=\sum_{i}\alpha_{i}\ov D_{i}'$ with $\alpha_{i}>0$ and $\ov D_{i}'$
a semipositive  (smooth or algebraic) metrized divisor on $X$.

For each $i$, let $\Vert\cdot\Vert'_{i,\tor}$ be the toric 
metric on $D_{i}'$ obtained from the metric of $\ov D_{i}'$ by
averaging of the metric along the fibres of the map
$\val_{v}\colon X_{0,v}^\an\to N_{\Sigma}$ as in
\cite[Definition 4.3.3]{BurgosPhilipponSombra:agtvmmh}.  Write $\ov
D_{i,\tor}'=(D_{i},\Vert\cdot\Vert'_{i,\tor})$. This is a toric
semipositive  (smooth or algebraic) metrized divisor and so
\begin{displaymath}
  \ov D_{\tor}':=\sum_{i}\alpha_{i}\ov D_{i,\tor}'
\end{displaymath}
is a toric semipositive  (smooth or algebraic) metrized divisor.  Since $\Vert\cdot\Vert$ is
$\SS_{v}^{\an}$-invariant, it results that
\begin{math}
  \dist(\Vert\cdot\Vert,\Vert\cdot\Vert'_{\tor}) <\varepsilon.
\end{math}
Hence, 
\begin{displaymath}
\sup_{u\in N_{\Sigma}} \big|\psi_{\ov D}(u)-\psi_{\ov D_{\tor}'}(u)\big|<
{\varepsilon}.  
\end{displaymath}
By propositions 4.4.1 and 4.7.1 in {\it loc. cit.}, the functions
$\psi_{\ov D_{i,\tor}'}$ are concave, and so is $\psi_{\ov D_{\tor}'}$
since $\alpha_{i}>0$ for all $i$.  Letting $\varepsilon \to 0$, we
conclude that $\psi_{\ov D}$ is concave.

Conversely, let $\psi\colon N_{\Sigma}\to \R$ be a concave
function such that $\psi-\Psi_{D}$ is bounded. Denote by
$\Vert\cdot\Vert$ the toric $v$-adic metric associated to $\psi$ by 
Proposition~\ref{prop:22}\eqref{item:70} and let $\varepsilon >0$. 
By \cite[Proposition 2.5.23(2)]{BurgosPhilipponSombra:agtvmmh}, there is a 
piecewise affine concave function $\zeta\colon \Delta_{D}\to \R$ such that
\begin{displaymath}
  \sup_{x\in \Delta_{D}} \big|\psi^{\vee}(x)-\zeta(x)\big|< \varepsilon.
\end{displaymath}
By the density of $\Q$, we can choose $\zeta$ defined by a finite number
of affine maps with rational coefficients.  Its Legendre-Fenchel dual
$\phi=\zeta^{\vee}$ is a piecewise affine concave function on a
rational polyhedral complex $\Pi$ such that
\begin{displaymath}
  \sup_{u\in N_{\Sigma}} \big|\psi(u)-\phi(u)\big|<  \varepsilon.
\end{displaymath}
Arguing as in the proof of \cite[Theorem
3.7.3]{BurgosPhilipponSombra:agtvmmh}, we can 
assume that the recession of $\Pi$ agrees with $\Sigma$.

Let $\cP(\Pi)$ denote the space of piecewise affine functions on $\Pi$ and
$\cC(\Pi)\subset \cP(\Pi)$ the subset of concave functions.  The space $\cP(\Pi)$ can
be identified with the linear subspace of $\prod_{\Lambda\in \Pi^{n}}(M_{\R}\times\R)$ given by
\begin{displaymath}
   \big\{ (m_{\Lambda},\gamma_{\Lambda})_{\Lambda\in \Pi^{n}} \mid
   \langle m_{\Lambda},u\rangle+ \gamma_{\Lambda} =\langle
   m_{\Lambda'},u\rangle+ \gamma_{\Lambda'} \text{ for all } \Lambda,\Lambda'\in
   \Pi^{n} \text{ and } u\in
   \Lambda\cap \Lambda'\big\}.
\end{displaymath}
This is a finite-dimensional linear subspace defined over $\Q$. For each $\Lambda\in
\Pi^{n}$, choose a point $u_{\Lambda}\in \ri(\Lambda)\cap N_{\Q}$. 
Then
\begin{displaymath}
 \cC(\Pi)  =\big\{ (m_{\Lambda},\gamma_{\Lambda})_{\Lambda}\in \cP(\Pi) \mid
   \langle m_{\Lambda},u_{\Lambda}\rangle+ \gamma_{\Lambda} \le\langle
   m_{\Lambda'},u_{\Lambda}\rangle+ \gamma_{\Lambda'} \text{ for all } \Lambda,\Lambda'\in
   \Pi^{n}\big\}.
\end{displaymath}
This is a cone of $\cP(\Pi)$ given by a rational H-representation as
in \cite[(2.1.1)]{BurgosPhilipponSombra:agtvmmh}. Hence, it admits a
rational V-representation. This implies that there exist a finite
number of H-lattice piecewise affine concave functions $\phi_{i}\in
\cC(\Pi)$ and positive real numbers $\alpha_{i}$ such that
\begin{displaymath}
  \phi=\sum_{i}\alpha_{i}\phi_{i}.
\end{displaymath}
For each $i$, set $\Phi_{i}=\rec(\phi_{i})$, the recession function of
$\phi_{i}$ (Definition \ref{def:11}). This is a support
function on $\Sigma$ and so it corresponds to a toric divisor on $X$,
that we denote by $D_{i}$. We observe that $\sum_i\alpha_i\Phi_i = \Psi_D$, hence $D=\sum_i\alpha_iD_i$ by Proposition \ref{prop:15}.

In the non-Archimedean case, \cite[Corollary
4.5.9]{BurgosPhilipponSombra:agtvmmh} implies 
that there exists a semipositive algebraic metric $\Vert\cdot
\Vert_{i}$ on $D_{i}$ such that $\psi_{D_{i},\Vert\cdot
\Vert_{i}}=\phi_{i}$. Set $\ov D_{i}=(D_{i},\Vert\cdot
\Vert_{i})$ and $\ov D'=\sum_{i}\alpha_{i}\ov D_{i}$. This gives a
semipositive algebraic $v$-adic metric $\Vert\cdot \Vert'$ on $D$
and 
\begin{displaymath}
  \dist(\Vert\cdot\Vert,\Vert\cdot \Vert') =
  \sup_{u\in N_{\Sigma}} \big|\psi(u)-\phi(u)\big|<
  \varepsilon.
\end{displaymath}
In the Archimedean case, Theorem 4.8.1 in {\it loc. cit.} implies that
there exists a semipositive smooth metric $\Vert\cdot \Vert_{i}$
on $D_{i}$ such that $|\phi_{i}- \psi_{D_{i},\Vert\cdot
  \Vert_{i}}|<\varepsilon/\sum_i\alpha_i$. Let $\Vert\cdot\Vert'$ denote
the induced semipositive smooth metric on $D$. Then
\begin{displaymath}
  \dist(\Vert\cdot\Vert,\Vert\cdot\Vert') =
  \sup_{u\in N_{\Sigma}} \big|\psi(u)- \sum_i\alpha_i\psi_{D_{i},\Vert\cdot
\Vert_{i}}(u)\big|< 
  2\varepsilon.
\end{displaymath}
In both cases, as $\varepsilon$ tends to $0$, this and Definition \ref{def:24} show that the metric $\Vert\cdot\Vert$ is semipositive.

The global statement follows from the local one and Proposition \ref{prop:22}\eqref{item:71}.
\end{proof}

Let $Y$ be a $d$-dimensional cycle on $X$ and $\ov D_{i}$, $i=0,\dots,
d$, semipositive toric metrized $\R$-divisors on~$X$. For a place
$v\in \mathfrak M_{\K}$, the \emph{$v$-adic toric height} of $Y$ with
respect to $\ov D_{0},\dots,\ov D_{d}$, denoted $\h^{\tor}_{v,\ov
  D_{0},\dots,\ov D_{d}}(Y)$, is defined from the $v$-adic toric
height of a cycle (\cite[Definition
5.1.1]{BurgosPhilipponSombra:agtvmmh}) with respect to semipositive
toric line bundles by multilinearity and continuity. 

The global height of $Y$ can be computed as
\begin{displaymath}
  \h_{\ov D_{0},\dots,\ov
  D_{d}}(Y) = \sum_{v\in \mathfrak M_{\K}}n_{v }\htor_{v,\ov D_{0},\dots,\ov
  D_{d}}(Y).
\end{displaymath}
This follows easily from the analogous statement for semipositive
toric line bundles, see Proposition 5.2.4 in {\it loc. cit.}.

Recall that there is a one-to-one, dimension reversing, correspondence
between the cones of $\Sigma$ and the orbits of the action of $\T$ on
$X$. Given a cone $\sigma\in \Sigma$, we denote by $O(\sigma)$ the
corresponding orbit and by $V(\sigma)$ its closure.  Recall also that,
given a support function $\Psi$ on $\Sigma$, we can associate to each
cone $\sigma\in \Sigma$ a face, denoted $F_{\sigma}$, of the polytope
$\stab(\Psi)$.

\begin{prop} \label{prop:1}
Let $\ov D$ be a toric semipositive  metrized $\R$-divisor on
$X$ and $\sigma\in \Sigma$ a cone of codimension $d$. Then, for $v\in \mathfrak M_{\K}$, 
\begin{displaymath}
  \h_{v, \ov {   D}}^{\tor}(V(\sigma))=(d+1)!\int _{F_{\sigma}}\vartheta _{\ov D,v}\dd\vol_{M(F_{\sigma})}
\end{displaymath}
and
\begin{displaymath}
 \h_{\ov {   D}}(V(\sigma))=(d+1)!\int _{F_{\sigma}}\vartheta _{\ov D}\dd\vol_{M(F_{\sigma})},
\end{displaymath}
where $M(F_{\sigma})$ is the lattice induced by $M$ on the affine
space generated by $F_{\sigma}$ and $\vol_{M(F_{\sigma})}$ is the Haar
measure on that affine space such that the covolume of $M(F_{\sigma})$
is one.

In particular,
\begin{displaymath}
  \h_{\ov {   D}}(X)=(n+1)!\int _{\Delta _{D}}\vartheta _{\ov D}\dd\vol_{M}.
\end{displaymath}
\end{prop}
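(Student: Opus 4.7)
The plan is to reduce to the analogous formula for toric semipositive metrized line bundles, namely \cite[Proposition~5.2.4]{BurgosPhilipponSombra:agtvmmh}, and extend to the $\R$-divisor setting by multilinearity and approximation. The global formula follows immediately from the local one by summing over places, using that $\h_{\ov D}(V(\sigma))=\sum_{v}n_{v}\htor_{v,\ov D}(V(\sigma))$ and $\vartheta_{\ov D}=\sum_{v}n_{v}\vartheta_{\ov D,v}$, so I focus on the local statement at a fixed place $v\in\mathfrak{M}_{\K}$.

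First, suppose that $\ov D$ admits a decomposition $\ov D=\sum_{i=1}^{k}\alpha_{i}\ov D_{i}$ with $\alpha_{i}>0$ and each $\ov D_{i}$ a toric semipositive metrized (integer) divisor. By the multilinearity defining the local toric height of a semipositive metrized $\R$-divisor, and by the formula of \cite[Proposition~5.2.4]{BurgosPhilipponSombra:agtvmmh} applied to each term, the left-hand side expands as
\[
\htor_{v,\ov D}(V(\sigma))=(d+1)!\sum_{\mathbf{j}}\alpha_{j_{0}}\cdots\alpha_{j_{d}}\,\MI_{M(F_{\sigma})}\bigl(\vartheta_{\ov D_{j_{0}},v},\dots,\vartheta_{\ov D_{j_{d}},v}\bigr),
\]
the sum running over multi-indices $\mathbf{j}=(j_{0},\dots,j_{d})\in\{1,\dots,k\}^{d+1}$. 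To identify this with $(d+1)!\int_{F_{\sigma}}\vartheta_{\ov D,v}\,\dd\vol_{M(F_{\sigma})}$, I would use that $\psi_{\ov D,v}=\sum_{i}\alpha_{i}\psi_{\ov D_{i},v}$ by \eqref{eq:29} together with the fact that Legendre-Fenchel duality sends pointwise sums of concave functions to sup-convolutions of their conjugates, so $\vartheta_{\ov D,v}$ coincides with the sup-convolution of the scaled roof functions $\alpha_{i}\vartheta_{\ov D_{i},v}$. The defining polynomial expansion of the mixed integral, combined with the symmetry and multilinearity of $\MI$, then collapses the above sum to $(d+1)!\,\MI_{M(F_{\sigma})}(\vartheta_{\ov D,v},\dots,\vartheta_{\ov D,v})$, and this mixed integral of $d+1$ equal concave functions equals $\int_{F_{\sigma}}\vartheta_{\ov D,v}\,\dd\vol_{M(F_{\sigma})}$.

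For a general toric semipositive metrized $\R$-divisor $\ov D$, Proposition~\ref{prop:23}\eqref{item:72} says that $\psi_{\ov D,v}-\Psi_{D}$ is a bounded continuous function on $N_{\Sigma}$, and the density arguments in the proof of Proposition~\ref{prop:23} allow me to approximate $\psi_{\ov D,v}$ uniformly by functions of the form $\sum_{i}\alpha_{i}\psi_{\ov D_{i},v}$ as above. Both sides of the desired equality are continuous in the uniform topology on $\psi_{\ov D,v}$: the left-hand side by the continuity of local heights of semipositive metrized line bundles via B\'ezout's formula \eqref{eq:20}, and the right-hand side by continuity of the Legendre-Fenchel transform on bounded perturbations of $\Psi_{D}$ and of integration over $F_{\sigma}$. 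Passing to the limit yields the local formula in full generality, and the global statement follows by summing over $v$.

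The main technical obstacle will be controlling the convergence of the Chambert-Loir type measures $\chern_{1}(D,\Vert\cdot\Vert_{v})^{\wedge d}\wedge\delta_{V(\sigma)}$ appearing in \eqref{eq:20} for the approximating sequence, which is precisely what is needed to establish continuity of the local height under uniform convergence of the associated functions; this is known for toric semipositive metrics on line bundles via the results of \cite{BurgosPhilipponSombra:agtvmmh}, and extends to the $\R$-divisor setting by the same multilinearity used throughout.
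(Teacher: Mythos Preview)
Your argument is correct and follows essentially the same route as the paper: reduce to the known formula for toric semipositive metrized line bundles, extend to finite positive $\R$-combinations via the multilinearity of the toric local height and of the mixed integral (together with the fact that Legendre--Fenchel duality exchanges sums with sup-convolutions), and then pass to general semipositive metrics by uniform approximation and the continuity of both sides. The only minor adjustment is that the relevant local formula in \cite{BurgosPhilipponSombra:agtvmmh} is Proposition~5.1.11 rather than Proposition~5.2.4, and the continuity of the local height is already recorded there as Theorem~1.4.15(4), so you need not go through the Chambert-Loir measures directly.
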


\begin{proof}
  For the local case, let $v\in \mathfrak M_{\K}$. Consider first the
  case when the metric $\Vert\cdot\Vert_{v}$ is smooth semipositive,
  if $v$ is Archimedean, or algebraic semipositive, if $v$ is
  ultrametric.  Write
\begin{displaymath}
  \ov D=\sum_{i}\alpha_{i}\ov D_{i}
\end{displaymath}
with $\ov D_{i}$ a $\T$-divisor on $X$ with a  semipositive (smooth or
algebraic) metric, and $\alpha_{i}>0$. 
For each $\ov D_{i}$, the formula follows from 
\cite[Proposition 5.1.11]{BurgosPhilipponSombra:agtvmmh}. 
It follows for $\ov D$ by the multilinearity of the toric local height
 and of
the mixed integral of concave functions with respect to the
sup-convolution. 

For a semipositive $\ov D$, the formula follows from the continuity
of the toric local height with respect to $\dist$, which follows from
{\it loc. cit.}, Theorem 1.4.17(4), and from the continuity of the
Legendre-Fenchel duality with respect to the uniform convergence ({\it
  loc. cit.}, Proposition 2.2.3).

The global case follows by adding up  all local
toric heights.  
\end{proof}

\begin{rem} \label{rem:4}
  As in \cite[Corollary 5.1.9 and Theorem
  5.2.5]{BurgosPhilipponSombra:agtvmmh} one can
  express the mixed $v$-adic toric height and the mixed global height
  of toric metrized $\R$-divisors in terms of mixed integrals. 
\end{rem}

\section{Small sections, arithmetic and $\chi$-arithmetic volumes of
  toric metrized $\R$-divisors}\label{sec:small-sect-arithm}

In this section, we give formulae for the arithmetic and the
$\chi$-arithmetic volumes of a toric metrized $\R$-divisor in terms of
its global roof function.  We keep the notations of the previous
section.  In particular, $X$ is a proper toric variety of dimension
$n$ over a global
field $\K$.

We show first how the positivity of the roof function determines the
existence of small toric sections.

\begin{prop} \label{prop:3} Let $\ov D$ be a toric metrized
  $\R$-divisor on $X$.  For $\ell\ge1 $ and $m\in \ell \Delta_{D}$ we
  set $s_{m}\in\Gamma(X,\ell D)_{\R}^{\times}$ for the corresponding
  $\R$-section.
  \begin{enumerate}
  \item \label{item:13} Let $v\in \mathfrak{M}_{\K}$. Then 
  \begin{math}
    -\log \|s_{m}\|_{v,\sup}=\ell \vartheta_{\ov D,v}(m/\ell).
  \end{math}
\item \label{item:14} If  $\vartheta_{\ov D}(m/\ell)>0$,
  then there exists $e\ge 1$ and $\gamma \in \K^{\times}$
  such that $\gamma s_{m}^{e}\in \wh\Gamma(X,e \ell \ov D)^{\times}_{\R}$.
  \end{enumerate}
\end{prop}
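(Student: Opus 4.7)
The plan for (1) is a direct computation on the principal open subset $X_{0,v}^\an$, extended by continuity. The section $s_m = (\chi^m, \ell D)$ is a rational $\R$-section of $\ell D$, and for $p \in X_{0,v}^\an$ with $\val_v(p) = u$, combining the toric identity $\log|\chi^m(p)|_v = -\lambda_v \langle m, u\rangle$ with the relation $g_{\ell \ov D, v}(p) = -\ell \lambda_v \psi_{\ov D, v}(u)$ from Definition \ref{def:15} yields
\begin{displaymath}
  \log \|s_m(p)\|_v = \log|\chi^m(p)|_v - g_{\ell \ov D, v}(p) = \ell \lambda_v \bigl(\psi_{\ov D, v}(u) - \langle m/\ell, u\rangle\bigr).
\end{displaymath}
Because $m \in \ell \Delta_{D} = \stab(\Psi_{\ell D})$ we have $\Psi_{\ell D} \le \langle m, \cdot\rangle$, hence $\Psi_{\ell D + \div(\chi^m)} \le 0$, and Proposition \ref{prop:16}\eqref{item:45} implies $\div(s_m) \ge 0$. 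Consequently $\|s_m\|_v$ extends continuously to $X_v^\an$ (vanishing on $|\div(s_m)|$) and its sup-norm coincides with the supremum over the dense subset $X_{0,v}^\an$. Using that $m/\ell \in \Delta_D = \stab(\psi_{\ov D, v})$ together with the definition of the Legendre-Fenchel dual,
\begin{displaymath}
  -\log \|s_m\|_{v,\sup} = -\ell \lambda_v \sup_{u \in N_\R} \bigl(\psi_{\ov D, v}(u) - \langle m/\ell, u\rangle\bigr) = \ell \lambda_v \, \psi_{\ov D, v}^\vee(m/\ell) = \ell \, \vartheta_{\ov D, v}(m/\ell),
\end{displaymath}
which is (1).

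For (2), I would sum the formula of (1) weighted by the $n_v$ to obtain
\begin{displaymath}
 \sum_{v \in \mathfrak M_\K} n_{v} \log \|s_m\|_{v,\sup} = -\ell \, \vartheta_{\ov D}(m/\ell) < 0,
\end{displaymath}
so $\prod_{v} \|s_m\|_{v,\sup}^{n_v} < 1$. Since $s_m$ is already a global $\R$-section of $\ell D$ by what was shown above, I apply Proposition \ref{prop:2} to $s_m$ viewed as an $\R$-section of $\ell \ov D$, which produces an integer $e \ge 1$ and $\gamma \in \K^\times$ with $\|\gamma s_m^{e}\|_{v,\sup} \le 1$ for every $v$; this is exactly the assertion that $\gamma s_m^{e} \in \wh\Gamma(X, e \ell \ov D)^\times_\R$.

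I expect the main delicate step to be the identity $\sup_{p \in X_v^\an}\|s_m(p)\|_v = \exp\bigl(\ell \lambda_v \sup_{u \in N_\R}(\psi_{\ov D, v}(u) - \langle m/\ell, u\rangle)\bigr)$, for which one needs to check that the continuous extension from $X_{0,v}^\an$ to $X_v^\an$ realises its supremum. This will follow from the properness of $\val_v \colon X_v^\an \to N_\Sigma$ together with the asymptotically conic behaviour of $\psi_{\ov D, v}$ used in Proposition \ref{prop:22}, ensuring that the supremum over $N_\R$ is finite and attained on the extension to $N_\Sigma$. Everything else is routine bookkeeping with the conventions of \S\ref{sec:toric-metrized-r}.
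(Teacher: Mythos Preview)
Your argument is correct and matches the paper's proof essentially line for line. The paper writes the same chain of equalities for part~\eqref{item:13} (without your extra care about the continuous extension and the supremum, which it simply takes for granted), and for part~\eqref{item:14} it invokes Lemma~\ref{lemm:10} ``as in the proof of Proposition~\ref{prop:2}'' rather than Proposition~\ref{prop:2} itself---a distinction without a difference.
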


\begin{proof}
  We compute
  \begin{align*}
    \vartheta _{\ov D,v}(m/\ell)&=\psi _{\ov D,v}^{\vee}(m/\ell)\\
    &=\inf_{u\in N_{\R}}(\langle m/\ell,u \rangle -\psi
    _{\ov D,v}(u))\\
    &=\frac{1}{\ell}\inf_{x\in X_{0}}\big(-\log |\chi^{m}(x)|_{v}+
    {g_{\ell \ov D,v}(x)}\big) \\
    &=\frac{-1}{\ell}\sup_{x\in X_{0}}\log \|s_{m}(x)\|_{v}\\
    &=\frac{-1}{\ell}\log \|s_{m}\|_{v,\sup},
  \end{align*}
  which proves the first statement. The second statement follows from
  the first, using Lemma \ref{lemm:10} as in the proof of Proposition
  \ref{prop:2}.
\end{proof}

Next we show that, for every place, the basis of toric global sections
is orthogonal 
with respect to the sup-norm.

\begin{prop} \label{prop:4} Let $\ov D$ be a toric
  metrized $\R$-divisor. Then, for all $v\in \mathfrak{M}_{\K}$, the set
  $\{s_{m}\}_{m\in \Delta_{D} \cap M}$ is an orthogonal basis of
  $\LL(D) $ with respect to the norm $\|\cdot\|_{v,\sup}$.
\end{prop}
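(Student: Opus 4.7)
The plan is to split the statement into two parts. The fact that $\{s_{m}\}_{m\in \Delta_{D}\cap M}$ is a $K$-basis of $\LL(D)$ was already recalled before Proposition~\ref{prop:8}, so the real content is to verify the orthogonality relation with respect to $\|\cdot\|_{v,\sup}$ for every $v\in\mathfrak M_{\K}$.

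Fix $v$ and a finite linear combination $s=\sum_{m}\gamma_{m}s_{m}$ with $\gamma_{m}\in\K_{v}$, indexed by distinct $m\in\Delta_{D}\cap M$. Since $s_{m}=(\chi^{m},D)$, for $p\in X^{\an}_{0,v}$ we have
\begin{displaymath}
\|s(p)\|_{v}=\Big|\sum_{m}\gamma_{m}\chi^{m}(p)\Big|_{v}\cdot \|s_{D}(p)\|_{v}.
\end{displaymath}
The toric hypothesis makes $\|s_{D}(p)\|_{v}$ invariant under the compact torus $\SS^{\an}_{v}$, while each $\chi^{m}$ is transformed by a constant of absolute value $1$ under this action. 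The key is to use this symmetry to find, for each $p$, a point at which the triangle inequality bounding $|\sum_{m}\gamma_{m}\chi^{m}|_{v}$ is essentially sharp.

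In the Archimedean case $\SS^{\an}_{v}\simeq(S^{1})^{n}$ carries a Haar measure, and orthogonality of the characters $\chi^{m}|_{\SS^{\an}_{v}}$ yields
\begin{displaymath}
\int_{\SS^{\an}_{v}}\Big|\sum_{m}\gamma_{m}\chi^{m}(\zeta\cdot p)\Big|_{v}^{2}\,\dd\zeta
=\sum_{m}|\gamma_{m}|_{v}^{2}\,|\chi^{m}(p)|_{v}^{2}\ge \max_{m}|\gamma_{m}\chi^{m}(p)|_{v}^{2},
\end{displaymath}
so some $\zeta_{0}$ satisfies $|\sum_{m}\gamma_{m}\chi^{m}(\zeta_{0}\cdot p)|_{v}\ge \max_{m}|\gamma_{m}\chi^{m}(p)|_{v}$, and $\SS^{\an}_{v}$-invariance of $\|s_{D}\|_{v}$ gives $\|s(\zeta_{0}\cdot p)\|_{v}\ge \max_{m}\|\gamma_{m}s_{m}(p)\|_{v}$. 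In the non-Archimedean case I would use the canonical Gauss section $u\mapsto p_{u}$ of $\val_{v}\colon\T^{\an}_{v}\to N_{\R}$ (as described in \cite[Example~1.2.3]{BurgosPhilipponSombra:agtvmmh}), whose associated multiplicative seminorm satisfies
\begin{displaymath}
\Big|\sum_{m}a_{m}\chi^{m}\Big|_{p_{u}}=\max_{m}|a_{m}|_{v}\,\e^{-\lambda_{v}\langle m,u\rangle},
\end{displaymath}
that is, the ultrametric triangle inequality becomes an equality at $p_{u}$; consequently $\|s(p_{u})\|_{v}=\max_{m}\|\gamma_{m}s_{m}(p_{u})\|_{v}$.

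Taking the supremum over $p$ (over $u$, respectively) and invoking Proposition~\ref{prop:3}\eqref{item:13} to identify $\sup_{u}\|s_{m}(p_{u})\|_{v}=\|s_{m}\|_{v,\sup}=\e^{-\vartheta_{\ov D,v}(m)}$, in both cases one obtains
\begin{displaymath}
\|s\|_{v,\sup}\ \ge\ \max_{m}\|\gamma_{m}s_{m}\|_{v,\sup},
\end{displaymath}
which is the required orthogonality. The main subtlety I anticipate is the non-Archimedean case: one has to make sure that the supremum of $\|s_{m}\|_{v}$ realized along the Gauss section $\{p_{u}\}_{u\in N_{\R}}\subset \T^{\an}_{v}$ actually coincides with the global sup-norm on $X^{\an}_{v}$. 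This is precisely the content of Proposition~\ref{prop:3}\eqref{item:13}, combined with the continuous extension of $\|s_{m}\|_{v}$ from $X^{\an}_{0,v}$ to the whole of $X^{\an}_{v}$ provided by $m\in\Delta_{D}$; in the Archimedean setting the analogous point is automatic from the $\SS^{\an}_{v}$-invariance of the sup norm.
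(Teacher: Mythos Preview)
Your argument is correct, but it follows a different route from the paper's. The paper gives a single proof, uniform for all places, based on a \emph{finite} Fourier inversion: one fixes a box $[-r,r]^{n}\supset\Delta_{D}$, picks a primitive $(2r+1)$-th root of unity $\omega$ (after a harmless finite extension), and uses the matrix $(\omega^{\langle a,m\rangle})_{a,m}$ to write each $\gamma_{m}s_{m}$ as an average $(2r+1)^{-n}\sum_{a}\omega^{-\langle a,m\rangle}t_{a}^{\ast}s$ over translates by finitely many torsion points $t_{a}\in\SS_{v}^{\an}$. Since $\|t_{a}^{\ast}s\|_{v,\sup}=\|s\|_{v,\sup}$ by $\SS_{v}^{\an}$-invariance, the triangle (or ultrametric) inequality immediately gives $\|\gamma_{m}s_{m}\|_{v,\sup}\le\|s\|_{v,\sup}$.

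By contrast, you split into cases: for Archimedean $v$ you use the continuous Haar measure on $(S^{1})^{n}$ and $L^{2}$-orthogonality of characters, and for non-Archimedean $v$ you evaluate at the Gauss points $p_{u}$, where the ultrametric inequality for Laurent polynomials is an equality. Both approaches exploit the same underlying symmetry, but the paper's discrete averaging avoids any appeal to Berkovich-specific facts and treats all places at once. Your non-Archimedean argument, on the other hand, actually yields more: since $\|s_{D}\|_{v}$ depends only on $\val_{v}$ and the Gauss point maximizes $|f|_{v}$ over its $\val_{v}$-fiber, you get directly $\|s\|_{v,\sup}=\sup_{u}\|s(p_{u})\|_{v}=\max_{m}\|\gamma_{m}s_{m}\|_{v,\sup}$, which is the content of Corollary~\ref{cor:4} in the non-Archimedean case.
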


\begin{proof}
  Fix an integral basis of $N$ so that there is an isomorphisms $N\simeq
  \Z^{n}\simeq M$.  Let $v\in \mathfrak{M}_{\K}$ and choose an integer
  $r\ge 1$ such that $\Delta_{D} \subset [-r,r]^{n}$ and, if $v$ is
  non-Archimedean, such that $|2r+1|_{v}=1$. After possibly taking a
  finite extension of $\K$, we may assume that $\K_{v}$ contains a
  primitive root of $1$ of order $2r+1$, which we denote by $\omega
  $. For every $a=(a_{1},\dots,a_{n})\in [-r,r]^{n}\cap N$ we consider
  the element $t_{a}=(\omega ^{a_{1}},\dots,\omega ^{a_{n}})\in
  \SS_{v  }^{\an}$.  Such an element determines an automorphism of
  $X_{{v}}^{\an}$.  Let $s=(f,D)\in \LL(D)$ be a global section.  Then
  $t_{a}^{*}s\in \LL(D)$ since $D$ is toric. For $p\in X_{v}^{\an}$,
  \begin{displaymath}
    \Vert t_{a}^{\ast}s\Vert _{v}(p)= |f(t_{a}* p)|_{v} \e^{-g_{\ov D,v}(p)}=|f(t_{a}* p)|_{v} \e^{-g_{\ov D,v}(t_{a}*p)}=\Vert s\Vert _{v}(t_{a}*p)
  \end{displaymath}
because of the $\SS_{v}^{\an}$-invariance of $\Vert \cdot \Vert _{v}$.
Hence, 
  \begin{equation}
    \label{eq:1}
    \|t_{a}^{\ast}s\|_{v,\sup}= \sup_{p\in X_{v}^{\an}}\Vert t_{a}^{\ast}s\Vert _{v}(p)= \sup_{p\in X_{v}^{\an}}\Vert s\Vert _{v}(t_{a}*p)
    =  \|s\|_{v,\sup}.
  \end{equation}
Write 
  \begin{displaymath}
    s=\sum_{m\in \Delta_{D}\cap M }\gamma _{m}s_{m}   
  \end{displaymath}
  with $\gamma_{m}\in \K$. For convenience, we also put $\gamma _{m}=0$
  for $m\in ([-r,r]^{n}\setminus \Delta_{D}) \cap M$.  Then, for $a\in
  [-r,r]^n\cap N$, 
\begin{displaymath}
  t_{a}^{*}s=\sum_{m\in [-r,r]^{n}\cap M}\gamma_{m}t_{a}^{*}s_{m}= 
\sum_{m}\chi^{m}(t_{a})\gamma_{m}s_{m}=
\sum_{m}\omega^{\langle a,m\rangle}\gamma_{m}s_{m}.
\end{displaymath}
Consider the matrix $\Omega =(\omega ^{\langle a,m\rangle})_{a,m}$. Its inverse is given by
\begin{displaymath}
  \Omega ^{-1}=(2r+1)^{-n}\big(\omega ^{-\langle a,m\rangle}\big)_{m,a}.
\end{displaymath}
Therefore, for $m\in [-r,r]^{n}\cap M$,
\begin{displaymath}
  \gamma _{m}s_{m}=\sum_{a\in [-r,r]^{n}\cap N}(2r+1)^{-n}\omega
  ^{-\langle a,m\rangle} t_{a}^{*}s.
\end{displaymath}
If $v$ is Archimedean, 
\begin{displaymath}
  \|\gamma _{m}s_{m}\|_{v,\sup}\le (2r+1)^{n}\max_{a}\|(2r+1)^{-n}\omega
  ^{-\langle a,m\rangle} t_{a}^{*}s\|_{v,\sup}=\max_{a}\|t_{a}^{*}s\|_{v,\sup}.
\end{displaymath}
while, if $v$ is non-Archimedean,
\begin{displaymath}
  \|\gamma _{m}s_{m}\|_{v,\sup}\le \max\|(2r+1)^{-n}\omega
  ^{-\langle a,m\rangle} t_{a}^{*}s\|_{v,\sup}=\max_{a}\|t_{a}^{*}s\|_{v,\sup}.
\end{displaymath}
In both cases, we deduce from \eqref{eq:1} that 
\begin{displaymath}
  \max_{m\in \Delta_{D}\cap M }\|\gamma _{m}s_{m}\|_{v,\sup}\le \|s\|_{v,\sup}.
\end{displaymath}
Hence,  $\{s_{m}\}_{m\in \Delta_{D} \cap M}$ is an orthogonal basis
of $ \LL(D)$ with respect to $\|\cdot\|_{v,\sup}$ for all~$v$.
\end{proof}

\begin{cor} \label{cor:4} Let $\ov D$ be a toric $\R$-divisor on $X$ and
  $s\in \LL(D)$. Write
  \begin{displaymath}
    s=\sum_{m\in \Delta_{D}\cap M }\gamma _{m}s_{m}
  \end{displaymath}
  with $\gamma_{m}\in \K$. If $v$ is Archimedean,
  then
  \begin{displaymath}
    \max_{m\in \Delta_{D}\cap M }\|\gamma _{m}s_{m}\|_{v,\sup}\le \|s\|_{v,\sup}
    \le \#(\Delta_{D} \cap M) \max_{m\in \Delta_{D}\cap M }\|\gamma _{m}s_{m}\|_{v,\sup},
  \end{displaymath}
while, if $v$ is non-Archimedean,
  \begin{displaymath}
    \|s\|_{v,\sup}=\max_{m\in \Delta_{D}\cap M }\|\gamma _{m}s_{m}\|_{v,\sup}.
  \end{displaymath}
\end{cor}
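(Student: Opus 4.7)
The plan is straightforward, since Proposition \ref{prop:4} has just supplied the key input: the set $\{s_m\}_{m\in \Delta_D\cap M}$ is an orthogonal basis of $\LL(D)$ with respect to $\|\cdot\|_{v,\sup}$ for every place $v$. The corollary simply packages what orthogonality means, together with the appropriate triangle inequality, into two-sided estimates.

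First, the lower bound $\max_{m}\|\gamma_m s_m\|_{v,\sup}\le\|s\|_{v,\sup}$ in both cases is nothing but the definition of orthogonality (Definition \ref{def:10}) applied to the expansion $s=\sum_m \gamma_m s_m$. So this half requires no work beyond invoking Proposition \ref{prop:4}.

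For the upper bound in the Archimedean case, I would apply the ordinary triangle inequality pointwise on $X_v^{\an}$ and take the supremum, giving
\begin{displaymath}
\|s\|_{v,\sup}\le \sum_{m\in \Delta_D\cap M}\|\gamma_m s_m\|_{v,\sup}
\le \#(\Delta_D\cap M)\,\max_{m\in \Delta_D\cap M}\|\gamma_m s_m\|_{v,\sup},
\end{displaymath}
which is exactly the stated bound.

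For the non-Archimedean case, the sup-norm on $\LL(D)\otimes \K_v$ satisfies the ultrametric inequality, so the triangle inequality yields $\|s\|_{v,\sup}\le \max_{m}\|\gamma_m s_m\|_{v,\sup}$. Combined with the orthogonality lower bound, this forces equality; equivalently, one may invoke \eqref{eq:18} directly. No serious obstacle is expected: the only thing to keep in mind is that the sup-norm genuinely is ultrametric at non-Archimedean places (which holds because $|f_1+f_2|_v\le \max(|f_1|_v,|f_2|_v)$ pointwise), so the argument is essentially a one-line consequence of Proposition \ref{prop:4}.
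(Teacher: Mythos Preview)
Your proposal is correct and follows the same approach as the paper: the lower bound comes from the orthogonality established in Proposition~\ref{prop:4}, while the upper bound follows from the triangle inequality (Archimedean case) or the ultrametric inequality (non-Archimedean case).
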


\begin{proof} The upper bound for $\|s\|_{v,\sup}$ follows from the
  triangle inequality, in the Archimedean case, and from the ultrametric
  inequality, in the non-Archimedean case.
  The remaining inequality follows from Proposition \ref{prop:4}.
\end{proof}

\begin{cor}\label{cor:1}  Let $\ov D$ be a  toric metrized $\R$-divisor and
  $s\in \whL(\ov D)$. Write
  \begin{displaymath}
    s=\sum_{m\in
    \Delta_{D}\cap M} \gamma _{m}s_{m}
  \end{displaymath}
  with $\gamma_{m}\in \K$. Then $\gamma _{m}s_{m}\in \whL(\ov D)$ for all $m\in \Delta_{D}\cap M$.
\end{cor}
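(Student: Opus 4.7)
The plan is to deduce this immediately from Corollary \ref{cor:4}, which was stated just above and gives norm comparisons between a section and its coordinates in the basis $\{s_{m}\}_{m\in \Delta_{D}\cap M}$.

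First I would observe that, by hypothesis, $s\in \whL(\ov D)$ means exactly that $\|s\|_{v,\sup}\le 1$ for every place $v\in \mathfrak{M}_{\K}$, and the conclusion that $\gamma_{m}s_{m}\in \whL(\ov D)$ amounts to showing $\|\gamma_{m}s_{m}\|_{v,\sup}\le 1$ for every $v\in \mathfrak{M}_{\K}$ and every $m\in \Delta_{D}\cap M$. Fix such an $m$ and $v$.

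Next I would apply Corollary \ref{cor:4}. In the non-Archimedean case it yields the equality
\begin{displaymath}
\|s\|_{v,\sup}= \max_{m'\in \Delta_{D}\cap M}\|\gamma_{m'}s_{m'}\|_{v,\sup},
\end{displaymath}
and in the Archimedean case it yields the inequality
\begin{displaymath}
\max_{m'\in \Delta_{D}\cap M}\|\gamma_{m'}s_{m'}\|_{v,\sup}\le \|s\|_{v,\sup}.
\end{displaymath}
In both situations one has $\|\gamma_{m}s_{m}\|_{v,\sup}\le \|s\|_{v,\sup}\le 1$, which is exactly what was needed. Since $v$ and $m$ were arbitrary, the conclusion follows.

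There is essentially no obstacle: the key input is the orthogonality of the basis $\{s_{m}\}_{m\in \Delta_{D}\cap M}$ with respect to each sup-norm (Proposition \ref{prop:4}), which was already exploited in Corollary \ref{cor:4}. The corollary is just the translation of that orthogonality property from the level of individual norms to the level of small sections.
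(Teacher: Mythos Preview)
Your proof is correct and is exactly the approach taken in the paper: the paper's proof consists of the single line ``This follows immediately from Corollary \ref{cor:4},'' and your argument is precisely the natural unpacking of that sentence.
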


\begin{proof}
This follows immediately from Corollary \ref{cor:4}.
\end{proof}
 
Let $\ov D$ be a toric $\R$-divisor on $X$ and consider the set
\begin{displaymath}
  \Theta _{\ov D}=\{x\in \Delta_{D} \mid \vartheta _{\ov D}(x)\ge 0\}.
\end{displaymath}
If $\Theta _{\ov D}\not = \emptyset$, it is a convex subset of $\Delta_{D}$.

\begin{thm}\label{thm:1} Let $\ov D$ be a toric 
  metrized $\R$-divisor. Then  
  the arithmetic volume of $\ov D$ is given by
  \begin{displaymath}
    \avol(X,\ov D)=
    (n+1)!\int _{\Delta_{D} }\max(0,\vartheta_{\ov D}) \dd\vol_{M}=(n+1)!\int _{\Theta _{\ov D}}\vartheta_{\ov D} \dd\vol_{M},
  \end{displaymath}
  while the $\chi$-arithmetic volume of $\ov D$ is given by
  \begin{displaymath}
    \wh \vol_{\chi}(X,\ov D)=(n+1)!\int _{\Delta_{D} }\vartheta_{\ov D} \dd\vol_{M}.    
  \end{displaymath}
\end{thm}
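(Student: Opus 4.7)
The plan is to exploit the orthogonal basis $\{s_m\}_{m\in \ell\Delta_D\cap M}$ of $\LL(\ell D)$ provided by Proposition \ref{prop:4}, which holds simultaneously at every place $v\in\mathfrak M_\K$ with respect to the sup-norm. Since the metric is quasi-algebraic, Proposition \ref{prop:22}\eqref{item:71} furnishes a finite set $S\subset\mathfrak M_\K$ such that $\psi_{\ov D,v}=\Psi_D$ for all $v\notin S$; because $\Psi_D$ is conic, its Legendre-Fenchel dual vanishes on $\Delta_D$, so $\vartheta_{\ov D,v}\equiv 0$ outside $S$, and by Proposition \ref{prop:3}\eqref{item:13} we have $\|s_m\|_{v,\sup}=1$ for all such $v$ and all $m$. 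In particular $\sum_{v}n_{v}(-\log\|s_m\|_{v,\sup})=\ell\,\vartheta_{\ov D}(m/\ell)$ is a finite sum. Throughout I set $r_\ell=\#(\ell\Delta_D\cap M)=O(\ell^n)$.

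For the $\chi$-arithmetic volume I would apply Proposition \ref{prop:7} to the $\K$-basis $\{s_m\}$ of $\LL(\ell D)$. Using Proposition \ref{prop:3}\eqref{item:13}, the main term is
\begin{displaymath}
\sum_{v}\sum_{m}d_\K\,n_{v}\log\|s_m\|_{v,\sup}^{-1}=d_\K\,\ell\sum_{m\in\ell\Delta_D\cap M}\vartheta_{\ov D}(m/\ell),
\end{displaymath}
while the error in Proposition \ref{prop:7} is bounded by $d_\K\,r_\ell\bigl(\log r_\ell+\log\pi+\sum_{v\in S}n_{v}\lambda_{v}\bigr)=O(\ell^n\log\ell)$. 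A Riemann sum argument then gives
\begin{displaymath}
\ell^{-n}\sum_{m\in\ell\Delta_D\cap M}\vartheta_{\ov D}(m/\ell)\longrightarrow\int_{\Delta_D}\vartheta_{\ov D}\dd\vol_M,
\end{displaymath}
and after dividing by $\ell^{n+1}/(n+1)!$ and taking the limsup, the error vanishes, yielding the stated formula for $\wh\vol_\chi(X,\ov D)$.

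For the arithmetic volume, I would attach to each $m\in\ell\Delta_D\cap M$ the $\mathfrak M_\K$-divisor $\mathfrak c_m$ whose entry at $v$ approximates $\|s_m\|_{v,\sup}^{-1}$ from above (within the image of $|\cdot|_{v}$ in the non-Archimedean case, with $O(1)$ error), so that $\whL(\mathfrak c_m)\,s_m$ is, up to bounded error, the set of small scalar multiples of $s_m$ in $\ell\ov D$. Proposition \ref{prop:3}\eqref{item:13} gives $\wh\deg(\mathfrak c_m)=d_\K\ell\,\vartheta_{\ov D}(m/\ell)+O(1)$ and Lemma \ref{lemm:2} then implies $\whl(\mathfrak c_m)=\max\bigl(0,d_\K\ell\,\vartheta_{\ov D}(m/\ell)\bigr)+O(1)$ uniformly in $m$. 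Corollary \ref{cor:1} yields an injection $\whL(\ell\ov D)\hookrightarrow\bigoplus_m\whL(\mathfrak c_m)\,s_m$, giving the upper bound $\whl(\ell\ov D)\le\sum_m\whl(\mathfrak c_m)$. For the matching lower bound, Corollary \ref{cor:4} shows that if $\|\gamma_m s_m\|_{v,\sup}\le r_\ell^{-1}$ at every Archimedean place and $\|\gamma_m s_m\|_{v,\sup}\le 1$ elsewhere, then $\sum_m\gamma_m s_m$ is small; this corresponds to shrinking the Archimedean entries of $\mathfrak c_m$ by $r_\ell^{-1}$, which perturbs $\wh\deg(\mathfrak c_m)$ by only $O(\log\ell)$. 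Both bounds then coincide up to a total error of $O(\ell^n\log\ell)$, and Riemann sum convergence of $\ell^{-n}\sum_m\max(0,\vartheta_{\ov D}(m/\ell))$ to $\int_{\Delta_D}\max(0,\vartheta_{\ov D})\dd\vol_M$ completes the proof. Since $\max(0,\vartheta_{\ov D})$ vanishes outside $\Theta_{\ov D}$ and equals $\vartheta_{\ov D}$ on it, the second form of the formula is immediate.

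The main technical subtlety will be the Archimedean lower bound for $\avol$: the triangle inequality costs a polynomial factor $r_\ell$ in the passage from coordinate-wise smallness to the smallness of $\sum_m\gamma_m s_m$, and this must be absorbed into the auxiliary $\mathfrak M_\K$-divisors. The essential point is that the resulting perturbation of $\wh\deg$ is only $O(\log\ell)$ per lattice point, hence $O(\ell^n\log\ell)$ in total, which is negligible once normalized by $\ell^{n+1}$. Beyond this, the argument is routine: uniform continuity of $\vartheta_{\ov D}$ on the compact polytope $\Delta_D$ ensures uniform Riemann sum convergence, and the finiteness of $S$ together with the quasi-algebraic hypothesis keeps all global sums finite and the error terms uniform in $m$.
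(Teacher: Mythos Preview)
Your proposal is correct and follows essentially the same approach as the paper: decompose $\LL(\ell\ov D)$ via the orthogonal monomial basis, reduce the arithmetic volume to a sum of one-dimensional $\mathfrak M_\K$-divisor computations controlled by Lemma~\ref{lemm:2} (with the Archimedean shrinking by $r_\ell^{-1}$ for the lower bound), and handle $\wh\vol_\chi$ directly through Proposition~\ref{prop:7}. The paper organizes the number field and function field cases separately, but the underlying argument is exactly as you describe.
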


\begin{proof}
 We consider first the case when $\K$ is a number field. Let $\ell
  \ge 1$ and set $\Delta=\Delta_{D}$ for short. For each $m\in \ell\Delta \cap M$, we consider the
  $\mathfrak{M}_{\K}$-divisors $\mathfrak{c}_{m}$ and
  $\mathfrak{c}_{m}'$ given by, for $v|\infty$,
  \begin{displaymath}
 \log(   \mathfrak{c}_{m,v})=
    -\log\|s_{m}\|_{v,\sup} , \quad 
\log(   \mathfrak{c}'_{m,v})=
-\log\|s_{m}\|_{v,\sup}-\log( \#(\ell\Delta \cap M))
   \end{displaymath}
and, for $v\nmid\infty$, 
  \begin{displaymath}
 \log(   \mathfrak{c}_{m,v})= \log(   \mathfrak{c}'_{m,v})=
      \lambda _{v}\Big\lfloor \frac{-\log\|s_{m}\|_{v,\sup}}{\lambda _{v}}\Big\rfloor.
   \end{displaymath}
By Corollary~\ref{cor:4},
\begin{displaymath}
 \bigoplus_{m\in \ell\Delta\cap
    M} \wh\LL(\mathfrak{c}'_{m}) s_{m}\subset 
 \whL(\ell \ov D)\subset \bigoplus_{m\in \ell\Delta\cap
    M} \wh\LL(\mathfrak{c}_{m}) s_{m}.
\end{displaymath}
Observe that, by the quasi-algebricity of the metric, there exists a
finite set $S\subset \mathfrak{M}_{\K}$ such that, for all $v\notin
S$, we have that $\mathfrak{c}_{m,v}=\mathfrak{c}'_{m,v}=1$ for all
$\ell\ge1$ and $m\in \ell\Delta\cap M$. With Proposition
\ref{prop:3}\eqref{item:13}, this implies that
\begin{displaymath}
\wh\deg(\mathfrak{c}_{m})= d_{\K}\ell \vartheta _{\ov D}(m/\ell)+O(1),
\quad 
\wh\deg(\mathfrak{c}'_{m})=d_{\K}\ell \vartheta _{\ov D}(m/\ell)+O(\log(\ell)).
\end{displaymath}
Applying Lemma \ref{lemm:2}, we deduce that
\begin{displaymath}
\Big|    \whl(\ell\ov D)- 
  d_{\K}\ell\sum_{m\in \ell \Delta \cap M}\max(0,\vartheta _{\ov
    D}(m/\ell))\Big| = O(\ell ^{n}\log(\ell)).
\end{displaymath}
Therefore,
\begin{align*}
  \avol(X,\ov D)=&
  \frac{1}{d_{\K}}\limsup_{\ell\to \infty} \frac{ \whl(\ell\ov D)}{\ell^{n+1}/(n+1)!}\\
  =&(n+1)!\limsup_{\ell\to \infty}\bigg(\frac{1}{\ell^{n}} \sum_{m\in
    \ell\Delta \cap
      M}\max(0,\vartheta _{\ov 
      D}(m/\ell))+ O\bigg(\frac{\log(\ell)}{\ell}\bigg)\bigg)\\=&
  (n+1)!\int _{\Delta }\max(0,\vartheta_{\ov D}) \dd\vol_{M}.
\end{align*}
We next prove the formula for the $\chi$-arithmetic volume.  Using
propositions \ref{prop:4}, \ref{prop:7} and
\ref{prop:3}\eqref{item:13}, we see that
\begin{align*}
\frac{1}{d_{\K}}\wh\chi(\LL(\ell\ov D))  &=  \sum_{v}
  \sum _{m\in \ell\Delta \cap M}n_{v}\log(\|s_{m}\|_{v,\sup}^{-1})+
  O(\# (\ell\Delta\cap M)\log(\# (\ell\Delta\cap M)))\\
  &= \sum_{v} \sum _{m\in \ell\Delta \cap M}n_{v} \ell \vartheta_{\ov
   D,v}
  (m/\ell) + O(\ell^{n}\log (\ell))\\
  &= \sum _{m\in \ell\Delta \cap M}\ell \vartheta_{\ov D} (m/\ell) +
  O(\ell^{n}\log (\ell)).
\end{align*}
Therefore,
\begin{align*}
  \wh\vol_{\chi}(X,\ov D)&=
  (n+1)!\limsup_{\ell\to \infty}\bigg(\frac{1}{\ell^{n}} \sum_{m\in \ell\Delta \cap
    M}\vartheta _{\ov 
    D}(m/\ell)+ O\bigg(\frac{\log(\ell)}{\ell}\bigg)\bigg)\\
  &=(n+1)!\int _{\Delta
  }\vartheta _{\ov D}\dd \vol_{M}.
\end{align*}

We consider next the function field case. Let $\pi \colon B\to C$ be a
dominant morphism of smooth projective curves and consider the
function field $\K={\rm K}(B)$ with the adelic structure defined
in Example \ref{exm:2}. Let $\ell\ge 1$. 
For each $m\in \ell \Delta \cap M$, consider the
$\mathfrak{M}_{\K}$-divisor given by 
\begin{displaymath}
  \log(\mathfrak c_{m,v})= \lambda _{v}\left\lfloor \frac{-\log
      \Vert s_{m}\Vert _{v,\sup}}{\lambda _{v}}\right\rfloor  
\end{displaymath}
and let $D(\mathfrak c_{m})$ be the associated Weil divisor of $B$.
Observe that, for $\gamma\in \K$, we have that
$\log|\gamma|_{v}\le -\log\|s_{m}\|_{v,\sup}$ if and only if
$\ord_{v}(\gamma)+ \big\lfloor \frac{-\log
  \Vert s_{m}\Vert _{v,\sup}}{\lambda _{v}}\big\rfloor\ge 0$. Hence, this
condition holds for all $v$ if and only if $ \div(\gamma)+
D(\mathfrak c_{m})\ge 0$, namely, if and only if $\gamma\in \LL(D(\mathfrak c_{m}))$.
Corollary \ref{cor:4} and equation \eqref{eq:28} then show
\begin{displaymath}
  \whL(\ell\ov D) = \bigoplus_{m\in \ell \Delta \cap M}
  \LL(D(\mathfrak c_{m}))s_{m} = \bigoplus_{m\in \ell \Delta \cap M}
  \whL(\mathfrak c_{m})s_{m}.   
\end{displaymath}
As in the number field case, there is a
finite subset $S\subset \mathfrak{M}_{\K}$ independent of
$\ell$, that contains the support of $D(\mathfrak c_{m})$ for all $m$.
Therefore, Proposition \ref{prop:3}\eqref{item:13} imply
\begin{displaymath}
\wh \deg(\mathfrak c_{m}) = -d_{\K}\sum_{v}n_v\log\|s_{m}\|_{v,\sup}+O(1) =
  d_{\K} \ell \vartheta _{\ov D}(m/\ell)+O(1).
\end{displaymath}
Applying Lemma \ref{lemm:2}, we deduce that
\begin{displaymath}
\Big|    \whl(\ell\ov D)- 
  d_{\K}\ell\sum_{m\in \ell \Delta \cap M}\max(0,\vartheta _{\ov
    D}(m/\ell))\Big| = O(\ell ^{n}).
\end{displaymath}
Therefore,
\begin{math}
    \avol(X,\ov D)=
  (n+1)!\int _{\Delta }\max(0,\vartheta_{\ov D}) \dd\vol_{M},
\end{math}
as stated.  The formula for $\wh\vol_{\chi}(X,\ov D)$ can be proved using the
same approach as for the case of a number field.
\end{proof}

\begin{rem}
  Usually, the computation of the arithmetic volume is done by comparing
  the sup-norm with the $L^{2}$-norm with the help of Gromov's inequality, and then
  using the fact that the $L^{2}$-norm is a Hermitian norm, see for instance
  \cite{GilletSoule:amar}. Here, we bypass the use of the
  $L^{2}$-norm with the fact that
  the basis of toric sections is orthogonal with respect to the
  sup-norm for all places, both  Archimedean or  ultrametric. 
\end{rem}

\begin{cor}\label{cor:2}
  Let $\ov D$ be a toric metrized $\R$-divisor on $X$.
  \begin{enumerate}
  \item \label{item:17}  $\wh \vol_{\chi}(X,\ov D)\le
    \avol(X,\ov D)$, with equality when  $\vartheta _{\ov
      D}(x)\ge 0$ for all $x\in \Delta_{D} $.
\item \label{item:5} If $D$ is big, then $\wh \vol_{\chi}(X,\ov D)=
    \avol(X,\ov D)$ if and only if   $\vartheta _{\ov
      D}(x)\ge 0$ for all $x\in \Delta_{D} $.
  \item \label{item:18} If the metric is semipositive, then $\wh \vol_{\chi}(X,\ov D)=\h_{\ov
      D}(X)$.
  \end{enumerate}
\end{cor}
\begin{proof}
  This follows easily from Theorem \ref{thm:1}, together with
  Proposition \ref{prop:1} for \eqref{item:18}.
\end{proof}

The next examples shows that the $\chi$-arithmetic volume and the
height may differ when $\ov D$ is not semipositive. 

\begin{exmpl} \label{exm:4} Let $X= \P^{1}_{\Q}$ and $\ov D=\ov 0$ the
  zero  divisor equipped with a smooth toric metric at
  the Archimedean place and with the canonical metric at the
  non-Archimedean places. The associate functions are
\begin{displaymath}
  \psi_{\ov D,v}= 
  \begin{cases}
    f & \text{ if } v=\infty,\\
    0 & \text{ if } v\ne \infty,
  \end{cases}
\end{displaymath}
for a bounded smooth function $f\colon \R\to \R$. 

We have that $\Delta_{D}=\{0\}$ and so $ \wh{\vol}_{\chi}(X,\ov D)=0$
thanks to Theorem \ref{thm:1}. For the height, we first note that
$\lim_{|u|\to \infty}f'(u)=0$, which follows from the fact that the
metric corresponding to the Archimedean place is smooth. Then, the
B\'ezout formula gives
\begin{displaymath}
  \h_{\ov D}(X)= \h_{\ov D}(\div(s)) - \sum_{v} \int_{X_{v}^{\an}}\log\Vert s\Vert  \chern_{1}(\cO(D), \Vert\cdot\Vert_{v} )= \int_{\R} f f'' \dd u. 
\end{displaymath}
Integrating by parts, we obtain that 
\begin{displaymath}
  \int_{\R} f f'' \dd u= \Big[ff'\Big]^{\infty}_{-\infty} -
  \int_{\R}(f')^{2}\dd u= -\int_{\R}(f')^{2}\dd u. 
\end{displaymath}
Hence,
\begin{displaymath}
\h_{\ov D}(X)=   -\int_{\R}(f')^{2}\dd u \le 0,
\end{displaymath}
with equality if and only $f$ is constant. Since $f$ is bounded on
$\R$, this last condition is equivalent to the fact that $f$ is
concave or, equivalently, that  $\ov D$ is semipositive.
Therefore, 
\begin{equation}\label{eq:8}
\h_{\ov D}(X)\le \wh{\vol}_{\chi}(X,\ov D),
\end{equation}
with equality if and only if $\ov D$ is semipositive. 
\end{exmpl}

\begin{exmpl}\label{exm:5}
  Let $X= \P^{2}_{\Q}$ and $D=0$.  Let $g\colon \R^{2}\to
  \R^{2}$ be the support function of the standard simplex of
  $\Delta^{2}$. Let $\tau\in \R_{\ge0}$ and $f_{\tau}\colon
  \R^{2}\to \R$ be the function defined by
  $f_{\tau}(u_{1},u_{2})=g(u_{1}-\tau,u_{2}-\tau)$ for
  $(u_{1},u_{2})\in \R^{2}$.

Let $v_{1},v_{2}$ be two different places of $\Q$ and
$\tau_{1},\tau_{2}\in \R_{\ge0}$. Consider the
toric metric on $D$ given, under the correspondence in Proposition
\ref{prop:22}\eqref{item:71}, by
\begin{displaymath}
  \psi_{v}=
\begin{cases}
  g-f_{\tau_{1}} & \text{ if } v=v_{1},\\
  f_{\tau_{2}}-g & \text{ if } v=v_{2},\\
0 &\text{ otherwise}.
\end{cases}
\end{displaymath}
The obtained metric is DSP, since each $\psi_{v}$ is a
difference of concave functions.

By Theorem \ref{thm:1},  $\wh\vol_{\chi}(X,\ov D)=0$ since the polytope 
associated to $D$ is a point. 
For the height, we have $  \h_{\ov D}(X)= \htor_{v_{1},\ov D}(X)+
\htor_{v_{2},\ov D}(X)$.
By \cite[Remark 5.1.10]{BurgosPhilipponSombra:agtvmmh}, the quantity $
 \htor_{v_{1},\ov D}(X)$ can be computed as
\begin{multline}\label{eq:11}
\MI(g^{\vee}, g^{\vee},
g^{\vee})-3\MI(f_{\tau_{1}}^{\vee}, g^{\vee}, g^{\vee}) + 
3\MI(f_{\tau_{1}}^{\vee}, f_{\tau_{1}}^{\vee}, g^{\vee}) -
\MI(f_{\tau_{1}}^{\vee}, f_{\tau_{1}}^{\vee}, f_{\tau_{1}}^{\vee}),
\end{multline}
where $\MI$ denotes the mixed integral of concave functions, see
\cite[Definition~2.7.16]{BurgosPhilipponSombra:agtvmmh}.  
We have that $f_{\tau_{1}}^{\vee}(x_{1},x_{2})=\tau_{1}(x_{1}+x_{2})$ and
$g^{\vee}(x_{1},x_{2})=0$ for $(x_{1},x_{2})\in \Delta^{2}$.  Hence,
\begin{displaymath}
  \MI(g^{\vee}, g^{\vee},
  g^{\vee}) =3!\int_{\Delta^{2}}g^{\vee}=0, \quad 
 \MI(f_{\tau_{1}}^{\vee}, f_{\tau_{1}}^{\vee},
  f_{\tau_{1}}^{\vee})=3!\int_{\Delta^{2}}f_{\tau_{1} }^{\vee}= 2\tau_{1}
\end{displaymath}
and, using the definition of the mixed integral and
computing the relevant sup-convolutions, we can verify that
\begin{displaymath}
  \MI(f_{\tau_{1}}^{\vee}, g^{\vee},
  g^{\vee}) = \tau_{1},\quad \MI(f_{\tau_{1}}^{\vee}, f_{\tau_{1}}^{\vee},
  g^{\vee}) = 2\tau_{1}.
\end{displaymath}
The formula \eqref{eq:11} then implies that $\htor_{v_{1},\ov D}(X)=
\tau_{1}$. Similarly, 
 $\htor_{v_{2},\ov D}(X)=
(-1)^{3}\tau_{2}=-\tau_{2}$. Hence, 
\begin{displaymath}
  \h_{\ov D}(X)=
-\tau_{1}+\tau_{2}.
\end{displaymath}
Varying $\tau_{1}$ and $\tau_{2}$, 
this height can be any real number.
\end{exmpl}

\begin{rem} \label{rem:13} It would be interesting to see if the
  inequality \eqref{eq:8} holds for any toric DSP metrized
  $\R$-divisor on $\P^{1}_{\Q}$.  Example \ref{exm:5} shows that in
  dimension 2 and in the absence of approachability, there is no
  relation between the height and the $\chi$-arithmetic volume.
\end{rem}

\section{Positivity properties of toric metrized $\R$-divisors}
\label{sec:posit-propert-toric}

In this section, we give criteria for the different positivity conditions
for toric metrized $\R$-divisors.

\begin{thm}\label{thm:2}
  Let $\ov D$ be a toric metrized $\R$-divisor on $X$ and $\vartheta
  _{\ov D}\colon\Delta_{D}\to\R$ its global roof function.
  \begin{enumerate}
  \item \label{item:7} $\ov D$  is ample if
    and only if  $\Psi_{D} $ is strictly
    concave on $\Sigma $, the
    function $\psi _{\ov D,v}$ is concave for all $v\in \mathfrak
    M_{\K}$,  and $ \vartheta _{\ov D}(x)>0$ for all $x\in \Delta_{D}$;

  \item \label{item:8} $\ov D$ is nef if and only if $\psi _{\ov D,v}$ is concave for
    all $v\in \mathfrak M_{\K}$ and $ \vartheta _{\ov D}(x)\ge0$ for
    all $x\in \Delta_{D}$;
  \item \label{item:9} $\ov D$ is big if and only if
    $\dim(\Delta_{D})=n $ and there exists $x\in\Delta_{D}$ such that
    $\vartheta _{\ov D}(x)>0$; 
  \item \label{item:100} $\ov D$ is pseudo-effective if and only if 
    there exists $x\in\Delta_{D}$ such that $\vartheta
    _{\ov D}(x)\ge0$; 
\item \label{item:88} $\ov D$ is effective if and only if $0\in
  \Delta_{D}$ and $\vartheta_{\ov D,v}(0)\ge0$ for all $v\in
  \mathfrak{M}_{\K}$.  
  \end{enumerate}
\end{thm}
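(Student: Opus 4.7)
The plan is to treat the five parts in the order (5), (3), (4), (2), (1). Throughout, the key observation is that $\vartheta_{\ov D}=\sum_v n_v\vartheta_{\ov D,v}$ is concave and upper semicontinuous on $\Delta_D$, being a weighted sum of Legendre-Fenchel duals.

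Part (5) is immediate from Proposition \ref{prop:3}\eqref{item:13}: $\ov D$ is effective iff $(1,D)\in\Gamma(X,D)^\times_\R$, equivalent (by Proposition \ref{prop:16}\eqref{item:45}) to $0\in\Delta_D$, and $\|(1,D)\|_{v,\sup}\le 1$ for all $v$, which by the cited proposition unravels to $\vartheta_{\ov D,v}(0)\ge 0$. Part (3) is equally direct from Theorem \ref{thm:1}: positivity of $\avol(X,\ov D)=(n+1)!\int_{\Delta_D}\max(0,\vartheta_{\ov D})\,\dd\vol_M$ forces $\dim\Delta_D=n$ and strict positivity of $\vartheta_{\ov D}$ at some point; conversely, given both, concavity propagates the strict positivity from such $x_0$ along segments toward points of $\ri(\Delta_D)$, producing an $n$-dimensional open subset with positive integrand.

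Parts (1) and (2) rest on Proposition \ref{prop:1}: for a maximal cone $\sigma\in\Sigma^n$, $V(\sigma)$ is a torus-fixed point and the corresponding face $F_\sigma$ is a vertex $x_\sigma$ of $\Delta_D$, so that $\h_{\ov D}(V(\sigma))=\vartheta_{\ov D}(x_\sigma)$. For necessity in (2), semipositivity of the metric is equivalent to concavity of every $\psi_{\ov D,v}$ by Proposition \ref{prop:23}\eqref{item:73} (which also implies nefness of $D$ via $\Psi_D=\rec(\psi_{\ov D,v})$), and the hypothesis $\h_{\ov D}\ge 0$ applied at each $V(\sigma)$ gives $\vartheta_{\ov D}\ge 0$ at every vertex of $\Delta_D$, hence on all of $\Delta_D$ by concavity. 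For sufficiency in (2), the inequality $\h_{\ov D}(p)\ge \vartheta_{\ov D}(x)$ valid for any $p=t\cdot x_0$ in the dense torus orbit and any $x\in \Delta_D$ follows from $\psi_{\ov D,v}(u)\le \langle x,u\rangle-\vartheta_{\ov D,v}(x)/\lambda_v$ by summing with weights and applying the product formula; stratification by torus orbits, using that the restriction of $\ov D$ to each $V(\tau)$ is toric with polytope the face $F_\tau$ of $\Delta_D$, extends this to all geometric points. Part (1) adds to the necessity of (2) the strict concavity of $\Psi_D$ from ampleness of $D$ (Proposition \ref{prop:26}\eqref{item:53}) and Proposition \ref{prop:6}\eqref{item:75} for strict positivity of heights at fixed points; for sufficiency, one produces, for any auxiliary $\ov M$, every $p\in X(\ov\K)$ and $\ell$ large, a strictly small $\R$-section of $\ov M+\ell\ov D$ not vanishing at $p$ by selecting toric sections $s_m$ with $m$ a suitable rational point of $\Delta_M+\ell\Delta_D$ and applying Proposition \ref{prop:3}\eqref{item:14}, with strict positivity of $\vartheta_{\ov D}$ throughout $\Delta_D$ guaranteeing the freedom to avoid $p$.

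For (4), combine Proposition \ref{prop:27} with (3) to rephrase: $\ov D$ is pseudo-effective iff $\ov D+\varepsilon\ov A$ is big for every $\varepsilon>0$, where $\ov A$ is a fixed toric ample metrized $\R$-divisor on a projective toric modification of $X$. The implication $(\Leftarrow)$ follows by choosing $x_0\in \Delta_D$ with $\vartheta_{\ov D}(x_0)\ge 0$ and any $z_0\in\Delta_A$: part (1) applied to $\ov A$ gives $\vartheta_{\ov A}(z_0)>0$, and the sup-convolution-type lower bound $\vartheta_{\ov D+\varepsilon\ov A}(x_0+\varepsilon z_0)\ge\vartheta_{\ov D}(x_0)+\varepsilon \vartheta_{\ov A}(z_0)>0$ (obtained from the inequality $\inf_u(A(u)+B(u))\ge \inf_u A(u)+\inf_u B(u)$ applied in the defining infimum of each $\vartheta_{\ov D+\varepsilon\ov A,v}$) forces bigness by (3). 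The converse direction uses (3), Corollary \ref{cor:1} and Proposition \ref{prop:3}\eqref{item:14} to extract rational points $x_\varepsilon\in\Delta_{D+\varepsilon A}\cap M_\Q$ with $\vartheta_{\ov D+\varepsilon\ov A}(x_\varepsilon)\ge 0$; Lemma \ref{lemm:6}\eqref{item:36} gives $\bigcap_{\varepsilon>0}\Delta_{D+\varepsilon A}=\Delta_D$, and a compactness argument combined with an asymptotic analysis of the minimizer $u^*_{v,\varepsilon}$ in the definition of $\vartheta_{\ov D+\varepsilon\ov A,v}(x_\varepsilon)$, as $\varepsilon\to 0$, produces a cluster point $x_0\in\Delta_D$ with $\vartheta_{\ov D}(x_0)\ge 0$. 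The main obstacle is this final asymptotic control: without semipositivity of $\ov D$ the sup-convolution identity for the roof function becomes only a one-sided inequality, so one must argue via the defining infima rather than through the roof function of $\ov D$ directly.
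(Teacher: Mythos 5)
Most of your outline tracks the paper's proof: parts (5) and (3) are read off from propositions \ref{prop:3}, \ref{prop:16} and Theorem \ref{thm:1} exactly as in the text; the necessity directions of (1) and (2) via heights of torus-fixed points (propositions \ref{prop:6}, \ref{prop:1}) and concavity, and the sufficiency of (2) via the bound $\h_{\ov D}(p)\ge \vartheta_{\ov D}(m)$ on the principal orbit plus reduction to faces, are the paper's arguments (note only that one must twist by $\chi^{m_\tau}$, $m_\tau\in F_\tau$, before restricting to $V(\tau)$, since $|D|$ may contain $V(\tau)$). In (4) both directions are essentially the paper's: the lower bound $\vartheta_{\ov D+\varepsilon\ov A}(x_0+\varepsilon z_0)\ge \vartheta_{\ov D}(x_0)+\varepsilon\vartheta_{\ov A}(z_0)$ is Lemma \ref{lemm:4}\eqref{item:74}, and for the converse the paper replaces your ``asymptotic analysis of the minimizer'' by the monotonicity statement of Lemma \ref{lemm:6}\eqref{item:15} (available because its $\ov A$ is chosen \emph{effective}, so $\phi_v\le 0$ and $0\in\stab(\phi_v)$, which also gives the nestedness of the polytopes you need for compactness); your alternative of testing the defining infima with fixed vectors $u_v$ and letting $\varepsilon\to0$ can be made to work, but you should say so at that level of precision — minimizers need not exist — and you must build $\ov A$ (the paper does this explicitly with a shifted canonical metric at one place) rather than posit it.

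The genuine gap is in the sufficiency direction of (1). Arithmetic ampleness (Definition \ref{def:22}\eqref{item:41}) quantifies over \emph{every} metrized $\R$-divisor $\ov M\in\wh\Car(X)_{\R}$, not only toric ones. Your plan ``select toric sections $s_m$ with $m$ a suitable rational point of $\Delta_M+\ell\Delta_D$'' is not meaningful here: for a non-toric $M$ there is no polytope $\Delta_M$, the sections of $M+\ell D$ have no monomial basis, and Proposition \ref{prop:3}\eqref{item:14} only produces small sections of multiples of $\ov D$ itself, so it cannot by itself yield small sections of $\ov M+\ell\ov D$, let alone control the finitely many places where the metric of $\ov M$ is large. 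The paper's proof supplies the missing mechanism: use the geometric ampleness of $D$ to pick (arbitrary, not small) generating $\R$-sections $s_1,\dots,s_r$ of $M+\ell' D$, let $S$ be the finite set of places where some $\|s_i\|_{v,\sup}\ne 1$ and $\eta=\min_{i}\min_{v\in S}\|s_i\|_{v,\sup}^{-1}$, and then, for each vertex $m_j$ of $\Delta_D$ (where $\prod_v\|s_{m_j}\|_{v,\sup}^{n_v}=\e^{-\vartheta_{\ov D}(m_j)}<1$ by the hypothesis $\vartheta_{\ov D}>0$), use Proposition \ref{prop:2} to rescale high powers: $\|\alpha_j s_{m_j}^{\ell-\ell'}\|_{v,\sup}\le1$ everywhere and $<\eta$ on $S$. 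The products $\alpha_j s_i s_{m_j}^{\ell-\ell'}$ are then small sections of $\ov M+\ell\ov D$, and they generate because for each $p$ one can choose $i$ with $p\notin|\div(s_i)|$ and a vertex $m_j$ whose affine chart contains $p$. Without this ``tensor the geometric generators with rescaled strictly small toric sections'' step, your argument does not establish condition \eqref{item:62} of the definition, so the ``if'' part of (1) is unproved as written.
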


\begin{proof} Write $\Psi =\Psi _{D}$ and $\Delta
  =\stab(\Psi )$ for short.  We start by proving \eqref{item:7}. By Proposition
  \ref{prop:26}\eqref{item:53}, $D$ is  (geometrically) ample if and
  only if $\Psi $ is strictly concave on $\Sigma $ and, by Proposition
  \ref{prop:23}\eqref{item:73}, the metric of $\ov D$ is semipositive
  if and only if $\psi _{\ov D,v}$ is concave for all $v$. Thus it
  only remains to prove that, under the assumption that $D$ is ample
  and $\ov D$ semipositive, the ampleness of $\ov D$ is equivalent to
  the positivity of $\vartheta _{\ov D}$.
  
  Assume that $\ov D$ is ample.  Let $m_{0}$ be a vertex of $\Delta
  $ and $p_{0}\in
  X(\K)$ its corresponding $\T$-invariant point. 
By propositions \ref{prop:6}\eqref{item:75} and
  \ref{prop:1},
  \begin{displaymath}
    0 < \h_{\ov D}(p_{0})=\vartheta _{\ov D}(m_{0}).
  \end{displaymath}
  By the concavity of $\vartheta _{\ov D}$, we deduce that $\vartheta
  _{\ov D}(x)> 0$ for all $x\in \Delta $. 

Conversely, assume that  $\vartheta _{\ov D}$ is positive on $\Delta$. Let $\ov M $ be
a metrized $\R$-divisor on $X$. Since $D$ is  ample, there is
a positive integer $\ell'$ such that $M + \ell' D$ is
generated by $\R$-sections. Let $s_{1},\dots, s_{r}$ be a set of
generating $\R$-sections. By the quasi-algebraicity of the metrics,
$\|s_{i}\|_{v,\sup} =1$ for all $v$ outside a finite subset $S\subset
\mathfrak{M}_{\K}$. Put
\begin{displaymath}
 \eta =\min_{i} \min_{v\in S}\|s_{i}\|_{v,\sup}^{-1}.
\end{displaymath}
Let $m_{1},\dots, m_{l}$ be the vertices of $\Delta$ and $s_{m_{j}}$
the $\R$-section of $D$ corresponding to $m_{j}$.  By Proposition
\ref{prop:3}\eqref{item:13} and the hypothesis that $\vartheta_{\ov
  D}$ is positive,
 \begin{displaymath}
   \prod_{v}\Vert s_{m_{j}}\Vert_{v,\sup}^{n_{v}}= \e^{-\vartheta _{\ov D}(m_{j})}<1.
 \end{displaymath}
 Therefore, we can apply Proposition \ref{prop:2} to each
 $s_{m_{j}}$ to obtain $\ell_{0}\ge1$ and $\alpha_{j}\in \K^{\times}$
 such that, for each $\ell\ge \ell_{0}+\ell'$ and $j=1,\dots, r$,
\begin{displaymath}
  \|\alpha _{j}s_{m_{j}}^{ \ell-\ell'}\|_{v,\sup}
  \begin{cases}
    \le 1&\text{ if }v\not\in S,\\
    < \eta&\text{ if }v\in S.
  \end{cases}
\end{displaymath}
The set of global sections
\begin{math}
 \alpha _{j}s_{i}  s_{m_{j}}^{ \ell-\ell'}
\end{math}, $i=1,\dots,r$, $j=1,\dots,l$,  
generates the $\R$-divisor  $M+ \ell D=(M + 
\ell' D) + (\ell-\ell') D$. Moreover,  for $v\in S$,
\begin{displaymath}
  \|\alpha _{j}s_{i} 
  s_{m_{j}}^{\ell-\ell'}\|_{v,\sup}\le 
\|s_{i}\|_{v,\sup}\|\alpha
_{j}s_{m_{j}}^{ (\ell-\ell')}\|_{v,\sup} <\eta^{-1}\eta= 1,
\end{displaymath}
while \begin{math}
  \|\alpha _{j}s_{i} 
  s_{m_{j}}^{ \ell-\ell'}\|_{v,\sup}\le 1,
\end{math}
for $v\not\in S$.  Thus $\ov M +  \ell\ov D$ is generated by
small $\R$-sections and $\ov D$ is ample, finishing the proof of
\eqref{item:7}.

We next prove \eqref{item:8}.  By Proposition
\ref{prop:26}\eqref{item:83}, the $\R$-divisor $D$ is nef if and only
if $\Psi $ is concave and, by Proposition
\ref{prop:23}\eqref{item:73}, $\ov D$ is semipositive if and only if
the functions $\psi _{\ov D,v}$ are concave for all $v$. We are
reduced to show that, under the assumption that $D$ is nef and that
$\ov D$ is semipositive, $\ov D$ being nef is equivalent to
the nonnegativity of $\vartheta _{\ov D}$.

  Assume that $\ov D$ is nef. As in the first part of the ample case, let $m_{0}$ be a
  vertex of $\Delta $ and $p_{0}\in X({\K})$ a $\T$-invariant
closed  point corresponding to $m_{0}$. By Proposition~\ref{prop:1},
  \begin{displaymath}
    0 \le  \h_{\ov D}(p_{0})=\vartheta _{\ov D}(m_{0}).
  \end{displaymath}
  By the concavity of $\vartheta _{\ov D}$ we deduce $\vartheta
  _{\ov D}(x)\ge 0$ for all $x\in \Delta $. 

  Conversely, assume  that $\vartheta _{\ov D}$ is nonnegative and
  let $p\in X({\F})$ be a $\F$-rational point for a finite extension $\F$
  of $\K$. Suppose that $p$ lies in an orbit $O(\sigma )$ for a cone
  $\sigma\in \Sigma$. Choose a point $m_{\sigma}$ in the face
  $F_{\sigma }$ of $\Delta $ corresponding to $\sigma $. Then $
  \div(s_{m_{\sigma}})=D+\div(\chi^{m_{\sigma}})$ meets the closure $V(\sigma)$ properly,
  and so we can restrict this $\R$-divisor to $V(\sigma)$.  Set $\ov
  D'= \ov D+\wh \div(\chi^{m_{\sigma}})$. Then
  \begin{displaymath}
\h_{\ov D}(p)= \h_{\ov D'}(p)= 
    \h_{\ov D'\mid_{V(\sigma )}}(p).
\end{displaymath}
Furthermore, the roof function $\vartheta _{\ov D'|_{V(\sigma)}}$
coincides, up to translation by $m_{\sigma}$, with the restriction
$\vartheta _{\ov D}\big|_{F_{\sigma }}$, by the analogue for
$\R$-divisors of \cite[Proposition 4.8.8]{BurgosPhilipponSombra:agtvmmh}.

Thus, we are reduced to prove the nonnegativity of the height for
points in the principal orbit.  Let $m\in \Delta$ and
$s_{m}$ be the corresponding monomial $\R$-section.  Since~$p$ lies in the
principal orbit, it is not contained in the support of
$\div(s_{m})$. Then, using Proposition \ref{prop:3}\eqref{item:13},
  \begin{multline*}
    \h_{\ov D}(p)=\sum_{w\in \mathfrak{M}_{\F}}-n_{w}\log\|s_{m}(p)\|_{w}\ge
    \sum_{w\in \mathfrak{M}_{\F}}-n_{w}\log\|s_{m}\|_{w,\sup}\\=\sum_{w\in
      \mathfrak{M}_{\F}}n_{w}\vartheta _{\ov D,w}(m)=
    \sum_{v\in
      \mathfrak{M}_{\K}}n_{v}\vartheta _{\ov D,v}(m)=
    \vartheta _{\ov
      D}(m)\ge 0,
  \end{multline*}
which concludes the proof of \eqref{item:8}. 

  The statement \eqref{item:9} is a direct consequence of Theorem
  \ref{thm:1} and the definition of big metrized $\R$-divisor. 

  We now prove \eqref{item:100}. By the toric Chow lemma, there is a
   birational toric map $\varphi\colon X'\to X$ with $X'$
  projective. Let $A$ be an effective ample toric divisor on $X'$ and write
  $\Phi =\Psi _{A}$ and $\Gamma=\Delta_{A}=\stab(\Phi)$ for the
  corresponding support function and polytope. By Proposition
  \ref{prop:16}\eqref{item:45},  $\Phi \le 0 $.

  Choose $v_{0}\in \mathfrak{M}_{\K}$ and consider the 
  toric metric on $A$ given by the family of functions
   \begin{displaymath}
     \phi _{v}=
     \begin{cases}
       \Phi -1&\text{ if }v=v_{0}, \\
       \Phi &\text{ otherwise.}
     \end{cases}  
   \end{displaymath}
   Denote by $\ov A$ the obtained toric metrized divisor. We have
   that $\phi_{v}^{\vee}\equiv 1$ if $v=v_{0}$ and
   $\phi_{v}^{\vee}\equiv 0$ otherwise.  Hence, for any $x\in\Gamma$,
\begin{displaymath}
  \vartheta_{\ov A}(x) = \sum_{v}n_{v}
  \phi_{v}^{\vee}(x)= n_{v_{0}}>0.
\end{displaymath}
By  \eqref{item:7}, $\ov A$ is ample.

Suppose that $\ov D$ is pseudo-effective. By Proposition
\ref{prop:27}, the metrized $\R$-divisor $\ell\varphi^{*}\ov D + \ov
A$ is big for all $\ell\ge1$. The virtual support function and the
polytope corresponding to this divisor are $\ell\Psi + \Phi$ and
$\stab(\ell\Psi+\Phi)$, respectively. Write $\psi_{v}=\psi_{\ov D,v}$ for convenience. For $v\in \mathfrak M_{\K}$, the function
associated to the $v$-adic metric on this $\R$-divisor is
$\ell\psi_{v}+ \phi_{v}$, and the $v$-adic roof function is
$(\ell\psi_{v}+\phi_{v})^{\vee}$.

Set $\Delta_{\ell}= \frac{1}{\ell} \stab(\ell\Psi+\Phi)=
\stab\big(\psi_{v}+\frac{1}{\ell}\phi_{v}\big)$ and
$\vartheta_{v,\ell}=(\psi_{v} + \frac{1}{\ell} \phi_{v})^{\vee}$.
Then, for $x\in \Delta_{\ell}$,
\begin{displaymath}
(\ell \psi_{v}+\phi_{v})^{\vee}(\ell x) = 
  \ell \vartheta_{v,\ell} (x).
\end{displaymath}
Set $\vartheta_{\ell}=\sum n_{v}\vartheta _{v,\ell}$.  Since $
\ell\varphi^{*}\ov D + \ov A $ is toric and big, by
\eqref{item:9}, there exists a $x_{\ell}\in \Delta _{\ell}$ such that
$\vartheta _{\ell}(x_{\ell})>0$.

By Proposition \ref{prop:22}\eqref{item:70}, the functions $\psi_{v}$ and
$\phi_{v}$ are asymptotically conic and, by construction,
$0\in\stab(\phi_{v})$. Since  by Lemma \ref{lemm:6}\eqref{item:36} one has $\Delta_\ell\supset\Delta_{\ell+1}$ for $\ell\geq1$, the sequence $(x_{\ell})_{\ell}$ lies in the polytope $\Delta_{1}$.
By choosing a convergent subsequence, we can assume that it converges
to a point $  x_{\infty}\in \bigcap_{\ell}\Delta_{\ell}$. By Lemma
\ref{lemm:6}\eqref{item:36}, 
$  x_{\infty}\in\Delta$ and, 
by Lemma \ref{lemm:6}\eqref{item:15}, for $\ell'\ge \ell$,
\begin{displaymath}
\vartheta_{v,\ell}(x_{\ell'} )\ge
\vartheta_{v,\ell'}(x_{\ell'})+\Big(\frac{1}{\ell}-\frac{1}{\ell'}\Big)
\phi_{v}^{\vee}(0) \ge \vartheta_{v,\ell'}(x_{\ell'}).
\end{displaymath}
Hence, $\vartheta_{\ell}(x_{\ell'} )\ge
\vartheta_{\ell'}(x_{\ell'})\ge 0$. By continuity,
$\vartheta_{\ell}(x_{\infty} )=\lim_{\ell'\to\infty}
\vartheta_{\ell}(x_{\ell'} )\ge 0$.  Applying again Lemma
\ref{lemm:6}\eqref{item:15},
\begin{displaymath}
  \vartheta_{\ov D}(x_{\infty})= \sum_{v}n_{v}\psi_{v}^{\vee}(x_{\infty})=\sum_{v}n_{v}\lim_{\ell\to\infty}
  \vartheta_{v,\ell}(x_{\infty})=\lim_{\ell\to\infty}
  \vartheta_{\ell}(x_{\infty})\ge 0,
\end{displaymath}
which proves the statement. 

Conversely, suppose now that there exists $x\in \Delta $ with
$\vartheta _{\ov D}(x)\ge 0$ maximal. In particular, $\Delta \not =
\emptyset$.  The virtual support function corresponding to the
$\R$-divisor $\ell \varphi^{*}D + A$ is equal to $\ell \Psi+\Phi$.  By
Lemma \ref{lemm:4}\eqref{item:35}, $\stab(\ell \Psi+\Phi) \supset
\ell\Delta+\Gamma$. In particular, this is a polytope of dimension
$n$. Furthermore, for $v\in \mathfrak{M}_{\K}$, the corresponding
function is $\ell \psi_{v}+ \phi_{v}$. Applying Lemma
\ref{lemm:4}\eqref{item:74}, we deduce that
\begin{displaymath}
(\ell \psi_{v}+ \phi_{v})^{\vee}(\ell x) \ge \bigl((\ell
\psi_{v})^{\vee}\boxplus \phi_{v}^{\vee}\bigr)(\ell x) = \ell\psi_{v}^{\vee} (x)  +  \phi_{v}^{\vee} (0).  
\end{displaymath}
Hence, 
\begin{multline*}
  \vartheta _{\ell \varphi^{*}\ov D + \ov A}(\ell x) =\sum_{v} n_{v}
  (\ell \psi_{v}+ \phi_{v})^{\vee}
  (\ell x) \\ \ge \sum_{v} n_{v} (\ell\psi_{v}^{\vee} (x)
  +  \phi_{v}^{\vee} (0)) 
\ge \ell \vartheta_{\ov D}(x)+ n_{v_{0}} >0.
\end{multline*}
By \eqref{item:9}, this implies that $\ell \varphi^{*}\ov D + \ov A$
is big and therefore $\ov D$ is pseudo-effective.

We finally prove \eqref{item:88}. The metrized $\R$-divisor $\ov D$ is
effective if and only if $D$ is effective and $s_{D}$ is small which,
by propositions~\ref{prop:16}\eqref{item:45} and
\ref{prop:3}\eqref{item:13}, is equivalent to the fact that
$0\in\Delta_{D}$ and $\vartheta_{\ov D,v}(0)\ge 0$, for all $v\in
\mathfrak{M}_{\K}$.
\end{proof}

\begin{cor}
  Let $\ov D$ be {a semipositive} toric metrized $\R$-divisor on $X$
   {with $D$ big}. Then $\h_{\ov D}(X)=\avol(X,\ov D)$ if and
  only if $\ov D$ is nef.
\end{cor}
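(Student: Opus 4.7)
The corollary follows by chaining the three previous results in an almost mechanical way; the plan is to make this chain explicit and verify that the semipositivity hypothesis plays the role needed at each step.

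First, since $\ov D$ is semipositive, Corollary \ref{cor:2}\eqref{item:18} gives $\wh\vol_{\chi}(X,\ov D)=\h_{\ov D}(X)$. Consequently, the equality $\h_{\ov D}(X)=\avol(X,\ov D)$ is equivalent to $\wh\vol_{\chi}(X,\ov D)=\avol(X,\ov D)$. By Corollary \ref{cor:2}\eqref{item:17}, this latter equality holds if and only if $\vartheta_{\ov D}(x)\ge 0$ for all $x\in\Delta_{D}$.

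Second, I would invoke Theorem \ref{thm:2}\eqref{item:8}: $\ov D$ is nef if and only if $\psi_{\ov D,v}$ is concave for every $v\in\mathfrak M_{\K}$ and $\vartheta_{\ov D}(x)\ge 0$ on $\Delta_{D}$. The hypothesis that $\ov D$ is semipositive, together with Proposition \ref{prop:23}\eqref{item:73}, ensures that the first condition (concavity of each $\psi_{\ov D,v}$) holds automatically. Hence, under semipositivity, nefness of $\ov D$ reduces exactly to the nonnegativity of the global roof function on $\Delta_{D}$.

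Combining the two steps, $\h_{\ov D}(X)=\avol(X,\ov D)$ is equivalent to $\vartheta_{\ov D}\ge 0$ on $\Delta_{D}$, which in turn is equivalent to $\ov D$ being nef. There is no genuine obstacle here: the work has already been done in Theorem~\ref{thm:1}, Corollary~\ref{cor:2} and Theorem~\ref{thm:2}, and the only thing to check is that the semipositivity assumption is precisely what bridges the $\chi$-arithmetic volume with the height on one side (Corollary \ref{cor:2}\eqref{item:18}) and what makes the concavity condition in the nef criterion automatic on the other side (Proposition \ref{prop:23}\eqref{item:73}).
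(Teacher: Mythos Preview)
Your proof is correct and follows the same approach as the paper, which simply cites Proposition~\ref{prop:1}, Theorem~\ref{thm:1} and Theorem~\ref{thm:2}\eqref{item:8}; your use of Corollary~\ref{cor:2} just repackages the first two of these, and you make explicit the appeal to Proposition~\ref{prop:23}\eqref{item:73} that the paper leaves implicit.
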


\begin{proof}
  This follows easily from Proposition \ref{prop:1} and theorems
  \ref{thm:1} and \ref{thm:2}\eqref{item:8}.
\end{proof}

We also obtain the following arithmetic analogue of the Nakai-Moishezon
criterion for toric varieties. 

\begin{cor} \label{cor:9} Let $\ov D$ be {a semipositive} toric
  metrized $\R$-divisor on $X$.
  \begin{enumerate}
  \item \label{item:89} If $D$ is nef, the following conditions are equivalent: 
    \begin{enumerate}
    \item \label{item:91} $\ov D$ is nef;
\item \label{item:92} $\h_{\ov D}(p)\ge 0$ for every $\T$-invariant
  point $p$; 
\item \label{item:93} $\h_{\ov D}(Y)\ge 0$ for every subvariety $Y$ of $X_{\ov \K}$.
    \end{enumerate}
  \item \label{item:90} If $D$ is ample, the following conditions are
    equivalent:
    \begin{enumerate}
    \item \label{item:94} $\ov D$ is ample;
\item \label{item:95} $\h_{\ov D}(p)> 0$ for every $\T$-invariant
  point $p$; 
\item \label{item:96} $\h_{\ov D}(Y)> 0$ for every subvariety
  $Y$ of $X_{\ov \K}$. 
    \end{enumerate}
  \end{enumerate}
\end{cor}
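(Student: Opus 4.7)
The plan is to prove the circular chain (a) $\Rightarrow$ (c) $\Rightarrow$ (b) $\Rightarrow$ (a) in both parts, of which (c) $\Rightarrow$ (b) is immediate since $\T$-invariant points are $0$-dimensional subvarieties of $X_{\ov\K}$.

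For (a) $\Rightarrow$ (c), the ample case \eqref{item:90} is exactly Proposition \ref{prop:6}\eqref{item:75}. For the nef case \eqref{item:89}, I would argue by perturbation: passing through a birational toric map $\varphi\colon X'\to X$ with $X'$ projective (by the toric Chow lemma) and fixing an ample toric metrized divisor $\ov A$ on $X'$, the metrized $\R$-divisor $\varphi^{\ast}\ov D+\varepsilon\ov A$ satisfies all three conditions of Theorem \ref{thm:2}\eqref{item:7} for each $\varepsilon>0$, hence is ample on $X'$. Proposition \ref{prop:6}\eqref{item:75} then gives $\h_{\varphi^{\ast}\ov D+\varepsilon\ov A}(Y')>0$ for any subvariety $Y'$ of $X'_{\ov\K}$; letting $\varepsilon\to 0$ and invoking the continuity of local heights with respect to the distance of metrics (Theorem 1.4.15(4) of \cite{BurgosPhilipponSombra:agtvmmh}) yields $\h_{\varphi^{\ast}\ov D}(Y')\ge 0$, which transfers back to $X$ via the projection formula.

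The heart of the argument is (b) $\Rightarrow$ (a). Since $\ov D$ is semipositive, each $\psi_{\ov D,v}$ is concave by Proposition \ref{prop:23}\eqref{item:73}, so each $\vartheta_{\ov D,v}$ is concave on $\Delta_D$, whence the global roof function $\vartheta_{\ov D}=\sum_{v}n_{v}\vartheta_{\ov D,v}$ is concave on $\Delta_D$. The $\T$-invariant points of $X$ are precisely the orbit closures $V(\sigma)$ attached to maximal cones $\sigma\in\Sigma^{n}$, and Proposition \ref{prop:1} yields $\h_{\ov D}(V(\sigma))=\vartheta_{\ov D}(m_{\sigma})$, where $m_{\sigma}\in M$ is the defining vector of $\Psi_{D}$ on $\sigma$. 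Under the assumption that $D$ is nef, $\Psi_{D}$ is concave (Proposition \ref{prop:26}\eqref{item:83}) and $\Delta_{D}=\conv\{m_{\sigma}\mid\sigma\in\Sigma^{n}\}$, so every vertex of $\Delta_{D}$ is of the form $m_{\sigma}$ for some $\sigma$; hypothesis (b) then forces $\vartheta_{\ov D}\ge 0$ at every vertex of $\Delta_{D}$, and concavity propagates this to all of $\Delta_{D}$, so Theorem \ref{thm:2}\eqref{item:8} concludes that $\ov D$ is nef. When $D$ is ample, $\Psi_{D}$ is strictly concave by Proposition \ref{prop:26}\eqref{item:53}, so $\sigma\mapsto m_{\sigma}$ is a bijection between $\Sigma^{n}$ and the vertices of $\Delta_{D}$; condition (b) becomes $\vartheta_{\ov D}(m_{\sigma})>0$ for every $\sigma\in\Sigma^{n}$, and writing an arbitrary $x\in\Delta_{D}$ as a convex combination $\sum_{i}\lambda_{i}m_{\sigma_{i}}$ with $\lambda_{i}\ge 0$, $\sum_{i}\lambda_{i}=1$ and at least one $\lambda_{i}>0$, concavity gives $\vartheta_{\ov D}(x)\ge\sum_{i}\lambda_{i}\vartheta_{\ov D}(m_{\sigma_{i}})>0$. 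Theorem \ref{thm:2}\eqref{item:7} then yields ampleness of $\ov D$.

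The main obstacle will be the perturbation step in (a) $\Rightarrow$ (c) for the nef case, namely checking that the three conditions for ampleness in Theorem \ref{thm:2}\eqref{item:7} survive the addition of a small multiple of an ample divisor and that the resulting heights depend continuously on $\varepsilon$. All remaining work reduces, via Theorem \ref{thm:2} and Proposition \ref{prop:1}, to elementary convex-geometric manipulations of the concave function $\vartheta_{\ov D}$ on the polytope $\Delta_{D}$.
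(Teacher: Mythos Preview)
Your proof is correct, and for (c) $\Rightarrow$ (b) and (b) $\Rightarrow$ (a) it matches the paper essentially verbatim (the paper simply refers back to the proof of Theorem~\ref{thm:2}\eqref{item:8}, which is exactly your vertex-plus-concavity argument). The only substantive difference is in (a) $\Rightarrow$ (c) for the nef case. You use a perturbation argument: add $\varepsilon\ov A$, apply the ample case via Proposition~\ref{prop:6}\eqref{item:75}, and let $\varepsilon\to 0$. The paper instead proves this directly by induction on $\dim Y$ via the B\'ezout formula~\eqref{eq:20}: after reducing to the case where $Y$ meets the principal orbit, one picks any $m\in\Delta_{D}$, applies the inductive hypothesis to $Y\cdot\div(s_{m})$, and bounds the integral term using positivity of the Chern measure and $\log\|s_{m}\|_{w}\le\log\|s_{m}\|_{w,\sup}$, obtaining $\h_{\ov D}(Y)\ge\vartheta_{\ov D}(m)\deg_{D}(Y)\ge 0$. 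The paper's route is more self-contained and yields an explicit lower bound; yours is conceptually cleaner and avoids the reduction to the principal orbit.

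One small inaccuracy in your perturbation step: the continuity you need as $\varepsilon\to 0$ does not follow from Theorem~1.4.15(4) of \cite{BurgosPhilipponSombra:agtvmmh}, which controls variation of metrics on a \emph{fixed} divisor. Here the underlying divisor moves with $\varepsilon$. The correct justification is multilinearity: $\h_{\varphi^{\ast}\ov D+\varepsilon\ov A}(Y')$ expands as a polynomial in $\varepsilon$ whose coefficients are mixed heights of the semipositive metrized $\R$-divisors $\varphi^{\ast}\ov D$ and $\ov A$, all of which are well-defined, so continuity (indeed polynomiality) in $\varepsilon$ is immediate.
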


\begin{proof}
We first prove  \eqref{item:89}. The equivalence between \eqref{item:91}
and  \eqref{item:92} follows from the proof of Theorem
\ref{thm:2}\eqref{item:8}. 

It is clear that \eqref{item:93} implies \eqref{item:92}. For the
converse, let $Y$ be a subvariety of $X$ defined over a finite
extension $\F$ of $\K$. Using the same argument as in the proof of
Theorem~\ref{thm:2}\eqref{item:8}, we reduce to points
in the principal orbit and we assume that $Y$ meets this principal
orbit.

  We prove that $\h_{\ov D}(Y)\ge 0$ by induction on the dimension of
  $Y$. The 0-dimensional case follows from the fact that $\ov D$ is
  nef. Assume now that $Y$ has dimension $d\ge1$. Let $m\in \Delta$ and $s_{m}$ the
  corresponding toric $\R$-section of $D$. By B\'ezout formula~\eqref{eq:20},
  \begin{displaymath}
    \h_{\ov D}(Y)=\h_{\ov D}(Y\cdot \div (s_{m}))-
    \sum_{w\in \mathfrak{M}_{\F}}n_{w}\int_{X^{\an}_{w}}\log
    \|s_{m}\|_{w} \, \chern_{1}(D, \Vert\cdot\Vert_{w})^{\land d}\land \delta _{Y}.
  \end{displaymath}
  We have that $h_{\ov D}(Y\cdot \div s_{m})\ge 0$ by the inductive
  hypothesis, and observe that $\|s_{m}\|_{w}\le
  \|s_{m}\|_{w,\sup}$. The fact that the metric is semipositive
  implies that, for each $w\in \mathfrak{M}_{\F}$, the signed measure
  $\chern_{1}(D, \Vert\cdot\Vert_{w})^{\land d}\land \delta _{Y} $ is
  nonnegative and of total mass $\deg_{D}(Y)$. Using Proposition
  \ref{prop:3}\eqref{item:13},
  \begin{multline*}
    \h_{\ov D}(Y) \ge - \sum_{w\in
      \mathfrak{M}_{\F}}n_{w}\int_{X^{\an}_{w}}\log
    \|s_{m}\|_{w,\sup}\, \chern_{1}(D,
    \Vert\cdot\Vert_{w})^{\land d}\land \delta _{Y}\\
    = \sum_{w\in \mathfrak{M}_{\F}}n_{w}\vartheta_{\ov D,w}(m)
    \deg_{D}(Y) =\vartheta _{\ov D}(m)\deg_{D}(Y).
  \end{multline*}
  Theorem \ref{thm:2}\eqref{item:8} implies that $\vartheta _{\ov
    D}(m)\ge0$, which concludes the proof of \eqref{item:89}.  The
  proof of \eqref{item:90} can be done similarly. 
\end{proof}

\begin{prop} \label{prop:14}
  Let $\ov D,\ov E$ be two toric metrized $\R$-divisors on a toric
  variety $X$ such that $\ov
  E$ is semipositive. The following conditions are equivalent: 
  \begin{enumerate}
  \item \label{item:38} $\ov D\ge \ov E$;
  \item \label{item:39} $\psi_{\ov D,v}\le \psi_{\ov E,v}$ for all
    $v\in \mathfrak M_{\K}$;
  \item \label{item:40} $\Delta_{E}\subset \Delta_{D}$ and
    $\vartheta_{\ov E,v}(x)\le \vartheta_{\ov D,v}(x)$ for all $x\in
    \Delta_{E}$ and $v\in \mathfrak M_{\K}$.
  \end{enumerate}
\end{prop}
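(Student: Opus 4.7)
The plan is to apply the effectiveness criterion of Theorem \ref{thm:2}\eqref{item:88} to the toric metrized $\R$-divisor $\ov F := \ov D - \ov E$ to obtain (1) $\iff$ (2), and then derive (2) $\iff$ (3) from Legendre--Fenchel duality, using the semipositivity hypothesis on $\ov E$ in one direction.

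For the first equivalence, I would use \eqref{eq:29} to write $\psi_{\ov F,v} = \psi_{\ov D,v} - \psi_{\ov E,v}$, and then observe from Definition \ref{def:8} that
\begin{displaymath}
\vartheta_{\ov F, v}(0) = \lambda_{v}\, \psi_{\ov F, v}^\vee(0) = -\lambda_{v} \sup_{u \in N_\R}\bigl(\psi_{\ov D, v}(u) - \psi_{\ov E, v}(u)\bigr).
\end{displaymath}
Hence $\vartheta_{\ov F, v}(0) \ge 0$ is equivalent to $\psi_{\ov D, v} \le \psi_{\ov E, v}$. The remaining condition $0 \in \Delta_{D-E}$ from Theorem \ref{thm:2}\eqref{item:88} amounts to $\Psi_D \le \Psi_E$, and it is automatically implied by (2) through the recession identity $\Psi_D(u) = \lim_{t \to \infty} \psi_{\ov D, v}(tu)/t$ which follows from the asymptotic conicity of $\psi_{\ov D,v}$ and $\psi_{\ov E,v}$ (Proposition \ref{prop:22}\eqref{item:70}). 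Together these give (1) $\iff$ (2).

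For (2) $\Rightarrow$ (3), the same recession argument yields $\Psi_D \le \Psi_E$, hence $\Delta_E = \stab(\Psi_E) \subset \stab(\Psi_D) = \Delta_D$; the order-reversing property of the Legendre--Fenchel dual then translates the inequality of $\psi$'s into $\vartheta_{\ov E, v} \le \vartheta_{\ov D, v}$ on the common domain $\Delta_E$. For the reverse implication (3) $\Rightarrow$ (2), Proposition \ref{prop:23}\eqref{item:73} together with the semipositivity of $\ov E$ ensures that each $\psi_{\ov E, v}$ is concave and continuous, so Fenchel--Moreau gives
\begin{displaymath}
\psi_{\ov E, v}(u) = \psi_{\ov E, v}^{\vee\vee}(u) = \inf_{x \in \Delta_E}\bigl(\langle x, u \rangle - \vartheta_{\ov E, v}(x)/\lambda_{v}\bigr).
\end{displaymath}
Restricting the analogous infimum expressing $\psi_{\ov D, v}^{\vee\vee}(u)$ from $\Delta_D$ to $\Delta_E$ and using $\vartheta_{\ov E, v} \le \vartheta_{\ov D, v}$ on $\Delta_E$, I would deduce $\psi_{\ov D, v} \le \psi_{\ov D, v}^{\vee\vee} \le \psi_{\ov E, v}$, which is (2).

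The crux of the argument is the step (3) $\Rightarrow$ (2), where the semipositivity of $\ov E$ is indispensable: without the concavity of $\psi_{\ov E, v}$ the Fenchel--Moreau identity fails, and condition (3) would only constrain the concave envelope $\psi_{\ov D, v}^{\vee\vee}$ rather than $\psi_{\ov D, v}$ itself, rendering (3) strictly weaker than (2).
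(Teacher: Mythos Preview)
Your proposal is correct and follows essentially the same route as the paper. The only cosmetic difference is in the equivalence \eqref{item:38}$\Leftrightarrow$\eqref{item:39}: the paper argues directly from the definition of effectiveness (smallness of $s_{\ov D-\ov E}$ translates to $\psi_{\ov D-\ov E,v}\le 0$), whereas you invoke the already-proved Theorem~\ref{thm:2}\eqref{item:88}, which packages the same computation; the remaining steps \eqref{item:39}$\Rightarrow$\eqref{item:40} and \eqref{item:40}$\Rightarrow$\eqref{item:39} coincide with the paper's argument via recession and Legendre--Fenchel duality.
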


\begin{proof}
  $\eqref{item:38} \Leftrightarrow \eqref{item:39}$ We have that $\ov
  D\ge\ov E$ if and only if the $\R$-section $s_{\ov D-\ov E}$ is
  small.  This is equivalent to the fact that, for $v\in \mathfrak
  M_{\K}$, $\psi_{\ov D-\ov E,v}(u)\le 0$ for all $u\in N_{\R}$, which
  in turn is equivalent to $\psi_{\ov E,v}(u)\ge \psi_{\ov D,v}(u)$
  for all $u$.

$\eqref{item:39} \Rightarrow \eqref{item:40}$ By considering the corresponding recession functions,
  we deduce that $\Psi_{E}(u)\ge \Psi_{D}(u)$ for all
  $u$, which implies that
  \begin{multline*}
    \Delta_{E}=\{x\in M_{\R}\mid \langle x,u\rangle\ge\Psi_{E}(u)
    \text{ for all } u\in N_{\R}\}\\
\subset\{x\in M_{\R}\mid \langle x,u\rangle\ge\Psi_{D}(u)
    \text{ for all } u\in N_{\R}\} =\Delta_{D}.
  \end{multline*}
Similarly, for $x\in \Delta_{E}$,
\begin{displaymath}
  \vartheta_{\ov E,v}(x)=\inf_{u\in N_{\R}}\big( \langle x,u\rangle- \psi_{\ov E,v}(u)\big)
  \le \inf_{u\in \N_{\R}}\big( \langle x,u\rangle- \psi_{\ov D,v}(u)\big)
= \vartheta_{\ov D,v}(x).
\end{displaymath}
  
$\eqref{item:40} \Rightarrow \eqref{item:39}$ Let $v\in \mathfrak
M_{\K}$. Since $\ov E$ is semipositive, the function $\psi_{\ov E,v}$
is concave and so $\psi_{\ov E,v}=\vartheta_{\ov
  E,v}^{\vee}$. The fact that $\vartheta_{\ov E,v}\le \vartheta_{\ov
  D,v}$ on $ \Delta_{E}$ implies that $\vartheta_{\ov E,v}^{\vee} \ge
\vartheta_{\ov D,v}^{\vee}$ on $ N_{\R}$. Moreover,
$$
\vartheta_{\ov D,v}^{\vee}= \conc(\psi_{\ov D,v})\ge
\psi_{\ov D,v},
$$
where $\conc(\psi_{\ov D,v})$ denotes the concave envelope of the
function $\psi_{\ov D,v}$ (Appendix
\ref{sec:conv-analys-asympt}). Hence $\psi_{\ov E,v}\ge \psi_{\ov
  D,v}$, which concludes the proof.
\end{proof}

\begin{exmpl}\label{exm:11}
  This example is due to Moriwaki \cite{Moriwaki:bad}. Let $X=\P^{n}_{\Q}$
  with homogeneous coordinates $(z_{0}:\dots:z_{n})$ and
  $D=\div(z_{0})$. Then $X$ is a toric
  variety and $D$ is a toric divisor corresponding to the support
  function $\Psi
  \colon \R^{n}\to \R$ given by
  \begin{displaymath}
    \Psi (u_{1},\dots,u_{n})=\min(0,u_{1},\dots,u_{n}).
  \end{displaymath}
  Let $\bfalpha=(\alpha_{0},\dots,\alpha_{n})$ be a collection of
  positive real numbers. Consider the toric semipositive metric on $D$
  given, under the correspondence in
  Proposition~\ref{prop:22}\eqref{item:71}, by the family of functions
  \begin{displaymath}
    \psi _{v} {(u_{1},\dots, u_{n})}=
  \begin{cases}
      \frac{-1}{2}\log(\alpha_{0}+\alpha_{1}\e^{-2u_{1}}+\dots+\alpha_{n}\e^{-2u_{n}})&
      \text{ if }v=\infty,\\
      \Psi  &\text{ otherwise.}
  \end{cases}
  \end{displaymath}
For $v=\infty$, this metric agrees
  with the {weighted} Fubini-Study metric given by
  \begin{displaymath}
    \|s(z_{0}:\dots:z_{n})\|^{2}_{\infty}=
    \frac{z_{0}\ov z_{0}}
    {\alpha_{0}z_{0}\ov z_{0}+\dots +\alpha_{n}z_{n}\ov z_{n}}
  \end{displaymath}
 {for the toric section $s$ corresponding to the linear form $z_{0}$.}
  For the non-Archimedean places, it  agrees with the canonical
  metric and is induced by the canonical model
  $\mathcal{O}_{\P^{n}_{\Z}}(1)$.  We denote $\ov D_{\bfalpha}$ this toric
  metrized divisor.

  Let $\{e_1,\dots,e_n\}$ be the standard basis of
  $M_{\R}=(\R^{n})^{\vee}$ and $(x_{1},\dots,x_{n})$ the
  coordinates of a point $x\in M_{\R}$ with respect to this basis. Set also $e_0=0$ and
  $x_{0}=1-\sum_{i} x_{i}$. The polytope $\Delta =\stab(\Psi
  )$ is the 
  standard simplex $\conv(e_{0},\dots,e_{n})$ of $\R^{n}$. Computing as in
  \cite[Example~2.4.3]{BurgosPhilipponSombra:agtvmmh}, one sees that
  the local roof functions are given, for  $(x_{1},\dots,x_{n})\in
  \Delta$,  by
  \begin{displaymath}
    \vartheta _{\ov D_{\bfalpha},v}(x_{1},\dots,x_{n})=
    \begin{cases}
      \frac{-1}{2}\sum_{i=0}^{n}x_{i}\log(x_{i}/\alpha_{i})&\text{ if }v=\infty,\\
      0&\text{ otherwise.}
    \end{cases}
  \end{displaymath}
This is an adelic family of continuous
concave functions on $\Delta $. By {\it loc. cit.}, Theorem 4.8.1(2), the metrized
divisor $\ov D_{\alpha}$ is semipositive. 
Its roof function is 
$$
\vartheta_{\ov D_{\bfalpha}}(x_{1},\dots,x_{n})
  =\frac{-1}{2}\sum_{i=0}^{n}x_{i}\log(x_{i}/\alpha_{i}). 
$$
The values of the roof function at the vertices of $\Delta $ are given by
$\vartheta_{\ov D_{\bfalpha}}(e_{i})=(1/2)\log(\alpha_{i})$,
$i=0,\dots, n$. Moreover
  \begin{displaymath}
    \sup_{x\in \Delta }\vartheta_{\ov D_{\bfalpha}}(x)=\frac{1}{2}\log(\alpha_{0}+\dots+\alpha_{n}).
  \end{displaymath}
  Write $\Theta _{\bfalpha}=\{x\in \Delta \mid \vartheta _{\ov D_{\bfalpha}}(x)\ge 0\}$.
  From Theorem \ref{thm:2}, we deduce part of the main result of \cite{Moriwaki:bad}:
  \begin{enumerate}
  \item $\ov D_{\bfalpha}$ is ample if and only if $\alpha_{i}>1$, $i=0,\dots,n$;
  \item $\ov D_{\bfalpha}$ is nef if and only if $\alpha_{i}\ge 1$, $i=0,\dots,n$;
  \item $\ov D_{\bfalpha}$ is big if and only if $\sum_{i} \alpha_{i}>1$;
  \item $\ov D_{\bfalpha}$ is pseudo-effective if and only if $\sum_{i}\alpha_{i}\ge 1$;
  \item $\ov D_{\bfalpha}$ is effective if and only if $\alpha_{0}\ge 1$.
\end{enumerate}
  By \cite[Theorem 5.2.5]{BurgosPhilipponSombra:agtvmmh} and Theorem
  \ref{thm:1}, 
  \begin{equation*}
    \h_{\ov D_{\bfalpha}}(X)=\avol_{\chi}(X,\ov D_{\bfalpha})=(n+1)!  \int
    _{\Delta }\vartheta_{\ov D_{\bfalpha}} \dd\vol =\frac{1}{2}
    \sum_{i=0}^{n}\bigg( \log(\alpha_{i})+\sum_{j=1}^{i}\frac{1}{j}\bigg).
  \end{equation*}
Apparently, there is no such a simple formula for the arithmetic volume
$$\avol(X,\ov D_{\bfalpha})=(n+1)! \int _{\Theta
  _{\bfalpha}}\vartheta_{\ov D_{\bfalpha}} \dd\vol.$$
\end{exmpl}

\section{Toric versions of Dirichlet's unit theorem,  Zariski decomposition
  and Fujita
  approximation} \label{sec:dirichl-unit-theor}

In this section, we give a complete answer to questions \ref{ques:1}
and \ref{ques:4}  in the toric case and a partial answer to
Question \ref{ques:3}. Namely, we give a criterion for when a
\emph{toric} Zariski decomposition or a strong \emph{toric} Zariski
decomposition exists. We will show in \S \ref{sec:zariski-decomp} that,
for big toric arithmetic $\R$-divisors and under some restrictions,
the existence of a non-necessarily toric Zariski decomposition
implies the existence of a toric one.

\begin{defn} \label{def:27}
  Let $X$ be a proper toric variety over $\K$ and $\ov D$ a toric metrized
  $\R$-divisor on $X$. 
  A \emph{toric Zariski decomposition} of $\ov D$ is a Zariski
  decomposition $\varphi^{\ast}\ov D=\ov P+\ov E$ such that $\varphi$
  is a  birational toric map  of proper toric varieties  and $\ov P$ (hence $\ov E$) is a
  toric metrized $\R$-divisor.  
  
  We denote by $\Upsilon _{\T}(\ov D)\subset \Upsilon
  (\ov D)$ the subset formed by the elements $(\varphi,\ov P)$,
  with $\varphi$ a proper birational toric map  of proper toric varieties  and $\ov P$ a
  toric metrized $\R$-divisor. The equivalence relation $\sim$ in Definition \ref{def:2} induces
  an equivalence relation on $\Upsilon _{\T}(\ov D)$ and the order
  relation on $\Upsilon (\ov D)/\sim$ induces an order relation on
  $\Upsilon_{\T} (\ov D)/\sim$. A \emph{toric strong Zariski
    decomposition} is the greatest element  of $\Upsilon _{\T}(\ov
  D)/\sim$, if it exists. 
\end{defn}

\begin{thm} \label{thm:4}
Let $X$ be a proper toric variety over $\K$ and $\ov D$ a toric metrized
$\R$-divisor on $X$. 

\begin{enumerate}
\item \label{item:54} Assume that $\K$ is an $A$-field. Then, for every  $a\in \Theta _{\ov D}$ there exists
  an $\alpha\in \K^{\times}\otimes \R$ such that
\begin{equation}\label{eq:32}
  \ov D+ \wh\div(\alpha \chi^{a})\ge 0.
\end{equation}
Therefore, if $\ov D$ is pseudo-effective, then there exists $a\in \Delta
_{D}$ and $\alpha\in \K^{\times}\otimes \R$ satisfying
\eqref{eq:32}. 
\item \label{item:55} If $\ov D$ is pseudo-effective, then there
  exists a toric strong Zariski decomposition of $\ov D$ if and only
  if $\Theta_{\ov D}$ is a quasi-rational polytope. 

  \item \label{item:26} If $\ov D$ is big, the following statements are equivalent: 
    \begin{enumerate}
    \item \label{item:101} there exists a toric Zariski decomposition of $\ov D$;
\item \label{item:102} there exists a strong toric Zariski decomposition of $\ov D$; 
\item \label{item:103} $\Theta_{\ov D}$ is a quasi-rational polytope.
    \end{enumerate}
\item \label{item:56} Assume that $\ov D$ is big. Then, for every
  $\varepsilon>0$, there exists a birational toric map $\varphi\colon
  X'\to X$  of proper toric varieties  and
toric 
metrized $\R$-divisors $\ov A$, $\ov E$ on $X'$ such that $\ov A$ is
ample, $\ov E$ is effective,
\begin{displaymath}
 \varphi^{*}\ov D=\ov A+\ov E\quad\text{and}\quad \wh\vol(X',\ov A)\ge
 \wh\vol(X,\ov D)-\varepsilon. 
\end{displaymath}
\end{enumerate}
\end{thm}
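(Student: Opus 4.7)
The strategy combines the geometric Fujita approximation for toric varieties (Proposition \ref{prop:21}\eqref{item:51}) with the approximation of continuous concave functions by rational piecewise affine concave functions used in the proof of Proposition \ref{prop:23}. By Theorem \ref{thm:1} we have $\wh\vol(X,\ov D)=(n+1)!\int_{\Theta_{\ov D}}\vartheta_{\ov D}\,\dd\vol_{M}$, and by Theorem \ref{thm:2}\eqref{item:9} the polytope $\Theta_{\ov D}$ has dimension $n$ and $\vartheta_{\ov D}$ is positive on its interior with a positive minimum on every compact subset thereof. The plan is to approximate $\ov D$ from below by a toric nef semipositive metrized $\R$-divisor $\ov P$ built from rational piecewise linear data with slightly shrunk roof functions, and then to perturb by a small ample toric divisor equipped with its canonical metric so as to produce an ample $\ov A$ with $\varphi^{\ast}\ov D-\ov A\ge 0$ and $\wh\vol(X',\ov A)$ close to $\wh\vol(X,\ov D)$.

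Concretely, I first choose a quasi-rational polytope $\Gamma$ compactly contained in the interior of $\Theta_{\ov D}$ such that $(n+1)!\int_{\Gamma}\vartheta_{\ov D}\,\dd\vol_{M}\ge \wh\vol(X,\ov D)-\varepsilon/3$, using the density of rational polytopes among convex bodies. By the toric Chow lemma, let $\Sigma_{1}$ be a projective refinement of $\Sigma$ that further refines the normal fan of $\Gamma$; set $X_{1}=X_{\Sigma_{1}}$ with induced birational toric map $\varphi\colon X_{1}\to X$. Only finitely many places $v$ have $\vartheta_{\ov D,v}\not\equiv 0$; for each such $v$, using Proposition~2.5.20 of \cite{BurgosPhilipponSombra:agtvmmh}, approximate $\vartheta_{\ov D,v}|_{\Gamma}$ uniformly within $\eta/2$ from below by a rational piecewise affine concave function $\phi_{v}\le \vartheta_{\ov D,v}$, and set $\tilde\vartheta_{v}=\phi_{v}-\eta/2$, so that $\tilde\vartheta_{v}\le\vartheta_{\ov D,v}-\eta/2$ on $\Gamma$ with a built-in slack of $\eta/2$. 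For $\eta$ small enough (relative to the positive minimum of $\vartheta_{\ov D}$ on the compact $\Gamma$), the sum $\sum_{v} n_{v}\tilde\vartheta_{v}$ remains positive on $\Gamma$ and $(n+1)!\int_{\Gamma}\sum_{v} n_{v}\tilde\vartheta_{v}\,\dd\vol_{M}\ge \wh\vol(X,\ov D)-2\varepsilon/3$. Define on $X_{1}$ the nef toric semipositive metrized $\R$-divisor $\ov P$ with polytope $\Gamma$ (compatible with $\Sigma_{1}$) and $v$-adic roof functions $\tilde\vartheta_{v}$, via Proposition \ref{prop:23}. Choose an ample toric divisor $H$ on $X_{1}$ with polytope $\Delta_{H}$ satisfying $0\in\Delta_{H}$ (arranged by a toric linear equivalence), equipped with its canonical metric $\ov H_{\can}$ so that $\vartheta_{\ov H_{\can},v}\equiv 0$ (Definition \ref{def:25}). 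For $\delta>0$ sufficiently small, set $\ov A=\ov P+\delta\ov H_{\can}$ and $\ov E=\varphi^{\ast}\ov D-\ov A$.

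To verify the claims: the underlying divisor $A=P+\delta H$ has polytope $\Delta_{A}=\Gamma+\delta\Delta_{H}$, which lies in $\Delta_{D}$ for $\delta$ small (since $\Gamma$ is compactly contained in $\Delta_{D}^{\circ}$) and has normal fan equal to $\Sigma_{1}$ (by Minkowski summing with $\Delta_{H}$), so $A$ is ample on $X_{1}$. A direct Legendre-dual computation based on $(f+g)^{\vee}=f^{\vee}\boxplus g^{\vee}$, together with the fact that $\Psi_{H}^{\vee}$ is the concave indicator of $\Delta_{H}$, gives
\begin{displaymath}
\vartheta_{\ov A,v}(x)=\sup\big\{\tilde\vartheta_{v}(x_{1})\bigm| x_{1}\in \Gamma,\ x-x_{1}\in\delta\Delta_{H}\big\}.
\end{displaymath}
Taking $x_{1}=x$ for $x\in\Gamma$ (using $0\in\Delta_{H}$) yields $\vartheta_{\ov A}(x)\ge \sum_{v}n_{v}\tilde\vartheta_{v}(x)>0$ on $\Gamma$, and continuity of the sup-convolution extends this to $\vartheta_{\ov A}>0$ on $\Delta_{A}$; hence $\ov A$ is ample by Theorem \ref{thm:2}\eqref{item:7}. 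For effectiveness of $\ov E$, Proposition \ref{prop:14} reduces the claim to $\vartheta_{\ov A,v}\le \vartheta_{\ov D,v}$ on $\Delta_{A}$ for every $v$: at a maximising $x_{1}$ one has $\tilde\vartheta_{v}(x_{1})\le \vartheta_{\ov D,v}(x_{1})-\eta/2$, while the Lipschitz continuity of $\vartheta_{\ov D,v}$ on the compact $\Delta_{A}$ bounds $|\vartheta_{\ov D,v}(x_{1})-\vartheta_{\ov D,v}(x)|$ by a constant multiple of $\delta\diam(\Delta_{H})$, so choosing $\delta$ small relative to $\eta$ gives the inequality. Finally, Theorem \ref{thm:1} and the lower bound $\vartheta_{\ov A}\ge \sum_{v} n_{v}\tilde\vartheta_{v}$ on $\Gamma$ yield $\wh\vol(X_{1},\ov A)\ge \wh\vol(X,\ov D)-\varepsilon$. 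The main obstacle is the simultaneous control of the three parameters $\Gamma$, $\eta$, and $\delta$: enlarging $\Gamma$ and shrinking $\eta$ are needed to recover most of the volume, while the containment $\Delta_{A}\subset \Delta_{D}$ and the effectiveness of $\ov E$ require $\delta$ small enough that the slack $\eta/2$ dominates the Lipschitz perturbation of size $\delta\diam(\Delta_{H})\cdot L_{v}$ at each of the finitely many relevant places.
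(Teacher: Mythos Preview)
Your proposal addresses only part~\eqref{item:56}; parts~\eqref{item:54}--\eqref{item:26} are not treated at all. Those require separate arguments (the classical Dirichlet unit theorem for $A$-fields in~\eqref{item:54}, a maximality argument in $\Upsilon_{\T}(\ov D)/\!\sim$ for~\eqref{item:55}, and an integral comparison forcing $\Delta_{P}=\Theta_{\ov D}$ for the implication \eqref{item:101}$\Rightarrow$\eqref{item:103}), so as a proof of the full statement the proposal is incomplete.

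For part~\eqref{item:56} itself your argument is correct but more involved than the paper's. The paper simply chooses a quasi-rational polytope $\Delta'\subset\Theta_{\ov D}^{\circ}$ with $(n+1)!\int_{\Theta_{\ov D}\setminus\Delta'}\vartheta_{\ov D}\,\dd\vol_{M}\le\varepsilon$, passes to $X'=X_{\Sigma'}$ for $\Sigma'$ a common refinement of $\Sigma$ and $\Sigma_{\Delta'}$, and takes $\ov A$ to have polytope $\Delta'$ and local roof functions $\vartheta_{\ov A,v}=\vartheta_{\ov D,v}\big|_{\Delta'}$. Then $\vartheta_{\ov A,v}\le\vartheta_{\ov D,v}$ holds trivially, so Proposition~\ref{prop:14} gives $\varphi^{*}\ov D\ge\ov A$ without any slack $\eta$ or Lipschitz estimate; the volume bound and the positivity $\vartheta_{\ov A}>0$ on $\Delta'$ are immediate. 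Your rational piecewise-affine approximation of the $\vartheta_{\ov D,v}$ is therefore unnecessary, since Proposition~\ref{prop:23} already applies to the restricted continuous concave functions. The one place where your route is genuinely more careful is the perturbation by $\delta\ov H_{\can}$: it forces $\Psi_{A}$ to be strictly concave on $\Sigma_{1}$, hence $A$ to be ample and not merely nef on the refined variety, a point the paper's argument handles only implicitly through the choice of $\Delta'$.
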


\begin{proof}
  \eqref{item:54} Let $S$ be a
  finite subset of $\mathfrak M_{\K}$ containing the Archimedean
  places and those places such that $\vartheta_{\ov D,v}(a)\ne 0$.
  Let $\gamma_{v}\in \R$, $v\in \mathfrak M_{\K}$, such that
  $\gamma_{v}\le \vartheta_{\ov D,v}(a)$ for all $v\in S$, $\gamma_{v}=0$
  for $v\notin S$, and $\sum_{v}n_{v}\gamma_{v}=0$. Dirichlet's unit
  theorem for $A$-fields \cite[Chapter IV, \S 4, Theorem 9]{Weil:bnt}
  implies that there exists $\alpha\in \K^{\times}\otimes \R$ such
  that $\log|\alpha|_{v}=\gamma_{v}$ { for all} $v$.  Set $\ov D'=\ov D+
  \wh\div(\alpha \chi^{a})$. Then, for all $v\in \mathfrak M_{\K}$ and
  $x\in M_{\R}$,
\begin{displaymath}
  \vartheta_{ \ov D',v}(x) =   \vartheta_{ \ov D,v}(x+a)-\gamma_{v}.
\end{displaymath}
In particular, $\vartheta_{ \ov D',v}(0) =   \vartheta_{ \ov
  D,v}(a)-\gamma_{v}\ge 0$ for all $v$ and so $\ov D'\ge0$, as stated. 

If $\ov D$ is pseudo-effective, then $\Theta _{\ov D}\not =\emptyset$
by Theorem \ref{thm:2}\eqref{item:100}, which proves the second
statement.

\eqref{item:55} Let $\varphi\colon X'\to X$ be a birational toric map
and $\ov P$ a nef toric metrized $\R$-divisor on $X'$ such that $\ov
P\le \varphi^{*}\ov D$.  In particular, $\ov P$ is semipositive. By
Proposition \ref{prop:14}, $\Delta_{P}\subset \Delta_{\varphi^{*}D}=
\Delta_{D}$ and $\vartheta_{\ov P,v}(x)\le \vartheta_{\varphi^{*}\ov
  D,v}=\vartheta_{\ov D,v}(x)$ for all $v\in \mathfrak M_{\K}$ and $x\in {\Delta_{P}}$.
Furthermore, by Theorem~\ref{thm:2}\eqref{item:8}, $\vartheta_{\ov
  P}\ge 0$ on ${\Delta_{P}}$.  Hence, for  $x\in {\Delta_{P}}$,
\begin{equation}\label{eq:30}
\vartheta_{\ov D}(x) = \sum_{v}n_{v}\vartheta_{\ov D,v}(x) \ge \vartheta_{\ov
  P}(x)\ge 0
\end{equation}
and $\Delta_{P}\subset \Theta_{\ov D}$. 

Assume that $(\varphi,\ov P)$ is maximal.
Suppose that $\Theta_{\ov
  D}$ is not a quasi-rational polytope. Then $\Delta_{P}\ne
\Theta_{\ov D}$ and so there is a quasi-rational polytope
$\Delta'\supsetneq \Delta_{P}$  contained in $\Theta_{\ov D}$.
Let $\Sigma'$ be a common refinement of $\Sigma$ and
$\Sigma_{\Delta'}$. Set $X'=X_{\Sigma'}$ and let $\varphi\colon X'\to
X$ be the associate birational toric  map.
Let $P'$ be the toric $\R$-divisor of $X'$ determined by $\Delta'$ and
$\ov P'$ the toric semipositive metrized $\R$-divisor associated to
the restriction to $\Delta'$ of the family of concave functions
$\{\vartheta_{\ov D,v}\}_{v}$ under the correspondence in Proposition
\ref{prop:23}\eqref{item:73}.  We have that $\ov P'$ is nef (by
Theorem \ref{thm:2}\eqref{item:8}), $\ov P\le \varphi^{*} \ov P'$ (by
Proposition \ref{prop:14}) and $\ov P\ne \ov P'$ since the associated
polytopes are different. Hence, $\ov P$ is not maximal contradicting
the hypothesis. We conclude that~$\Theta_{\ov D}=\Delta_P$ is a
quasi-rational polytope.

Conversely, assume that $\Theta_{\ov D}$ is a quasi-rational
polytope. Let $\Sigma'$ be a common refinement of $\Sigma$
and $\Sigma_{\Theta_{\ov D}}$. Set $X'=X_{\Sigma'}$ and 
let $\varphi\colon X'\to X$  be the associated birational toric map. 

Let $P$ be the toric $\R$-divisor of $X'$ determined by $\Theta_{\ov
  D}$ and $\ov P$ the toric semipositive metrized $\R$-divisor
associated to the restriction to $\Theta_{\ov D}$ of the family of
concave functions $\{\vartheta_{\ov D,v}\}_{v}$ under the
correspondence in Proposition \ref{prop:23}\eqref{item:73}.  Hence,
$\ov P$ is nef (by Theorem \ref{thm:2}\eqref{item:8}) and $\ov P\le
\varphi^{*} \ov D$ (by Proposition \ref{prop:14}).

Now we show the maximality of the class of $(\varphi,\ov P)$. Let
$\Sigma_{1}$ be a refinement of $\Sigma$, $\varphi_{1}\colon X_{1}\to
X$ the corresponding birational toric map and $\ov P_{1}$ a nef toric
metrized $\R$-divisor on $X_{1}$ with $\varphi_{1}^{*}\ov D\ge \ov
P_{1}$.  By Proposition \ref{prop:14}, $\Delta_{P_{1}}\subset
\Delta_{D}$ and $\vartheta_{\ov P_{1},v}(x)\le \vartheta_{\ov D,v}(x)$
for all $v\in \mathfrak M_{\K}$ and $x\in \Delta_{P_{1}}$.  Since $\ov
P_{1}$ is nef, Theorem \ref{thm:2}\eqref{item:8} implies that
$\vartheta_{\ov P_{1}}(x)\ge 0$ for all $x\in \Delta_{P_{1}}$. Hence,
$\Delta_{P_{1}}\subset \Theta_{\ov D}$.  By construction, $
\Delta_{P_{1}}\subset\Delta_{P}$ and $ \vartheta_{\ov P_{1}}(x)\le
\vartheta_{\ov P}(x) $ for $ x\in\Delta_{P_{1}}$.

Taking a common refinement $\Sigma''$ of $\Sigma'$ and $\Sigma_{1}$,
we consider the corresponding birational toric maps $\nu\colon
X''\to X'$ and $\nu_{1}\colon X''\to X_{1}$. 
Proposition \ref{prop:14} then implies that $\nu^{*}\ov P\ge
\nu_{1}^{*}\ov P_{1}$, which proves the statement.

\eqref{item:26} Since a big metrized divisor is pseudo-effective the
equivalence of \eqref{item:102} and \eqref{item:103} follows from
\eqref{item:55}. Furthermore \eqref{item:102} and \eqref{item:103}
imply \eqref{item:101}, it remains to prove that \eqref{item:101}
implies \eqref{item:103}. Assume that $\varphi^{\ast}\ov D=\ov P+\ov
E$ is a toric Zariski decomposition. As before,  $\Delta _{P}\subset
\Theta
_{\ov D}$ and $\vartheta _{\ov P}\le \vartheta _{\ov D}$. The
equality of arithmetic volumes implies 
\begin{equation}\label{eq:27}
  \int_{\Delta_{P}} \vartheta_{\ov P} \dd \vol_{M}=
  \int_{\Theta_{\ov D}} \vartheta_{\ov D} \dd \vol_{M}.
\end{equation}
Since  $\ov D$ is big, the interior of $\Theta_{\ov D}$ is nonempty and
the function $ \vartheta_{\ov D}$ is strictly positive on
it. Then, equation \eqref{eq:27}
implies that $\Theta_{\ov D}$ is equal to $\Delta_{P}$, which is a 
quasi-rational polytope. 

\eqref{item:56} For each $\varepsilon > 0$ we pick a quasi-rational
polytope $\Delta'$ contained in the interior of $\Theta_{\ov D}$  and such that 
\begin{displaymath}
(n+1)!\int_{\Theta_{\ov
    D}\setminus \Delta'} \vartheta_{\ov D} \dd \vol_{M}\le
\varepsilon.
\end{displaymath}
Let $\Sigma'$ be a common refinement of $\Sigma$ and
$\Sigma_{\Delta'}$. We consider the corresponding toric variety
$X'=X_{\Sigma'}$ and the birational toric map $\varphi\colon X'\to X$. We set
$\ov A$ for the toric $ \R$-divisor corresponding to $\Delta'$
together with the metrics induced by the
restriction to this polytope of the family of concave functions
$\{\vartheta_{\ov D,v}\}_{v}$.

By concavity,  $\vartheta_{\ov A}$ is strictly positive on $\Delta'$.
Theorem \ref{thm:2}\eqref{item:7} then implies that $\ov A$ is ample.
By Proposition \ref{prop:14}, $\ov A\le \varphi^{*} \ov D$ and, by
construction, 
\begin{displaymath}
  \wh\vol(X',\ov A)=(n+1)!\int_{\Delta'} \vartheta_{\ov D} \dd
  \vol_{M}\ge (n+1)!\int_{\Theta_{\ov D}} \vartheta_{\ov D} \dd
  \vol_{M} -\varepsilon
= \wh\vol(X,\ov D)-\varepsilon,
\end{displaymath}
which concludes the proof. 
\end{proof}

\begin{rem}\label{rem:5}
  If $\ov D$ as in the previous theorem is pseudo-effective but 
  not big, then a toric Zariski decomposition always exists. Take any
  $a\in \Theta _{\ov D}$. Since the set $\{a\}$ is a quasi-rational polytope,
  using the construction of Theorem \ref{thm:4}\eqref{item:55} we can
  associate to it a nef toric metrized divisor on $X$ such that $\ov
  P \le \ov D$ and $\avol(X,\ov P)=0=\avol(X,\ov D)$. Clearly, in this
  case,  the decomposition may be non-unique. 
\end{rem}

\begin{exmpl} \label{exm:14} Consider again the toric metrized divisor
  $\ov D_{\bfalpha}$ on $\P^{n}_{\Q}$ in Example~\ref{exm:11}. We
  also use the notation and results therein.

First suppose that $\sum_{i}\alpha_{i}\ge1$ or, equivalently, that $\ov D_{\bfalpha}$
is pseudo-effective. By Theorem \ref{thm:4}\eqref{item:54},
Dirichlet's unit theorem holds true  in this case, as it does for any
pseudo-effective metrized toric $\R$-divisor over an $A$-field.

If $\sum_{i}\alpha_{i}=1$, then the set $\Theta_{\bfalpha}$ is a
point. Otherwise, this is a compact subset of $\Delta$ of
dimension $n$ with smooth boundary.  In this case,
$\Theta_{\bfalpha}$ is not a polytope, unless
$n=0,1$. Hence, by Theorem \ref{thm:4}\eqref{item:55}, $\ov D$ admits a strong
toric Zariski decomposition if and only if either
$\sum_{i}\alpha_{i}=1$, or $\sum_{i}\alpha_{i}>1$ and $n=0,1$.

Now suppose that $\sum_{i}\alpha_{i}> 1$ or, equivalently, that $\ov
D$ is big. Then, by Theorem~\ref{thm:4}\eqref{item:26}, $\ov D$ admits
a toric Zariski decomposition (and a strong Zariski decomposition) if
and only if $n=0,1$.

Finally, Theorem \ref{thm:4}\eqref{item:56} shows that a Fujita
approximation of $\ov D_{\bfalpha}$ always exists, as it does for any
big metrized toric $\R$-divisor over a global field.
\end{exmpl}

\section{Zariski decomposition on toric varieties}
\label{sec:zariski-decomp}

In the previous section, we gave a characterization  for when a toric Zariski
decomposition exists. Now, we will  study when the existence of
a not necessarily toric Zariski decomposition implies the existence of
a toric one. 
For technical reasons, we will restrict this study to algebraic metrized
$\R$-divisors arising from arithmetic $\R$-divisors as in Example
\ref{exm:3} and assume $\K=\Q$. 

\begin{defn}\label{def:17} Let $X$ be a smooth projective toric
  variety over $\Q$, $\cX$ a model of~$X$ over $\Z$ and $\ov {\cD}$ an
  arithmetic $\R$-divisor on $\cX$ of $C^{0}$-type. Let $D$ be the
  restriction of~$\cD$ to $X$ and $\ov D$ the algebraic metrized
  $\R$-divisor on $X$ associated to $\ov \cD$. We say that $\ov {\cD}$
  is a \emph{toric arithmetic $\R$-divisor} if $\ov D$ is a toric
  metrized $\R$-divisor. In particular, $D$ is a toric $\R$-divisor on
  $X$.
\end{defn}

The following is the main result of this section. 

\begin{thm}\label{thm:5}
  Let $X$ be a smooth projective toric variety over $\Q$, $\ov \cD$ a
  big toric arithmetic $\R$-divisor and $\ov D$ the associated toric
  metrized $\R$-divisor. Then, the following conditions are
  equivalent:
  \begin{enumerate}
  \item \label{item:3} there is a birational map $\sigma \colon
    \cY\to \cX$ of flat normal generically smooth 
   projective schemes over $\Z$ and a
   decomposition $\sigma ^{\ast}\ov \cD=\ov \cP+\ov \cE$ with $\ov \cP$ and
   $\ov \cE$ arithmetic $\R$-divisors on $\cY_{\Q}$ such that the
   corresponding metrized $\R$-divisors give a Zariski decomposition
   of $\ov D$;
  \item \label{item:2} there is a toric Zariski decomposition of
    $\ov D$;
 \item \label{item:4} $\Theta _{\ov D}$ is a quasi-rational polytope.
  \end{enumerate}
\end{thm}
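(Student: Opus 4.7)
Conditions (\ref{item:2}) and (\ref{item:4}) are already equivalent by Theorem \ref{thm:4}\eqref{item:26}, so the theorem amounts to (\ref{item:2})$\Leftrightarrow$(\ref{item:3}). The implication (\ref{item:2})$\Rightarrow$(\ref{item:3}) is a model-theoretic lift: given a toric Zariski decomposition $\varphi^*\ov D=\ov P+\ov E$ on a smooth projective toric variety $X'$, take for $\cY$ the canonical flat, normal, generically smooth integral toric model of $X'$ (the toric $\Z$-scheme associated to the fan of $X'$). At finite places the toric algebraic metrics of $\ov P$ and $\ov E$ are induced by $\cY$ in the sense of Example \ref{exm:3}, and at the Archimedean place the toric $C^0$-metrics produce Green functions of $C^0$-type thanks to Proposition \ref{prop:22}; the birational toric morphism $\varphi$ extends to a birational morphism $\sigma\colon\cY\to\cX$ of integral models, giving the required arithmetic decomposition $\sigma^*\ov\cD=\ov\cP+\ov\cE$.

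For the main implication (\ref{item:3})$\Rightarrow$(\ref{item:2}), my plan is to turn a non-necessarily toric Zariski decomposition on $\cY$ into a toric one via compact torus averaging. First, resolve indeterminacies: choose a smooth projective toric variety $X'$ with a birational toric morphism $\varphi\colon X'\to X$, and a smooth projective variety $W$ with birational morphisms $\pi_1\colon W\to X'$ and $\pi_2\colon W\to\cY_\Q$ satisfying $\varphi\circ\pi_1=\sigma_\Q\circ\pi_2$. By Lemma \ref{lemm:7}\eqref{item:87}, the pullback $(\varphi\circ\pi_1)^*\ov D=\pi_2^*\ov P+\pi_2^*\ov E$ is still a Zariski decomposition on $W$. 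Since the Picard group of a smooth projective toric variety is generated by toric boundary divisors, the numerical class of $\pi_{1*}\pi_2^*P$ is represented by a toric $\R$-divisor $P^\tor$ on $X'$, which is nef because nefness depends only on the numerical class. Transferring the semipositive metric of $\pi_2^*\ov P$ to $P^\tor$ via the linear equivalence, and averaging the result place by place over the compact torus $\SS_v^\an$ as in the proof of Proposition \ref{prop:23}, yields a toric semipositive metrized $\R$-divisor $\ov P^\tor$ on $X'$.

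Three properties must then be checked. (a) $\ov P^\tor\le\varphi^*\ov D$: the inequality $\pi_2^*\ov P\le\pi_1^*\varphi^*\ov D$ survives the averaging because the right-hand side is torus-invariant, and passing to $\psi$-functions via Proposition \ref{prop:14} yields the claim. (b) $\ov P^\tor$ is nef: the underlying divisor is nef, and by Corollary \ref{cor:9}\eqref{item:89} nefness of the metric reduces to nonnegativity of heights at torus-invariant points, where averaging does nothing and the claim follows from the nefness of $\ov P$. (c) $\wh\vol(X',\ov P^\tor)\ge\wh\vol(\cY_\Q,\ov P)=\wh\vol(X,\ov D)$. Combined with the reverse inequality $\wh\vol(X',\ov P^\tor)\le\wh\vol(X,\ov D)$ derived from (a), Theorem \ref{thm:1} and Proposition \ref{prop:14}, one obtains equality of arithmetic volumes and hence a toric Zariski decomposition $\varphi^*\ov D=\ov P^\tor+\ov E^\tor$ with $\ov E^\tor:=\varphi^*\ov D-\ov P^\tor$ effective by Proposition \ref{prop:14}; the existence of such a toric decomposition then forces $\Theta_{\ov D}$ to be a quasi-rational polytope by Theorem \ref{thm:4}\eqref{item:26}.

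The delicate point is the volume inequality in (c), which asserts that compact torus averaging does not decrease the arithmetic volume. Its geometric analogue is a Brunn--Minkowski type lower bound, but its arithmetic refinement, which controls small sections after averaging, is more subtle. In the present toric setting I would reduce it via Theorem \ref{thm:1} to an inclusion of polytopes and a pointwise inequality of roof functions, so that the volume preservation in the hypothesis forces $\Delta_{P^\tor}=\Theta_{\ov D}$; this would simultaneously yield the toric Zariski decomposition and confirm the quasi-rationality of $\Theta_{\ov D}$.
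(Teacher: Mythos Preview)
Your proposal has genuine gaps in both directions between \eqref{item:2} and \eqref{item:3}, and the paper's argument is quite different.

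For \eqref{item:2}$\Rightarrow$\eqref{item:3}, you claim that the canonical toric $\Z$-scheme $\cY$ associated to the fan of $X'$ induces the finite-place metrics of $\ov P$ and $\ov E$. This is false: that model induces the \emph{canonical} metrics, i.e.\ $\psi_{v}=\Psi_{P}$ for every $v\nmid\infty$. But the $\ov P$ coming out of Theorem~\ref{thm:4}\eqref{item:26} has $\psi_{\ov P,v}=(\lambda_v^{-1}\vartheta_{\ov D,v}|_{\Theta_{\ov D}})^{\vee}$, which is piecewise affine on a subdivision $\Pi_{v}$ that, for $v\in S$, is strictly finer than the fan. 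To realize such metrics by an arithmetic $\R$-divisor one must refine the special fibre accordingly. The paper does exactly this: for each $v\in S$ it builds a toric scheme $\cX_{\Pi_{v}}$ over $\Z_{S\setminus\{v\}}$ using $\Pi_{v}$, equips it with the divisor $\cP_{\varphi_{v}}$, and then glues these models along $\Spec(\Z_{S})$ via the identifications $p_{v}/p_{w}$. Without this construction the implication \eqref{item:4}$\Rightarrow$\eqref{item:3} does not go through.

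For \eqref{item:3}$\Rightarrow$\eqref{item:2}, your averaging strategy hinges on the volume inequality in (c), which you yourself flag as unproved. There is no available statement asserting that $\SS_{v}^{\an}$-averaging of a semipositive metric on a \emph{non-toric} nef $\R$-divisor (followed by pushforward to a toric variety and replacement by a linearly equivalent toric class) does not decrease $\avol$. Theorem~\ref{thm:1} computes $\avol$ only for toric metrized $\R$-divisors, so you cannot compare $\avol(\ov P^{\tor})$ with $\avol(\ov P)$ by that route; and the ``reduction to roof functions'' you sketch presupposes exactly the conclusion you want. The paper avoids this difficulty entirely: it goes from \eqref{item:3} directly to \eqref{item:4} by invoking Moriwaki's representability result (Theorem~\ref{thm:9}), which says that if an arithmetic Zariski decomposition exists then the arithmetic multiplicity $\mu_{\ov D}$ is represented by the $\R$-divisor $E=\cE|_{Y}$. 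Proposition~\ref{prop:12}\eqref{item:27} then shows that representability of $\mu_{\ov D}$ forces $\Theta_{\ov D}$ to be cut out by finitely many affine inequalities with integral slopes, hence a quasi-rational polytope. The detour through arithmetic multiplicities is the key idea you are missing.
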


Before proving the theorem we will need some preliminaries.

 Let $X$ be a proper variety over a field $K$. A \emph{divisorial
  valuation} of $X$ is a valuation $\nu$ on ${\rm K}(X)$ given, for $f\in
{\rm K}(X)$ by
\begin{displaymath}
  \nu(f)=\ord_{H}(\sigma^{*}f),
\end{displaymath}
where $\sigma\colon Y\to X$ is a birational
map of normal  proper
varieties over $K$ and $H\in \Div(Y)$ is  a prime Weil divisor.
We denote by $\DV(X)$ the set of all divisorial valuations of $X$. 

A divisorial valuation $\nu$ defines a map
\begin{math}
  \mult_{\nu}\colon \Car(X)\rightarrow \Z
\end{math}
given, for $D\in \Car(X)$, by $\mult_{\nu}(D)=
\nu(f_{D})$ where $f_{D}\in {\rm K}(X)$ is a local equation of $D$
on a neighbourhood of the point $\sigma(H)$. This map is a group
morphism and so it extends to a map 
\begin{displaymath}
  \mult_{\nu}\colon \Car(X)_{\R}\longrightarrow \R.
\end{displaymath}

Let $\rho\colon Z\to X$ be a birational map of normal proper
varieties. Since $\DV(X)=\DV(Z)$, for $\nu \in
\DV(X)$ the map $\mult_{\nu}$ extends to a map $ \Car(Z)_{\R}\to
\R$.

We will consider the following notion of arithmetic multiplicity for
metrized $\R$-divisors over a global field $\K$. 

\begin{defn} \label{def:21} Let $X$ be a proper variety over $\K$
  and $\ov D$ a metrized $\R$-divisor on $X$. The \emph{arithmetic
    multiplicity} of $\ov D$ is the function defined, for $\nu\in
  \DV(X)$, by
\begin{displaymath}
  \mu_{\ov D}(\nu)=
  \inf\Big\{\frac{1}{\ell} 
  \mult_{\nu}(\div(s)) \, \Big|\ 
  \ell\ge 1, s\in \wh{\Gamma}(X,\ell\ov D)^{\times}\Big\}.
\end{displaymath}  
Let $\rho\colon Y\to X$ be a birational map from a normal proper
variety $Y$ over $\K$ and $E\in \Car(Y)_{\R}$.  We say that the arithmetic
multiplicity of $\ov D$ is \emph{represented} by $E$ when $\mu_{\ov D}(\nu)=
\mult_{\nu}(E)$ for all $\nu\in \DV(X)$. 

The \emph{arithmetic multiplicity} of an arithmetic $\R$-divisor is
defined as the arithmetic multiplicity of its associated metrized
$\R$-divisor.
\end{defn}

We will show that, for a toric metrized $\R$-divisor $\ov D$, the
convex set $\Theta_{\ov D}$
can be expressed as the intersection of a family of halfspaces defined
by the arithmetic multiplicity of $\ov D$. When this arithmetic
multiplicity is representable, this convex set 
can be expressed as the intersection of a \emph{finite} sub-family of these
halfspaces which implies that, in this case, $\Theta_{\ov D}$ is a polytope.

\begin{prop}\label{prop:12}
  Let $X$ be a proper  toric variety over $\K$ and $\ov D$ a big
  toric metrized $\R$-divisor on $X$. Then 
  \begin{enumerate}
\item \label{item:31} for all $\nu\in \DV(X)$,
  \begin{displaymath}
  \mu_{\ov D}(\nu)= \inf\{ \mult_{\nu}(\div(s_{a})) \mid a\in \Theta_{\ov D}\cap M_{\Q}\};     
  \end{displaymath}
  \item \label{item:10} 
    $\Theta_{\ov D}=\{a\in M_{\R}\mid \mu_{\ov D}(\nu)\le
    \mult_{\nu}(\div(s_{a})) \text{ for all } \nu\in \DV(X) \};
    $
  \item \label{item:27} assume that there is a birational map $\sigma
    \colon Y\to X$ from a normal proper variety $Y$ over $\K$ and an
    $\R$-divisor $E$ on $Y$ that represents $\mu _{\ov D}$. Then, there
    are prime Weil divisors $H_{i}\in \Div(Y)$, $i=1,\dots, l$, such
    that
  \begin{displaymath}
    \Theta_{\ov D}=\{a\in M_{\R}\mid \mu_{\ov D}(\nu_{i})\le
    \mult_{\nu_{i}}(\div(s_{a})),\ i=1,\dots,l \},         
  \end{displaymath}
  where $\nu_{i}$ is the divisorial valuation defined by $H_{i}$.  In
  particular, $\Theta _{\ov D}$ is a quasi-rational polytope.
  \end{enumerate}
\end{prop}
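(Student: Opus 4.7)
The plan is to handle the three parts in sequence, at each step translating the valuative condition into a statement about the polytope $\Theta_{\ov D}$ and its support function $h_{\Theta_{\ov D}}(u):=\inf_{b\in\Theta_{\ov D}}\langle b,u\rangle$.

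For (1), I will prove both inequalities. For the upper bound, given $a=m/\ell\in\Theta_{\ov D}\cap M_{\Q}$ with $\vartheta_{\ov D}(a)>0$, Proposition~\ref{prop:3}\eqref{item:14} supplies an integer $e\ge 1$ and $\gamma\in\K^{\times}$ with $\gamma s_{m}^{e}$ a small section of $e\ell\ov D$; using $\nu(\gamma)=0$ one computes $\mult_{\nu}(\div(\gamma s_{m}^{e}))=e\ell\,\mult_{\nu}(\div(s_{a}))$, giving $\mu_{\ov D}(\nu)\le\mult_{\nu}(\div(s_{a}))$. The case $\vartheta_{\ov D}(a)=0$ follows by approximating $a$ with rational interior points of $\Theta_{\ov D}$, which exist by bigness of $\ov D$, and using affineness of $\mult_{\nu}(\div(s_{a}))$ in $a$. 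For the reverse inequality, any $s\in\whL(\ell\ov D)$ decomposes via Corollary~\ref{cor:1} as $s=\sum_{m}\gamma_{m}s_{m}$ with each summand small; the product formula combined with Proposition~\ref{prop:3}\eqref{item:13} then forces $m/\ell\in\Theta_{\ov D}$ whenever $\gamma_{m}\ne 0$, while the valuative inequality $\nu\bigl(\sum\gamma_{m}\chi^{m}\bigr)\ge\min_{m}\nu(\gamma_{m}\chi^{m})$ yields $\frac{1}{\ell}\mult_{\nu}(\div(s))\ge\min_{m:\gamma_{m}\ne 0}\mult_{\nu}(\div(s_{m/\ell}))$.

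For (2), one inclusion is immediate from (1). For the reverse, the identity $\mult_{\nu}(\div(s_{a}))=\mult_{\nu}(D)+\langle a,u_{\nu}\rangle$, where $u_{\nu}\in N$ is the element determined by $\nu(\chi^{m})=\langle m,u_{\nu}\rangle$, combined with (1) rewrites the hypothesis as $\langle a,u_{\nu}\rangle\ge h_{\Theta_{\ov D}}(u_{\nu})$ for every $\nu\in\DV(X)$. Every primitive $u\in N$ arises as $u_{\nu_{u}}$ by taking the toric valuation attached to a refinement of $\Sigma$ containing $u$ as a ray, so positive homogeneity yields the condition for all $u\in N_{\Q}\setminus\{0\}$; continuity of $h_{\Theta_{\ov D}}$ on $N_{\R}$ (from compactness of $\Theta_{\ov D}$) then extends it to all $u\in N_{\R}$, and the standard support-function characterization of compact convex sets gives $a\in\Theta_{\ov D}$.

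For (3), substituting representability $\mu_{\ov D}(\nu)=\mult_{\nu}(E)$ into (2) turns the criterion into $\mult_{\nu}(\sigma^{\ast}\div(s_{a})-E)\ge 0$ for every $\nu\in\DV(X)$, which by normality of $Y$ is equivalent to $\sigma^{\ast}\div(s_{a})-E$ being effective as a Cartier $\R$-divisor on $Y$. This is a finite condition: it suffices to verify $\mult_{H}(\sigma^{\ast}\div(s_{a}))\ge\mult_{H}(E)$ at the prime divisors $H$ of $Y$ lying in the support of $E$ or of $\sigma^{\ast}D$. Take $\{H_{i}\}$ to be this finite collection. Those $H_{i}$ arising as strict transforms of the toric divisors $D_{\rho}$ force $a\in\Delta_{D}$, and for any prime divisor $H$ of $Y$ outside the list one checks that either $H\cap\T\ne\emptyset$, so $u_{\nu_{H}}=0$ and the associated inequality is vacuous, or else $H$ is $\sigma$-exceptional with $\mult_{\nu_{H}}(\sigma^{\ast}D)=-\Psi_{D}(u_{\nu_{H}})=0$, in which case the implied inequality $\langle a,u_{\nu_{H}}\rangle\ge 0$ follows from $a\in\Delta_{D}$. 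Each retained condition is a half-space with rational normal $u_{\nu_{i}}\in N$, so $\Theta_{\ov D}$ is a quasi-rational polytope.

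The main obstacle is the first equivalence in (3): one must show that valuative positivity over all of $\DV(X)$ — including valuations arising from prime divisors on further blowups of $Y$, centered on higher-codimension subvarieties of $Y$ — really collapses to the finite check on prime Weil divisors of $Y$. This uses that Cartier pullback preserves effectivity of $\R$-divisors on normal birational modifications, together with the bookkeeping above showing no additional half-space inequality is imposed by divisors omitted from $\{H_{i}\}$.
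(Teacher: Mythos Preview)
Your arguments for \eqref{item:31} and \eqref{item:10} are correct and follow the paper's approach closely; the support-function rephrasing in \eqref{item:10} is a clean equivalent of the paper's separating-hyperplane argument.

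There is, however, a genuine gap in \eqref{item:27}. Your finite collection $\{H_{i}\}$ consists of the prime divisors in the support of $E$ or of $\sigma^{\ast}D$, and your case dichotomy for $H$ outside this list is ``either $H\cap\sigma^{-1}(\T)\ne\emptyset$ or $H$ is $\sigma$-exceptional''. This misses a third case: $H$ is the strict transform of a toric boundary divisor $D_{\rho}$ with $\Psi_{D}(v_{\rho})=0$. Such an $H$ satisfies $H\cap\sigma^{-1}(\T)=\emptyset$, is not exceptional, and need not lie in $|\sigma^{\ast}D|$ nor in $|E|$. Concretely, take $X=\P^{2}$, $D=D_{\rho_{1}}$, and a toric metric with $\vartheta_{\ov D}(a_{1},a_{2})=\tfrac{1}{2}-a_{1}-a_{2}$, so that $\Theta_{\ov D}=\{a_{1}\ge 0,\ a_{2}\ge 0,\ a_{1}+a_{2}\le \tfrac{1}{2}\}$. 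One checks that $E=\tfrac{1}{2}D_{\rho_{1}}$ on $Y=X$ represents $\mu_{\ov D}$. Your list is then $\{D_{\rho_{1}}\}$ alone, yielding only the single half-space $a_{1}+a_{2}\le\tfrac{1}{2}$; the facets $a_{1}\ge 0$ and $a_{2}\ge 0$ of $\Theta_{\ov D}$ are lost, so your claim that the strict transforms in the list ``force $a\in\Delta_{D}$'' fails.

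The fix is exactly what the paper does: take $\{H_{1},\dots,H_{l}\}$ to contain the components of $E$ together with \emph{all} prime divisors of $Y$ contained in $\sigma^{-1}(X\setminus X_{0})$, not just those in $|\sigma^{\ast}D|$. Then $\sigma^{\ast}\div(s_{a})$ and $E$ are both supported on $\{H_{i}\}$, and the effectivity of $\sigma^{\ast}\div(s_{a})-E$ is equivalent to the finite list of coefficient inequalities; your remaining argument then goes through.
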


\begin{proof} 
\eqref{item:31} 
Let $\ell\ge 1$ and $m\in \Theta_{\ell \ov D}\cap
  M$. Proposition \ref{prop:3}\eqref{item:13} implies that $s_{m}=(\chi^{m},\ell D)\in
  \wh \Gamma(X,\ell \ov D)^{\times}$. Since
  $\mult_{\nu}(\div(s_{m})) = \ell \mult_{\nu}(\div(s_{a}))$ for
  $a=m/\ell$, it follows that
  \begin{displaymath}
 \mu_{\ov D}(\nu)\le
 \inf\{ \mult_{\nu}(
 \div(s_{a})) \mid a\in \Theta_{\ov D}\cap M_{\Q}\}.      
   \end{displaymath}

   For the reverse inequality, let $s\in \wh \Gamma(X,\ell \ov
   D)^{\times}$. Write $s=(f,\ell D)$ with $f=\sum_{m\in
     \ell\Delta_{D}} c_{m}\chi^{m}$. By Corollary \ref{cor:1}, $
   c_{m}\chi^{m} \in \wh \Gamma(X,\ell \ov D)^{\times}$ for all $m$
   such that $c_{m}\ne 0$. For all such $m$, Proposition \ref{prop:3}\eqref{item:13}
   together with the product formula imply that $m\in \ell\Theta_{\ov
     D}$. Hence,
  \begin{displaymath}
    \mult_{\nu}(\div(s)) = \ell \mult_{\nu}(D)+ \nu\bigg(\sum_{m\in \ell\Delta_{D}} c_{m}\chi^{m}\bigg)
\ge \ell \mult_{\nu}(D) +
\min_{m\in \ell\Theta_{\ov
  D}\cap M} \nu(\chi^{m}). 
  \end{displaymath}
This implies that
\begin{math}
  \mu_{\ov D}(\nu)
  \ge 
  \inf\{ \mult_{\nu}(\div(s_{a})) \mid a\in \Theta_{\ov D}\cap M_{\Q}\},      
   \end{math}
which proves the formula. 

\eqref{item:10} Write $\Omega =\{a\in M_{\R}\mid \mu_{\ov D}(\nu)\le
\mult_{\nu}(\div(s_{a})) \ \forall \nu\in \DV(X) \}$ for
short. By~\eqref{item:31}, $\Theta_{\ov D}\cap M_{\Q}\subset
\Omega$. Since $\ov D$ is big, $\Theta_{\ov D}\cap M_{\Q}$ is dense in
$\Theta_{\ov D}$ and so  $\Theta_{\ov D}\subset \Omega$.

    For the reverse inclusion, let $b\in M_{\R}\setminus \Theta _{\ov
      D}$. Since $\Theta _{\ov D}$ is convex and closed, there is a
    real number $\varepsilon >0$ and a
    primitive element $u\in N$ such that $\langle b,u \rangle<\langle
    x,u \rangle-\varepsilon $ for all $x\in \Theta _{\ov
      D}$. Let $\Sigma '$ be a complete unimodular regular refinement
    of $\Sigma $ containing the ray $\R_{\ge 0}u$. Let $H$ be the
    prime Weil divisor of $X_{\Sigma '}$ corresponding to this
    ray and $\nu_{H}$ the associated divisorial valuation of $X$. Then, for any
    $a=m/\ell\in \Theta _{\ov
      D}\cap M_{\Q}$,
    \begin{displaymath}
      \nu_{H}(\chi^{b})=\langle b,u \rangle < \langle a,u
      \rangle-\varepsilon = \nu_{H}(\chi^{m})/\ell-\varepsilon .
    \end{displaymath}
    By \eqref{item:31}, 
    \begin{math}
      \mult_{\nu_{H}}(\div(s_{b}))<\mu _{\ov D}(\nu_{H})
    \end{math}
    and so $b\not \in \Omega $, which proves the statement.

\eqref{item:27}
  Let $\{H_{1},\dots,H_{l}\}$ be the set of prime Weil divisors on $Y$ 
  containing all the components of $E$ and  of $\sigma
  ^{-1}(X\setminus X_{\Sigma ,0})$. For short, write 
  \begin{displaymath}
\Omega '=\{a\in M_{\R}\mid
  \mu_{\ov D}(\nu_{i})\le
    \mult_{\nu_{i}}(\div(s_{a})),\ i=1,\dots,l \}.    
  \end{displaymath}
By \eqref{item:10},
  \begin{math}
    \Theta_{\ov D}\subset \Omega '.         
  \end{math}
  For the reverse inclusion, let $b\in \Omega '$. Then, for all $\nu\in \DV(X)$,
  \begin{multline*}
    \mu _{\ov
      D}(\nu)=\mult_{\nu}(E)=\sum_{i=1}^{l}\mult_{\nu_{i}}(E)\mult_{\nu}(H_{i})=
    \sum_{i=1}^{l}\mu_{\ov D}(\nu_{i})\mult_{\nu}(H_{i})\\
    \le \sum_{i=1}^{l}\mult_{\nu_{i}}(\div(s_{b}))\mult_{\nu}(H_{i})=
    \mult_{\nu}(\div(s_{b})).
  \end{multline*}
  By  \eqref{item:10}, this implies that $b\in \Theta _{\ov D}$ and so
  we obtain the first statement. 
The quasi-rationality of
  $\Theta _{\ov D}$ follows from the fact that
  \begin{displaymath}
    \mu _{\ov D}(\nu_{i})= \mult_{\nu_{i}}(\div s_{a})
  \end{displaymath}
  is an affine equation in $a$ with integral slope.
\end{proof}

The relationship between the arithmetic multiplicity and the Zariski
decomposition is given by the following result of Moriwaki
\cite[Theorems 2.5 and 4.1.1]{Moriwaki:rlsbc}. 

\begin{thm} \label{thm:9} Let $\sigma\colon \cY\to\cX$ be a birational
  morphism of generically smooth normal projective varieties over
  $\Z$. Denote by $X$ and $Y$ the generic fibre of $\cX$ and $\cY$,
  respectively. Let $\ov {\cD}$ be a big arithmetic
  $\R$-divisor on $\cX$. If $\ov {\cD}$ admits a Zariski decomposition 
  $\sigma^{*}\ov \cD=\ov \cP+\ov \cE$ with $\ov \cP,\ov \cE$ arithmetic
  $\R$-divisors on $\cY$, then the
  arithmetic multiplicity of $\ov \cD$ is
  represented by the $\R$-divisor $E=\cE\mid_Y\in \Car(Y)_{\R}$.
\end{thm}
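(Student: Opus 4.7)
The plan is to prove the two inequalities $\mu_{\ov D}(\nu) \le \mult_\nu(E)$ and $\mu_{\ov D}(\nu) \ge \mult_\nu(E)$ for every divisorial valuation $\nu \in \DV(X) = \DV(Y)$. The pullback bijection $\sigma^*\colon \wh\Gamma(X,\ell\ov D)^\times \to \wh\Gamma(Y,\ell\sigma^*\ov D)^\times$ of Lemma \ref{lemm:7}\eqref{item:87} preserves the multiplicity at every divisorial valuation, so $\mu_{\ov D}(\nu) = \mu_{\sigma^*\ov D}(\nu)$ and the problem reduces to one on $\cY$ with the decomposition $\sigma^*\ov\cD = \ov\cP + \ov\cE$ in hand.

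For the upper bound, given any small section $t \in \wh\Gamma(\cY, \ell\ov\cP)^\times$, the product $s_{\ell\cE}\cdot t$, where $s_{\ell\cE}$ is the canonical section of the effective arithmetic $\R$-divisor $\ell\ov\cE$, is a small section of $\ell\sigma^*\ov\cD$: effectivity of $\ov\cE$ gives $\|s_{\ell\cE}\|_{v,\sup}\le 1$ at every place (cf.\ Definition \ref{def:22}\eqref{item:59} and Proposition \ref{prop:3}), so the sup-norm bound survives multiplication. Its multiplicity at $\nu$ equals $\mult_\nu(\div(t)) + \ell\,\mult_\nu(E)$, and dividing by $\ell$ and taking the infimum over $\ell$ and $t$ yields
\begin{displaymath}
\mu_{\sigma^*\ov D}(\nu) \le \mu_{\ov P}(\nu) + \mult_\nu(E).
\end{displaymath}
The key input is then the vanishing $\mu_{\ov P}(\nu) = 0$ for a nef big arithmetic $\R$-divisor $\ov P$, which is the arithmetic analogue of the classical fact that the stable base locus of a nef big divisor contains no divisorial component. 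It is proved via an arithmetic Fujita approximation: for every $\varepsilon > 0$, one produces many small sections of some $\ell\ov P$ whose divisors have multiplicity $< \ell\varepsilon$ at any prescribed $\nu$.

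The reverse inequality is the delicate part and is where the hypothesis $\wh\vol(\cY, \ov\cP) = \wh\vol(\cX, \ov\cD)$ is indispensable. The strategy is a volume-comparison: if $\mu_{\sigma^*\ov D}(\nu) < \mult_\nu(E) - \varepsilon$ for some $\varepsilon > 0$, one would filter the spaces of small sections $\wh\Gamma(\cY, \ell\sigma^*\ov\cD)^\times$ by multiplicity along the prime Weil divisor $H_\nu$ realising $\nu$, and use the Newton--Okounkov / concave-transform description of $\wh\vol$ to produce a nef arithmetic $\R$-divisor $\ov\cP'$ sandwiched between $\ov\cP$ and $\sigma^*\ov\cD$ with $\wh\vol(\cY, \ov\cP') > \wh\vol(\cY, \ov\cP)$. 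This contradicts the defining equality $\wh\vol(\cY, \ov\cP) = \wh\vol(\cX, \ov\cD) = \wh\vol(\cY, \sigma^*\ov\cD)$ of the Zariski decomposition, since any nef arithmetic $\R$-divisor $\le \sigma^*\ov\cD$ has volume $\le \wh\vol(\sigma^*\ov\cD)$.

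The principal obstacle is to make both volume inputs rigorous in the setting of adelic metrized $\R$-divisors: the vanishing $\mu_{\ov P} \equiv 0$ for nef big classes and the asymptotic volume comparison used in the lower bound both rely on an arithmetic Fujita-type approximation and an asymptotic-multiplicity formalism for $\wh\vol$. These are exactly the contents of Theorems 2.5 and 4.1.1 of \cite{Moriwaki:rlsbc}, and the proof consists in packaging them into the desired equality $\mu_{\ov D}(\nu) = \mult_\nu(E)$.
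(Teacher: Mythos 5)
The paper does not actually prove this statement: Theorem \ref{thm:9} is imported from Moriwaki \cite[Theorems 2.5 and 4.1.1]{Moriwaki:rlsbc} and used as a black box, so there is no internal argument to compare yours with. Judged as a self-contained proof, your proposal has genuine gaps at exactly the two points that carry all the weight. First, the vanishing $\mu_{\ov P}(\nu)=0$ for a nef and big arithmetic $\R$-divisor is asserted with the remark that arithmetic Fujita approximation produces small sections of $\ell\ov P$ with small multiplicity at $\nu$; but Fujita approximation only controls volumes. Writing (on a suitable model) $\ell\ov P=\ov A+\ov E'$ with $\ov A$ ample and $\ov E'$ effective gives small sections of $\ell\ov P$ whose divisors all contain $E'$, and nothing bounds $\mult_{\nu}(E')$. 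In the geometric case the vanishing of the asymptotic order along $\nu$ for nef big classes comes from the continuity of that function on the big cone, and the arithmetic analogue of this continuity is precisely the nontrivial content of Moriwaki's base-condition volume theorem, not a formal consequence of Fujita approximation. Second, in the reverse inequality you never explain how the multiplicity-filtered spaces of small sections would produce a \emph{nef} arithmetic $\R$-divisor $\ov P'$ with $\ov P\le \ov P'\le \sigma^{*}\ov D$ and strictly larger volume; the Okounkov-body/concave-transform formalism gives volume estimates for graded linear series, but nefness of a modified divisor is an assertion of a different nature and is again where the real work lies.

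Moreover, closing the argument by invoking Theorems 2.5 and 4.1.1 of \cite{Moriwaki:rlsbc} is circular for the present purpose: Theorem 4.1.1 there is, up to the dictionary between arithmetic $\R$-divisors and metrized $\R$-divisors (Example \ref{exm:3}, Definition \ref{def:21}), the very statement to be proved, which is exactly why the paper cites it rather than proving it. What is correct and worth keeping from your write-up is the reduction to $\cY$ via the pullback bijection on small sections (Lemma \ref{lemm:7}) and the inequality $\mu_{\sigma^{*}\ov D}(\nu)\le\mu_{\ov P}(\nu)+\mult_{\nu}(E)$ obtained by multiplying small sections of $\ell\ov P$ by the canonical section of $\ell\ov E$, which is indeed small because $\ov E$ is effective; but these reductions do not touch the substance of the theorem.
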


\begin{proof}[Proof of Theorem \ref{thm:5}]
  By Theorem
  \ref{thm:4}\eqref{item:26}, we know that \eqref{item:2} is equivalent
  to \eqref{item:4}. Theorem \ref{thm:9} and Proposition
  \ref{prop:12}\eqref{item:27} show that 
  \eqref{item:3} implies \eqref{item:4}. 

It only remains to prove that
  \eqref{item:4} implies \eqref{item:3}. The only difficulty is to
  show that the metrized $\R$-divisors appearing in the Zariski
  decomposition $\ov D=\ov P + \ov E$ obtained by Theorem
  \ref{thm:4}\eqref{item:26} come from arithmetic
  $\R$-divisors. Clearly, it is enough to show that this is the case
  for the nef part $\ov P$. 

  Suppose that $\Theta_{\ov D}$ is a quasi-rational polytope.  Let
  $\Sigma'$ be a complete unimodular regular refinement of $\Sigma$
  and $\Sigma_{\Theta_{\ov D}}$. Set $X'=X_{\Sigma'}$ and let
  $\varphi\colon X'\to X$ be the associated birational toric map.
  Recall the construction of $\ov P$ in the proof of
  Theorem \ref{thm:4}\eqref{item:26}:  $P$ is the toric $\R$-divisor on
  $X'$ determined by $\Theta_{\ov D}$ and $\ov P$ is the toric
  semipositive metrized $\R$-divisor associated to the restriction to
  $\Theta_{\ov D}$ of the family of concave functions
  $\{\vartheta_{\ov D,v}\}_{v}$ under the correspondence in
  Proposition \ref{prop:23}\eqref{item:73}.  For $v\ne \infty$, the
  function $\psi_{\ov D,v}$ is piecewise affine, by \cite[Proposition
  4.5.10(1)]{BurgosPhilipponSombra:agtvmmh}, and so is each local roof
  function $\vartheta_{\ov D,v}$.

For each $v\in \mathfrak{M}_{\Q}$, consider the functions
\begin{displaymath}
\zeta_{v}=\vartheta_{v}\big|_{\Theta_{\ov D}}\qquad \text{and} \qquad
\varphi_{v}=\zeta_{v}^{\vee} 
\end{displaymath}
and the finite set of places 
\begin{math}
  S= \{v\in \mathfrak{M}_{\Q}\setminus \{\infty\} \mid \zeta_{v}\not\equiv
  0\}.
\end{math}
For each $v\in S$, let $p_{v}\in
\Z$ be the corresponding prime number. 
Choose a subdivision $\Pi_{v}$ of $N_{\R}$
so that $\varphi_{v}$ is piecewise affine on $\Pi_{v}$ and $\rec (\Pi
_{v})=\Sigma '$. 
This  can be done as follows. Let $\Psi'$ be a
strictly concave function on $\Sigma'$, which exists because of the
condition that $\Sigma'$ is a regular fan. 
Then, $\Pi_{v}$ can be constructed as the subdivision determined by
the concave function $\varphi_{v}+\Psi'$ as in \cite[Definition
2.2.5]{BurgosPhilipponSombra:agtvmmh}.

For each finite subset $S'\subset S$ we denote $p_{S'}=\prod_{v\in
  S'}p_{v}$ and $\Z_{S'}=\Z[1/p_{S'}]$.
Consider the  toric scheme $\cX_{\Pi_{v}}$ over
$\Z_{S\setminus \{v\}}$ obtained by the  construction in \cite[\S
3.5]{BurgosPhilipponSombra:agtvmmh} using $\Z_{S\setminus \{v\}}$ and
$p_{v}$ in place of  $K^{\circ}$ and $\varpi$. The function
$\varphi_{v}$ defines a Cartier
 divisor $\cP_{\varphi_{v}}$ on $\cX_{\Pi_{v}}$ as in the case of
 toric varieties over a field. The restriction of $\cP_{\varphi_{v}}$
 to the generic fibre $X'$ agrees with $P$.

For $v,w\in S$, the restriction of the models $\cX_{\Pi_{v}}$
and $\cX_{\Pi_{w}}$ to  $\Z_{S\setminus \{v,w\}}$ can be identified 
by using the element $p_{v}/p_{w}$. Under this identification, the
Cartier divisors $\cP_{\varphi_{v}}$ and $\cP_{\varphi_{w}}$
correspond to each other. Since these identifications satisfy the
cocycle condition, we can glue together the schemes
$\cX_{\Pi_{v}}$, $v\in S$, 
into a model $\cX'$ over $\Z$ of $X'$, and the divisors
$\cP_{\varphi_{v}}$ into a model $\cP$ of $P$. Then
\begin{displaymath}
  g(x)=-2\varphi_{\infty}(\val_{\infty}(x))
\end{displaymath}
is a Green function of $C^{0}$ and $PSH$-type for $P$. Hence
$\ov{\cP}=(\cP,g)$ is a nef arithmetic $\R$-divisor and, by construction, $\ov P$ is
its associated metrized $\R$-divisor, which concludes the proof of the theorem.
\end{proof}

\begin{exmpl}\label{exm:15}
  Consider again the toric metrized
  divisor $\ov D_{\bfalpha}$ on $\P^{n}_{\Q}$ in Example~\ref{exm:11} and let $\ov \cD_{\bfalpha }$ be the associated toric
  arithmetic divisor. 

  With the notation therein, suppose that $n\ge 2$, and that
  $\sum_{i}\alpha_{i}>1$ or, equivalently, that $\ov D_{\bfalpha}$ is
  big. As noted in Example \ref{exm:14}, the convex set
  $\Theta_{\bfalpha}$ is not a polytope in this case.  Hence, Theorem
  \ref{thm:5} shows that $\ov \cD_{\bfalpha}$ does not admit a Zariski
  decomposition into arithmetic $\R$-divisors.
\end{exmpl}

\begin{rem}
  In principle, Theorem \ref{thm:5} leaves open the possibility that 
  the existence of a general Zariski decomposition of $\ov D$ does not imply
  the existence of a toric one. It would be interesting to settle this
  question and extend Theorem \ref{thm:5} to arbitary metrized
  $\R$-divisors. 
  Since the main reason why we restrict ourselves to
  arithmetic $\R$-divisors is the use of Theorem \ref{thm:9}, one step
  in this direction would be to extend this last result to metrized
  $\R$-divisors. 
\end{rem}

\appendix
\section{Convex analysis of asymptotically conic functions} \label{sec:conv-analys-asympt}

In this appendix we extend some definitions and constructions from
\cite[Chapter 2]{BurgosPhilipponSombra:agtvmmh} to functions which are not necessarily concave. We will freely use the notations and conventions in {\it loc. cit.}.

  \begin{defn} \label{def:4} Let $f\colon N_{\R}\to \R$ be a function.
The \emph{stability set} of $f$ is the subset of $M_{\R}$ given by 
\begin{displaymath}
  \stab(f)=\{x\in M_{\R} \mid x-f \text{ is bounded below}\}.
\end{displaymath}
When $f$ is a concave function, this coincides with the definition of
stability set in convex analysis \cite{Rockafellar:ca}.  If
$\stab(f)\not = \emptyset$, this is a convex set. The function $f$
determines a concave function $f^{\vee}\colon \stab(f)\to \R$ defined
as
\begin{displaymath}
  f^{\vee}(x)=\inf_{u\in N_{\R}}\langle x,u\rangle-f(u).
\end{displaymath}
When $f$ is concave, the function $f^{\vee}$ is the
Legendre-Fenchel dual of $f$. 
\end{defn}

\begin{lem} \label{lemm:4}
Let $f,g\colon
  N_{\R} \to \R$ be two functions. Then
  \begin{enumerate}
  \item \label{item:35}  
\begin{math}
    \stab(f+g)\supset \stab(f)+\stab(g). 
  \end{math}
\item \label{item:74}
$(f+g)^{\vee}(x)\ge
    (f^{\vee}\boxplus g^{\vee})(x)$
for all $x\in \stab(f)+\stab(g)$.
\end{enumerate}
\end{lem}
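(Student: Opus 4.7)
The plan is to deduce both assertions from elementary manipulations of the defining infima, together with the basic inequality $\inf_u(a(u)+b(u))\ge \inf_u a(u)+\inf_u b(u)$.

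For \eqref{item:35}, I would pick $x\in\stab(f)$ and $y\in\stab(g)$. Unpacking Definition \ref{def:4}, this means that the functions $u\mapsto \langle x,u\rangle-f(u)$ and $u\mapsto \langle y,u\rangle-g(u)$ are bounded below on $N_{\R}$. Adding them shows that $u\mapsto \langle x+y,u\rangle-(f+g)(u)$ is also bounded below, so $x+y\in\stab(f+g)$, giving the desired inclusion.

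For \eqref{item:74}, fix $x\in\stab(f)+\stab(g)$ and any decomposition $x=y+z$ with $y\in\stab(f)$ and $z\in\stab(g)$; by \eqref{item:35} such a decomposition lies in $\stab(f+g)$, so that $(f+g)^{\vee}(x)$ makes sense. Then
\begin{align*}
(f+g)^{\vee}(x)
&=\inf_{u\in N_{\R}}\bigl(\langle x,u\rangle-(f+g)(u)\bigr)\\
&=\inf_{u\in N_{\R}}\bigl((\langle y,u\rangle-f(u))+(\langle z,u\rangle-g(u))\bigr)\\
&\ge \inf_{u\in N_{\R}}(\langle y,u\rangle-f(u))+\inf_{u\in N_{\R}}(\langle z,u\rangle-g(u))\\
&= f^{\vee}(y)+g^{\vee}(z).
\end{align*}
Taking the supremum over all such decompositions yields $(f+g)^{\vee}(x)\ge (f^{\vee}\boxplus g^{\vee})(x)$, by the definition of the sup-convolution.

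There is no real obstacle in this proof: it is a direct unraveling of the definitions together with the superadditivity of the infimum. The only point that requires a little care is ensuring that, when forming the sup-convolution in \eqref{item:74}, the decompositions $x=y+z$ range precisely over pairs with $y\in\stab(f)$ and $z\in\stab(g)$, which is exactly the natural domain guaranteed by \eqref{item:35}.
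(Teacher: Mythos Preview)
Your proof is correct and matches the paper's own argument essentially line for line: part \eqref{item:35} is identical, and for part \eqref{item:74} you and the paper both use the superadditivity of the infimum on the decomposition $\langle x,u\rangle-(f+g)(u)=(\langle y,u\rangle-f(u))+(\langle z,u\rangle-g(u))$, the only cosmetic difference being that the paper writes the chain of inequalities starting from $(f^{\vee}\boxplus g^{\vee})(x)$ rather than from $(f+g)^{\vee}(x)$.
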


\begin{proof}
\eqref{item:35} Let $x\in \stab(f )+\stab(g )$ and take
  $y\in \stab(f )$ and $z\in \stab(g )$ such that $x=y+z$. Then
  $y-f $ and $z-g $ are bounded below. Therefore $(y+z)-(f+g)$ 
  is also bounded below, and so $x\in\stab(f +g )$.

\eqref{item:74} For  $x\in \stab(f)+\stab(g)$ we have that
\begin{displaymath}
  (f^{\vee}\boxplus g^{\vee})(x)=
    \sup_{y+z=x}(f^{\vee}(y)+g^{\vee}(z)), \quad
     (f+g)^{\vee}(x)=\inf_{u} (\langle x,u\rangle-f(u)-g(u))
\end{displaymath}
and
  \begin{multline*}
        \sup_{y+z=x}(f^{\vee}(y)+g^{\vee}(z))
    =\sup_{y+z=x} \Big(\inf_{u}(\langle y,u\rangle-f(u))+
    \inf_{v}(\langle z,v\rangle-g(v))\Big)\\
    \le \sup_{y+z=x}\inf_{u}(\langle y,u\rangle+\langle z,u\rangle-f(u)-g(u))
    = \inf_{u} (\langle x,u\rangle-f(u)-g(u)),
   \end{multline*}
from where we deduce the statement. 
\end{proof}

A conic function on $N_{\R}$ is a function
$\Psi\colon N_{\R}\to \R$ such that $\Psi(\gamma u)=\gamma \Psi(u)$
for all $u\in N_{\R}$ and $\gamma\ge0$.

\begin{defn} \label{def:11}
Let  $f\colon N_{\R}\to \R$ be a function. We say that $f$
is \emph{asymptotically conic} if there is a conic function
$\Psi$ on $ N_{\R}$ such that $|f-\Psi|$ is bounded. 
Such a conic function $\Psi$ is necessarily unique. We call it the
\emph{recession function} of $f$ and we denote it by $ \rec(f)$.
\end{defn}

\begin{rem} \label{rem:7}
Let $f\colon N_{\R}\to \R$ be an asymptotically conic function. Then 
\begin{displaymath}
  \rec(f)(u)=\lim_{\gamma\to \infty}\frac{f(\gamma u)}{\gamma}.
\end{displaymath}
Hence, for a concave  asymptotically conic function, the
 notion of recession function coincides with the usual one in convex
 analysis, see for instance \cite[Theorem~8.5]{Rockafellar:ca}.  
\end{rem}

The stability set of an asymptotically conic function $f\colon
N_{\R}\to \R$ agrees with that of its recession function. Hence,
\begin{displaymath}
  \stab(f)=\{x\in M_{\R}\mid \langle x,u\rangle \ge \rec(f)(u) \text{
    for all } u\in N_{\R}\}.
\end{displaymath}

If $f$ is an asymptotically conic function $f\colon N_{\R}\to \R$ with
$\stab(f)\not=\emptyset$, the \emph{concave envelope} of $f$, denoted
$\conc(f)$,  is defined as the smallest concave 
function $h\colon N_{\R}\to \R$ such that $h\ge f$.
We have that $f^{\vee}= \conc(f)^{\vee}$ and $f^{\vee\vee}=\conc(f)$.

\begin{lem} \label{lemm:6}
Let $f,g\colon
  N_{\R} \to \R$ be two asymptotically conic functions such that $0\in \stab(g)$. Then
  \begin{enumerate}
  \item \label{item:36}  $\stab(f+\varepsilon' g)\subset
\stab(f+\varepsilon g)$ for $0\le \varepsilon '\le
\varepsilon$ and
  \begin{displaymath}
   \bigcap_{\varepsilon > 0} \stab(f+\varepsilon g) = \stab(f).
  \end{displaymath}

\item \label{item:15} Assume that $\stab(f)\not=\emptyset$. Then, for $0\le \varepsilon '\le \varepsilon$,
  \begin{displaymath}
 (f+\varepsilon
  g)^{\vee}\big|_{\stab(f+\varepsilon'g)}\ge
  (f+\varepsilon'g)^{\vee} + (\varepsilon
  -\varepsilon ')g^{\vee}(0)
  \end{displaymath}
and, for $x\in \stab(f)$,
\begin{displaymath}
\lim_{\varepsilon \to 0}\big(f+\varepsilon g\big)^{\vee}(x) = f^{\vee}(x).
\end{displaymath}
  \end{enumerate}
 \end{lem}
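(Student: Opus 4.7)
\medskip
\noindent\emph{Proof plan for Lemma \ref{lemm:6}.} The strategy is to reduce everything to two elementary inputs: first, the decomposition $f+\varepsilon g=(f+\varepsilon'g)+(\varepsilon-\varepsilon')g$ together with Lemma~\ref{lemm:4}; second, the description of the stability set of an asymptotically conic function via its recession function, namely $\stab(h)=\{x\in M_{\R}\mid \langle x,u\rangle\ge \rec(h)(u)\ \forall u\}$, with $\rec(f+\varepsilon g)=\rec(f)+\varepsilon \rec(g)$. A key consequence of the hypothesis $0\in\stab(g)=\stab(\rec g)$ is that $\rec(g)(u)\le 0$ for all $u$, and hence also $g^{\vee}(0)=\inf_{u}(-g(u))$ is finite (as $g$ is bounded above up to the bounded discrepancy with $\rec(g)$).

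For \eqref{item:36}, the inclusion $\stab(f+\varepsilon'g)\subset \stab(f+\varepsilon g)$ follows from Lemma~\ref{lemm:4}\eqref{item:35} applied to the decomposition above, since $0\in\stab((\varepsilon-\varepsilon')g)=(\varepsilon-\varepsilon')\stab(g)$. Specialising to $\varepsilon'=0$ gives $\stab(f)\subset \bigcap_{\varepsilon>0}\stab(f+\varepsilon g)$. For the reverse inclusion, rewrite the stability condition via recession functions: $x\in \stab(f+\varepsilon g)$ iff $\langle x,u\rangle\ge \rec(f)(u)+\varepsilon \rec(g)(u)$ for all $u$; if this holds for every $\varepsilon>0$, then letting $\varepsilon\to 0$ yields $\langle x,u\rangle\ge \rec(f)(u)$ for all $u$, i.e.\ $x\in\stab(f)$.

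For the inequality in \eqref{item:15}, apply Lemma~\ref{lemm:4}\eqref{item:74} to the decomposition $f+\varepsilon g=(f+\varepsilon'g)+(\varepsilon-\varepsilon')g$: for $x\in \stab(f+\varepsilon'g)$ (which is contained in $\stab(f+\varepsilon'g)+\stab((\varepsilon-\varepsilon')g)$ since $0$ lies in the latter), one has
\[
(f+\varepsilon g)^{\vee}(x)\ge \bigl((f+\varepsilon'g)^{\vee}\boxplus ((\varepsilon-\varepsilon')g)^{\vee}\bigr)(x)\ge (f+\varepsilon'g)^{\vee}(x)+((\varepsilon-\varepsilon')g)^{\vee}(0),
\]
the second inequality coming from taking $y=x,\ z=0$ in the sup-convolution. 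A direct computation shows $((\varepsilon-\varepsilon')g)^{\vee}(0)=(\varepsilon-\varepsilon')g^{\vee}(0)$, giving the required bound.

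For the limit in \eqref{item:15}, for $x\in\stab(f)\subset\stab(f+\varepsilon g)$ the case $\varepsilon'=0$ of the inequality reads $(f+\varepsilon g)^{\vee}(x)\ge f^{\vee}(x)+\varepsilon g^{\vee}(0)$, whence $\liminf_{\varepsilon\to 0}(f+\varepsilon g)^{\vee}(x)\ge f^{\vee}(x)$. For the opposite inequality, fix $\delta>0$ and choose $u_{0}\in N_{\R}$ with $\langle x,u_{0}\rangle-f(u_{0})\le f^{\vee}(x)+\delta/2$; then
\[
(f+\varepsilon g)^{\vee}(x)\le \langle x,u_{0}\rangle-f(u_{0})-\varepsilon g(u_{0})\le f^{\vee}(x)+\tfrac{\delta}{2}-\varepsilon g(u_{0}),
\]
and for $\varepsilon$ small enough (depending on $g(u_{0})$ and $\delta$) the right-hand side is at most $f^{\vee}(x)+\delta$. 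The main point requiring care is that the test vector $u_{0}$ is fixed before letting $\varepsilon\to 0$; since $g(u_{0})$ is a fixed real number, no uniformity in $u$ is needed. This establishes $\limsup_{\varepsilon\to 0}(f+\varepsilon g)^{\vee}(x)\le f^{\vee}(x)$, concluding the proof.
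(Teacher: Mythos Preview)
Your proof is correct and follows essentially the same approach as the paper's: both use the decomposition $f+\varepsilon g=(f+\varepsilon'g)+(\varepsilon-\varepsilon')g$ together with Lemma~\ref{lemm:4} for the monotonicity and the inequality, the recession-function description of $\stab$ for the intersection, and the same fixed-$u_0$ argument for the $\limsup$. Your write-up is slightly more explicit in justifying why $x\in\stab(f+\varepsilon'g)$ lies in the domain where Lemma~\ref{lemm:4}\eqref{item:74} applies and in noting that $u_0$ is fixed before $\varepsilon\to 0$, but the substance is identical.
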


\begin{proof}
\eqref{item:36} Let $0\le \varepsilon' \le \varepsilon $. By Lemma \ref{lemm:4}\eqref{item:35},
\begin{displaymath}
  \stab (f +\varepsilon g  ) 
  \supset
  \stab (f +\varepsilon'g  ) + \stab ( (\varepsilon-\varepsilon')
  g ) \supset
  \stab (f +\varepsilon'g  ) 
\end{displaymath}
because $\varepsilon -\varepsilon'\ge 0$ and so $0\in
(\varepsilon-\varepsilon')\stab ( g)= \stab (
(\varepsilon-\varepsilon') g )$.  Now let $x\in\stab (f +\varepsilon g
)$ for all $\varepsilon
>0$. We have that $\stab (f +\varepsilon g )=\stab (\rec(f)
+\varepsilon \rec(g))$ and so, for all $u\in N_{\R}$,
\begin{displaymath}
  \langle x,u\rangle \ge \rec(f) (u)+\varepsilon \rec(g) (u).
\end{displaymath}
Letting $\varepsilon \to 0$, we obtain
\begin{math}
  \langle x,u\rangle \ge \rec(f) (u).
\end{math}
Since this holds for all $u\in N_{\R}$, we deduce that $x\in
\stab(\rec(f))=\stab(f )$. Hence, $\bigcap_{\varepsilon > 0} \stab(f+\varepsilon g ) \subset \stab(f )$. The reverse inclusion follows
from the argument above with $\varepsilon'=0$.

\eqref{item:15} Let $0\le \varepsilon'\le \varepsilon$. By Lemma
\ref{lemm:4}\eqref{item:74}, for $x\in \stab(f+\varepsilon' g)$,
\begin{displaymath}
  \big(f+\varepsilon g\big)^{\vee}(x) \ge
  \big(\big(f+\varepsilon' g)^{\vee}\boxplus\big(\big(\varepsilon-\varepsilon'\big) g\big)^{\vee}\big)(x) \ge
(f+\varepsilon' g)^{\vee}(x) + \big(\varepsilon-\varepsilon'\big)g^{\vee}(0), 
\end{displaymath}
which proves the first assertion. 

Letting $\varepsilon'=0$, we deduce that $\liminf_{\varepsilon\to 0}   \big(f+\varepsilon g\big)^{\vee}(x) \ge
f^{\vee}(x)$ for $x\in \stab(f)$.
For the reverse inequality, given $\delta >0$  let $u_{0}\in N_{\R}$ such that
\begin{math}
   f^{\vee}(x) \ge \langle x,u_{0}\rangle
    - f(u_{0})- \delta 
\end{math}.
Then
\begin{displaymath}
  (f+\varepsilon  g )^{\vee}(x) \le \langle x,u_{0}\rangle
    - f(u_{0}) - \varepsilon g (u_{0})\le 
   f^{\vee}(x) - \varepsilon   g (u_{0})+\delta.
\end{displaymath}
Letting $\varepsilon\to 0$, we deduce that 
\begin{displaymath}
\limsup_{\varepsilon\to 0}  (f+\varepsilon g)^{\vee}(x) \le f^{\vee}(x)+ \delta .
\end{displaymath}
Since this holds for
all $\delta >0$, we obtain $\limsup_{\varepsilon\to 0} (f+\varepsilon g
)^{\vee}(x) \le f^{\vee}(x)$. Therefore 
$\lim_{\varepsilon\to 0} (f+\varepsilon g)^{\vee}(x)$ exists and is
equal to $f^{\vee}(x)$, which concludes the proof.
\end{proof}

\newcommand{\noopsort}[1]{} \newcommand{\printfirst}[2]{#1}
  \newcommand{\singleletter}[1]{#1} \newcommand{\switchargs}[2]{#2#1}
  \def\cprime{$'$}
\providecommand{\bysame}{\leavevmode\hbox to3em{\hrulefill}\thinspace}
\providecommand{\MR}{\relax\ifhmode\unskip\space\fi MR }
\providecommand{\MRhref}[2]{%
  \href{http://www.ams.org/mathscinet-getitem?mr=#1}{#2}
}
\providecommand{\href}[2]{#2}


\begin{thebibliography}{BGPS11}

\bibitem[BC11]{BoucksomChen:Obfs}
S.~Boucksom and H.~Chen, \emph{Okounkov bodies of filtered linear series},
  Compos. Math. \textbf{147} (2011), 1205--1229.

\bibitem[BFJ11]{boucksom:snama}
S.~Boucksom, C.~Favre, and M.~Jonsson, \emph{Solution to a non-{A}rchimedean
  {M}onge-{A}mp\`ere equation},   {J. Amer. Math. Soc.} \textbf{28} (2015), 617--667.

\bibitem[Bil97]{Bilu:ldspat}
Y.~Bilu, \emph{Limit distribution of small points on algebraic tori}, Duke
  Math. J. \textbf{89} (1997), 465--476.

\bibitem[BPS14]{BurgosPhilipponSombra:agtvmmh}
J.~I. Burgos~Gil, P.~Philippon, and M.~Sombra, \emph{Arithmetic geometry of
  toric varieties. {M}etrics, measures and heights}, Ast\'erisque,
vol. 360, Soc. Math. France, 2014. 

\bibitem[BR06]{BakerRumely:esp}
M.~H. Baker and R.~Rumely, \emph{Equidistribution of small points, rational
  dynamics, and potential theory}, Ann. Inst. Fourier (Grenoble) \textbf{56}
  (2006), 625--688.

\bibitem[Bur97]{Burgos:CDB}
J.~I. Burgos~Gil, \emph{Arithmetic {C}how rings and {D}eligne-{B}eilinson
  cohomology}, J. Alg. Geom. \textbf{6} (1997), 335--377.

\bibitem[Cha06]{ChambertLoir:meeB}
A.~Chambert-Loir, \emph{Mesures et \'equidistribution sur les espaces de
  {B}erkovich}, J. Reine Angew. Math. \textbf{595} (2006), 215--235.

\bibitem[Che10]{Chen:afa}
H.~Chen, \emph{Arithmetic {F}ujita approximation}, Ann. Sci. \'Ec. Norm.
  Sup\'er. (4) \textbf{43} (2010), 555--578.

\bibitem[CLS11]{CoxLittleSchenck:tv}
D.~A. Cox, J.~B. Little, and H.~K. Schenck, \emph{Toric varieties}, Grad. Stud.
  Math., vol. 124, Amer. Math. Soc., 2011.

\bibitem[EGA]{GrothendieckDieudonne:EGA}
A.~Grothendieck and J.~Dieudonn\'e, \emph{{\'E}l\'ements de {G}\'eom\'etrie
  {A}lg\'ebrique.} \emph{I}, Grunlehren der Math. Wissenschaften {\bf 166} (1971); \emph{II},
  Publ. Math. I.H.\'E.S. {\bf 8} (1961); \emph{III}, ibidem {\bf 11} (1961); \emph{IV}, ibidem
  {\bf 20} (1964), {\bf 24} (1965), {\bf 28} (1966), {\bf 32} (1967).

\bibitem[FR06]{FavreRivera:eqpph}
C.~Favre and J.~Rivera-Letelier, \emph{\'{E}quidistribution quantitative des
  points de petite hauteur sur la droite projective}, Math. Ann. \textbf{335}
  (2006), 311--361.

\bibitem[Ful93]{Fulton:itv}
W.~Fulton, \emph{Introduction to toric varieties}, Ann. of Math. Stud., vol.
  131, Princeton Univ. Press, 1993.

\bibitem[Gau09]{Gaudron:gnals}
{\'E}.~Gaudron, \emph{G\'eom\'etrie des nombres ad\'elique et lemmes de
  {S}iegel g\'en\'eralis\'es}, Manuscripta Math. \textbf{130} (2009), 159--182.

\bibitem[GS88]{GilletSoule:amar}
H.~Gillet and C.~Soul{\'e}, \emph{Amplitude arithm\'etique}, C. R. Acad. Sci.
  Paris S\'er. I Math. \textbf{307} (1988), 887--890.

\bibitem[GS90]{GilletSoule:ait}
\bysame, \emph{Arithmetic intersection theory}, Publ. Math. Inst. Hautes \'Etudes Sci. \textbf{72} (1990), 94--174.

\bibitem[Lan83]{Lang:FDG}
S.~Lang, \emph{Fundamentals of Diphantine Geometry}, Springer-Verlag, 1983. 

\bibitem[Lan94]{Lang:ANT}
\bysame, \emph{Algebraic number theory}, 2nd ed., Grad. Texts in Math., vol. 110, Springer-Verlag, 1994.

\bibitem[Laz04]{Lazarsfeld:posit_I}
R.~Lazarsfeld, \emph{Positivity in algebraic geometry. {I}}, Ergeb. Math.
  Grenzgeb. (3), Springer-Verlag, 2004.

\bibitem[Liu02]{Liu:agac}
Q.~Liu, \emph{Algebraic geometry and arithmetic curves}, Oxf.
Grad. Texts  Math., vol. 6, Oxford Univ. Press, 2002.

\bibitem[Mor09]{Moriwaki:cav}
A.~Moriwaki, \emph{Continuity of volumes on arithmetic varieties}, J. Algebraic Geom. \textbf{18} (2009), 407--457.

\bibitem[Mor11]{Moriwaki:bad}
\bysame, \emph{Big arithmetic divisors on the projective spaces over
  $\mathbb{Z}$}, Kyoto J. Math. \textbf{51} (2011), 503--534.

\bibitem[Mor12a]{Moriwaki:rlsbc}
\bysame, \emph{Arithmetic linear series with base conditions},
Math. Z. \textbf{272} (2012), 1383--1401.

\bibitem[Mor12b]{Moriwaki:cnadas}
\bysame, \emph{Numerical characterization of nef arithmetic divisors on arithmetic
  surfaces}, Ann. Fac. Sci. Toulouse Math. (6) \textbf{23} (2014), 717--753.

\bibitem[Mor12c]{Moriwaki:zdas}
\bysame, \emph{Zariski decompositions on arithmetic surfaces}, 
  Publ. Res. Inst. Math. Sci. \text{\bf 48} (2012), 799-–898

\bibitem[Mor13]{Moriwaki:tdutav}
\bysame, \emph{Toward {D}irichlet's unit theorem on arithmetic varieties},
Kyoto J. Math. \textbf{53} (2013), 197--259.

\bibitem[Roc70]{Rockafellar:ca}
R.~T. Rockafellar, \emph{Convex analysis}, Princeton Math. Series, vol.~28,
  Princeton Univ. Press, 1970.

\bibitem[SUZ97]{SzpiroUllmoZhang:epp}
L.~Szpiro, E.~Ullmo, and S.-W. Zhang, \emph{\'{E}quir\'epartition des petits
  points}, Invent. Math. \textbf{127} (1997), 337--347.

\bibitem[Wei74]{Weil:bnt}
A.~Weil, \emph{Basic number theory}, third ed., Die Grundlehren der
  Mathematischen Wissenschaften, Band 144, Springer-Verlag, 1974.

\bibitem[Yua08]{Yuan:blbav}
X.~Yuan, \emph{Big line bundles over arithmetic varieties}, Invent. Math.
  \textbf{173} (2008), 603--649.

\bibitem[Yua09a]{Yuan:valb}
\bysame, \emph{On volumes of arithmetic line bundles}, Compos. Math.
  \textbf{145} (2009), 1447--1464.

\bibitem[Yua09b]{Yuan:valbII}
\bysame, \emph{On volumes of arithmetic line bundles. {II}}, e-print
  arXiv:0909.3680v1, 2009.

\bibitem[Zha95a]{Zhang:plbas}
S.-W. Zhang, \emph{Positive line bundles on arithmetic varieties}, J. Amer.
  Math. Soc. \textbf{8} (1995), 187--221.

\bibitem[Zha95b]{Zhang:_small}
\bysame, \emph{Small points and adelic metrics}, J. Algebraic Geom. \textbf{4}
  (1995), 281--300.

\end{thebibliography}

\end{document}